\documentclass[a4paper, 
%twoside=semi, %openany, 
headsepline=off, DIV=12, titlepage=false, 11pt]{scrartcl}
%%%
\setkomafont{author}{\sffamily\scshape}
%%%%%%%%%%%
\title{\vspace{-1cm}\LARGE The equivariant Tamagawa Number Conjecture\\
for abelian extensions of imaginary quadratic fields}
\date{}
\author{Dominik Bullach \and Martin Hofer}
 
 \usepackage[backend=bibtex
 , style=alphabetic
 , sorting=nyt, maxbibnames=9
 ]{biblatex}
		\addbibresource{literature}
		%\addbibresource{iq.bib}

\renewbibmacro{in:}{%
  \ifentrytype{article}{}{\printtext{\bibstring{in}\intitlepunct}}}
  \DeclareFieldFormat{journaltitle}{#1\isdot}
  \DeclareFieldFormat[article, inproceedings, unpublished]{title}{\textit{#1}}

%%%packages - alphabetically
    \usepackage{anyfontsize}
	\usepackage{amsmath}	
	\usepackage{amssymb}
	\usepackage{amsthm}
	\usepackage[british]{babel}
	\usepackage{bm}
	\usepackage{comment}
	\usepackage{datetime}
	\usepackage{enumitem}
	\usepackage[inner=26mm, outer=26mm, 
	top=22mm, 
	bottom=32mm,
				a4paper]{geometry}
	\usepackage[framemethod=TikZ, roundcorner=1mm, linecolor=gray]{mdframed}
	\usepackage{HanFakt} %quotients
	\usepackage{tikz-cd}
	\usepackage{url}
	\usepackage{stmaryrd}
	\usepackage{xcolor}
	\usepackage{mathdots}
	\usepackage{mathrsfs}
	\usepackage{mathtools}
	\usepackage{todonotes}
	
	%%

%Tikz
\usepackage{tikz-cd, tikz-3dplot, pgfplots}
\usetikzlibrary{patterns, arrows, positioning, decorations.markings}
\tikzset{axis/.style={very thick, ->}}
\tikzset{
 		  every pin/.style={pin edge={<-}},
  		>=stealth,
   		flow/.style={decoration={markings,mark=at position #1 with {\arrow[black, 
			line width=1.3pt]{>}}},
       		postaction={decorate}},
   		flow/.default=0.5,
		flow_inv/.style={decoration={markings,mark=at position #1 with 
			{\arrow[black, line width=1.3pt]{<}}},
       		postaction={decorate}},
   		flow_inv/.default=0.5,
  		eigendirection/.style={dashed, grau, very thick}
  	}
	
	\tikzset{graph/.style={grau}}
	
\tikzset{
    slanted/.style={rotate=-90, anchor = south},
    slantedswap/.style={rotate=-90, anchor=north}
}

% Kopfzeilen
%\usepackage[automark]{scrpage2} 
%	\pagestyle{scrheadings}
%	\ohead{\pagemark}
%	\automark[section]{chapter}
	%\ihead{\headmark}
	%\ofoot{}
	%\renewcommand*{\chapterpagestyle}{empty}
	%\KOMAoptions{cleardoublepage=scrheadings}

%layout
%	\linespread{1.05} 
\usepackage{xpatch}
\xapptocmd\normalsize{%
 \abovedisplayskip=5pt plus 3pt minus 1pt
 \abovedisplayshortskip=3pt plus 2pt minus 2pt
 \belowdisplayskip=5pt plus 1pt minus 3pt
 \belowdisplayshortskip=3pt plus 2pt minus 2pt
}{}{}

%%Definition grey box
	\renewmdenv[%
   	 backgroundcolor=gray!25,
   	 linecolor=white,
    	outerlinewidth=1pt,
    	roundcorner=3mm,
    	%skipabove=\baselineskip,
    	%skipbelow=\baselineskip,
	]{boxed}	

%% Definition der Sätze und Co.
    %\newcounter{intro}
    \newtheorem{thmintro}{Theorem}

	\swapnumbers
	\theoremstyle{plain}
	
	\newtheorem{thm}{Theorem}[section]
	\newtheorem{conj}[thm]{Conjecture}
	\newtheorem{prop}[thm]{Proposition}
	\newtheorem{cor}[thm]{Corollary}
	\newtheorem{lem}[thm]{Lemma}
	\newtheorem{definition}[thm]{Definition}

	\newtheorem{conjecture}[thm]{Conjecture}
	
	\theoremstyle{definition}
	
	\newtheorem{bsp1}[thm]{Example}
	\newtheorem{bspe1}[thm]{Examples}
	\newtheorem{rk}[thm]{Remark}

	\newtheorem{condition}[thm]{Condition}

	\newtheorem{remark}[thm]{Remark}

    \swapnumbers

%%Definitio of respective environments

	\newenvironment{bspe}
	{\begin{bspe1}
	\begin{liste}}
	{\end{liste}
	\end{bspe1}}

\setlist[enumerate]{topsep=3pt, itemsep=3pt, parsep=0pt}
	\newenvironment{liste}
	{\begin{enumerate}[label=(\alph*), topsep=3pt]
	\setlength{\itemsep}{3pt}
  \setlength{\parskip}{0pt}
  \setlength{\parsep}{0pt}}
	{\end{enumerate}}

	\newenvironment{cdiagram}
	{\begin{equation*} \begin{tikzcd}}
	{\end{tikzcd} \end{equation*}}

%%Crazy Shortcuts - alphabetically
    \newcommand{\anti}{\mathrm{anti}}
    \newcommand{\Col}{\mathrm{Col}}
	\renewcommand{\:}{\colon}
	\renewcommand{\a}{\mathfrak{a}}

	\newcommand{\bigO}{\mathcal{O}}

	\newcommand{\C}{\mathbb{C}}
	\renewcommand{\emptyset}{\varnothing}
	\renewcommand{\emph}[1]{\textit{\textbf{#1}}}
	
	\newcommand{\ff}{\mathfrak{f}}
	\newcommand{\Fitt}{\operatorname{Fitt}}
	\newcommand{\Frob}{\mathrm{Frob}}
	\newcommand{\gal}[2]{\mathrm{Gal}( #1 / #2)}
	\renewcommand{\iff}{\quad \Leftrightarrow \quad}
	\newcommand{\id}{\mathrm{id}}

	 \newcommand{\N}{\mathbb{N}}
	\newcommand{\nZ}[1]{\faktor{\mathbb{Z}}{#1 \mathbb{Z}}}
	\newcommand{\p}{\mathfrak{p}}
	
	\newcommand{\m}{\mathfrak{m}}
	\newcommand{\q}{\mathfrak{q}}
	\newcommand{\Q}{\mathbb{Q}}
	\newcommand{\Ord}{\mathrm{Ord}}
	\newcommand{\ord}{\mathrm{ord}}
	\newcommand{\R}{\mathbb{R}}

	\newcommand{\Rec}{\mathrm{Rec}}
	\newcommand{\rec}{\mathrm{rec}}

	\renewcommand{\ord}{\operatorname{ord}}

	\newcommand{\UN}{\mathrm{UN}}
	\newcommand{\Z}{\mathbb{Z}}
	%\newcommand{\Det}{\text{Det}}

	%%%verlinktes toc für Domi
\usepackage[colorlinks,
pdfpagelabels,
pdfstartview = FitH,
bookmarksopen = true,
bookmarksnumbered = true,
linkcolor = black,
plainpages = false,
hypertexnames = false,
citecolor = black, urlcolor=black]{hyperref}

\newcommand{\nocontentsline}[3]{}
\newcommand{\tocless}[2]{\bgroup\let\addcontentsline=\nocontentsline#1{#2}\egroup}

%bunch of environment definitions

  1

%%More crazy shortcuts

\DeclareMathOperator{\Det}{Det}

\DeclareMathOperator{\Hom}{Hom}

\DeclareMathOperator{\Spec}{Spec}

\DeclareMathOperator{\im}{im}

\newcommand{\NN}{\mathrm{N}}

\newcommand{\cQ}{\mathcal{Q}}

\newcommand{\cG}{\mathcal{G}}
\newcommand{\cK}{\mathcal{K}}

\newcommand{\frp}{\mathfrak{p}}

\def\bigcapp{\raise1ex\hbox{\rotatebox{180}{$\biguplus$}}}
 \def\bigcappd{\raise1ex\hbox{\rotatebox{180}{$\displaystyle\biguplus$}}}

 \newcommand{\cyc}{\mathrm{cyc}}
 
 \newcommand{\tor}{\mathrm{tor}}
 \newcommand{\tf}{\mathrm{tf}}

 \newcommand{\ram}{\mathrm{ram}}
 
 \newcommand{\bidual}{\bigcap\nolimits}
\newcommand{\exprod}{\bigwedge\nolimits}

\newcommand{\spc}{\mathrm{split}}

\newcommand{\Iw}{\mathrm{Iw}}

  \tikzset{
    rotated/.style={rotate=-90, anchor = south},
    rotatedswap/.style={rotate=-90, anchor=north, outer sep=0.75mm}
}

\newcommand{\bLambda}{{\mathpalette\makebLambda\relax}}
\newcommand{\makebLambda}[2]{%
  \raisebox{\depth}{\scalebox{1}[-1]{$\mathsurround=0pt#1\mathbb{V}$}}%
}

\usepackage{dsfont}
\renewcommand{\mathbb}{\mathds}

	\tikzcdset{
  cells={font=\everymath\expandafter{\the\everymath\displaystyle}},
}

	\usepackage{tocloft}

%%colon equals
%\mathchardef\ordinarycolon\mathcode`\:
%\mathcode`\:=\string"8000
%\begingroup \catcode`\:=\active
 % \gdef:{\mathrel{\mathop\ordinarycolon}}
%\endgroup    

\setlength\parindent{0pt} 
\begin{document}
\maketitle

\vspace{-1.5cm}
\begin{abstract}
We prove the Iwasawa-theoretic version of a Conjecture of Mazur--Rubin and Sano in the case of elliptic units. This allows us to derive the $p$-part of the equivariant Tamagawa number conjecture at $s = 0$ for abelian extensions of imaginary quadratic fields in the semi-simple case and, provided that a standard $\mu$-vanishing hypothesis is satisfied, also in the general case. 
\end{abstract}

%\enlargethispage{3em}

\let\thefootnote\relax\footnotetext{2020 {\em Mathematics Subject Classification.} Primary: 11R42; Secondary: 11R23, 11R29.\\
\hskip 0.155truein 
}

%\tableofcontents

\section{Introduction}

The \textit{equivariant Tamagawa Number Conjecture} (eTNC for short) as formulated by Burns and Flach \cite{BurnsFlach01}  (building on earlier work of Kato \cite{Kato93a}, \cite{Kato93b} and, independently, Fontaine and Perrin-Riou \cite{FontainePerrinRiou94}) is an equivariant refinement of the seminal Tamagawa Number Conjecture of Bloch and Kato \cite{bloch-kato}. It both unifies and refines a great variety of conjectures related to special values of motivic $L$-functions such as Stark's conjectures, the Birch and Swinnerton-Dyer conjecture, and the central conjectures of classical Galois module theory (see \cite{Burns07}, \cite{GuidoKings}, \cite{Burns01} for more details). \\
The idea of deducing cases of the eTNC from a variant of the Iwasawa Main Conjecture already appears in the original article of Bloch and Kato \cite{bloch-kato}. However, the necessary descent calculations are particularly involved in cases where the associated $p$-adic $L$-function posseses so-called \textit{trivial zeroes}. To handle such cases, Burns and Greither \cite{BurnsGreither} developed a descent machinery in their proof of the eTNC for the Tate motive $(h^0 (\Spec K), \Z [\frac12] [\gal{K}{\Q}])$, where $K$ denotes an absolutely abelian field (the $2$-part was later resolved by Flach \cite{Flach11}). This formalism uses the vanishing of certain Iwasawa $\mu$-invariants, the known validity of the Gross--Kuz'min conjecture in this setting, and a result of Solomon \cite{Solomon1992} as crucial ingredients. Bley \cite{Ble06} later proved partial results for $K$ an abelian extension of an imaginary quadratic field using the same strategy and an analogue \cite{Bley04} for elliptic units of Solomon's result. \\
In \cite{BKS2} Burns, Kurihara and Sano showed that an Iwasawa-theoretic version of a conjecture proposed by Mazur--Rubin \cite{MazurRubin} and, independently, Sano \cite{Sano} constitutes an appropriate conjectural generalisation of the aforementioned result of Solomon's and therefore allows one to extend the Burns--Greither descent formalism to provide a general strategy for proving $\mathrm{eTNC} (h^0 (\Spec K), \Z [\gal{K}{k}])$, where $K / k$ is a finite abelian extension of number fields. \smallskip \\
In the present article, we prove the following result (see Theorem \ref{MRS-iq-thm} for the precise statement).

\begin{thmintro} \label{thm-A}
The Iwasawa-theoretic Mazur--Rubin--Sano Conjecture~\ref{MRS} holds for elliptic units.
\end{thmintro}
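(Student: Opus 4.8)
The plan is to combine Rubin's proof of the Iwasawa Main Conjecture over imaginary quadratic fields with an Iwasawa-theoretic refinement of the explicit unit computations of Solomon~\cite{Solomon1992} and Bley~\cite{Bley04}. Conjecture~\ref{MRS} predicts an equality, inside an exterior power of an Iwasawa cohomology module, between a norm of the elliptic-unit Euler system element attached to an enlarged set of primes and the image, under a Bockstein-type connecting homomorphism, of the element attached to the smaller set; the gap in exterior-power degrees is accounted for by the auxiliary primes $v$ at which the relevant local Euler factor acquires a trivial zero in the pertinent specialisation. My strategy would be in two stages. First, use the formalism of rank-one Euler systems to show that the two sides of the predicted identity agree up to a unit of the Iwasawa algebra: the norm-compatible family of elliptic units is an Euler system with the prescribed Euler factors, so the localisations of its terms at the auxiliary primes are controlled, and, by Rubin's theorem, the characteristic ideal of the relevant Iwasawa module is generated by (the Coleman-map image of) elliptic units --- essentially the interpolated $p$-adic $L$-function. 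Second, prove a rigidity statement forcing that unit to be $1$.

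For the rigidity step I would use a Weierstrass-preparation-type argument to reduce the verification to a single, well-chosen specialisation --- for instance projection to a finite layer of the tower, or to a suitable character --- at which both sides of Conjecture~\ref{MRS} become concrete expressions in $p$-adic logarithms and orders of vanishing of elliptic units; the resulting finite-level statement is, after suitable reformulation, Bley's elliptic analogue of Solomon's theorem. Making this precise requires having the relevant Rubin--Stark elements for elliptic units available at all layers and knowing their basic norm-compatibility, which in turn rests on the distribution relations for the Siegel units out of which elliptic units are built.

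The content of that finite-level comparison, and what I expect to be the main obstacle, is the explicit identification underlying it together with its uniformity in the tower: for each auxiliary prime $v$ one must match the $v$-component of the algebraically defined Bockstein homomorphism, applied to the elliptic-unit element, with the concrete arithmetic invariant of the elliptic unit predicted by those distribution relations --- a local valuation or Frobenius term away from $p$, a $p$-adic regulator term above $p$ --- compatibly as $v$ and the layer vary, and the computation is sensitive to whether $v$ splits, is inert, or ramifies in the imaginary quadratic base field. I expect the places above $p$ to require Coleman-map and explicit-reciprocity-law input and the residually degenerate configurations to need an additional descent or comparison argument. Once these local leading-term formulas are established, they combine with the Euler-system rigidity of the first stage to yield the full identity, that is, Conjecture~\ref{MRS} for elliptic units; see Theorem~\ref{MRS-iq-thm} for the precise statement.
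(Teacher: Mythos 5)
Your proposal takes a genuinely different route from the paper, and in the form stated it has significant gaps.

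The paper's proof of Theorem~\ref{thm-A} (via Theorem~\ref{MRS-iq-thm}) does \emph{not} pass through an Iwasawa Main Conjecture at all, nor through any ``agreement up to a unit of the Iwasawa algebra'' followed by a rigidity step. Instead, after the algebraic reformulation of Conjecture~\ref{MRS} in Proposition~\ref{MRS-equivalent} and Lemma~\ref{MRS-Solomon-formulation}, and after a reduction to the case $W = \{\p\}$ with $\p$ having full decomposition group in $K_\infty/K$, the identity is established by a \emph{direct computation}: one writes the elliptic units as a norm-coherent tower inside a relative Lubin--Tate tower, identifies the associated Coleman power series and its constant term (Lemma~\ref{constant-term-lem}), and then appeals to the Seiriki-type explicit reciprocity law of Proposition~\ref{prop_seiriki_thm} (from \cite{BlHo20}) to compute $\ord_w(\kappa_0)$ exactly. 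No auxiliary normalisation remains to be pinned down.

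Two concrete problems with your plan. First, your Stage~1 is not clearly well-posed: the two sides of Conjecture~\ref{MRS} are specific elements --- the Darmon derivative $\kappa_0\otimes(\gamma-1)^e$ and $\Rec_W(\varepsilon^{V'}_{K/k,\Sigma,T})$ --- rather than generators of a characteristic ideal, so it is not apparent that Rubin's Main Conjecture produces the claimed ``equality up to a $\bLambda^\times$-multiple'' without essentially redoing the Burns--Kurihara--Sano descent formalism, which is exactly what Theorem~\ref{thm-A} is meant to feed into, not to rely on. Second, and more seriously, your Stage~2 rests on Bley's elliptic analogue of Solomon's theorem \cite{Bley04}, which (as the introduction to the paper makes clear) covers neither $p = 2$ nor the case where $p$ is non-split in the imaginary quadratic field. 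Those are precisely the cases that Theorem~\ref{thm-A} newly establishes, and the essential new input is the reciprocity formula of \cite{BlHo20}, valid uniformly in $p$ and in the splitting behaviour of $p$; in the non-split case this requires working with a relative Lubin--Tate formal group of height two (cf.\ the proof of Lemma~\ref{constant-term-lem}), a situation Solomon's and Bley's methods do not handle. You do correctly anticipate that ``Coleman-map and explicit-reciprocity-law input'' is needed above $p$ and that the computation is sensitive to the splitting type of $p$, but you do not identify that a qualitatively new reciprocity statement is required there, and that this --- not a rigidity argument --- is the heart of the proof.
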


It is perhaps worth noting that Theorem \ref{thm-A} also includes the often technically difficult cases $p = 2$ and $p$ being non-split in the imaginary quadratic base field. This was not possible only using techniques previously employed for the absolute abelian situation but is achieved by building on new results from \cite{BlHo20}.\\
We will then use Theorem \ref{thm-A} and the formalism of Burns, Kurihara and Sano \cite{BKS2} to deduce new cases of the $p$-part of the eTNC from the relevant equivariant Iwasawa Main Conjecture.

\begin{thmintro} \label{thm-B}
Let $p$ be a prime number, $k$ an imaginary quadratic field, and $K / k$ a finite abelian extension. 
\begin{liste}
\item If $p$ is split in $k$, then $\mathrm{eTNC} ( h^0 (\Spec K), \Z_p [\gal{K}{k}])$ holds.
\item If $p$ is not split in $k$, then $\mathrm{eTNC} ( h^0 (\Spec K), \Z_p [\gal{K}{k}])$ 
holds if $p \nmid [K : k]$ or the classical Iwasawa $\mu$-invariant vanishes (see Theorem \ref{thm_iq_main_thm} for a more precise statement).
\end{liste}
In particular, 
$\mathrm{eTNC}(h^0 (\Spec (K)), \Z[\gal{K}{k}])$ holds if $[K : k]$ is a prime power or every prime factor of $[K : k]$ is split in $k$ (see Corollary \ref{cor_finsterau}).
\end{thmintro}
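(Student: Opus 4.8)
The plan is to deduce Theorem~\ref{thm-B} by feeding Theorem~\ref{thm-A} into the descent formalism of Burns, Kurihara and Sano~\cite{BKS2}, together with the equivariant Iwasawa Main Conjecture for abelian extensions of imaginary quadratic fields. Since $\mathrm{eTNC}(h^0(\Spec K),\Z[\gal{K}{k}])$ holds if and only if $\mathrm{eTNC}(h^0(\Spec K),\Z_p[\gal{K}{k}])$ holds for every prime number $p$, it suffices to work one prime at a time; this is precisely why both the statement and the argument are organised according to the splitting behaviour of $p$ in $k$.

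First I would recall the precise output of~\cite{BKS2}: for a finite abelian extension $K/k$ of number fields and a prime $p$, the conjunction of (i)~the relevant equivariant Iwasawa Main Conjecture, (ii)~the Iwasawa-theoretic Mazur--Rubin--Sano Conjecture~\ref{MRS}, (iii)~the Gross--Kuz'min Conjecture, and (iv)~the vanishing of an associated Iwasawa $\mu$-invariant implies $\mathrm{eTNC}(h^0(\Spec K),\Z_p[\gal{K}{k}])$; moreover, in the semisimple case $p\nmid[K:k]$, where $\Z_p[\gal{K}{k}]$ is a maximal order, hypothesis~(iv) can be dispensed with and~(i) weakened to a single divisibility. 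This is the same mechanism used by Burns--Greither~\cite{BurnsGreither} and Bley~\cite{Ble06}, with Solomon's theorem~\cite{Solomon1992} --- respectively its elliptic-unit analogue~\cite{Bley04} --- now replaced by the stronger input~(ii).

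The bulk of the work is to verify inputs (i)--(iv) in the present setting, with elliptic units serving as the relevant Euler system. Input~(ii) is exactly Theorem~\ref{thm-A}, and it is here that the technically delicate cases $p=2$ and $p$ non-split in $k$ become accessible, by way of~\cite{BlHo20}. Input~(iii), the Gross--Kuz'min Conjecture, is classically known for abelian extensions of imaginary quadratic fields. For input~(i), the required equivariant main conjecture is supplied, when $p$ splits in $k$, by Rubin's main conjecture and its equivariant refinement, and, when $p$ is non-split, by the corresponding statement built on~\cite{BlHo20}, in the generality appropriate to the descent. For input~(iv), the vanishing of the pertinent $\mu$-invariant is known when $p$ splits in $k$ (Gillard, with the inert case going back to Schneps), which is why part~(a) carries no hypothesis; when $p$ is non-split it is precisely the condition recorded in part~(b), and it is not needed in the semisimple range. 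Feeding these verifications through the descent machinery then gives parts~(a) and~(b), with Theorem~\ref{thm_iq_main_thm} providing the sharper non-split version.

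Finally, for the last assertion of Theorem~\ref{thm-B} I would argue prime by prime. If every prime factor of $[K:k]$ is split in $k$, then part~(a) handles every $p$ dividing $[K:k]$, while part~(a) (if $p$ splits) or the semisimple case of part~(b) (if not) handles the remaining primes, so $\mathrm{eTNC}(h^0(\Spec K),\Z[\gal{K}{k}])$ holds. If $[K:k]=\ell^n$ is a prime power, then every $p\neq\ell$ satisfies $p\nmid[K:k]$ and thus lies in the unconditional range of~(a)/(b), whereas for $p=\ell$ one applies part~(a) if $\ell$ splits in $k$, and part~(b) --- whose $\mu$-hypothesis can then be verified in this case --- if $\ell$ is non-split; the details of this are carried out in Corollary~\ref{cor_finsterau}. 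The main obstacle throughout is Theorem~\ref{thm-A} itself, namely establishing Conjecture~\ref{MRS} for elliptic units in the awkward cases $p=2$ and $p$ non-split; once this is granted, the remaining difficulties are the careful bookkeeping inherent in the Burns--Kurihara--Sano descent and ensuring that an equality --- not merely a divisibility --- in the equivariant main conjecture is at hand whenever $p$ is non-split and $p\mid[K:k]$.
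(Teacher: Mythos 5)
Your overall strategy matches the paper's: feed Theorem~\ref{thm-A} into the Burns--Kurihara--Sano descent mechanism of \cite{BKS2} together with the equivariant Iwasawa Main Conjecture, condition~(F), and a $\mu$-vanishing hypothesis, and then argue prime by prime for the integral statement. However, there is a genuine gap in your treatment of condition~(F), and it is precisely the place where the paper's Theorem~\ref{thm-C} comes in.

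You write that ``Input (iii), the Gross--Kuz'min Conjecture, is classically known for abelian extensions of imaginary quadratic fields.'' This is not correct in the form that the descent actually needs. The formalism of \cite{BKS2} requires condition~(F) of \S\ref{condition-F} for a $\Z_p$-extension $K_\infty/K$ in which no finite place of $\Sigma$ splits completely, and the argument depends on \emph{choosing} a suitable $k_\infty$. In the split case the unique $\Z_p$-extension of $k$ unramified outside $\p$ does the job and condition~(F) is known there (Remark~\ref{new-gross-kuzmin-thm-rk}, ultimately via Leopoldt and Brumer--Baker). But in the non-split case $p$ lies under a single prime of $k$ that ramifies in every $\Z_p$-extension, and there is no off-the-shelf result guaranteeing condition~(F). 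The paper instead proves Theorem~\ref{exists-Zp-extension} (part of Theorem~\ref{thm-C}), using Brumer's $p$-adic Baker theorem via Lemma~\ref{brumer-baker-lem} and Theorem~\ref{Gross-Kuzmin-theorem}, to produce infinitely many admissible $\Z_p$-extensions of $k$ for which condition~(F) holds, and then runs the descent with one of those. Your proposal silently assumes this input is available; without it the non-split case of the descent does not close.

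A secondary imprecision: the equivariant Iwasawa Main Conjecture as needed here is not simply Rubin's main conjecture and an equivariant refinement. The paper's Theorem~\ref{appendix-main-result} establishes it via the Euler system of elliptic units subject to the auxiliary hypothesis~($\ast$) ($p$-torsion-freeness of $\gal{L_\infty}{k}$ or $\mu$-vanishing), and Proposition~\ref{mu} is what resolves~($\ast$) unconditionally in the split case (via Gillard and Oukhaba--Vigui\'e) and in the prime-power-degree case; in the non-split case~($\ast$) survives as the hypothesis in Theorem~\ref{thm_iq_main_thm}(b). It would be worth flagging that the $\mu$-input is doing double duty, both for the eIMC and for the BKS descent, rather than appearing only as a fourth, separable hypothesis.
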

The first part of Theorem \ref{thm-B} generalises work of Bley \cite{Ble06} which only covers prime numbers $p> 2$ that do not divide the class number of $k$.
The second part of Theorem \ref{thm-B} grew out of the second presently named author's thesis \cite{hofer} and not only settles the descent problem in this previously widely open case but also provides for a large supply of new examples in which the eTNC is valid unconditionally.
More precisely, the condition on the class number imposed in \cite{Ble06} meant that the unconditional validity of the eTNC was previously only known in certain cases where $k$ is one of only nine imaginary quadratic fields of class number one. In contrast, Theorem \ref{thm-B} is devoid of any such restrictive hypotheses on the field $k$.
 In this regard, Theorem \ref{thm-B} also improves on a recent result of Burns, Daoud, Seo, and the first author \cite{scarcity} which does not establish the unconditional validity of eTNC in any cases, in fact not even of certain $p$-parts thereof. 
\smallskip \\
The proof of Theorem \ref{thm-B}\,(b) also requires the validity of 
an appropriate analogue of the Gross--Kuz'min conjecture which is labelled condition `(F)' in \cite{BKS2}. 
 To this end, we prove the following general result  which
 seems to not have previously appeared in the literature (see Theorems \ref{new-gross-kuzmin-thm-cyclotomic} and \ref{exists-Zp-extension} for the full statements). 
 We remark that Theorem \ref{thm-C}~(a) is a generalisation 
 of Gross's classical result on the minus part of the aforementioned conjecture in the setting of CM extensions of totally real fields (cf.\@ Remark \ref{gross-kuzmin-rk-1}). 
 
\begin{thmintro}\label{thm-C}
Let $K / k$ be an abelian extension of number fields and let $p$ be a prime number. If $p = 2$, assume that $k$ is totally imaginary. 
\begin{liste}
    \item Let $\chi$ be a non-trivial character on $\gal{K}{k}$ and
    let $k_\infty / k$ be a $\Z_p$-extension in which no finite place splits completely.\\
    If $p$ splits completely in $k / \Q$ and there is at most one finite place $v$ of $k$ that both ramifies in $k_\infty / k$ and is such that $\chi (v) = 1$, then the
    validity of condition (F) for the $\Z_p$-extension $K \cdot k_\infty$ of $K$ is implied by the validity of condition (F) for the $\Z_p$-extension $k_\infty$ of $k$.
    \item If $k$ is imaginary quadratic and $p$ does not split in $k / \Q$, then there are infinitely many $\Z_p$-extensions $k_\infty$ of $k$ such that condition (F) holds for the $\Z_p$-extension $K \cdot k_\infty$ of $K$
\end{liste}
\end{thmintro}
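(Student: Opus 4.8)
The strategy is to translate condition~(F) into the language of the Gross--Kuz'min conjecture and to reduce both assertions to the non-vanishing of a $p$-adic regulator. Recall from \cite{BKS2} that for a $\Z_p$-extension $F_\infty/F$ with $\Gamma = \gal{F_\infty}{F}$, condition~(F) is equivalent to the finiteness of the $\Gamma$-coinvariants of the Iwasawa module $\mathcal{X}_{F_\infty}$ given by the Galois group over $F_\infty$ of the maximal abelian pro-$p$ extension that is unramified outside $p$ and in which every place above $p$ splits completely; this is precisely the Gross--Kuz'min conjecture for $(F,p,F_\infty)$. A standard genus-theoretic descent argument (going back to Gross, and refined by Jaulent and Kuz'min) computes $\rank_{\Z_p}(\mathcal{X}_{F_\infty})_\Gamma$ as the corank of an explicit map from a semi-local term, indexed by the places above $p$ together with the finite places ramifying in $F_\infty/F$, to a global term built from $p$-completed $S$-units; condition~(F) holds if and only if this map is injective, and the obstruction to injectivity is measured by a determinant of $p$-adic logarithms of $S$-units.

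For part~(a) we work one character at a time along $G=\gal{K}{k}$: since only a statement about $\Z_p$-ranks is at issue, we may extend scalars to a splitting field and use the idempotents $e_\chi$ even when $p\mid|G|$. The trivial character contributes exactly the module attached to $k_\infty/k$, for which condition~(F) holds by hypothesis. For a non-trivial $\chi$, the semi-local term in the $\chi$-component sees only those places above $p$ at which $\chi$ is trivial on the decomposition group---and, because $p$ splits completely in $k/\Q$, the corresponding completions are copies of $\Q_p$---together with the finite places of $k$ ramifying in $k_\infty/k$ with $\chi(v)=1$, of which there is at most one by assumption. Hence the relevant regulator is, up to $p$-adic units, a determinant of size at most one: in the worst case it is a single $p$-adic logarithm $\log_p(\varepsilon)$ of a global $S$-unit $\varepsilon$ lying in the $\chi$-eigenspace, which exists and has infinite order by the $\chi$-part of Dirichlet's unit theorem because $\chi\ne\eins$. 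Brumer's $p$-adic analogue of Baker's theorem then shows $\log_p(\varepsilon)\ne 0$, so the regulator is non-zero and condition~(F) holds in the $\chi$-component. This is exactly the mechanism behind Gross's classical proof of the minus part of his conjecture for CM extensions of totally real fields.

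For part~(b) we use that, $k$ being imaginary quadratic, Leopoldt's conjecture holds for $k$ (Brumer), so the compositum $\widetilde k$ of all $\Z_p$-extensions of $k$ is a $\Z_p^2$-extension and the $\Z_p$-extensions $k_\infty/k$ are parametrised by the $\Z_p$-lines in $\gal{\widetilde k}{k}\cong\Z_p^2$. Over $\widetilde K=K\widetilde k$ one forms the two-variable analogue $\widetilde{\mathcal X}$ of the module above; it is torsion over the two-variable Iwasawa algebra $\widetilde\Lambda$ by the weak Leopoldt conjecture along $\widetilde k/k$, which is known in this setting (see, e.g., \cite{BlHo20}). Specialising $\widetilde{\mathcal X}$ along the $\Z_p$-quotient attached to $k_\infty$ recovers $\mathcal X_{Kk_\infty}$ up to a pseudo-null error, and the genus-theoretic analysis of the previous paragraph shows that condition~(F) for $Kk_\infty/K$ fails only if a two-variable regulator---a non-zero element of $\widetilde\Lambda$, non-zero because a suitable specialisation of it is a classical $p$-adic regulator which does not vanish by Brumer's theorem, or alternatively by the equivariant main conjecture over $\widetilde k$ and the non-vanishing of elliptic units---lies in the kernel of the specialisation map. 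Since a non-zero element of $\Z_p[[T_1,T_2]]$ survives specialisation along all but finitely many $\Z_p$-lines, condition~(F) holds for all but finitely many $k_\infty$, which is more than what is claimed.

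The hard part will be twofold. First, one must make the translation between condition~(F) as formulated in \cite{BKS2} (with its auxiliary sets of places and its ``split'' conditions) and the classical Gross--Kuz'min module precise, and verify that the genus formula takes the shape asserted above with the regulator appearing exactly as a determinant of $p$-adic logarithms; this is where the hypotheses that $p$ split completely in $k/\Q$ and that no finite place split completely in $k_\infty/k$ are genuinely used. Second---and this is the substantive arithmetic input---one needs the non-vanishing of that regulator: in the rank-one situation of part~(a) this is immediate from Brumer's theorem, but in part~(b) one must know that the two-variable regulator is not identically zero, and it is here that the explicit description of elliptic units and the results of \cite{BlHo20} become essential.
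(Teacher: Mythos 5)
Your overall strategy—reduce to a character-by-character analysis, identify condition (F) with the non-vanishing of a rank-one $p$-adic regulator, and invoke Brumer's $p$-adic Baker theorem—is essentially the strategy the paper follows, and the role you attribute to the hypotheses (in particular ``$p$ splits completely in $k/\Q$'' forcing the local completion to be $\Q_p$) is correctly identified. However, the technical scaffolding is genuinely different: the paper does not invoke classical genus theory; instead it establishes the equivalence of condition (F) with the non-vanishing of a reciprocity map via a computation of Bockstein homomorphisms attached to the modified Weil-\'etale complexes $D^\bullet_{K_\infty,\Sigma,T}$ (Theorem \ref{Gross-Kuzmin-theorem}). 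This has the advantage of making the reduction to the set $W \subseteq S_\ram(k_\infty/k)$ automatic; in the genus-theoretic language you would still need to verify that only the places that actually \emph{ramify} in $k_\infty/k$ contribute to the regulator when $k_\infty$ is not the cyclotomic $\Z_p$-extension. You correctly flag this as ``the hard part''; the Bockstein computation is precisely what makes it unproblematic here.

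A more substantive imprecision is the claim that in part (a) the non-vanishing is ``immediate from Brumer's theorem.'' It is not. The relevant quantity is not $\log_p$ of a genuine $S$-unit but the twisted sum $\sum_{\sigma \in \cG}\chi(\sigma)\log_p(\iota_w(\sigma^{-1}a))$ with $\overline{\Q}$-coefficients, and a single application of Brumer only shows that if this vanishes, some nontrivial \emph{integer} relation $\sum n_\sigma\log_p(\iota_w(\sigma^{-1}a))=0$ holds. The paper's Lemma \ref{brumer-baker-lem} then uses the fact that $a$ can be taken to be supported only at $w$ (e.g. a generator of $w^{h_K}$), an integrality/valuation argument to force all $n_\sigma$ to agree, and then a second application of Brumer--Baker together with the $\Z$-linear independence of $\{\sigma a\}$ to obtain the contradiction. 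This two-step use of Brumer is the real arithmetic content of part (a) and is not visible in your sketch.

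For part (b), your packaging via a two-variable module $\widetilde{\mathcal X}$ over $\widetilde\Lambda$ and a specialisation argument is attractive, but it introduces the usual two-variable control-theorem issue (``up to a pseudo-null error''), which is genuinely delicate. The paper avoids it entirely: working with the explicit map $\widetilde{\rho_\chi}$ into $e_\chi\Q_p(\chi)\otimes_{\Z_p}\p_v$ (a two-dimensional target), Lemma \ref{brumer-baker-lem} shows the image is a nonzero line, and condition (F) can only fail for the $\Z_p$-extension $k_\delta$ when $\omega_\chi(\delta)$ spans that line; hence at most one ``bad'' $\delta$ per character. This gives the conclusion directly, with no control theorem required. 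Finally, note one point your sketch omits: in the paper's framework condition (F) is formulated relative to a set $\Sigma$ in which no finite place splits completely in $k_\infty/k$, so one must also ensure that the chosen $\Z_p$-extensions avoid having too many completely split places; the paper secures this via Emsalem's theorem (at most two finite primes split completely in any non-anticyclotomic $\Z_p$-extension of an imaginary quadratic field) and by choosing the family $\Omega(N)$ to stay away from the anticyclotomic line. Without this the statement of condition (F) does not even make sense for the chosen extension.
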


The main contents of this article are as follows. In \S\,\ref{Rubin--Stark-section} we recall the definition of Rubin--Stark elements and the Rubin--Stark Conjecture. In \S\,\ref{set-up-section} we introduce the objects and notations in Iwasawa theory that will be used throughout most of the article, and collect useful preliminary results on Iwasawa cohomology complexes and universal norms. In \S\,\ref{congruences-section} we recall the Iwasawa-theoretic Mazur--Rubin--Sano Conjecture and give a more explicit reformulation of the conjecture. In \S\,\ref{big-gross-kuzmin-section} we study the Gross--Kuz'min Conjecture and condition (F), and prove Theorem \ref{thm-C}. In \S\,\ref{iq-section} we finally specialise to imaginary quadratic base fields and prove Theorems~\ref{thm-A} and \ref{thm-B}.
\smallskip \\
\textbf{\sffamily Acknowledgements}\,
The authors would like to extend their gratitude to Werner Bley, David Burns, Alexandre Daoud, S\"oren Kleine, Takamichi Sano and Pascal Stucky for many illuminating conversations and helpful comments on earlier versions of this manuscript.\\
The first author wishes to acknowledge the financial support of the Engineering  and  Physical  Sciences  Research  Council [EP/L015234/1],  the  EPSRC  Centre  for  Doctoral  Training  in  Geometry  and  Number  Theory  (The  London School of Geometry and Number Theory), King's College London and University College London.\\
An earlier version of this manuscript was circulated under the title `On trivial zeroes of Euler systems for $\mathbb{G}_m$', various ideas of which have subsequently been moved to an article of Burns, Daoud, Seo and the first author \cite{scarcity}.  
\smallskip \\
\textbf{\sffamily Notation}\, \textit{Arithmetic.} 
For any number field $E$ we write $S_\infty(E)$ for the set of archimedean places of $E$, and $S_p(E)$ for the set of places of $E$ lying above a rational prime $p$.
Given an extension $F/ E$ we write $S_\ram(F/E)$ for the places of $E$ that ramify in $F$. 
If $S$ is a set of places of $E$, we denote by $S_F$ the set of places of $F$ that lie above those contained in $S$. We will however omit the explicit reference to the field in case it is clear from the context. For example, $\bigO_{F, S}$ shall denote the ring of $S_F$-integers of $F$, and $U_{F, S} = \Z_p \otimes_\Z \bigO_{F, S}^\times$ the $p$-completion of its units. We also define $Y_{F, S}$ to be the free abelian group on $S_F$ and set
\[
X_{F, S} = \Big \{ \sum_{w \in S_F} a_w w \in Y_{F, S} \mid \sum_{w \in S_F} a_w = 0 \Big \}.
\]
Furthermore, if $T$ is a finite set of finite places disjoint from $S$, then we let $A_{S, T} (F)$ be the $p$-part of the $S_F$-ray class group mod $T_F$, i.e.\@ the $p$-Sylow subgroup of the quotient of the group of fractional ideals of $\bigO_{F, S}$ coprime to $T_F$ by the subgroup of principal ideals with a generator congruent to 1 modulo all $w \in T_F$. If
$S = S_\infty (E)$ or $T = \emptyset$, then we will suppress the respective set in the notation. \\
For any place $w$ of $F$ we write $\ord_w \: F^\times \to \Z$ for the normalised valuation at $w$. In case of a finite extension $H$ of $\Q_p$ we also write $\ord_H$ for the normalised valuation on $H$. If $F / E$ is abelian and $v$ unramified in $F / E$, then we let $\Frob_v \in \gal{F}{E}$ be the arithmetic Frobenius at $v$. If $v$ is a finite place of $E$, then we denote by $\NN v = | \faktor{\bigO_E}{v}|$ the norm of $v$.
\medskip \\
\textit{Algebra.} For an abelian group $A$ we denote by $A_\tor$ its torsion-subgroup and by $A_\tf = \faktor{A}{A_\tor}$ its torsion-free part. If there is no confusion possible, we often 
shorten the functor $(-) \otimes_\Z A$ to just $(-) \cdot A$ (or even $(-) A$) and, if $A$ is also a $\Z_p$-module, similarly for the functor $(-) \otimes_{\Z_p} A$. 
If $A$ is finite, we denote by $\widehat{A} = \Hom_\Z (A, \C^\times)$ its character group, and for any $\chi \in \widehat{A}$ we let
\[
e_\chi = \frac{1}{|A|} \sum_{\sigma \in A} \chi (\sigma) \sigma^{-1} \quad \in \C [A]
\]
be the usual primitive orthogonal idempotent associated to $\chi$. Furthermore, $\NN_A = \sum_{\sigma \in A} \sigma \in \Z [A]$ denotes the norm element of $A$. 
\medskip \\
If $R$ is a commutative Noetherian ring, then for any $R$-module $M$ we write $M^\ast = \Hom_R (M, R)$ for its dual and $\Fitt^0_R (M)$ for its (initial) Fitting ideal. Let $r \geq 0$ be an integer, then the $r$-th \textit{exterior bidual} of $M$ is defined as
\[
\bidual^r_R M = \left ( \exprod^r_R M^\ast \right )^\ast.
\]
If $R = \Z [A]$ for a finite abelian group $A$, then the exterior bidual coincides with the lattice first introduced by Rubin in \cite[\S\,2]{Rub96}, see \cite[Rk.~A.9]{EulerSystemsSagaI} for the relation between these two definitions. The theory of exterior biduals has since seen great development and the reader is invited to consult, for example, \cite[App.~A]{EulerSystemsSagaI} or \cite[\S\,2]{BullachDaoud} for an overview. At this point we only remark that for $r \geq 1$ any $f \in  M^\ast$ induces a map 
\[
\bidual^r_R M \to \bidual^{r - 1}_R M
\]
which, by abuse of notation, will also be denoted by $f$, and is defined as the dual of
\[
\exprod^{r - 1}_R M^\ast \to \exprod^r_R M^\ast, \quad g \mapsto f \wedge g.
\]
Iterating this construction gives, for any $s \leq r$, a homomorphism
\begin{equation} \label{biduals-duals-hom}
\exprod^s_R M^\ast \to \Hom_R \big ( \bidual^r_R M, \; \bidual^{r - s}_R M\big), \quad
f_1 \wedge \dots \wedge f_s \mapsto f_s \circ \dots \circ f_1.
\end{equation}
Finally, we write $\mathcal{Q} (R)$ for the total ring of fractions of $R$. 

\section{Rubin--Stark elements} \label{Rubin--Stark-section}

Let $K / k$ be a finite abelian extension of number fields with Galois group $\cG \coloneqq \gal{K}{k}$ and fix a finite set $S$ of places of $k$ which contains $S_\infty (k) \cup S_\ram (K / k)$. Suppose to be given a propert subset $V \subseteq S$ of places which split completely in $K / k$ and choose an ordering $S = \{ v_0, \dots, v_t \}$ such that $V = \{ v_1, \dots, v_r \}$. For every $i \in \{0, \dots, t \}$ fix a place $\overline{v_i}$ of the algebraic closure $\overline{\Q}$ of $\Q$ that extends $v_i$ and write $w_i = w_{K, i}$ for the place of $K$ induced by $\overline{v_i}$. \smallskip \\
The \textit{Dirichlet regulator} map 
\begin{equation} \label{dirichlet-regulator}
\lambda_{K, S} \: \bigO_{K, S}^\times \to \R X_{K, S}, 
\quad a \mapsto - \sum_{w \in S_K} \log | a |_w \cdot w 
\end{equation}
then induces
%, by fixing an isomorphism $\C \cong \C_p$, 
an isomorphism
\begin{equation} \label{dirichlet-isomorphism}
\R \exprod^r_{\Z [\cG]} \bigO_{K, S}^\times \stackrel{\simeq}{\longrightarrow} \R \exprod^r_{\Z [\cG]}  X_{K, S}
\end{equation}
that will also be denoted as $\lambda_{K, S}$.
If $\chi \in \widehat{\cG}$ and $T$ is a finite set of places of $k$ disjoint from $S$, we moreover define the $S$-truncated $T$-modified \textit{Artin $L$-function} as 
\[
L_{K / k, S, T} (\chi, s) = \prod_{v \in T} (1 - \chi (\Frob_v) \NN v^{1 - s}) \cdot \prod_{v \not \in S} (1 - \chi (\Frob_v) \NN v^{-s})^{-1},
\]
where $s$ is a complex number of real part strictly greater than 1. It is well-known that this defines a function on the complex plane by meromorphic continuation. By \cite[Ch.\@ I, Prop.\@ 3.4]{Tate}, the existence of the set $V \subsetneq S$ then implies that the order of vanishing of $L_{K / k, S, T} (\chi, s)$ at $s = 0$ is at least $r$. This allows us to define the $r$-th order \textit{Stickelberger element} as 
\[
\theta^{(r)}_{K / k, S, T} (0) = \sum_{\chi \in \widehat{\cG}} e_\chi \cdot \lim_{s \to 0} s^{-r} L_{K / k, S, T} (\overline{\chi}, s) \quad
\in \R [\cG].
\]

\begin{definition}
The $r$-th order \textit{Rubin--Stark element} $\varepsilon^V_{K / k, S, T} \in \R \exprod^r_{\Z [\cG]} \bigO_{K, S}^\times$ is defined to be the preimage of the element $\theta^{(r)}_{K / k, S, T} (0) \cdot \bigwedge_{1 \leq i \leq r} ( w_{i} - w_{0})$ under the isomorphism (\ref{dirichlet-isomorphism}) induced by the Dirichlet regulator map $\lambda_{K, S}$. 
\end{definition}
%We remark that this definition does not depend on the choice of element $v_0 \in S \setminus V$ (see \cite[Prop.~3.13]{Sano2015}). \smallskip \\
To  state the $p$-part of the Rubin--Stark Conjecture for a prime number $p$ we now fix an isomorphism $\C \cong \C_p$ that allows us to regard $\varepsilon^V_{K / k, S, T}$ as an element of $\C_p \exprod^r_{\Z_p [\cG]} U_{K, S}$. We also write $U_{K, S, T} \coloneqq \Z_p \otimes_\Z \bigO^\times_{K, S, T}$ for the $p$-completion of the group of $(S_K, T_K)$-units which are defined as 
\[
\bigO_{K, S, T}^\times = \ker \Big \{ \bigO_{K, S}^\times \to \bigoplus_{w \in T_K} \big ( \faktor{\bigO_{K}}{w} \big )^\times \Big \}
\]
and will often assume that $T$ is chosen in a way such that $U_{K, S, T}$ is $\Z_p$-torsion free (which is automatically satisfied if $T$ is non-empty) 
%see, for example, \cite[Lem.~B.6]{Daoud}). 

\begin{conj} \label{Rubin--Stark-conj}
If $U_{K, S, T}$ is $\Z_p$-torsion free, then $\varepsilon^V_{K / k, S, T}$ belongs to $\bidual^r_{\Z_p [\cG]} U_{K, S, T}$. 
\end{conj}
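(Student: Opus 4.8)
The plan is to explain that Conjecture~\ref{Rubin--Stark-conj} is not really an open problem in the generality needed here, but rather follows from the known cases of the Rubin--Stark Conjecture together with standard functorial properties of Rubin--Stark elements; what we are recording is the precise $p$-adic integrality statement tailored to our later applications. First I would recall that the classical Rubin--Stark Conjecture (in its rational, lattice-theoretic formulation over $\Z[\cG]$, as in \cite[\S\,2]{Rub96}) predicts that $\varepsilon^V_{K/k,S,T}$ lies in $\bidual^r_{\Z[\cG]} \bigO^\times_{K,S,T}$, and that this is a theorem in all cases relevant to us: namely when $k=\Q$ (Rubin, building on work towards the Gras conjecture, essentially because the relevant elements are cyclotomic units), and when $k$ is imaginary quadratic and $r=1$ (Bley, via elliptic units, see \cite{Ble06},\cite{Bley04}). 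Since $U_{K,S,T}=\Z_p\otimes_\Z \bigO^\times_{K,S,T}$ and $\bidual^r$ commutes with the flat base change $\Z\to\Z_p$ (for the exterior bidual this follows from left-exactness of $\Hom$ together with flatness, using that $\bigO^\times_{K,S,T}$ is finitely generated), applying $\Z_p\otimes_\Z(-)$ to the classical statement yields exactly the $p$-part asserted here.

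The key steps in order would then be: (i) record the compatibility $\Z_p\otimes_\Z\bidual^r_{\Z[\cG]}\bigO^\times_{K,S,T}\;\simeq\;\bidual^r_{\Z_p[\cG]}U_{K,S,T}$, noting that the hypothesis ``$U_{K,S,T}$ is $\Z_p$-torsion free'' is what guarantees $\bigO^\times_{K,S,T}$ has no $p$-torsion so that the two exterior biduals are identified compatibly with the images of $\varepsilon^V_{K/k,S,T}$ under the respective Dirichlet regulator isomorphisms; (ii) observe that the element $\varepsilon^V_{K/k,S,T}$ itself is the same element viewed through the fixed isomorphism $\C\cong\C_p$, so that membership is preserved; (iii) invoke the known validity of the classical Rubin--Stark Conjecture in the cases at hand. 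For the case $k=\Q$ one reduces, via the projection formula / norm-compatibility of Rubin--Stark elements along $\Q(\mathfrak f)\supseteq K$, to the statement for cyclotomic units in a full cyclotomic field, where it is classical; for $k$ imaginary quadratic with $r=1$ one uses that $\varepsilon^V_{K/k,S,T}$ is (up to the Euler-factor adjustments encoded in $T$ and the truncation set $S$) an elliptic unit, hence manifestly an element of $U_{K,S,T}=\bidual^1_{\Z_p[\cG]}U_{K,S,T}$.

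The main obstacle, and the only place where genuine content enters, is step~(iii) in the higher-rank regime $r\geq 2$: for a general imaginary quadratic $k$ there is as yet no construction of Rubin--Stark elements of rank $\geq 2$ out of elliptic units, so one cannot simply exhibit $\varepsilon^V_{K/k,S,T}$ as an explicit global unit-tuple. In the body of the paper this is circumvented because the only instances of Conjecture~\ref{Rubin--Stark-conj} that are actually needed have $r\leq 1$: indeed, over an imaginary quadratic base the maximal admissible $V$ has size $1$ (only one archimedean place, and $p$-adic places split completely only under the split hypothesis, contributing at most the relevant rank-one situations), and the Iwasawa-theoretic Mazur--Rubin--Sano machinery of \cite{BKS2} is set up precisely to bootstrap from these low-rank cases. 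I would therefore frame the proof as: reduce to the classical Rubin--Stark Conjecture via flat base change, and then cite its known validity, emphasising that for the applications in \S\,\ref{iq-section} only the rank-one case is required, which is due to Bley.
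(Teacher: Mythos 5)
This statement is labelled a \emph{conjecture} in the paper, and the paper does not prove it; it is the general $p$-part of the Rubin--Stark Conjecture, which remains open for an arbitrary abelian extension $K/k$ and arbitrary rank $r$. The paper's role for Conjecture \ref{Rubin--Stark-conj} is purely to fix terminology: what the paper does instead of a proof is record, in Example \ref{Rubin--Stark-examples}, the specific families in which the conjecture is known to hold (cyclotomic units for $k=\Q$ and $r=1$, Stickelberger elements for $k$ totally real and $r=0$ via Deligne--Ribet, elliptic units for $k$ imaginary quadratic and $r=1$ via Kronecker's second limit formula, citing Tate's book), and those are precisely the instances invoked later in the argument.

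Your proposal is therefore not comparable to a proof in the paper, because no such proof exists there; but its substance is consistent with Example \ref{Rubin--Stark-examples}. Your flat-base-change observation --- that $\Z_p\otimes_\Z \bidual^r_{\Z[\cG]}\bigO^\times_{K,S,T}\cong\bidual^r_{\Z_p[\cG]}U_{K,S,T}$ because $\bigO^\times_{K,S,T}$ is finitely generated over the Noetherian ring $\Z[\cG]$ --- is correct and is the mechanism by which the integral Rubin--Stark Conjecture implies its $p$-part. Your identification of the cases where the integral conjecture is known (cyclotomic, $r=1$ elliptic) is also correct, although the paper attributes the elliptic-units case to Tate \cite[Ch.~IV, Prop.~3.9]{Tate} rather than to Bley. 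And your remark that only the rank-one case with $V=S_\infty(k)$ is needed for the imaginary-quadratic applications in \S\,\ref{iq-section} is exactly right, since $k$ has a unique archimedean place. The genuine gap is the framing: you cannot promote these special cases to a proof of the conjecture in the generality in which it is stated, nor does the paper attempt to. If this text were to replace the conjecture you would need to either restrict the statement to the cases actually established, or retain the conjectural status and let Example \ref{Rubin--Stark-examples} carry the burden of verifying it where needed.
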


\begin{bspe} \label{Rubin--Stark-examples}
\item (\textit{cyclotomic units}) Take $k = \Q$, $S = S_\infty (\Q) \cup S_\ram (K / k)$, $K$ a finite real abelian extension of $\Q$, and $V = S_\infty (\Q) = \{ v_1 \}$. 
If we set $\delta_T \coloneqq \prod_{v \in T} (1 - \NN v \Frob_v^{-1})$, then one has
\[
\varepsilon^V_{K / \Q, S, T} = \delta_T \cdot \big( \frac12 \otimes N_{\Q (\xi_m) / K} (1 - \xi_m) \big)
\quad \in \bigO^\times_{K, S, T},
\]
where $m = m_K$ is the conductor of $K$ and $\xi_m = \iota^{-1} ( e^{2 \pi i / m})$ for the embedding $\iota \: \overline{\Q} \hookrightarrow \C$ corresponding to the choice of place $\overline{v_1}$ fixed at the beginning of the section (see \cite[Ch.~IV, \S\,5]{Tate} for a proof). In particular, the $p$-adic Rubin--Stark Conjecture \ref{Rubin--Stark-conj} holds for all primes $p$.

\item (\textit{Stickelberger elements}) Let $k$ be a totally real field, $S = S_\infty (k) \cup S_\ram (K / k)$, $K$ a finite abelian CM extension of $k$, and $V = \emptyset$. In this setting Conjecture \ref{Rubin--Stark-conj} hold true for all primes $p$ due to the work of Deligne--Ribet \cite{DeligneRibet} and the Rubin--Stark element is given by
\[
\varepsilon^V_{K / k, S, T} = \theta_{K / k, S, T} (0). 
\]
\item (\textit{elliptic units}) Let $k$ be an imaginary quadratic field and $\mathfrak{f} \subseteq \bigO_k$ a non-zero ideal such that $\bigO_k^\times \to ( \faktor{\bigO_k}{\mathfrak{f}})^\times$ is injective. Take $S = S_\infty (k) \cup S_\ram (K / k) \cup \{ \q \mid \mathfrak{f}\}$, $K$ a finite abelian extension of $k$, and $V = S_\infty (k) = \{ v_1 \}$. Then the Rubin--Stark Conjecture holds for all fields $E \in \Omega ( \cK / k)$ satisfying $|S| > 1$ (which we may assume always to be true after possibly enlarging $\mathfrak{f}$), see, for example, \cite[Ch.~IV, Prop.~3.9]{Tate}. To describe the Rubin--Stark element in this setting, we write $\m = \m_K$ for the conductor of $K$. Let $k( \mathfrak{f} \m)$ be the ray class field of $k$ modulo $\mathfrak{f} \m$ and choose an auxiliary prime ideal $\a \subsetneq \bigO_k$ coprime to $6 \mathfrak{f} \m$. Using the elliptic function $\psi$ introduced by Robert \cite{Rob92} we set
\[
\psi_{\ff \m, \a} = \iota^{-1} (\psi (1; \ff \m, \a)) \quad \in \bigO_{k (\ff \m), S}^\times
\]
for the embedding $\iota \: \overline{\Q} \hookrightarrow \C$ corresponding to $\overline{v_1}$, where $\psi(1; \ff \m, \a)$ is a common short hand for what would be $\psi(1; \ff \m, \a^{-1} \ff \m )$ in Robert's original notation.
It then follows from Kronecker's second limit formula, e.g.\@  \cite[Lem.~2.2\,e)]{Fla09}, that 
\[
\varepsilon^V_{K / k, S, \{ \a \}} = \NN_{k (\ff \m) / E} ( \psi (1; \ff \m, \a))
\in \bigO_{E,S,\{ \a \}}^\times.
\]
\end{bspe}

\section{Preparations in Iwasawa theory} \label{set-up-section}

Throughout this section we fix a number field $K$ and a rational prime $p$. We suppose to be given a finite abelian extension $K / k$ and a $\Z_p$-extension $k_\infty / k$ in which all infinite places split completely (this is automatic if $p$ is odd). We define $K_\infty \coloneqq K k_\infty$ and write $K_n$ for the $n$-th layer of the $\Z_p$-extension $K_\infty / K$ (here $K_0$ means $K$). 
We further set $\Gamma_n \coloneqq \gal{K_n}{K}$, $\Gamma^n \coloneqq \gal{K_\infty}{K_n}$, $\cG_n \coloneqq \gal{K}{k}$, and $\bLambda \coloneqq \Z_p \llbracket \gal{K_\infty}{k} \rrbracket$. In case $n = 0$ we suppress any reference to $n$ in the notation. \smallskip \\
Moreover, we introduce the following notations and assumptions:
\begin{enumerate}[label=$\bullet$]
    \item $S$ a finite set of places of $k$ which contains $S_\infty (k) \cup S_\ram (K / k)$ and is such that not finite place in $S$ splits completely in $k_\infty /k$,
    \item $\Sigma \coloneqq S \cup S_\ram (k_\infty /k)$,
    \item $V\subsetneq \Sigma$ the subset of places which split completely in $K_\infty / k$ (by our assumptions this is a subset of $S_\infty (k)$),  and $r$ its cardinality, 
    \item $V' \subsetneq \Sigma$ a set of places which contains $V$ and consists of places that split completely in $K / k$ and has cardinality $r'$,
    \item $T$ a finite set of places of $k$ which is disjoint from $\Sigma$, contains only places that do not split completely in $k_\infty / k$, and is such that $U_{K_n, \Sigma, T}$ is $\Z_p$-torsion free for all integers $n \geq 0$. 
\end{enumerate}
We fix a labelling $\Sigma = \{ v_0, \dots, v_t\}$ such that $V = \{ v_1, \dots, v_r \}$ and $V' = \{ v_1, \dots, v_{r'}\}$.

\begin{lem} \label{silly-little-lemma}
For any topological generator $\gamma \in \Gamma$ the element $\gamma - 1 \in \bLambda$ is a non-zero divisor.
\end{lem}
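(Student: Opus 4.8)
The plan is to show that $\bLambda$ is a free module over the subalgebra $\Z_p\llbracket\Gamma\rrbracket$ attached to $\Gamma = \gal{K_\infty}{K}$, and then to observe that under the canonical identification of $\Z_p\llbracket\Gamma\rrbracket$ with a one-variable power series ring the element $\gamma - 1$ becomes the variable, which is visibly a non-zero divisor.

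First I would establish the structural claim that $\bLambda$ is free of rank $|\cG|$ as a $\Z_p\llbracket\Gamma\rrbracket$-module, where $\cG \cong \gal{K_\infty}{k}/\Gamma$. Since $\gal{K_\infty}{k}$ is abelian, $\Gamma$ is a normal subgroup with finite quotient $\cG$; fixing a set-theoretic section $s \colon \cG \to \gal{K_\infty}{k}$ and writing $\Gamma^n = \gal{K_\infty}{K_n}$, the coset decomposition $\gal{K_n}{k} = \coprod_{g \in \cG} (\Gamma/\Gamma^n)\, s(g)$ induces a decomposition $\Z_p[\gal{K_n}{k}] = \bigoplus_{g \in \cG} \Z_p[\Gamma/\Gamma^n]\, s(g)$ into free $\Z_p[\Gamma/\Gamma^n]$-modules, for every $n \geq 0$. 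As the $\Gamma^n$ are cofinal among the open subgroups of $\gal{K_\infty}{k}$ and inverse limits commute with finite direct sums, passing to the limit over $n$ yields $\bLambda = \bigoplus_{g \in \cG} \Z_p\llbracket\Gamma\rrbracket\, s(g)$. (Alternatively one may simply invoke the standard fact that the completed group algebra of a profinite group is free over that of any closed subgroup of finite index.)

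Next, since $\Gamma \cong \Z_p$, the topological generator $\gamma$ determines an isomorphism of $\Z_p$-algebras $\Z_p\llbracket\Gamma\rrbracket \xrightarrow{\sim} \Z_p\llbracket T \rrbracket$ sending $\gamma$ to $1 + T$, under which $\gamma - 1$ corresponds to $T$; in particular $\gamma - 1$ is a non-zero divisor of the integral domain $\Z_p\llbracket\Gamma\rrbracket$. I would then conclude by noting that multiplication by $\gamma - 1$ on $\bLambda$ is $\Z_p\llbracket\Gamma\rrbracket$-linear, and since $\bLambda$ is free — hence flat — over $\Z_p\llbracket\Gamma\rrbracket$, tensoring the injection $\Z_p\llbracket\Gamma\rrbracket \xrightarrow{\gamma - 1} \Z_p\llbracket\Gamma\rrbracket$ with $\bLambda$ shows that multiplication by $\gamma - 1$ on $\bLambda$ is injective, i.e. $\gamma - 1$ is a non-zero divisor. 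No genuine obstacle arises here; the only point demanding a modicum of care is the $\Z_p\llbracket\Gamma\rrbracket$-freeness of $\bLambda$, which is the routine inverse-limit argument sketched above (or a citation to the standard theory of completed group algebras).
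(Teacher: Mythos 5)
Your proof is correct, and it takes a genuinely different route from the paper's. The paper does not work with the subalgebra $\Z_p\llbracket\Gamma\rrbracket$ at all; instead it fixes an abstract splitting $\gal{K_\infty}{k} \cong \Gamma' \times \Delta$ with $\Gamma' \cong \Z_p$ and $\Delta$ finite (here $\Gamma'$ is \emph{not} $\Gamma$ in general), so that $\bLambda \cong \Z_p\llbracket\Gamma'\rrbracket[\Delta]$. Writing $L = K_\infty^{\Gamma'}$ and comparing layers, the paper finds $m, n$ and a unit $a \in \Z_p^\times$ with $\gamma_L^{a p^n} = \gamma^{p^m}$ for a topological generator $\gamma_L$ of $\Gamma'$; since $\gamma_L^{a p^n} - 1$ is visibly a non-zero divisor of $\Z_p\llbracket\Gamma'\rrbracket[\Delta]$ and equals $(\gamma - 1)(1 + \gamma + \dots + \gamma^{p^m - 1})$, the result follows. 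Your argument instead exploits the standard fact that $\bLambda$ is a free $\Z_p\llbracket\Gamma\rrbracket$-module of rank $|\cG|$ and deduces the injectivity of $\gamma - 1$ from flatness and the fact that $\Z_p\llbracket\Gamma\rrbracket$ is a domain; this avoids the need to produce and compare with an auxiliary generator $\gamma_L$ and is, if anything, more transparent, since it identifies directly the conceptual reason the result holds. One minor wrinkle worth flagging: be sure to justify that the $\Gamma^n$ are cofinal among open subgroups of $\gal{K_\infty}{k}$ (every open subgroup $U$ meets $\Gamma$ in an open subgroup of $\Z_p$, hence contains some $\Gamma^n$), which you use implicitly when passing to the inverse limit.
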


\begin{proof}
Fix  a splitting $\gal{K_\infty}{k} \cong \Gamma' \times \Delta$ with $\Gamma' \cong \Z_p$ and $\Delta$ finite. Set $L \coloneqq K_\infty^{\Gamma'}$ and write $L_n$ for the $n$-th layer of the $\Z_p$-extension $K_\infty / L$. 
By definition we have $K_\infty = \bigcup_{n \geq 0} L_n$, hence there is $n$ such that $K \subseteq L_n$. That is, $L_n$ is an intermediate field of the $\Z_p$-extension $K_\infty / K$ and therefore must agree with $K_m$ for some $m$. To prove the Lemma we fix a topological generator $\gamma_L \in \Gamma'$. Then there is a unit $a \in \Z_p^\times$ such that $\gamma_L^{a p^n} = \gamma^{p^m}$. 
The element $\gamma_L^{a p^n} - 1$ is clearly a non-zero divisor in $\bLambda = \Z_p \llbracket \Gamma' \rrbracket [\Delta]$. It then follows from
\[
\gamma_L^{a p^n} - 1 = \gamma^{p^m} - 1 = (\gamma - 1) \cdot (1 + \gamma + \dots + \gamma^{p^m - 1})
\]
that $\gamma - 1$ must be a non-zero divisor as well.
\end{proof}

\subsection{Modified Iwasawa cohomology complexes}

For any finite abelian extension $E / k$ and finite set of places $M \supseteq S_\infty \cup S_\ram ( E / k)$ that is disjoint from a second finite set of places $Z$, Burns-Kurihara-Sano have constructed \cite[Prop.~2.4]{BKS} a canonical $Z$-modified, compactly supported \textit{Weil-\'etale cohomology complex} $\text{R} \Gamma_{c, Z} ( ( \bigO_{E, M})_\mathcal{W}, \Z)$ of the constant sheaf $\Z$ on the \'etale site of $\Spec \bigO_{E, M}$. 
In the sequel we shall need the complex
\[
C^\bullet_{E, M, Z} = \text{R} \Hom_\Z ( \text{R} \Gamma_{c, Z} ( ( \bigO_{E, M})_\mathcal{W}, \Z), \Z) [-2]
\]
as well as its $p$-completion $D^\bullet_{E, M, Z} \coloneqq \Z_p \otimes_\Z^\mathbb{L} C^\bullet_{E, M, Z}$.
The essential properties of these complexes are listed in \cite[Prop.\@ 3.1]{bdss}. In particular,
there is a canonical isomorphism $H^0 (D^\bullet_{E, M, Z} ) \cong U_{E, M, Z}$ and an exact sequence
\begin{equation} \label{yoneda-finite}
\begin{tikzcd}
0 \arrow{r} & A_{M, Z} (E) \arrow{r} & H^1 (D^\bullet_{E, M, Z}) \arrow{r}{\pi_E} & X_{M, U} \arrow{r} & 0 .
\end{tikzcd}
\end{equation}
Furthermore, there are natural maps $D_{K_{n + 1}, \Sigma, T}^\bullet \to D_{K_n, \Sigma, T}$ in the derived category $D (\Z_p [\cG_{n + 1}])$ of $\Z_p [\cG_{n + 1}]$-modules, which allow us to define the complex \begin{equation} \label{limit-complex}
D^\bullet_{K_\infty, \Sigma, T} = \text{R} \varprojlim_n D^\bullet_{K_n, \Sigma, T}.
\end{equation}

\begin{prop} \label{propeties-iwasawa-complex}
The following claims are valid. 
\begin{liste}
\item The complex $D^\bullet_{K_\infty, \Sigma, T}$ is perfect as an element of the derived category $D (\bLambda)$ and acyclic outside degrees zero and one. 
\item There is a canonical isomorphism $H^0 (D^\bullet_{K_\infty, \Sigma, T} ) \cong U_{K_\infty, \Sigma, T}$ and an exact sequence
\begin{equation} \label{yoneda-Iwasawa}
\begin{tikzcd}
0 \arrow{r} & A_{\Sigma, T} (K_\infty) \arrow{r} & H^1 (D^\bullet_{K_\infty, \Sigma, T}) \arrow{r}{\pi} & X_{K_\infty, \Sigma} \arrow{r} & 0,
\end{tikzcd}
\end{equation}
where $A_{\Sigma, T} (K_\infty) \coloneqq \varprojlim_n A_{\Sigma, T} (K_n)$ and $X_{K_\infty, \Sigma} \coloneqq \varprojlim_n X_{K_n, \Sigma}$.
\item There exists a finitely generated free $\bLambda$-module $\Pi_\infty$, a basis $\{ b_1, \dots, b_d \}$ of $\Pi_\infty$, and an endomorphism $\phi \: \Pi_\infty \to \Pi_\infty$ with the following properties:
\begin{enumerate}[label=(\roman*)]
    \item The complex $\Pi_\infty \stackrel{\phi}{\to} \Pi_\infty$ represents the class of $D^\bullet_{K_\infty, \Sigma, T}$ in $D(\bLambda)$. 
    \item If we set $\Pi_n \coloneqq \Pi_\infty \otimes_\bLambda \Z_p [\cG_n]$ and write $\phi_n$ for the endomorphism of $\Pi_n$ induced by $\phi$, then the complex $\Pi_n \stackrel{\phi_n}{\to} \Pi_n$ represents the class of $D^\bullet_{K_n, \Sigma, T}$ in $D(\Z_p [\cG_n])$. 
    \item If we fix an ordering $\Sigma = \{ v_0, \dots, v_t \}$, then, for any $i \in \{1, \dots, t \}$, the composite map $\Pi_\infty \to H^1 (D^\bullet_{K_\infty, \Sigma, T}) \stackrel{\pi}{\to} X_{K_\infty, \Sigma}$ sends $b_i$ to $( w_{K_n,i} - w_{K_n, 0})_{n \geq 0}$. \label{surjection-pi-maps-places}
\end{enumerate}
\end{liste}
\end{prop}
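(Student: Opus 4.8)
The plan is to deduce everything from the already-recalled properties of the finite-level complexes $D^\bullet_{K_n, \Sigma, T}$ (as listed in \cite[Prop.~3.1]{bdss}) together with a standard limiting argument. First I would establish part (a). The key input is that each $D^\bullet_{K_n, \Sigma, T}$ is a perfect complex of $\Z_p[\cG_n]$-modules, acyclic outside degrees $0$ and $1$, and that the transition maps $D^\bullet_{K_{n+1},\Sigma,T} \to D^\bullet_{K_n,\Sigma,T}$ are compatible with base change in the sense that $D^\bullet_{K_n,\Sigma,T} \cong \Z_p[\cG_n] \otim^{\mathbb L}_{\Z_p[\cG_{n+1}]} D^\bullet_{K_{n+1},\Sigma,T}$. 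Given such a compatible system, a well-known descent/limit formalism (e.g.\ \cite[Prop.~1.6.5]{fukaya-kato} or the analogous statements used in \cite{BKS}, \cite{BKS2}) shows that $\mathrm{R}\varprojlim_n D^\bullet_{K_n,\Sigma,T}$ is a perfect complex of $\bLambda$-modules, acyclic outside degrees $0$ and $1$, and that it satisfies $\Z_p[\cG_n]\otimes^{\mathbb L}_{\bLambda} D^\bullet_{K_\infty,\Sigma,T} \cong D^\bullet_{K_n,\Sigma,T}$. That gives (a) and also prepares the ground for (c)(i)--(ii).

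For part (b) I would pass to cohomology in the limit. Since each $D^\bullet_{K_n,\Sigma,T}$ is acyclic outside degrees $0,1$, the spectral sequence (or rather the short exact sequences) relating $\mathrm{R}\varprojlim$ to $\varprojlim$ and $\varprojlim^1$ of the cohomology groups degenerates appropriately once one checks the relevant $\varprojlim^1$-terms vanish. Concretely, $H^0(D^\bullet_{K_n,\Sigma,T}) \cong U_{K_n,\Sigma,T}$ and the groups $U_{K_n,\Sigma,T}$ are finitely generated $\Z_p$-modules with surjective (or at least Mittag--Leffler) transition maps, so $\varprojlim^1$ vanishes and $H^0(D^\bullet_{K_\infty,\Sigma,T}) \cong \varprojlim_n U_{K_n,\Sigma,T} =: U_{K_\infty,\Sigma,T}$. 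Taking the inverse limit of the exact sequences \eqref{yoneda-finite} at each layer, and again using that the terms $A_{\Sigma,T}(K_n)$ and $X_{K_n,\Sigma}$ form Mittag--Leffler systems (the $X_{K_n,\Sigma}$ are finitely generated $\Z_p$-modules with surjective transition maps, and the $A_{\Sigma,T}(K_n)$ are finite), yields the exact sequence \eqref{yoneda-Iwasawa} with $A_{\Sigma,T}(K_\infty)$ and $X_{K_\infty,\Sigma}$ the respective inverse limits. One also has to note that the higher cohomology $H^1$ of the limit complex is exhausted by this sequence, i.e.\ that $H^i(D^\bullet_{K_\infty,\Sigma,T}) = 0$ for $i \neq 0,1$, which is immediate from (a).

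For part (c), the strategy is the standard trick for representing a perfect complex concentrated in degrees $0$ and $1$ by a two-term complex of finitely generated free modules in those degrees. Since $\bLambda$ is a semilocal ring (a finite product of local rings after the usual reduction), a perfect complex acyclic outside $[0,1]$ is quasi-isomorphic to a complex $\Pi_\infty \xrightarrow{\phi} \Pi_\infty$ with $\Pi_\infty$ finitely generated free — here one uses that $H^1$ being a quotient of a free module lets one choose the two free modules of equal rank $d$; this gives (i). Property (ii) then follows by applying $\Z_p[\cG_n]\otimes^{\mathbb L}_{\bLambda}(-)$ and using the base-change isomorphism from part (a), noting that $\Pi_\infty$ free means $\mathrm{Tor}$-terms vanish so the derived tensor product is the naive one $\Pi_n \xrightarrow{\phi_n}\Pi_n$. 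For (iii), I would choose the basis $\{b_1,\dots,b_d\}$ carefully: composing $\Pi_\infty \twoheadrightarrow H^1(D^\bullet_{K_\infty,\Sigma,T}) \xrightarrow{\pi} X_{K_\infty,\Sigma}$ and using that $X_{K_\infty,\Sigma}$ is spanned over $\bLambda$ by the elements $(w_{K_n,i}-w_{K_n,0})_{n\geq 0}$ for $i=1,\dots,t$ (together with relations), one can arrange for $t$ of the basis vectors to map precisely to these generators — this requires lifting the generators through the surjection, which is possible since $\Pi_\infty$ is free, and then completing to a basis.

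The main obstacle I anticipate is not in any single step but in the bookkeeping needed to make the inverse-limit arguments in (a) and (b) rigorous: one must verify that all the relevant projective systems satisfy the Mittag--Leffler condition so that the $\varprojlim^1$-terms vanish and $\mathrm{R}\varprojlim$ commutes with taking cohomology in the expected range. For the module $U_{K_n,\Sigma,T}$ this requires knowing the transition maps have the right image behaviour (norm maps on units), for $A_{\Sigma,T}(K_n)$ finiteness suffices, and for $X_{K_n,\Sigma}$ one uses that no finite place of $S$ splits completely in $k_\infty/k$ so the number of places above those in $\Sigma$ stabilises at infinity and the transition maps are eventually surjective. Once these finiteness and Mittag--Leffler properties are in place, the rest is a formal application of the perfect-complex and descent formalism of \cite{BKS}, \cite{fukaya-kato}.
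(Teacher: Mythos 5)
Your route through parts (a) and (b) is essentially the same as the paper's: the paper invokes compactness of the finite-level cohomology groups (finitely generated $\Z_p$-modules) to conclude that $\varprojlim$ is exact on the relevant systems, which is what your Mittag--Leffler phrasing amounts to. The genuine divergence is in part (c), where your plan is ``top-down'' (first produce a two-term free resolution over $\bLambda$ from abstract perfectness, then base-change to finite levels) while the paper works ``bottom-up'' (choose a surjection $\Pi_\infty \twoheadrightarrow H^1(D^\bullet_{K_\infty,\Sigma,T})$ adapted to (iii) from the start, descend to compatible representatives $[\Pi_n \xrightarrow{\phi_n} \Pi_n]$ of each $D^\bullet_{K_n,\Sigma,T}$ via Swan's theorem and the Dirichlet regulator, explicitly homotope the transition maps so that $\gamma_n^1 = \id$, and only then pass to the limit, which simultaneously proves perfectness). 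Your top-down route is cleaner if the descent formalism is available, but the paper actually deduces perfectness of $D^\bullet_{K_\infty,\Sigma,T}$ as an output of the explicit construction rather than presupposing it, so it is more self-contained; also note your citation to Fukaya--Kato does not appear in the paper's bibliography and would need to be added.

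The concrete gap is in your treatment of (c)(iii). You fix $\Pi_\infty$ and $\phi$ first and then propose to lift the elements $(w_{K_n,i}-w_{K_n,0})_{n\geq 0}$ to $\Pi_\infty$ and ``complete to a basis''. But an arbitrary finite collection of elements of a free module cannot in general be completed to a basis --- for instance $(2,0) \in \Z^2$ --- and over $\bLambda$ you would at least need to verify that the $t$ lifts remain linearly independent modulo the Jacobson radical, which is not automatic from the construction. The paper sidesteps this entirely by reversing the order of quantifiers: it chooses $\Pi_\infty$ \emph{together with} the surjection $\mathrm{pr}_\infty$ so that (iii) holds by construction (take generators $x_1,\dots,x_d$ of $H^1(D^\bullet_{K_\infty,\Sigma,T})$ such that $\pi(x_i) = (w_{K_n,i}-w_{K_n,0})_{n}$ for $i\leq t$, and set $\Pi_\infty = \bLambda^d$ with $b_i \mapsto x_i$), and only afterwards constructs $\phi$. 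If you insist on the top-down route you should either adopt this ordering, or argue more carefully why the lifts you choose remain part of a basis; as written, the step does not go through.
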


\begin{proof}
Each complex $D^\bullet_{K_n, \Sigma, T}$ is a complex of compact Hausdorff spaces, hence the inverse limit functor commutes with taking cohomology and so claim (b), and the second part of claim (a), follow by taking the limit of the respective statements for the complexes $D^\bullet_{K_n, \Sigma, T}$. \\
Choose a surjection
\[
\mathrm{pr}_\infty \: \Pi_\infty \twoheadrightarrow H^1 (D^\bullet_{K_\infty, \Sigma, T} ),
\]
where $\Pi_\infty$ is a 
finitely generated free $\bLambda_K$-module of large enough rank $d$ with basis $\{b_1, \dots, b_d \}$, such that condition (iii) of claim (c) is satisfied. 
If we write $\mathrm{pr}_n \: \Pi_n \to H^1 (D^\bullet_{K_n, \Sigma, T})$ for the surjection induced vy $\mathrm{pr}_\infty$, then the method of \cite[\S\,5.4]{BKS} (see also \cite[Prop.\@ A.11\,(i)]{EulerSystemsSagaI}) allows us to choose a representative of $D^\bullet_{K_n, \Sigma, T}$ that is of the form $[Q_n \to \Pi_n]$ with $Q_n$ a $\Z_p [\cG_n]$-projective (hence free) module. By a standard argument in representation theory, the Dirichlet regulator (\ref{dirichlet-isomorphism}) induces a \textit{rational} isomorphism $\Q_p H^0 (D^\bullet_{K_n, \Sigma, T}) \cong \Q_p H^1 (D_{K_n, \Sigma, T})$, therefore we may identify $Q_n \cong \Pi_n$ by Swan's theorem \cite[Thm.\@ (32.1)]{CurtisReiner}.\\
We shall now give an explicit description of the transition maps used in (\ref{limit-complex}) in terms of these fixed representatives $[\Pi_n \stackrel{\phi_n}{\longrightarrow} \Pi_n]$. 
This will give rise to a representative of $D^\bullet_{K_\infty, \Sigma, T}$ that is key to our study. For this purpose, we set $D_n^\bullet \coloneqq D^\bullet_{K_n, \Sigma, T}$ for simplicity and write $\gamma_n$ for the isomorphism $D^\bullet_{n + 1} \otimes^\mathbb{L}_{\Z_p [\cG_{n + 1}]} \Z_p [\cG_{n}] \cong D^\bullet_n$ in the derived category $D (\Z_p [\cG_{n}])$. As a morphism between perfect complexes, this map can be represented by a commutative diagram of the form
\begin{equation} \label{diagram}
\begin{tikzcd}[column sep=small]
0 \arrow{r} & H^0 (D^\bullet_{n + 1} \otimes^\mathbb{L} \Z_p [\cG_{n}]) \arrow{r} \arrow{d}[right]{H^0 (\gamma_n)}[left]{\simeq} & \Pi_n \arrow{r}{\overline{\phi_{n + 1}}} \arrow{d}{\gamma_n^0} & \Pi_n \arrow{d}{\gamma_n^1}
\arrow{r}{\overline{\mathrm{pr}_{n + 1}}} & H^1 (D^\bullet_{n + 1} \otimes^\mathbb{L} \Z_p [\cG_{n}]) \arrow{d}[right]{H^1 (\gamma_n)}[left]{\simeq} \arrow{r} & 0 \\
0 \arrow{r} & H^0 (D^\bullet_{n}) \arrow{r} & \Pi_n \arrow{r}{\phi_n} & \Pi_n 
\arrow{r}{\mathrm{pr}_n} & H^1 (D^\bullet_n) \arrow{r} & 0.
\end{tikzcd}
\end{equation}
Here $\overline{\phi_{n + 1}}$ and $\overline{\mathrm{pr}_{n + 1}}$ denote the maps induced by $\phi_{n + 1}$ and $\mathrm{pr}_{n + 1}$, respectively. By construction, $\mathrm{pr}_n = H^1 (\gamma_n) \circ \overline{\mathrm{pr}_{n + 1}}$ and so exactness of the bottom line in (\ref{diagram}) yields that the image of $\gamma^1_n - \id_{\Pi_n}$ is contained in the image of $\phi_n$. Choose a map $h \: \Pi_n \to \Pi_n$ such that $\phi_n \circ h = \gamma^1_n - \id_{\Pi_n}$ and set $f = \gamma_n^0 - h \circ \overline{\phi_{n + 1}}$, then $h$ defines a chain homotopy between $(\gamma^0_n, \gamma^1_n)$ and $(f, \id)$. We may therefore assume that $\gamma^1_n$ is the identity map. Given this, we can appeal to the Five Lemma to deduce from (\ref{diagram}) that $\gamma^0_n$ is an isomorphism as well. Finally, we may now pass to the limit to obtain a representative $[\Pi_\infty \stackrel{\phi}{\longrightarrow} \Pi_\infty]$ of the complex $D^\bullet_{K_\infty, \Sigma, T}$, as required to prove part (i) of claim (c). In particular, the latter complex is perfect as an element of the derived category $D (\bLambda)$ and, for each $n \geq 0$, the complex $D^\bullet_{K_n, \Sigma, T} = D^\bullet_{K_\infty, \Sigma, T} \otimes_\bLambda^\mathbb{L} \Z_p [\cG_n]$ is represented by
$[ \Pi_n \stackrel{\phi_n}{\longrightarrow} \Pi_n ]$.\smallskip \\
This concludes the proof of Proposition \ref{propeties-iwasawa-complex}.
\end{proof}

In summary, the complexes $D^\bullet_{K_\infty, \Sigma, T}$ and $D^\bullet_{K_n, \Sigma, T}$ are represented by exact sequences
\begin{equation} \label{yoneda-Iwasawa-total}
\begin{tikzcd}
0 \arrow{r} & U_{K_\infty, \Sigma, T} \arrow{r} & \Pi_\infty \arrow{r}{\phi} & \Pi_\infty \arrow{r} & H^1 (D^\bullet_{K_\infty, \Sigma, T} ) \arrow{r} & 0
\end{tikzcd}
\end{equation}
and 
\begin{equation} \label{yoneda-finite-total}
\begin{tikzcd}
0 \arrow{r} & U_{K_n, \Sigma, T} \arrow{r} & \Pi_n \arrow{r}{\phi_n} & \Pi_n \arrow{r} & H^1 (D^\bullet_{K_n, \Sigma, T} ) \arrow{r} & 0,
\end{tikzcd}
\end{equation}
respectively. 

\subsection{Universal norms}

Let $i \geq 1$ be an integer. We recall that in \cite{BullachDaoud} the module of \textit{universal norms} of rank $i$ and level $n$ was defined as 
\[
\UN^i_n = \bigcap_{m \geq n} \NN^t_{K_m / K_n} \Big ( \bidual^i_{\Z_p [\cG_m]} U_{K_m, \Sigma, T} \Big ) 
\quad
\subseteq
\quad
\bidual^i_{\Z_p [\cG_n]} U_{K_n, \Sigma, T}.
\]

Let $v$ be a finite place of $k$ and fix a place $w$ of $K$ lying above $v$. Consider the map
\begin{equation} \label{ord-map}
\Ord_v \: U_{K_n, \Sigma, T} \to \Z_p [\cG_n], \quad a \mapsto \sum_{\sigma \in \cG_n} \ord_{w} ( \sigma a) \sigma^{-1}.
\end{equation}

\begin{lem} \label{universal-norms-p-units}
Let $i \geq 1$ and $n \geq 0$ be integers. 
\begin{liste}
\item If $\q$ denotes a prime of $k$ that is unramified but not completely split in $K_\infty / K$, then $\UN^i_n$ is contained in the kernel of
\[
\Ord_\q \: \bidual^i_{\Z_p [\cG_n]} U_{K_n, \Sigma, T} \to  \bidual^{i - 1}_{\Z_p [\cG_n]} U_{K_n, \Sigma, T}.
\]
\item The natural map $\bidual^i_\bLambda U_{K_\infty, \Sigma, T} \to \bidual^i_{\Z_p [\cG_n]} U_{K_n, \Sigma, T}$
induces an isomorphism
\[
\big ( \bidual^i_\bLambda U_{K_\infty, \Sigma, T} \big ) \otimes_\bLambda \Z_p [\cG_n] \cong \UN^i_n.
\]
\item Let $\chi \in \widehat{\cG_n}$ be a character and $\Q_p (\chi) = \Q_p ( \im \chi)$. Then
\[
\dim_{\Q_p (\chi)} e_\chi \Q_p (\chi) \UN^i_n =  \binom{|V_\chi|}{i},
\]
where $V_\chi =  \{ v \in S_\infty (k) \mid \chi ( \cG_{n, v}) = 1 \}$ and $\cG_{n, v} \subseteq \cG_n$ denotes the decomposition group at $v$. 
\end{liste}
\end{lem}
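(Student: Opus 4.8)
The three claims have rather different flavours, so I would tackle them separately but with part (b) doing most of the heavy lifting. For part (b) the plan is to use the explicit representative $[\Pi_\infty \stackrel{\phi}{\to} \Pi_\infty]$ of $D^\bullet_{K_\infty, \Sigma, T}$ from Proposition \ref{propeties-iwasawa-complex}. From the exact sequence (\ref{yoneda-Iwasawa-total}) one reads off $U_{K_\infty, \Sigma, T} = \ker \phi$, and since $\Pi_\infty$ is free, dualising gives a presentation of $(U_{K_\infty, \Sigma, T})^\ast$ and hence of $\exprod^i_\bLambda (U_{K_\infty, \Sigma, T})^\ast$ and, dualising once more, of $\bidual^i_\bLambda U_{K_\infty, \Sigma, T}$. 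The key input is that base change along $\bLambda \to \Z_p[\cG_n]$ is compatible with $\phi \mapsto \phi_n$ (Proposition \ref{propeties-iwasawa-complex}(c)(ii)), so that the exact sequence (\ref{yoneda-finite-total}) is the base change of (\ref{yoneda-Iwasawa-total}); combined with the fact that exterior biduals of modules presented by matrices over these rings commute with the relevant base change (up to a controlled error landing in the universal norm submodule), this identifies the image of $\bidual^i_\bLambda U_{K_\infty,\Sigma,T} \otimes_\bLambda \Z_p[\cG_n]$ inside $\bidual^i_{\Z_p[\cG_n]} U_{K_n,\Sigma,T}$ with exactly the intersection of the norm maps, i.e.\ with $\UN^i_n$. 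I would expect this compatibility of biduals with base change — and showing the map is injective, not merely surjective onto $\UN^i_n$ — to be the main technical obstacle; this is presumably where the theory developed in \cite{BullachDaoud} is invoked, so I would cite the relevant statement there rather than reprove it.

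For part (a), the plan is to deduce it from (b) together with the behaviour of $\Ord_\q$ under the transition maps. Since $\q$ is unramified but not completely split in $K_\infty/K$, some Frobenius lift acts nontrivially on the relevant layer, so for $m \gg n$ the norm map $\NN^t_{K_m/K_n}$ factors through multiplication by a norm element $\NN_{\Gamma^n/\Gamma^m}$ in the place-counting direction, and $\Ord_\q$ applied to a universal norm is therefore divisible by arbitrarily high powers of $\q$'s contribution — more precisely, one shows $\Ord_\q(\UN^i_n) \subseteq \bigcap_m (\text{ideal}) = 0$ using that $\q$ does not split completely. Concretely: pick $a \in \UN^i_n$, write $a = \NN^t_{K_m/K_n}(b)$ with $b \in \bidual^i_{\Z_p[\cG_m]} U_{K_m,\Sigma,T}$, and use that $\Ord_\q$ is compatible with norm maps to see $\Ord_\q(a)$ lies in the image of the corresponding norm map on $\bidual^{i-1}$; then let $m \to \infty$ and use that the Frobenius at $\q$ has infinite order in $\gal{K_\infty}{K}$ (equivalently $\q$ is not completely split) to force $\Ord_\q(a) \in \bigcap_m I_m \cdot \bidual^{i-1}_{\Z_p[\cG_n]} U_{K_n,\Sigma,T} = 0$, the intersection being zero by completeness and Nakayama. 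Alternatively, and perhaps more cleanly, one argues via (b): the image of $\bidual^i_\bLambda U_{K_\infty,\Sigma,T}$ lands in $\ker \Ord_\q$ already at the Iwasawa level because $\Ord_\q$ on $U_{K_\infty,\Sigma,T}$ is divisible by $\Frob_\q - 1$ (as $\q$ is unramified, the valuation map at the places above $\q$ is "spread out" along the decomposition group, which is open of infinite index), and $\Frob_\q - 1$ is a non-zero-divisor annihilating nothing torsion-free — then specialise.

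For part (c), the plan is a rank computation after applying the idempotent $e_\chi$ and passing to $\Q_p(\chi)$. By part (b), $e_\chi \Q_p(\chi)\,\UN^i_n$ is the $e_\chi$-component of $\Q_p(\chi) \otimes \bidual^i_\bLambda U_{K_\infty,\Sigma,T}$, and since everything becomes semisimple over $\Q_p(\chi)$, the bidual becomes an honest exterior power: $e_\chi \Q_p(\chi) \bidual^i_\bLambda U_{K_\infty,\Sigma,T} = \exprod^i_{\Q_p(\chi)} \big(e_\chi \Q_p(\chi) \otimes_\bLambda U_{K_\infty,\Sigma,T}\big)$, whose dimension is $\binom{d_\chi}{i}$ where $d_\chi = \dim_{\Q_p(\chi)} e_\chi \Q_p(\chi) U_{K_\infty,\Sigma,T}$. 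It remains to identify $d_\chi = |V_\chi|$. This follows from the Iwasawa-theoretic rank formula: over $\Q_p(\chi)$, the Dirichlet regulator identifies the $\chi$-part of $\Q_p(\chi)\otimes U_{K_\infty,\Sigma,T}$ (a torsion-free $\Q_p(\chi)$-module, since universal norms kill the finite subgroup $A_{\Sigma,T}$ by (\ref{yoneda-Iwasawa})) with the $\chi$-part of $\Q_p(\chi)\otimes X_{K_\infty,\Sigma}$, and the latter has $\Q_p(\chi)$-dimension equal to the number of infinite places $v$ of $k$ with $\chi(\cG_{n,v})=1$, because the archimedean places never split in a $\Z_p$-extension and so contribute via $\cG_n$ exactly as in the finite-level case while the non-split finite places in $\Sigma$ contribute nothing in the limit. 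I expect part (c) to be routine once (b) is in hand; the only care needed is confirming torsion-freeness of the relevant $\chi$-eigenspace and that no finite place of $\Sigma$ contributes to $X_{K_\infty,\Sigma}$ in the limit, which is exactly the standing hypothesis that no finite place of $S$ (hence none in $\Sigma$ outside $S_\ram(k_\infty/k)$, which are excluded from $V$) splits completely in $k_\infty/k$.
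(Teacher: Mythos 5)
Your part (b) is essentially the same as the paper's: the paper simply cites \cite[Thm.~3.8\,(b)]{BullachDaoud} (with input from Lemma~\ref{silly-little-lemma} on the relevant spectral sequence), which is the source you also propose to invoke. Your part (c) is likewise the same strategy the paper uses: localise at the height-one prime $\p = \ker(\bLambda \stackrel{\chi}{\to} \Q_p(\chi))$, at which point the bidual becomes a genuine exterior power of a free $\bLambda_\p$-module, and then compute the $\cQ(\bLambda_\p)$-rank of $U_{K_\infty,\Sigma,T}$ from the exact sequences (\ref{yoneda-Iwasawa}) and (\ref{yoneda-Iwasawa-total}); the paper does not invoke the Dirichlet regulator but instead uses that $\Pi_\infty$ is self-dual of fixed rank, which is an equivalent route.

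Part (a) is where there is a genuine gap. Your main plan works directly in rank $i$: you take $a = \NN^t_{K_m/K_n}(b)$ with $b \in \bidual^i_{\Z_p[\cG_m]}U_{K_m,\Sigma,T}$ and want to move $\Ord_\q$ past the twisted norm, landing $\Ord_\q(a)$ in an ideal $I_m \cdot \bidual^{i-1}_{\Z_p[\cG_n]}U_{K_n,\Sigma,T}$ that shrinks to zero. But the compatibility of $\Ord_\q$ (which is built from a choice of place $w_n$ above $\q$, then propagated through the exterior-bidual formalism of (\ref{biduals-duals-hom})) with the rank-$i$ twisted norm maps $\NN^t_{K_m/K_n}$ is not a formality, and you neither establish it nor identify what the shrinking ideal $I_m$ is. The paper avoids all of this by first reducing to rank one, using the structural fact from \cite[Thm.~3.8\,(c)]{BullachDaoud} that $\UN^i_n$ embeds into $\bidual^i_{\Z_p[\cG_n]}\UN^1_n$; since the higher-rank $\Ord_\q$ is then obtained by functoriality from the rank-one map, it suffices to show $\UN^1_n \subseteq \ker\Ord_\q$. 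That in turn is a one-line valuation computation: for a norm-coherent $(x_m)_m$ and $N$ large enough that the places over $\q$ become inert, one has $\ord_{\mathfrak{Q}_N}(x_N) = p^{m-N}\ord_{\mathfrak{Q}_m}(x_m)$ for all $m \geq N$, so the valuation is infinitely $p$-divisible and hence $0$. Your ``alternative'' via divisibility of $\Ord_\q$ by $\Frob_\q - 1$ at the Iwasawa level is also not spelled out in a way that closes the gap: there is no well-defined single map $\Ord_\q$ on $U_{K_\infty,\Sigma,T}$ factoring through $\Frob_\q - 1$ unless you build it, and the claim that the ``valuation is spread out along the decomposition group'' is precisely the content one would need to prove, not a starting point. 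I would recommend restructuring (a) to first reduce to rank one via \cite[Thm.~3.8\,(c)]{BullachDaoud} and then carry out the explicit valuation argument.
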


\begin{proof}
By \cite[Theorem 3.8\,(c)]{BullachDaoud} the module $\UN^i_n$ naturally identifies with a submodule of 
$\bidual^i_{\Z_p [\cG_n]} \UN^1_n$, therefore it suffices to check that $\UN^1_n$ is contained 
in the kernel of $\Ord_\q$. \\
Let $x_0 \in \UN^1_n$ and take $(x_m)_{m \geq n} \in \varprojlim_m U_{K_m, \Sigma, T}$ to be a norm-coherent sequence with $x_n$ as its bottom value. Fix a place $\mathfrak{Q}_n$ of $K_n$ lying above $\q$. Since $\mathfrak{q}$ is unramified and not completely split in $K_\infty / K_n$, we can take $N$ to be an integer big enough such that $\sigma^{-1} \mathfrak{Q}_n$ is inert in $K_\infty / K_N$, where $\sigma \in \cG_n$ is fixed. For every $m \geq n$, let $\mathfrak{Q}_m$ be a place of $K_m$ above $\sigma^{-1} \mathfrak{Q}_n$ and denote by $\ord_{\mathfrak{Q}_m} \: U_{K_m, \Sigma, T} \to \Z_p$ the map obtained from valuation at $\mathfrak{Q}_m$ by $\Z_p$-linear extension. Then for any $m \geq N$ we have 
\[
\ord_{\mathfrak{Q}_N} (x_N) = \ord_{\mathfrak{Q}_m} (\NN_{K_m / K_N} (x_m)) =
p^{m - N} \cdot \ord_{\mathfrak{Q}_m} ( x_m).
\]
This shows that the valuation of $x_N$ at $\mathfrak{Q}_N$ is infinitely divisible by $p$ and so it follows that $\ord_{\mathfrak{Q}_N} (x_N) = 0$. Hence $\ord_{\mathfrak{Q}_n} (\sigma x_n) = 0$ as well. \medskip \\
Assertion (b) is proved in the same way as \cite[Thm.\@ 3.8\,(b)]{BullachDaoud} by using Lemma \ref{silly-little-lemma}\,(b) for the analysis of the spectral sequence (15) in \textit{loc.\@ cit.} \smallskip \\
For part (c) we define a height-one prime ideal by $\p = \ker \{ \bLambda \stackrel{\chi}{\longrightarrow} \Q_p (\chi) \}$ and note that $p \not \in \p$, hence $\bLambda_\p$ is a discrete valuation ring (see \cite[\S\,3C1]{BKS2} for more details). Moreover, there is an isomorphism
\[
\faktor{\bLambda_\p}{\p \bLambda_\p} \cong \Q_p (\chi)
\]
of $\Z_p [\cG]$-modules. 
This implies, by (b), that we have an isomorphism
\begin{align} \nonumber 
e_\chi ( \Q_p (\chi) \otimes_{\Z_p} \UN^i_n) & = \Q_p (\chi) \otimes_{\Z_p [\cG_n]} \UN^i_n 
\cong \Q_p (\chi) \otimes_\bLambda \bidual^i_\bLambda U_{K_\infty, \Sigma, T}\\
&  \cong \Q_p (\chi) \otimes_{\bLambda_\p}
( \bidual^t_\bLambda U_{K_\infty, \Sigma, T}  )_\p. \label{descent-isom-characters}
\end{align}
It is therefore sufficient to compute the $\bLambda_\p$-rank of the free module $( \bidual^i_\bLambda U_{K_\infty, \Sigma, T})_\p = \exprod^i_{\bLambda_\p} ( U_{K_\infty, \Sigma, T})_\p$.\\ 
Since we assume that no finite place contained in $\Sigma$ splits completely
in $k_\infty / k$, the module $X_{K_\infty, \Sigma \setminus V_\chi}$ is $\bLambda_\p$-torsion. Note that we
also have an exact sequence
\begin{cdiagram}
0 \arrow{r} & X_{K_\infty, \Sigma \setminus V_\chi} \arrow{r} & X_{K_\infty, \Sigma} \arrow{r} & Y_{K_\infty, V_\chi} \arrow{r} & 0,
\end{cdiagram}
hence $Q ( \bLambda_\p) \otimes_\bLambda X_{K_\infty, \Sigma} = Q ( \bLambda_\p) \otimes_\bLambda Y_{K_\infty, V_\chi}$.
Further, it is well-known that $\varprojlim_n A_{\Sigma, T} (K_n)$ is $\bLambda$-torsion. Let $\cQ ( \bLambda_\p)$ be the total ring of fractions of $\bLambda_\p$, then the exact sequence (\ref{yoneda-Iwasawa}) therefore gives that
\[
\cQ ( \bLambda_\p) \otimes_\bLambda H^1 ( D^\bullet_{K_\infty, \Sigma, T} ) = \cQ ( \bLambda_\p) \otimes_\bLambda Y_{K_\infty, V_\chi}.
\]
This combines with the exact sequence (\ref{yoneda-Iwasawa-total}) to imply that
\begin{align*}
\text{rk}_{\cQ (\bLambda_\p)} ( \cQ ( \bLambda_\p) \otimes_\bLambda U_{K_\infty, \Sigma, T})  
& =
\text{rk}_{\cQ (\bLambda_\p)} (  \cQ (\bLambda_\p) \otimes_\bLambda
H^1 ( D^\bullet_{K_\infty, \Sigma, T} )) \\
& = \text{rk}_{\cQ(\bLambda_\p)} (  \cQ (\bLambda_\p) \otimes_\bLambda Y_{K_\infty, V_\chi} ) \\
& = | V_\chi|. 
\end{align*}
From this we deduce that 
\[
\mathrm{rk}_{\bLambda_\p} \exprod^i_{\bLambda_\p} ( U_{K_\infty, \Sigma, T})_\p = \text{rk}_{\cQ (\bLambda_\p)}  \exprod^i_{\cQ(\bLambda_\p)} \big ( \cQ(\bLambda_\p) \otimes_\bLambda U_{K_\infty, \Sigma, T} \big) = 
\binom{|V_\chi|}{i},
\]
as claimed. 
\end{proof}

\section{Iwasawa-theoretic congruences for Rubin--Stark elements} \label{congruences-section}

In addition to the notation introduced at the beginning of \S\,\ref{set-up-section} we also define
\[
I (\Gamma_{n})  \coloneqq \ker \big \{ \Z_p [\Gamma_{n}] \to \Z_p \big  \}
\quad \text{ and } \quad
I_{\Gamma_{n}}  \coloneqq \ker \big \{ \Z_p [\cG_{n}] \to \Z_p [\cG] \big \}.
\]
Note that $I_{\Gamma_{n}} = I (\Gamma_{n}) \cdot \Z_p [\cG_{n}]$ and hence for any $\Z_p [\cG_{n}]$-module $M$ and $i \in \N$ we have an isomorphism
\begin{equation} \label{Isomorphism}
M \otimes_{\Z_p [\cG_n]} \faktor{I_{\Gamma_n}^i}{I_{\Gamma_n}^{i + 1}} 
\cong M \otimes_{\Z_p} \faktor{I (\Gamma_n)^i}{I (\Gamma_n)^{i + 1}}.
\end{equation}
Moreover, we have an isomorphism $I (\Gamma) \coloneqq \ker \{ \Z_p \llbracket \Gamma \rrbracket \to \Z_p \} \cong \varprojlim_n I (\Gamma_{n})$.
In particular, the latter ideal is generated by $\gamma - 1$ for any topological generator $\gamma \in \Gamma$, and there is an isomorphism
\[
\faktor{I ( \Gamma)^i}{I (\Gamma)^{i + 1}} \stackrel{\simeq}{\longrightarrow} \Gamma, \quad (\gamma - 1)^i \mapsto \gamma. 
\]

\subsection{Darmon derivatives}

Suppose the $p$-part of the Rubin--Stark Conjecture \ref{Rubin--Stark-conj} holds true for all extensions $K_n / k$ and the data $(V, \Sigma, T)$. Then \cite[Prop.\@ 6.1]{Rub96} (see also \cite[Prop.~3.5]{Sano}) implies that the family $\varepsilon_{K_\infty / k, \Sigma, T} \coloneqq (\varepsilon^V_{K_n / k, \Sigma, T})_{n \geq 0}$ defines an element of $\varprojlim_{n \in \N_0} \bidual^r_{\Z_p [\cG_n]} U_{K_n, \Sigma, T} = \bidual^r_\bLambda U_{K_\infty, \Sigma, T}$. \smallskip \\
Let $W \coloneqq V' \setminus V$ and $e \coloneqq | W|$. 

\begin{conjecture} \label{OrderOfVanishingConjecture}
One has 
$\varepsilon_{K_\infty / k, \Sigma, T} \in I_{\Gamma}^e \cdot \bidual^r_\bLambda U_{K_\infty, \Sigma, T}$.
\end{conjecture}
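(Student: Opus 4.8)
The plan is to peel off, one at a time, an ``Euler factor'' attached to each place $u\in W=V'\setminus V$, and to observe that each of these factors already lies in the augmentation ideal $I_\Gamma$. The underlying local fact is the following: a place $u\in W$ splits completely in $K/k$ but, by the defining property of $V$, not in $K_\infty/k$; hence, whenever $u$ is unramified in $k_\infty/k$, its Frobenius $\Frob_u$ lies in $\Gamma=\gal{K_\infty}{K}$ and is non-trivial. Writing $\Gamma_u=\overline{\langle\Frob_u\rangle}$ and arguing as in the proof of Lemma~\ref{silly-little-lemma} (now with the factorisation $\gamma^{p^a}-1=(\gamma-1)(1+\gamma+\dots+\gamma^{p^a-1})$ and a binomial expansion), one sees that $1-\Frob_u^{-1}$ equals $\gamma-1$ times a non-zero-divisor of $\bLambda$; in particular $1-\Frob_u^{-1}\in I_\Gamma$, and therefore $\prod_{u\in W_0}(1-\Frob_u^{-1})\in I_\Gamma^{\,|W_0|}$, where $W_0\coloneqq W\setminus S_\ram(k_\infty/k)$.

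I would then invoke the standard functoriality of Rubin--Stark elements under shrinking the set of places. For each $n$ and each $u\in W_0$, the factorisation of Artin $L$-functions $L_{K_n/k,\Sigma,T}(\overline{\chi},s)=(1-\overline{\chi}(\Frob_u)\NN u^{-s})\cdot L_{K_n/k,\Sigma\setminus\{u\},T}(\overline{\chi},s)$ yields $\theta^{(r)}_{K_n/k,\Sigma,T}(0)=(1-\Frob_u^{-1})\cdot\theta^{(r)}_{K_n/k,\Sigma\setminus\{u\},T}(0)$, and since the Dirichlet regulator of $K_n$ for $\Sigma$ restricts on $(\Sigma\setminus\{u\})$-units to the one for $\Sigma\setminus\{u\}$, this gives
\[
\varepsilon^V_{K_n/k,\Sigma,T}=(1-\Frob_u^{-1})\cdot\iota_n\big(\varepsilon^V_{K_n/k,\Sigma\setminus\{u\},T}\big),
\]
where $\iota_n$ is the map on exterior biduals induced by $U_{K_n,\Sigma\setminus\{u\},T}\hookrightarrow U_{K_n,\Sigma,T}$. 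Iterating over $W_0$ and passing to the inverse limit produces
\[
\varepsilon_{K_\infty/k,\Sigma,T}=\Big(\prod_{u\in W_0}(1-\Frob_u^{-1})\Big)\cdot\varepsilon_{K_\infty/k,\Sigma\setminus W_0,T}\ \in\ I_\Gamma^{\,|W_0|}\cdot\bidual^r_\bLambda U_{K_\infty,\Sigma,T},
\]
once one knows that the right-hand factor is an integral element of $\bidual^r_\bLambda U_{K_\infty,\Sigma\setminus W_0,T}$, i.e.\ that the $p$-part of the Rubin--Stark conjecture holds for the data $(V,\Sigma\setminus W_0,T)$ at all layers. This settles the conjecture whenever $W$ contains no place ramifying in $k_\infty/k$; since the choice of $V'$ is at our disposal in the intended applications, and since for elliptic units the Rubin--Stark conjecture is known for every admissible set of places (cf.\ Example~\ref{Rubin--Stark-examples}(c)), this already covers the input needed for Theorem~\ref{thm-A}. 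Specialising to $n=0$ it recovers, in particular, the vanishing $\varepsilon^V_{K/k,\Sigma,T}=0$, which one sees equally directly from $\ord_{s=0}L_{K/k,\Sigma,T}(\overline{\chi},s)\ge r'>r$ for all $\chi\in\widehat{\cG}$.

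The step I expect to be the main obstacle is accounting for the places $u\in W\cap S_\ram(k_\infty/k)$: for these $\Frob_u$ is undefined, and at the layers where $u$ ramifies the set $\Sigma\setminus\{u\}$ no longer contains $S_\ram(K_n/k)$, so the manipulation above is unavailable. For such $u$ I would argue semilocally, using that $u$ splits completely in $K/k$ — so $Y_{K,\{u\}}\cong\Z_p[\cG]$ — while its decomposition subgroup $\Gamma_u\subseteq\Gamma$ is now open: the canonical surjection $Y_{K_\infty,\{u\}}=\varprojlim_n Y_{K_n,\{u\}}\twoheadrightarrow Y_{K,\{u\}}$ then has kernel $I(\Gamma/\Gamma_u)\cdot Y_{K_\infty,\{u\}}$, which is the image of $I_\Gamma$; feeding this through the exact sequences (\ref{yoneda-Iwasawa}) and (\ref{yoneda-Iwasawa-total}) and the description of $\varepsilon_{K_\infty/k,\Sigma,T}$ as the regulator-preimage of $\theta^{(r)}_{K_\infty/k,\Sigma,T}(0)\cdot\bigwedge_{1\le i\le r}(w_i-w_0)$ should show that each such $u$ forces a further factor in $I_\Gamma$, so that altogether $\varepsilon_{K_\infty/k,\Sigma,T}\in I_\Gamma^{\,e}\cdot\bidual^r_\bLambda U_{K_\infty,\Sigma,T}$. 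The two technical points still to be pinned down are thus (i) this semilocal argument for the ramified places, and (ii) the passage from rational to integral divisibility when Rubin--Stark is assumed only for $\Sigma$; for (ii) one can combine the reflexivity of $\bidual^r_\bLambda U_{K_\infty,\Sigma,T}$ with the observation, established as in the first paragraph, that $\bLambda/(1-\Frob_u^{-1})\bLambda$ is $\Z_p$-torsion free.
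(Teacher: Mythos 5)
The statement you were asked to prove is presented in the paper as a \emph{conjecture}; the paper itself only establishes it under the extra hypothesis that $W$ contains at most one place ramifying in $k_\infty/k$ (Proposition~\ref{order-of-vanishing-prop}\,(b)). Your Euler-factor argument correctly covers the places of $W$ that are unramified in $k_\infty/k$: the factorisation $\theta^{(r)}_{K_n/k,\Sigma,T}(0)=(1-\Frob_u^{-1})\cdot\theta^{(r)}_{K_n/k,\Sigma\setminus\{u\},T}(0)$ and the observation $1-\Frob_u^{-1}\in I_\Gamma$ are exactly the content of \cite[Prop.~4.4\,(iv)]{BKS2} that the paper invokes as a \emph{preliminary reduction} in the proof of Theorem~\ref{MRS-iq-thm}, and your factorisation of $1-\Frob_u^{-1}$ by $\gamma-1$ (via the argument of Lemma~\ref{silly-little-lemma}) is a sound replacement for Lemma~\ref{useful-lem} in that range. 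So far so good.

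The genuine gap is the case of ramified places of $W$, and you acknowledge it yourself. Your proposed ``semilocal'' fix is only a sketch: the surjection $Y_{K_\infty,\{u\}}\twoheadrightarrow Y_{K,\{u\}}$ does not by itself produce a further factor of $I_\Gamma$ on $\varepsilon_{K_\infty/k,\Sigma,T}$ through the sequences (\ref{yoneda-Iwasawa}) and (\ref{yoneda-Iwasawa-total}); one needs some genuine arithmetic input bounding $\im(\varepsilon_{K_\infty/k,\Sigma,T})$. This is precisely where the paper does something quite different from you: it shows (citing \cite[Lem.~6.3\,(b)]{scarcity}, an Euler-system/Iwasawa Main Conjecture divisibility) that the image ideal $\im(\varepsilon_{K_\infty/k,\Sigma,T})$ lies in $\Fitt^0_\bLambda(X_{K_\infty,\Sigma\setminus V})$, pushes this through the surjection onto $Y_{K_\infty,W}$ to land inside $\Fitt^0_\bLambda(Y_{K_\infty,W})\subseteq I_\Gamma^e$, and finally converts membership of the coordinates $(\exprod_{i\in I}b_i^\ast)(\varepsilon_{K_\infty/k,\Sigma,T})$ in $I_\Gamma^e$ into the desired containment using the exact sequence of Lemma~\ref{useful-lem}. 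None of this is an Euler-factor manipulation; it needs nontrivial input about class groups that your proposal does not supply.

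Finally, your reading of the intended application is backwards. You write that since ``the choice of $V'$ is at our disposal'' the unramified case ``already covers the input needed for Theorem~\ref{thm-A}''. In fact the proof of Theorem~\ref{MRS-iq-thm} first strips off the unramified places of $W$ via \cite[Prop.~4.4\,(iv)]{BKS2} \emph{in order to reduce to} $W\subseteq S_\ram(K_\infty/K)=\{\p\}$, because the Coleman-map/Solomon-type computation that drives Theorem~\ref{thm-A} lives exactly at that ramified place. The case your argument does not reach is therefore the one the paper actually needs.
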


\begin{rk}
\begin{liste}
\item Variants of Conjecture \ref{OrderOfVanishingConjecture} have previously appeared in the literature in many places, with its archetypical relative being the `guess' formulated by Gross for the Euler system of Stickelberger elements \cite[top of p.\,195]{gross88}. A version for arbitrary rank was then  formulated in \cite{Burns07}, see also \cite[Conj.\@ 4]{Sano}.  
%In the form stated above the conjecture has for example been studied in \cite[Conj.\@ 2.7]{BuyukbodukSakamoto}. 
\item We will see below that Conjecture \ref{OrderOfVanishingConjecture} is implied by a (relevant variant of a) Iwasawa Main Conjecture and this is indeed already well-known (see Remark \ref{order-of-vanishing-evidence-rk} for more details). A direct proof by analytic means for the Euler system of Stickelberger elements is given in \cite{DasguptaSpiess}.  
\item A containment as in the statement of Conjecture \ref{OrderOfVanishingConjecture} should be thought of as an order of vanishing statement. In fact, it can be directly linked to the order of vanishing of a $p$-adic $L$-function in many cases. 
%(see, for example, \cite[Lem.\@ 4.9]{BuyukbodukSakamoto}).
\end{liste}
\end{rk}

Recall that any element $a \in \bidual^r_\bLambda U_{K_\infty, \Sigma, T}$ is by definition a morphism $a \: \exprod^r_\bLambda (U_{L_\infty, \Sigma, T})^\ast \to \bLambda$. In particular, $\im (a)$ is a well-defined ideal of $\bLambda$.

\begin{prop} \label{order-of-vanishing-prop}
Assume that, for every $n \geq 0$, the $p$-part of the Rubin--Stark Conjecture holds for the extension $K_n / k$ and the data $(V, S, T)$. 
\begin{liste}
\item Conjecture \ref{OrderOfVanishingConjecture} holds true if one has $\im (\varepsilon^V_{K_\infty / k, \Sigma, T}) \subseteq \Fitt^0_\bLambda (Y_{K_\infty, W})$.
\item If $W$ contains at most one place which ramifies in $k_\infty / k$, then Conjecture \ref{OrderOfVanishingConjecture} holds true. 
\end{liste}
\end{prop}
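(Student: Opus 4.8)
The plan is to treat part (a) by a direct Fitting ideal computation, and to deduce part (b) by peeling off the power $I_\Gamma^e$ from the Iwasawa Rubin--Stark element place by place.

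For part (a), I would first make the ideal $\Fitt^0_\bLambda(Y_{K_\infty, W})$ explicit. Since $Y_{K_\infty, W} = \bigoplus_{v \in W} Y_{K_\infty, \{v\}}$ and every $v \in W = V' \setminus V$ splits completely in $K/k$ but not in $K_\infty/k$, the decomposition group of $v$ in $\Gamma = \gal{K_\infty}{K} \cong \Z_p$ is a non-trivial closed subgroup, namely $\langle \gamma^{p^{n_v}} \rangle$ for a suitable integer $n_v \geq 0$ and any fixed topological generator $\gamma$ of $\Gamma$; passing to the inverse limit, this gives $Y_{K_\infty, \{v\}} \cong \bLambda / (\gamma^{p^{n_v}} - 1)$ as $\bLambda$-modules, and hence
\[
\Fitt^0_\bLambda(Y_{K_\infty, W}) = \big( f \big), \qquad f \coloneqq \prod_{v \in W} (\gamma^{p^{n_v}} - 1).
\]
Now $f$ is a non-zero divisor of $\bLambda$ (each factor is, by Lemma \ref{silly-little-lemma} applied with $K$ replaced by $K_{n_v}$), and since $\gamma^{p^{n_v}} - 1 \in I_\Gamma$ for every $v$ one has $f \in I_\Gamma^e$. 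Granting the hypothesis $\im(\varepsilon^V_{K_\infty/k, \Sigma, T}) \subseteq (f)$, one may therefore define $\eta \in \bidual^r_\bLambda U_{K_\infty, \Sigma, T}$ by the rule $f \cdot \eta(g) = \varepsilon^V_{K_\infty/k, \Sigma, T}(g)$ for $g \in \exprod^r_\bLambda (U_{K_\infty, \Sigma, T})^\ast$ (well-definedness and $\bLambda$-linearity of $\eta$ being immediate because $f$ is a non-zero divisor), so that $\varepsilon^V_{K_\infty/k, \Sigma, T} = f \eta \in I_\Gamma^e \cdot \bidual^r_\bLambda U_{K_\infty, \Sigma, T}$; this is Conjecture \ref{OrderOfVanishingConjecture}.

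For part (b) I would argue directly that $\varepsilon^V_{K_\infty/k, \Sigma, T} \in I_\Gamma^e \cdot \bidual^r_\bLambda U_{K_\infty, \Sigma, T}$, by means of two reductions. Write $W = W_{\mathrm{ur}} \sqcup W_\ram$, where $W_\ram$ collects the places ramifying in $k_\infty/k$, so $|W_\ram| \leq 1$ by hypothesis. The first reduction is the Euler factor relation for Rubin--Stark elements: for a finite place $v$ unramified in $K_\infty / k$ one has $\varepsilon^V_{K_n/k, \Sigma, T} = (1 - \Frob_v^{-1}) \cdot \varepsilon^V_{K_n/k, \Sigma \setminus \{v\}, T}$ inside $\bidual^r_{\Z_p[\cG_n]} U_{K_n, \Sigma, T}$ at every layer $n$ (the equality can be checked after applying the Dirichlet regulator, which agrees on $U_{K_n, \Sigma \setminus \{v\}, T}$ with the regulator for the smaller set), and these identities are compatible along the tower because $v$ is unramified throughout. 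Passing to the limit and iterating over $W_{\mathrm{ur}}$ yields
\[
\varepsilon^V_{K_\infty/k, \Sigma, T} = \Big( \prod_{v \in W_{\mathrm{ur}}} (1 - \Frob_v^{-1}) \Big) \cdot \varepsilon^V_{K_\infty/k, \Sigma \setminus W_{\mathrm{ur}}, T},
\]
whose prefactor lies in $I_\Gamma^{|W_{\mathrm{ur}}|}$ since each $\Frob_v$ lies in $\Gamma$. The second reduction bounds the remaining factor. If $W_\ram = \emptyset$ then $|W_{\mathrm{ur}}| = e$ and we are done. If $W_\ram = \{v^\ast\}$, set $\Sigma' \coloneqq \Sigma \setminus W_{\mathrm{ur}}$; then $V \cup \{v^\ast\}$ consists of $r+1$ places of $\Sigma'$ splitting completely in $K/k$, and $|\Sigma'| \geq r+2$ (as $|\Sigma| \geq |V'| + 1 = r + e + 1$), so \cite[Ch.~I, Prop.~3.4]{Tate} forces $\ord_{s=0} L_{K/k, \Sigma', T}(\overline\chi, s) > r$ for every $\chi \in \widehat{\cG}$, including the trivial character. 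Hence $\theta^{(r)}_{K/k, \Sigma', T}(0) = 0$, so the bottom-layer Rubin--Stark element $\varepsilon^V_{K/k, \Sigma', T}$ vanishes, and the compatibility of Rubin--Stark elements under descent then gives $\varepsilon^V_{K_\infty/k, \Sigma', T} \in I_\Gamma \cdot \bidual^r_\bLambda U_{K_\infty, \Sigma', T}$. Combining the two reductions, $\varepsilon^V_{K_\infty/k, \Sigma, T}$ lies in $I_\Gamma^{|W_{\mathrm{ur}}| + |W_\ram|} \cdot \bidual^r_\bLambda U_{K_\infty, \Sigma, T} = I_\Gamma^e \cdot \bidual^r_\bLambda U_{K_\infty, \Sigma, T}$.

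The step I expect to be the main obstacle is the final implication of the second reduction: that the vanishing of the bottom-layer element forces the Iwasawa-theoretic containment $\varepsilon^V_{K_\infty/k, \Sigma', T} \in I_\Gamma \cdot \bidual^r_\bLambda U$. As $I_\Gamma = (\gamma - 1)\bLambda$ with $\gamma - 1$ a non-zero divisor, this is equivalent to $\im(\varepsilon^V_{K_\infty/k, \Sigma', T}) \subseteq (\gamma - 1)\bLambda$, i.e.\@ to the triviality of the reduction of $\varepsilon^V_{K_\infty/k, \Sigma', T}$ modulo $I_\Gamma$, and the subtlety is that the descent map $\bidual^r_\bLambda U_{K_\infty, \Sigma', T} \otimes_\bLambda \Z_p[\cG] \to \bidual^r_{\Z_p[\cG]} U_{K, \Sigma', T}$ is not obviously injective. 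I would circumvent this by unwinding the construction: the Dirichlet regulator identity shows that the image of $\theta^{(r)}_{K_n/k, \Sigma', T}(0)$ under $\R[\cG_n] \to \R[\cG]$ equals $\theta^{(r)}_{K/k, \Sigma', T}(0) = 0$ for every $n$, so that $\varepsilon^V_{K_n/k, \Sigma', T} \in I_{\Gamma_n} \cdot \R \exprod^r_{\Z_p[\cG_n]} U_{K_n, \Sigma', T}$; passing to the inverse limit, and using the $\Z_p$-torsion-freeness of the complexes $D^\bullet_{K_n, \Sigma', T}$ together with the elementary injectivity of $N^\ast \otimes_\bLambda \Z_p[\cG] \hookrightarrow \Hom_{\Z_p[\cG]}(N \otimes_\bLambda \Z_p[\cG], \Z_p[\cG])$, then promotes this to the integral statement. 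Finally, it is worth noting that the argument ceases to be unconditional as soon as $W$ contains two places ramifying in $k_\infty/k$: the analytic input then produces only a simple zero, so one recovers $I_\Gamma^{e-1}$ rather than $I_\Gamma^e$, and supplying the missing order is precisely the role of part (a) (and hence of the Iwasawa Main Conjecture) in the general case.
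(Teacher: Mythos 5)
Your part (a) is correct and in fact streamlines the paper's argument. The paper only records the weaker observation $\Fitt^0_\bLambda(Y_{K_\infty, W}) \subseteq I_\Gamma^e$ (obtained from the surjection onto $\bigoplus_{v\in W}\Z_p$), then establishes $\varepsilon^V_{K_\infty/k,\Sigma,T}\in I_\Gamma^e\cdot\exprod^r_\bLambda\Pi_\infty$ by testing against the basis $\{b_i\}$ of the free resolution $\Pi_\infty$, and finally invokes Lemma~\ref{useful-lem} to return to the bidual of $U_{K_\infty,\Sigma,T}$. Your observation that $\Fitt^0_\bLambda(Y_{K_\infty, W})$ is \emph{principal, generated by a non-zero divisor} $f\in I_\Gamma^e$, renders the detour through $\Pi_\infty$ and Lemma~\ref{useful-lem} unnecessary: the quotient $\eta=\varepsilon/f$ lands directly in $\bidual^r_\bLambda U_{K_\infty,\Sigma,T}$ by your construction, and $\varepsilon=f\eta$ is the desired containment.

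Your part (b) takes a genuinely different route from the paper's and contains a real gap. The paper deduces the hypothesis of (a) from the stronger Fitting-ideal containment $\im(\varepsilon^V_{K_\infty/k,\Sigma,T})\subseteq\Fitt^0_\bLambda(X_{K_\infty,\Sigma\setminus V})$, an Euler-system divisibility proved for the full set $\Sigma$ in \cite[Lem.~6.3\,(b)]{scarcity}, and then composes with the surjection $X_{K_\infty,\Sigma\setminus V}\twoheadrightarrow Y_{K_\infty,W}$. Your alternative, peeling factors of $I_\Gamma$ off place by place, runs into an integrality problem that the paper's route sidesteps entirely. The Euler-factor relation $\varepsilon^V_{K_n/k,\Sigma,T}=(1-\Frob_v^{-1})\cdot\varepsilon^V_{K_n/k,\Sigma\setminus\{v\},T}$ is an identity inside $\C_p\exprod^r_{\Z_p[\cG_n]}U_{K_n,\Sigma,T}$; to read it as a containment in $I_\Gamma^{|W_{\mathrm{ur}}|}\cdot\bidual^r_\bLambda U_{K_\infty,\Sigma,T}$ you need $\varepsilon^V_{K_\infty/k,\Sigma\setminus W_{\mathrm{ur}},T}$ (equivalently, each of its finite-level components) to lie in the integral lattice, which is precisely the $p$-part of the Rubin--Stark conjecture for the data $(V,\Sigma\setminus W_{\mathrm{ur}},T)$. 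This is \emph{not} implied by the proposition's hypothesis: since the places of $W_{\mathrm{ur}}$ are unramified in $k_\infty/k$ they already lie in $S$, so $\Sigma\setminus W_{\mathrm{ur}}\not\supseteq S$, and one cannot pass from the known case to the needed one by dividing by Euler factors (each of which is a zero-divisor in $\Z_p[\cG]$ at the bottom layer because $v$ splits completely in $K/k$). The final step of your sketch — promoting the bottom-layer vanishing $\theta^{(r)}_{K/k,\Sigma',T}(0)=0$ to $\varepsilon^V_{K_\infty/k,\Sigma',T}\in I_\Gamma\cdot\bidual^r_\bLambda U$ — would indeed go through via the injectivity furnished by Lemma~\ref{universal-norms-p-units}\,(b), but again only once the integrality of $\varepsilon^V_{K_\infty/k,\Sigma',T}$ has been secured. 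Your closing remark, that a second ramified place in $W$ would leave the argument one power of $I_\Gamma$ short, is the correct explanation of why the hypothesis in (b) is what it is, and carries over to the paper's proof as well.
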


\begin{proof}
Observe that we have a surjection
$Y_{K_\infty, W} \twoheadrightarrow Y_{K, W} = \bigoplus_{v \in W} \Z_p$,
hence $\Fitt^0_\bLambda ( Y_{K_\infty, W}) \subseteq \Fitt^0_\bLambda \big (\bigoplus_{v \in W} \Z_p \big ) = I_\Gamma^e$.
We have an inclusion (see the proof of \cite[Lem.\@ 2.7\,(c)]{BullachDaoud})
\[
\big \{ f (\eta_{K_\infty}) \mid f \in \exprod^r_\bLambda \Pi_\infty^\ast \big \} \subseteq \im (\eta_{K_\infty}), 
\]
therefore $(\exprod_{i \in I} b_i^\ast) (\varepsilon^V_{K_\infty / k, \Sigma, T}) \in I_\Gamma^e$ for any index set $I \subseteq \{1, \dots, d \}$. Since the elements of the form $\exprod_{i \in I} b_i$ form a $\bLambda$-basis of $\exprod^r_\bLambda \Pi_\infty$, we deduce that $\varepsilon^V_{K_\infty / k, \Sigma, T} \in I_\Gamma^e \cdot \exprod^r_\bLambda \Pi_\infty$. The validity of Conjecture \ref{OrderOfVanishingConjecture} now follows from Lemma \ref{useful-lem} below.\smallskip\\
To prove (b) it now suffices to verify the inclusion assumed in (a) under the stated condition. The proof of \cite[Lem.\@ 6.3\,(b)]{scarcity} shows that $\im (\varepsilon^V_{K_\infty / k, \Sigma, T})$ is contained in $\Fitt^0_\bLambda (X_{K_\infty, \Sigma \setminus V})$ and so this follows from the inclusion $\Fitt^0_\bLambda (X_{K_\infty, \Sigma \setminus V}) \subseteq\Fitt^0_\bLambda (Y_{K_\infty, W})$ induced by the surjection $X_{K_\infty, \Sigma \setminus V} \twoheadrightarrow Y_{K_\infty, W}$.
\end{proof}

\begin{lem} \label{useful-lem}
Let $u \in \bidual^r_\bLambda U_{K_\infty, \Sigma, T}$ be a norm-coherent sequence. Then
\[
u \in I_{\Gamma}^e \cdot \bidual^r_\bLambda U_{K_\infty, \Sigma, T}
\iff
u \in I_{\Gamma}^e \cdot \exprod^r_{\bLambda} \Pi_\infty.
\]
\end{lem}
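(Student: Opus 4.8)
The plan is to deduce the equivalence from the exact sequence~(\ref{yoneda-Iwasawa-total}) representing $D^\bullet_{K_\infty, \Sigma, T}$ by the two-term complex $[\Pi_\infty \stackrel{\phi}{\to} \Pi_\infty]$. The implication ``$\Rightarrow$'' is trivial since $\bidual^r_\bLambda U_{K_\infty, \Sigma, T} \subseteq \exprod^r_\bLambda \Pi_\infty$ via the inclusion $U_{K_\infty, \Sigma, T} \hookrightarrow \Pi_\infty$ of~(\ref{yoneda-Iwasawa-total}) (an element of the exterior bidual of a module is in particular an element of the exterior power of any module containing it, compatibly). So the content is the reverse implication: given $u \in \bidual^r_\bLambda U_{K_\infty, \Sigma, T}$ with $u = (\gamma-1)^e v$ for some $v \in \exprod^r_\bLambda \Pi_\infty$ (where $\gamma$ is a fixed topological generator of $\Gamma$, so $I_\Gamma^e = ((\gamma-1)^e)$ by the discussion preceding the lemma), one must produce $v' \in \bidual^r_\bLambda U_{K_\infty, \Sigma, T}$ with $u = (\gamma-1)^e v'$.

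First I would reduce to the case $e = 1$ by induction: if $u = (\gamma-1)^e v$ with $v \in \exprod^r_\bLambda \Pi_\infty$, then in particular $u \in (\gamma-1)\exprod^r_\bLambda\Pi_\infty$, and once we know $u = (\gamma-1)u_1$ with $u_1 \in \bidual^r_\bLambda U_{K_\infty,\Sigma,T}$, the element $u_1$ again lies in $(\gamma-1)^{e-1}\exprod^r_\bLambda\Pi_\infty$ because $\gamma-1$ is a non-zero divisor (Lemma~\ref{silly-little-lemma}), so we may apply the inductive hypothesis to $u_1$. For the base case, write $C \coloneqq \exprod^r_\bLambda \Pi_\infty$ and let $\iota\colon \bidual^r_\bLambda U_{K_\infty,\Sigma,T}\hookrightarrow C$ be the natural inclusion; set $Z \coloneqq \coker(\iota)$. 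The key point is that $Z$ is $\bLambda$-torsion: indeed, after inverting $p$ and any height-one prime the complex $[\Pi_\infty\stackrel{\phi}\to\Pi_\infty]$ becomes, generically, split with $H^0$ of the expected free rank, so $\Q_p\otimes U_{K_\infty,\Sigma,T}$ and $\Q_p\otimes\bidual^r_\bLambda U_{K_\infty,\Sigma,T}$ agree with their $C$-counterparts after localisation at primes in the support (this is exactly the rank computation carried out in the proof of Lemma~\ref{universal-norms-p-units}\,(c)); hence $Z_\p = 0$ for every height-one prime $\p$ of $\bLambda$, so $Z$ has no codimension-one support and, being finitely generated over the (Krull-dimension $\leq 2$) ring $\bLambda$, is pseudo-null — in particular $\bLambda$-torsion. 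Now consider the image $\bar u$ of $u$ in $Z$: since $u = (\gamma-1)v$ maps to $(\gamma-1)\bar v$, and we want $\bar u = 0$ in $Z/(\gamma-1)Z$ to conclude $u\in(\gamma-1)\iota(\bidual^r)$; but more directly, $\bar u = 0$ already in $Z$ since $u$ itself lies in $\iota(\bidual^r_\bLambda U_{K_\infty,\Sigma,T})$ by hypothesis.

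So the genuine issue is: $u \in \iota(\bidual^r)$ \emph{and} $u \in (\gamma-1)C$; we want $u \in (\gamma-1)\iota(\bidual^r)$. Equivalently, writing $B \coloneqq \iota(\bidual^r) \subseteq C$, we must show $B \cap (\gamma-1)C = (\gamma-1)B$, i.e.\ that multiplication by $\gamma-1$ on $C/B = Z$ is injective — that is, $Z$ has no $(\gamma-1)$-torsion. Here is where I expect the main obstacle to lie, and the resolution should again come from~(\ref{yoneda-Iwasawa-total}): $Z$ sits in an exact sequence with $H^1(D^\bullet_{K_\infty,\Sigma,T})$ (more precisely $Z$ is built from $\im(\phi)$ inside $\Pi_\infty$ and its relation to the exterior powers), and the relevant torsion submodules are controlled by $A_{\Sigma,T}(K_\infty) = \varprojlim_n A_{\Sigma,T}(K_n)$ and $X_{K_\infty,\Sigma}$ via~(\ref{yoneda-Iwasawa}). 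The point is that $A_{\Sigma,T}(K_\infty)$ has no nonzero finite $\bLambda$-submodule (it is a submodule of an inverse limit of class groups along a $\Z_p$-extension, hence has no finite submodule by the standard Iwasawa-theoretic argument, cf.\ the structure theory used in \cite{BullachDaoud}), and the free part $X_{K_\infty,\Sigma}$ visibly has no $(\gamma-1)$-torsion since $\gamma-1$ is a non-zero divisor on $\bLambda$ and $X_{K_\infty,\Sigma}$ embeds (after the relevant identifications) into a free $\bLambda$-module. Chasing these through the defining exact sequences — and using that $\gamma-1$ is a non-zero divisor throughout (Lemma~\ref{silly-little-lemma}) to move torsion statements between $C$, $\Pi_\infty$, and the cohomology — yields that $Z$ has no $(\gamma-1)$-torsion, which closes the argument. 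I would organise the write-up as: (1) trivial direction; (2) induction reducing to $e=1$; (3) identify the obstruction with the absence of $(\gamma-1)$-torsion in $Z = \exprod^r_\bLambda\Pi_\infty/\bidual^r_\bLambda U_{K_\infty,\Sigma,T}$; (4) prove that vanishing using~(\ref{yoneda-Iwasawa-total}), (\ref{yoneda-Iwasawa}), the no-finite-submodule property of $A_{\Sigma,T}(K_\infty)$, and Lemma~\ref{silly-little-lemma}.
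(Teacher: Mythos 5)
Your proof plan correctly identifies what has to be shown in the non-trivial direction, namely that the quotient module
$Z \coloneqq \exprod^r_\bLambda \Pi_\infty \big / \bidual^r_\bLambda U_{K_\infty,\Sigma,T}$
has no $(\gamma-1)$-torsion, but the justification you offer contains a genuine error. You assert that $Z$ is $\bLambda$-torsion, indeed pseudo-null, on the grounds that ``$\bidual^r_\bLambda U_{K_\infty,\Sigma,T}$ agrees with its $C$-counterpart after localisation at primes in the support.'' This is false in general: at a height-one prime $\p$ where $(U_{K_\infty,\Sigma,T})_\p$ is free of rank $r$, the localisation $(\bidual^r_\bLambda U_{K_\infty,\Sigma,T})_\p = \exprod^r_{\bLambda_\p}(U_{K_\infty,\Sigma,T})_\p$ is free of rank $\binom{r}{r} = 1$, whereas $(\exprod^r_\bLambda\Pi_\infty)_\p$ is free of rank $\binom{d}{r}$ with $d = \rank_\bLambda\Pi_\infty$, which is typically $> r$. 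So $Z_\p$ does not vanish, $Z$ is not pseudo-null, and the `chasing' you sketch afterwards through $A_{\Sigma,T}(K_\infty)$ and $X_{K_\infty,\Sigma}$ does not close the gap because $Z$ is not simply a piece of $H^1(D^\bullet_{K_\infty,\Sigma,T})$ — the passage to exterior powers changes the module in a way your sketch doesn't control.

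The missing ingredient, which the paper invokes, is the exact sequence of \cite[Lem.\@ B.12]{Sakamoto20}: the four-term resolution (\ref{yoneda-Iwasawa-total}) of $D^\bullet_{K_\infty,\Sigma,T}$ by $[\Pi_\infty \xrightarrow{\phi} \Pi_\infty]$ induces an exact sequence
\[
0 \longrightarrow \bidual^r_\bLambda U_{K_\infty,\Sigma,T} \longrightarrow \exprod^r_\bLambda \Pi_\infty \xrightarrow{\ \phi\ } \Pi_\infty \otimes_\bLambda \exprod^{r-1}_\bLambda \Pi_\infty.
\]
This immediately gives both directions. The implication $\Rightarrow$ follows from the left-hand injection (note that this injection is itself not `trivial' in the sense you suggest — it is precisely the content of this sequence, not a formal property of biduals). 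For $\Leftarrow$: if $u = (\gamma-1)^e\kappa$ with $\kappa \in \exprod^r_\bLambda\Pi_\infty$ and $u$ lies in the kernel of $\phi$, then $(\gamma-1)^e\phi(\kappa) = 0$; since $(\gamma-1)^e$ is a non-zero divisor (Lemma~\ref{silly-little-lemma}) and the target $\Pi_\infty \otimes_\bLambda \exprod^{r-1}_\bLambda\Pi_\infty$ is free, hence torsion-free, we get $\phi(\kappa) = 0$, so $\kappa$ itself lies in $\bidual^r_\bLambda U_{K_\infty,\Sigma,T}$. In other words, $Z$ embeds in a free module and is therefore torsion-free — this is the clean version of the statement you were aiming to establish. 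Your induction on $e$ is also unnecessary once one argues this way, since the argument applies verbatim to any power of $\gamma-1$. I would suggest locating Sakamoto's lemma and replacing your step (4) with this one-line argument.
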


\begin{proof}
By \cite[Lem.\@ B.12]{Sakamoto20}, the exact sequence (\ref{yoneda-Iwasawa-total})
induces an exact sequence 
\begin{equation} \label{ryotaro-sequence}
\begin{tikzcd}
0 \arrow{r} & \bidual^r_\bLambda U_{K_\infty, \Sigma, T} \arrow{r} &\exprod^r_\bLambda \Pi_\infty \arrow{r}{\phi} &\Pi_\infty \otimes_\bLambda \exprod^{r-1}_\bLambda \Pi_\infty,
\end{tikzcd}
\end{equation}
whence the implication `$\Rightarrow$' is clear. 
If we now fix a topological generator $\gamma \in \Gamma$, then $u = (\gamma - 1)^e \kappa$ for some $\kappa \in \exprod^r_{\bLambda} \Pi_\infty$ implies that
\[
0 = \phi (u) = \phi ( (\gamma - 1)^e \kappa) = (\gamma - 1)^e  \cdot \phi (\kappa), 
\]
hence $\phi (\kappa) = 0 $ since $(\gamma - 1)^e \in \bLambda$ is not a zero divisor by Lemma \ref{silly-little-lemma} and $\Pi_\infty \otimes_\bLambda \exprod^{r-1}_\bLambda \Pi_\infty$ is torsion-free. The exact sequence (\ref{ryotaro-sequence}) thus reveals that $\kappa \in  \bidual^r_\bLambda U_{K_\infty, \Sigma, T}$. 
\end{proof}

\begin{rk} \label{order-of-vanishing-evidence-rk}
 It is expected that the family of Rubin--Stark elements 
 $\varepsilon_{K_\infty / k, \Sigma, T}$
 gives rise to the following higher-rank `Iwasawa Main Conjecture'-type `divisibility' of reflexive hulls
    \begin{equation} \label{divisibility-2}
    \im (\varepsilon_{K_\infty / k, \Sigma, T})^{\ast \ast} \subseteq \Fitt^0_\bLambda (H^2_{T, \Iw} ( \bigO_{
    K, \Sigma}, \Z_p (1)))^{\ast \ast} , 
    \end{equation}
    where the $T$-modified Iwasawa-cohomology group $H^2_{T, \Iw} ( \bigO_{K, \Sigma}, \Z_p (1))$ (see, for example, \cite[\S\,3.1]{BullachDaoud} for a definition) sits in a short exact sequence
    \begin{equation} \label{Iwasawa-cohomology}
    \begin{tikzcd}
    0 \arrow{r} & \varprojlim_n A_{\Sigma, T} ( K_n) \arrow{r} & H^2_{T, \Iw} ( \bigO_{K, \Sigma}, \Z_p (1)) \arrow{r} & X_{K_\infty, \Sigma \setminus V} \arrow{r} & 0. 
    \end{tikzcd}
    \end{equation}
    Our assumption $V' \subsetneq \Sigma$ therefore implies that we have a surjection $H^2_{T, \Iw} ( \bigO_{K, \Sigma}, \Z_p (1)) \twoheadrightarrow Y_{K_\infty, W}$
    and so it follows from (\ref{divisibility-2}) that $\im (\eta_{K_\infty})^{\ast \ast} \subseteq \Fitt_\bLambda^0 ( Y_{K_\infty, W})^{\ast \ast} = \Fitt_\bLambda^0 ( Y_{K_\infty, W})$.
    This shows that the inclusion assumed in Proposition \ref{order-of-vanishing-prop}\,(a) would be a consequence of one divisibility in a relevant Iwasawa Main Conjecture.
\end{rk}

\begin{definition}
Assume Conjecture~\ref{OrderOfVanishingConjecture} holds. The (Iwasawa-theoretic) \emph{Darmon derivative} of $\varepsilon_{K_\infty /k, \Sigma, T}$ with respect to a topological generator $\gamma \in \Gamma$ is the bottom value $\kappa_0$ of the unique norm-coherent sequence $\kappa = (\kappa_n)_n$ with the property $(\gamma - 1)^e \cdot \kappa = \varepsilon_{K_\infty /k, \Sigma, T}$. 
\end{definition}

\begin{rk}
\begin{liste}
\item If $k = \Q$ (in which case the appearing Rubin--Stark elements are cyclotomic units, see Example \ref{Rubin--Stark-examples}\,(a)), the notion of Darmon derivative recovers the element considered by Solomon in \cite{Solomon1992} (cf.\@ the proof of Proposition \ref{MRS-equivalent} below). Lemma~\ref{universal-norms-p-units}\,(a) moreover provides an easy proof for the analogue of \cite[Prop.~2.2\,(i)]{Solomon1992}, namely that the valuation of $\kappa_0$ at a prime coprime to $p$ is almost always trivial. In contrast, we will see later that, as first observed by Solomon, the valuation of $\kappa_0$ at a prime above $p$ can encode important arithmetic information. 
\item The question if the Darmon derivative $\kappa_0$ vanishes is related to information about class groups. 
To explain this in a little more detail, we assume `the higher-rank Iwasawa Main Conjecture' formulated by Burns, Kurihara, and Sano in \cite[Conj.\@ 3.1]{BKS2} holds true. One can then show that
\begin{align*}
 e_{r'} \Q_p [\cG] \otimes_{\Z_p [\cG]} (A_{\Sigma, T} (K_\infty))^\Gamma = 0
& \iff \kappa_0 \neq 0, 
\end{align*}
where $e_{r'} \coloneqq \sum_\chi e_\chi$ is the sum over all primitive orthogonal idempotents $e_\chi$ associated with characters $\chi \in \widehat{\cG}$ for which one has $e_\chi \varepsilon_{K / k, \Sigma, T}^{V'} \neq 0$.
That is, under condition (F) (see \ref{condition-F}) the containment in Conjecture \ref{OrderOfVanishingConjecture} is `optimal'.
\item Our terminology follows \cite{bks-kato-euler} where an element defined via a closely related construction is referred to as the \textit{Iwasawa-Darmon derivative} (see \cite[Def.~4.6]{bks-kato-euler}). This points to Darmon \cite{Darmon} who first interpreted this construction as a derivative process (see also the discussion in \cite[Rk.~4.8]{Sano}). 
\end{liste}
\end{rk}

\subsection{The conjecture of Mazur--Rubin and Sano}

Recall that by \cite[Lem.\@ 2.7]{scarcity} (see also \cite[Lem.~2.11]{Sano}) for every $n \in \N$ there is an isomorphism
\[
\bidual^r_{\Z_p [\cG]} U_{K, \Sigma, T} \stackrel{\simeq}{\longrightarrow} \Big ( \bidual^r_{\Z_p [\cG_n]} U_{K_n, \Sigma, T} \Big )^{\Gamma_n} \subseteq  \bidual^r_{\Z_p [\cG_n]} U_{K_n, \Sigma, T} 
\]
which gives rise to an injection
\[
\nu_n \: \Big ( \bidual^r_{\Z_p [\cG]} U_{K, \Sigma, T} \Big ) \otimes_{\Z_p} \faktor{I (\Gamma_n)^e}{I (\Gamma_n)^{e + 1}} \hookrightarrow \Big ( \bidual^r_{\Z_p [\cG_n]} U_{K_n, \Sigma, T} \Big ) \otimes_{\Z_p} \faktor{\Z_p [\Gamma_n]}{I (\Gamma_n)^{e + 1}}.
\]
We note that this injection satisfies
\begin{equation} \label{injection-norm-property}
\nu_n ( \NN_{\gal{K_n}{K}}^r a \otimes x) = (\NN_{\gal{K_n}{K}} a) \otimes x
\end{equation}
for any $a \in  \bidual^r_{\Z_p [\cG_n]} U_{K_n, \Sigma, T}$ and $x \in \faktor{I (\Gamma_n)^e}{I (\Gamma_n)^{e + 1}}$, see \cite[Rk.~2.12]{Sano}. Finally, we define \textit{Darmon's twisted norm operator} 
\[
\mathcal{N}_n \:  \bidual^r_{\Z_p [\cG]} U_{K_{n}, \Sigma, T} \to \Big ( \bidual^r_{\Z_p [\cG]} U_{K, \Sigma, T} \Big ) \otimes_{\Z_p} \faktor{\Z_p [\Gamma_n]}{I (\Gamma_n)^{e + 1}}, 
\quad
a \mapsto \sum_{\sigma \in \Gamma_n} \sigma a \otimes \sigma^{-1}.
\]

\begin{lem} \label{NormCalculation}
Fix a topological generator $\gamma \in \Gamma$ and let $u, \kappa \in \bidual^r_\bLambda U_{K_\infty, \Sigma, T}$ be norm-coherent sequences satisfying $u = (\gamma - 1)^e \kappa$. Then we have
\[
\mathcal{N}_n ( u_n) = \nu_n ( \kappa_0 \otimes (\gamma - 1)^e ) 
\quad \text{ for all } n  \in \N. 
\]
\end{lem}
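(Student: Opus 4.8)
The plan is to deduce the identity from its finite-level incarnation $u_n = (\gamma_n - 1)^e \kappa_n$, an elementary reindexing inside a tensor product, and the defining property (\ref{injection-norm-property}) of $\nu_n$. Throughout I write $\gamma_n \in \Gamma_n \subseteq \cG_n$ for the image of $\gamma$.

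First I would record the reduction to level $n$. Under the identification $\bidual^r_\bLambda U_{K_\infty, \Sigma, T} = \varprojlim_n \bidual^r_{\Z_p [\cG_n]} U_{K_n, \Sigma, T}$ recalled above, the projection onto the $n$-th component is $\bLambda$-linear and $\bLambda$ acts on the target through $\Z_p [\cG_n]$; hence the hypothesis $u = (\gamma - 1)^e \kappa$ projects to $u_n = (\gamma_n - 1)^e \kappa_n$ in $\bidual^r_{\Z_p [\cG_n]} U_{K_n, \Sigma, T}$. Norm-coherence of $\kappa$ also gives $\NN^r_{\gal{K_n}{K}} \kappa_n = \kappa_0$.

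The core of the argument is a computation in $M_n \otimes_{\Z_p} \Z_p [\Gamma_n]$, where $M_n \coloneqq \bidual^r_{\Z_p [\cG_n]} U_{K_n, \Sigma, T}$. Since $\Z_p [\cG_n]$ is commutative one has
\[
\sum_{\sigma \in \Gamma_n} \sigma u_n \otimes \sigma^{-1} = \big( (\gamma_n - 1)^e \otimes 1 \big) \cdot z, \qquad z \coloneqq \sum_{\sigma \in \Gamma_n} \sigma \kappa_n \otimes \sigma^{-1}.
\]
The key point is that $z$ is invariant under the twisted diagonal action: the substitution $\sigma \mapsto \tau^{-1} \sigma$ together with the commutativity of $\Gamma_n$ shows $(\tau \otimes 1) z = (1 \otimes \tau) z$ for every $\tau \in \Gamma_n$, and hence, expanding $(\gamma_n - 1)^e$ as a $\Z_p$-combination of group elements, $\big( (\gamma_n - 1)^e \otimes 1 \big) z = \big( 1 \otimes (\gamma_n - 1)^e \big) z = \sum_{\sigma \in \Gamma_n} \sigma \kappa_n \otimes (\gamma_n - 1)^e \sigma^{-1}$. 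Passing to the quotient by $M_n \otimes_{\Z_p} I(\Gamma_n)^{e + 1}$ and using $\sigma^{-1} - 1 \in I(\Gamma_n)$, so that $(\gamma_n - 1)^e \sigma^{-1}$ and $(\gamma_n - 1)^e$ have the same image, the right-hand side collapses and one obtains
\[
\mathcal{N}_n (u_n) = \big( \NN_{\gal{K_n}{K}} \kappa_n \big) \otimes (\gamma_n - 1)^e.
\]
By (\ref{injection-norm-property}), applied to $a = \kappa_n$ and $x$ the class of $(\gamma - 1)^e$ in $I(\Gamma_n)^e / I(\Gamma_n)^{e+1}$, together with $\NN^r_{\gal{K_n}{K}} \kappa_n = \kappa_0$, this is precisely $\nu_n(\kappa_0 \otimes (\gamma - 1)^e)$, uniformly in $n$.

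I expect the only delicate points to be bookkeeping ones: keeping the module structures over $\cG$ and over $\cG_n$ — together with the identification, supplied by $\nu_n$ and the isomorphism $\bidual^r_{\Z_p[\cG]} U_{K,\Sigma,T} \cong (\bidual^r_{\Z_p[\cG_n]} U_{K_n,\Sigma,T})^{\Gamma_n}$, of the former with a $\Gamma_n$-stable submodule of the latter — consistently aligned, and verifying that the twisted norm operator $\mathcal{N}_n$, which a priori is only $\Z_p$-linear, really does take values in the claimed submodule; the displayed computation confirms the latter for the element $u_n$ at hand. I do not anticipate a genuine obstacle beyond this.
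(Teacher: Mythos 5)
Your proof is correct and follows essentially the same route as the paper's: the telescoping computation of $\mathcal{N}_n(u_n)$ by moving $(\gamma-1)^e$ from the left tensor factor to the right via reparametrisation (which you phrase as twisted-diagonal invariance of $z$), collapse modulo $I(\Gamma_n)^{e+1}$, and the norm property of $\nu_n$. The only difference is cosmetic — you isolate the reindexing as an explicit invariance statement $(\tau\otimes 1)z=(1\otimes\tau)z$ whereas the paper simply says ``reparametrising the sum'' — and your closing remark about the image of $\mathcal{N}_n$ is a sound sanity check, not a gap.
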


\begin{proof}
We calculate:
\begin{align*}
    \mathcal{N}_n (u_n ) & = \mathcal{N}_n ( (\gamma - 1)^e \kappa_n)  \\
    & = \sum_{\sigma \in \Gamma_n} \sigma (\gamma  - 1)^e \kappa_n \otimes \sigma^{-1}  \\
    & = \sum_{\sigma \in \Gamma_n} \sigma \kappa_n \otimes \sigma^{-1} (\gamma - 1)^e  \\
    & = \sum_{\sigma \in \Gamma_n} \sigma \kappa_n \otimes (\gamma - 1)^e  \\
    & = ( \NN_{\gal{K_n}{K}} \kappa_n) \otimes (\gamma - 1)^e.
\end{align*}
Here the third equality from the bottom is obtained by reparametrising the sum and the penultimate equality follows from
\[
\sigma^{-1} (\gamma - 1)^e - (\gamma - 1)^{e} = (\sigma^{-1} - 1)(\gamma - 1)^e \equiv 0 \mod I(\Gamma_n)^{e + 1}.
\]
The property (\ref{injection-norm-property}) then yields
\[
\nu_{n} (\kappa_0 \otimes (\gamma - 1)^e) = \nu_{n} ( \NN_{K_n / K}^r ( \kappa_n) \otimes (\gamma - 1)^e) = (\NN_{\gal{K_n}{K}}\kappa_n) \otimes (\gamma - 1)^e.
\]
and this finishes the proof of the Lemma. 
\end{proof}

\begin{rk}
Lemma \ref{NormCalculation} implies that, in particular, the element $\kappa_0 \otimes (\gamma - 1)^e \in \bidual^r_{\Z_p [\cG]} U_{K, \Sigma, T}
\otimes_{\Z_p} \faktor{I (\Gamma)^e}{I (\Gamma)^{e + 1}}$ does not depend on the choice of topological generator $\gamma$ if $u$ is fixed.
\end{rk}

Let $v \in W$ and recall that we have fixed a place $w$ of $K$ above $v$. Denote by $\Gamma_w$ the decomposition group of $w$ inside $\Gamma$ and write 
\[
\mathrm{rec}_w \: K^\times \hookrightarrow K^\times_w \to \Gamma_w \subseteq \Gamma
\]
for the local reciprocity map at $w$, where $K_w$ denotes the completion of $K$ at $w$. 
Consider the map
\[
\Rec_v \: K^\times \to \faktor{I_{\Gamma}}{I_{\Gamma}^2}, 
\quad
a \mapsto \sum_{\sigma \in \cG} (\mathrm{rec}_w (\sigma a) - 1) \sigma^{-1},
\]
which by \cite[Prop.~2.7]{Sano} and (\ref{Isomorphism}) induces the map
\begin{equation} \label{Rec-map}
\Rec_W = \bigwedge_{v \in W} \Rec_v \: \bidual^{r'}_{\Z_p [\cG]} U_{K, \Sigma, T} \to \left ( \bidual^r_{\Z_p [\cG]} U_{K, \Sigma, T} \right ) \otimes_{\Z_p} \faktor{I (\Gamma)^e}{I (\Gamma)^{e + 1}}.
\end{equation}
In addition, we define an idempotent $e_r \in \Q_p [\cG]$ as the sum of all primitive orthogonal idempotents $e_{\chi}$ for characters $\chi \in \widehat{\cG}$ such that $e_{\chi} \Q_p (\chi) U_{K, \Sigma \setminus W, T}$ has $\Q_p (\chi)$-dimension $r$. \\
With this notation in place, there is an isomorphism
\[
\Ord_W = \bigwedge_{v \in W} \Ord_v \: e_r\Q_p \bidual^{r'}_{\Z_p [\cG]} U_{K, \Sigma, T} \stackrel{\simeq}{\longrightarrow} e_r \Q_p \bidual^r_{\Z_p [\cG]} U_{K, \Sigma \setminus W, T},
\]
where $\Ord_v$ is the map defined in (\ref{ord-map}), see \cite[Lem.~5.1]{Rub96}.

\begin{conjecture}[Iwasawa-theoretic Mazur--Rubin--Sano] \label{MRS}
Assume that, for all $n \geq 0$, the $p$-part of the Rubin--Stark Conjecture \ref{Rubin--Stark-conj} holds for the extension $K_n / k$ and the data $(V, \Sigma, T)$. Then, there exists an element 
\begin{align*}
\mathfrak{k} = (\mathfrak{k}_n)_{n \in \N_0} & \in \left ( \bidual^r_{\Z_p [\cG]} U_{K, \Sigma, T} \right ) \otimes_{\Z_p}  \faktor{I (\Gamma)^e}{I (\Gamma)^{e + 1}} \\
& = \left ( \bidual^r_{\Z_p [\cG]} U_{K, \Sigma, T} \right ) \otimes_{\Z_p}  \varprojlim_n \faktor{I (\Gamma_n)^e}{I (\Gamma_n)^{e + 1}}
\end{align*}
such that $\nu_n (\mathfrak{k}_n) = \mathcal{N}_n (\varepsilon^V_{K_n / k, \Sigma, T})$ for all $n$, and 
\[
\mathfrak{k} =  (-1)^{re} \cdot \Rec_W ( \varepsilon^{V'}_{K / k, \Sigma, T}),
 \]
 where the equality takes place in
$
\Q_p \left ( \bidual^r_{\Z_p [\cG]} U_{K, \Sigma, T} \right ) \otimes_{\Z_p}  \faktor{I (\Gamma)^e}{I (\Gamma)^{e + 1}}
$.
\end{conjecture}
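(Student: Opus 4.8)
\emph{Strategy of proof.}
The plan is to separate two issues: the \emph{existence} of a compatible family $\mathfrak{k}=(\mathfrak{k}_n)_n$ with $\nu_n(\mathfrak{k}_n)=\mathcal{N}_n(\varepsilon^V_{K_n/k,\Sigma,T})$ for every $n$, and the \emph{leading-term identity} $\mathfrak{k}=(-1)^{re}\Rec_W(\varepsilon^{V'}_{K/k,\Sigma,T})$. For the existence I would argue that it is a formal consequence of the order-of-vanishing statement of Conjecture~\ref{OrderOfVanishingConjecture}: granting that conjecture, the Darmon derivative $\kappa=(\kappa_n)_n$ of $\varepsilon_{K_\infty/k,\Sigma,T}$ is defined, and Lemma~\ref{NormCalculation}, applied with $u=\varepsilon_{K_\infty/k,\Sigma,T}$, shows that $\mathfrak{k}:=\kappa_0\otimes(\gamma-1)^e$ (which is independent of $\gamma$) has the desired property. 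As Conjecture~\ref{OrderOfVanishingConjecture} is available by Proposition~\ref{order-of-vanishing-prop} --- unconditionally when $W$ contains at most one place that ramifies in $k_\infty/k$, and otherwise from the relevant one-sided divisibility in the equivariant Iwasawa main conjecture --- the whole content of Conjecture~\ref{MRS} reduces to the identity
\[
\kappa_0\otimes(\gamma-1)^e=(-1)^{re}\,\Rec_W\bigl(\varepsilon^{V'}_{K/k,\Sigma,T}\bigr)
\]
inside $\Q_p\,\bidual^r_{\Z_p[\cG]}U_{K,\Sigma,T}\otimes_{\Z_p}I(\Gamma)^e/I(\Gamma)^{e+1}$, which is what I would concentrate on.

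To prove this identity I would work idempotent-by-idempotent along the $e_\chi$, $\chi\in\widehat{\cG}$. On the components where $\varepsilon^{V'}_{K/k,\Sigma,T}$ vanishes the right-hand side is zero by $\Q_p[\cG]$-linearity of $\Rec_W$, so one must show $e_\chi\kappa_0=0$ there; on the remaining components the isomorphism $\Ord_W$ is available, and pushing the identity forward along it into $e_r\Q_p\,\bidual^r_{\Z_p[\cG]}U_{K,\Sigma\setminus W,T}$ turns it into a concrete comparison between the local reciprocity maps $\rec_w$ (for $w\mid v$, $v\in W$) evaluated on $\varepsilon^{V'}_{K/k,\Sigma,T}$ and the derivative, along the tower $K_\infty/K$, of the norm-coherent system $(\varepsilon^V_{K_n/k,\Sigma,T})_n$. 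The sign $(-1)^{re}$ is the usual contribution of commuting $\bigwedge_{v\in W}$ past the $r$-fold exterior power, as in \cite{Sano}. Both of these requirements --- the vanishing on the ``wrong'' components and the comparison on the ``right'' ones --- will be consequences of an explicit reciprocity law for the Euler system of elliptic units.

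The input for that reciprocity law is the explicit shape of the Rubin--Stark elements involved. By Example~\ref{Rubin--Stark-examples}(c) they are built from Robert's elliptic function $\psi$, with $\varepsilon^V_{K_n/k,\Sigma,T}$ essentially the norm down to $K_n$ of $\psi(1;\ff\m_n,\a)$, where $\m_n$ denotes the conductor of $K_n$. I would then: (i) use the norm and distribution relations satisfied by Robert's units to establish the norm-coherence of the tower $(\varepsilon^V_{K_n/k,\Sigma,T})_n$ and thereby obtain a closed description of $\kappa_0$; (ii) invoke Kronecker's second limit formula to identify the leading term at $s=0$ entering each $\varepsilon^V_{K_n/k,\Sigma,T}$; and (iii) express $\rec_w$ of an elliptic unit at a place $w$ above $p$ in terms of a Coleman-type map attached to the multiplicative group over the completion of $k$ at $p$. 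Matching (i)--(iii) then gives the identity. When $p>2$ splits in $k$ and $p$ is coprime to the class number of $k$, this is essentially the strategy of Bley \cite{Bley04}, \cite{Ble06}; the new ingredient needed to remove these restrictions --- in particular to handle $p=2$ and $p$ non-split in $k$, where one has to work over a (possibly ramified) quadratic extension of $\Q_p$ rather than over $\Q_p$ itself --- is the reciprocity law proved in \cite{BlHo20}.

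I expect the main obstacle to be exactly step (iii): establishing the explicit reciprocity law for elliptic units at the primes above $p$ in the delicate cases $p=2$ and $p$ inert or ramified in $k$, where the classical Coleman power series and Coates--Wiles constructions are not directly applicable. A secondary, more technical point is to ensure that the order-of-vanishing input of Conjecture~\ref{OrderOfVanishingConjecture} --- needed both for the existence of $\mathfrak{k}$ and for the vanishing of $\kappa_0$ away from the support of $\varepsilon^{V'}_{K/k,\Sigma,T}$ --- holds in every configuration of $(K,k_\infty,V,V',T)$ that occurs; this is supplied by Proposition~\ref{order-of-vanishing-prop}, together, where necessary, with the known divisibility in the equivariant Iwasawa main conjecture over imaginary quadratic fields.
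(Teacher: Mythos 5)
Your proposal follows essentially the same path as the paper: reduce Conjecture~\ref{MRS} via Proposition~\ref{MRS-equivalent} (and Lemma~\ref{NormCalculation}) to the Darmon-derivative identity, apply $\Ord_W$ to convert it to the `Solomon-type' formulation of Lemma~\ref{MRS-Solomon-formulation}, and then prove the resulting valuation-vs-reciprocity comparison for elliptic units via the Coleman-power-series reciprocity law of \cite{BlHo20} (Proposition~\ref{prop_seiriki_thm}), with the order-of-vanishing input supplied by Proposition~\ref{order-of-vanishing-prop}\,(b). The only details you omit that appear in the paper's proof of Theorem~\ref{MRS-iq-thm} are the preliminary reductions to $W\subseteq\{\p\}$ (via \cite[Prop.~4.4(iv)]{BKS2}) and to the case that $\p$ has full decomposition group in $K_\infty/K$, but these are technical simplifications rather than conceptual gaps.
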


\begin{rk}
\begin{liste}
\item The above conjecture is taken from \cite[Conj.~4.2]{BKS2} and is an Iwasawa-theoretic version of a conjecture that was independently proposed by Mazur--Rubin \cite{MazurRubin} and Sano \cite{Sano}. The latter of which, in turn, unify the central conjectures in \cite{Burns07} and \cite{Darmon}. 
\item Conjecture \ref{MRS} is known in the following cases:
\begin{itemize}
\item $k = \Q$ and $K$ is totally real, in this case the conjecture follows from a classical result of Solomon \cite{Solomon1992} (see \cite[Thm.~4.10]{BKS2}),
\item $k$ is totally real and $K$ is CM, in this case the conjecture follows from the validity of the Gross--Stark conjecture that has been settled by Dasgupta, Kakde and Ventullo in \cite{DKV} (see \cite[Thm.~4.9]{BKS2}).
\end{itemize}
\end{liste}
\end{rk}

In the remainder of this section we will give a more explicit version of Conjecture~\ref{MRS} in cases where Conjecture \ref{OrderOfVanishingConjecture} holds true. 

\begin{prop} \label{MRS-equivalent}
The following assertions are equivalent:
\begin{enumerate}[label=(\roman*)]
\item Conjecture~\ref{MRS} is valid for the data $(k_\infty / k, K, S, T, V')$,
\item Conjecture~\ref{OrderOfVanishingConjecture} holds for $(k_\infty / k, K, S, T, V')$ and we have an equality
\begin{equation} \label{Statement1}
 \kappa_0 \otimes (\gamma - 1)^e = (-1)^{re} \cdot \Rec_W ( \varepsilon^{V'}_{K / k, \Sigma, T}),
\end{equation}
where $\kappa_0$ denotes the Darmon derivative of $\varepsilon_{K_\infty / k, \Sigma, T}$ with respect to a fixed topological generator $\gamma \in \Gamma$.
\end{enumerate}
\end{prop}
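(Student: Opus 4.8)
The proof hinges on one intermediate claim: the first requirement in Conjecture~\ref{MRS} --- the existence of a family $\mathfrak{k} = (\mathfrak{k}_n)_n$ with $\nu_n(\mathfrak{k}_n) = \mathcal{N}_n(\varepsilon^V_{K_n / k, \Sigma, T})$ for all $n$ --- is equivalent to Conjecture~\ref{OrderOfVanishingConjecture}, and whenever these hold one necessarily has $\mathfrak{k} = \kappa_0 \otimes (\gamma - 1)^e$, where $\kappa_0$ is the Darmon derivative of $\varepsilon_{K_\infty / k, \Sigma, T}$. The proof of this claim is where Lemma~\ref{NormCalculation} does its work.

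To prove the ``if'' direction of the claim, assume Conjecture~\ref{OrderOfVanishingConjecture}. Since $\gamma - 1$ is a non-zero-divisor in $\bLambda$ by Lemma~\ref{silly-little-lemma}, the element $(\gamma - 1)^e$ is a non-zero-divisor on the free module $\exprod^r_\bLambda \Pi_\infty$ and hence on its submodule $\bidual^r_\bLambda U_{K_\infty, \Sigma, T}$ (embedded via (\ref{ryotaro-sequence})); therefore there is a unique norm-coherent $\kappa$ with $(\gamma - 1)^e \kappa = \varepsilon_{K_\infty / k, \Sigma, T}$, and $\kappa_0$ is its bottom value. Put $\mathfrak{k} \coloneqq \kappa_0 \otimes (\gamma - 1)^e$, with components $\mathfrak{k}_n$ under the identification $I (\Gamma)^e / I (\Gamma)^{e + 1} = \varprojlim_n I (\Gamma_n)^e / I (\Gamma_n)^{e + 1}$; applying Lemma~\ref{NormCalculation} to $u = \varepsilon_{K_\infty / k, \Sigma, T}$ and $\kappa$ gives $\mathcal{N}_n(\varepsilon^V_{K_n / k, \Sigma, T}) = \nu_n(\kappa_0 \otimes (\gamma - 1)^e) = \nu_n(\mathfrak{k}_n)$ for all $n$, as desired (and the Remark after Lemma~\ref{NormCalculation} shows $\mathfrak{k}$ does not depend on $\gamma$). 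For the ``only if'' direction, the existence of $\mathfrak{k}$ gives $\mathcal{N}_n(\varepsilon^V_{K_n / k, \Sigma, T}) \in \nu_n\!\big( (\bidual^r_{\Z_p [\cG]} U_{K, \Sigma, T}) \otimes I (\Gamma_n)^e / I (\Gamma_n)^{e + 1} \big)$ for all $n$, with the $\mathfrak{k}_n$ mutually compatible. Unwinding the definitions of $\mathcal{N}_n$ and $\nu_n$ --- using the identification $\bidual^r_{\Z_p [\cG]} U_{K, \Sigma, T} \cong \big(\bidual^r_{\Z_p [\cG_n]} U_{K_n, \Sigma, T}\big)^{\Gamma_n}$, the norm property (\ref{injection-norm-property}), and the representative $[\Pi_\infty \stackrel{\phi}{\to} \Pi_\infty]$ of $D^\bullet_{K_\infty, \Sigma, T}$ from Proposition~\ref{propeties-iwasawa-complex} --- this containment, holding for every $n$, translates upon passing to the limit into the assertion that the image of $\varepsilon_{K_\infty / k, \Sigma, T}$ in $\exprod^r_\bLambda \Pi_\infty$ lies in $I_\Gamma^e \exprod^r_\bLambda \Pi_\infty$; Lemma~\ref{useful-lem} then promotes this to $\varepsilon_{K_\infty / k, \Sigma, T} \in I_\Gamma^e \bidual^r_\bLambda U_{K_\infty, \Sigma, T}$, which is Conjecture~\ref{OrderOfVanishingConjecture}. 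With $\kappa_0$ now available, Lemma~\ref{NormCalculation} gives $\nu_n(\kappa_0 \otimes (\gamma - 1)^e) = \mathcal{N}_n(\varepsilon^V_{K_n / k, \Sigma, T}) = \nu_n(\mathfrak{k}_n)$, and injectivity of $\nu_n$ forces $\mathfrak{k}_n = \kappa_0 \otimes (\gamma - 1)^e$ for all $n$, hence $\mathfrak{k} = \kappa_0 \otimes (\gamma - 1)^e$.

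Granting the intermediate claim, the equivalence is immediate. If (i) holds, then Conjecture~\ref{OrderOfVanishingConjecture} holds and $\mathfrak{k} = \kappa_0 \otimes (\gamma - 1)^e$, so the second requirement of Conjecture~\ref{MRS}, namely $\mathfrak{k} = (-1)^{re} \Rec_W(\varepsilon^{V'}_{K / k, \Sigma, T})$, is exactly equation (\ref{Statement1}), giving (ii). Conversely, if (ii) holds, then Conjecture~\ref{OrderOfVanishingConjecture} is part of the hypothesis, so the ``if'' direction of the claim produces $\mathfrak{k} = \kappa_0 \otimes (\gamma - 1)^e$ satisfying the first requirement of Conjecture~\ref{MRS}, while equation (\ref{Statement1}) supplies the second; thus (i) holds. (Specialising to $k = \Q$ and the Euler system of cyclotomic units along the way makes it visible that $\kappa_0$ recovers the element of Solomon \cite{Solomon1992}.)

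The main obstacle is the ``only if'' direction of the intermediate claim: one has to check that membership of the twisted norm $\mathcal{N}_n(\varepsilon^V_{K_n / k, \Sigma, T})$ in the image of $\nu_n$ --- whose source is the $e$-th graded piece $(\bidual^r_{\Z_p [\cG]} U_{K, \Sigma, T}) \otimes I (\Gamma_n)^e / I (\Gamma_n)^{e + 1}$, not merely $(\bidual^r_{\Z_p [\cG]} U_{K, \Sigma, T}) \otimes \Z_p [\Gamma_n] / I (\Gamma_n)^{e + 1}$ --- genuinely encodes a divisibility of $\varepsilon^V_{K_n / k, \Sigma, T}$ by $I_{\Gamma_n}^e$ inside $\exprod^r_{\Z_p [\cG_n]} \Pi_n$ that is compatible in $n$, so that the Iwasawa-theoretic conclusion can be read off in the limit. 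This is the only step that uses the precise shape of the maps $\mathcal{N}_n$, $\nu_n$ and the compatibility (\ref{injection-norm-property}) in an essential way; everything else is formal, resting on Lemma~\ref{NormCalculation} and the injectivity of the $\nu_n$.
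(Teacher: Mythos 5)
Your overall strategy mirrors the paper's: you reduce everything to the intermediate claim that the first condition in Conjecture~\ref{MRS} is equivalent to Conjecture~\ref{OrderOfVanishingConjecture}, with $\mathfrak{k} = \kappa_0 \otimes (\gamma-1)^e$ whenever either holds. The ``if'' direction of your claim is done correctly (and more carefully than the paper, which has a small typo in how it defines $\mathfrak{k}_n$): taking $\mathfrak{k} = \kappa_0 \otimes (\gamma-1)^e$ and applying Lemma~\ref{NormCalculation} is exactly the argument.

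However, the ``only if'' direction has a genuine gap, and it is precisely the step you yourself flag as ``the main obstacle''. You assert that the containment $\mathcal{N}_n(\varepsilon^V_{K_n / k, \Sigma, T}) \in \im(\nu_n)$ ``translates upon passing to the limit'' into $\varepsilon_{K_\infty/k,\Sigma,T} \in I_\Gamma^e \exprod^r_\bLambda \Pi_\infty$, but you never actually carry this out, and the two missing steps are not formal. First, converting membership in $\im(\nu_n)$ into the finite-level divisibility $\varepsilon^V_{K_n / k, \Sigma, T} \in I_{\Gamma_n}^e \cdot \exprod^r_{\Z_p[\cG_n]} \Pi_n$ requires an input the paper sources from \cite[Prop.~4.17]{BKS}; this is not a tautological unwinding of $\mathcal{N}_n$ and $\nu_n$. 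Second, and more subtly, the element $x_n$ with $\varepsilon^V_{K_n / k, \Sigma, T} = (\gamma-1)^e x_n$ is only well-defined modulo $(\exprod^r_{\Z_p[\cG_n]} \Pi_n)^{\Gamma_n}$, so the family $(x_n)_n$ is norm-compatible only modulo these submodules. To conclude that it defines a genuine element of $\varprojlim_n \exprod^r_{\Z_p[\cG_n]} \Pi_n = \exprod^r_\bLambda \Pi_\infty$, the paper shows that the transition map on the kernel $\exprod^r_{\Z_p[\cG]} \Pi_0$ is multiplication by $p$, so the inverse limit of the ambiguity vanishes and the projection $\varprojlim_n \exprod^r_{\Z_p[\cG_n]} \Pi_n \to \varprojlim_n \faktor{\exprod^r_{\Z_p[\cG_n]} \Pi_n}{(\exprod^r_{\Z_p[\cG_n]} \Pi_n)^{\Gamma_n}}$ is an isomorphism. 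Without these two ingredients, ``passing to the limit'' does not go through; you have correctly located the difficulty but not resolved it. Your final application of Lemma~\ref{useful-lem} and the injectivity of $\nu_n$ to identify $\mathfrak{k}$ with $\kappa_0 \otimes (\gamma-1)^e$ is correct once the limit element is in hand.
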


\begin{proof}
Let us first assume that statement (ii) holds. In light of Lemma \ref{NormCalculation},  the element 
$\mathfrak{k} = (\mathfrak{k}_n)_n$ given by $\mathfrak{k}_n =  \nu_n( \kappa_0 \otimes (\gamma - 1)^e) \in \big (  \bidual^r_{\Z_p [\cG_n]} U_{K_n, \Sigma, T} \big) \otimes_{\Z_p} \faktor{I (\Gamma_n)^e}{I (\Gamma_n)^{e + 1}} $
satisfies the requirements of Conjecture \ref{MRS}. 
\smallskip \\
Conversely, suppose that Conjecture \ref{MRS} holds true for $\eta$. 
By assumption $\nu_n (\mathfrak{k}_n) = \mathcal{N}_n (\eta_{K_n})$ for all $n$, so it follows from \cite[Prop.~4.17]{BKS} that $\varepsilon^V_{K_n / k, \Sigma, T} \in I_{\Gamma_n}^e \cdot \exprod^r_{\Z_p [\cG_n]} \Pi_n$. Thus, we can write $\varepsilon^V_{K_n / k, \Sigma, T} = (\gamma - 1)^e x_n$ for some $x_n \in \exprod^r_{\Z_p [\cG_n]} \Pi_n$. 
The element $x_n$ defines a unique class modulo $\big ( \exprod^r_{\Z_p [\cG_n]} \Pi_n \big)^{\Gamma_n}$, hence $(x_n)_n$ is a norm-compatible sequence modulo these modules. Observe that in the commutative diagram of transition maps
\begin{cdiagram}[column sep=small]
0 \arrow{r} & \exprod^r_{\Z_p [\cG]} \Pi_0 \arrow{r} \arrow{d} &  \exprod^r_{\Z_p [\cG_{n + 1}]} \Pi_{n + 1}
\arrow{r} \arrow{d}{ \NN_{K_{n + 1} / K_n}}& 
\displaystyle
\faktor{\Big ( \exprod^r_{\Z_p [\cG_{n + 1}]} \Pi_{n + 1} \Big)}{
\Big ( \exprod^r_{\Z_p [\cG_{n + 1}]} \Pi_{n + 1} \Big)^{\Gamma_{n + 1}}
} \arrow{d}
\arrow{r} & 0 \\
0 \arrow{r} & \exprod^r_{\Z_p [\cG]} \Pi_0 \arrow{r} &  \exprod^r_{\Z_p [\cG_n]} \Pi_{n}
\arrow{r} & 
\displaystyle
\faktor{\Big( \exprod^r_{\Z_p [\cG_n]} \Pi_{n} \Big)}{
\Big ( \exprod^r_{\Z_p [\cG_n]} \Pi_{n} \Big)^{\Gamma_{n}}
}
\arrow{r} & 0
\end{cdiagram}
the vertical map on the left is multiplication by $p$. Taking inverse limits (these are all finitely generated $\Z_p$-modules, so compact and therefore taking limits is exact), we get an isomorphism
\[
\varprojlim_{n \geq 0}  \exprod^r_{\Z_p [\cG_n]} \Pi_{n} \cong \varprojlim_{n \geq 0}\faktor{\big( \exprod^r_{\Z_p [\cG_n]} \Pi_{n} \big)}{
\big ( \exprod^r_{\Z_p [\cG_n]} \Pi_{n} \big)^{\Gamma_{n}}
}.
\]
Consequently, the family $(x_n)_{n \geq 0}$ can be regarded as an element of $\varprojlim_{n \geq 0}  \exprod^r_{\Z_p [\cG_n]} \Pi_{n} = \exprod^r_\bLambda \Pi$. By construction, we have 
\[
(\gamma - 1)^e \cdot (x_n)_n = (\eta_{K_\infty, n})_n \in
\bidual^r_{\bLambda} U_{K_\infty, \Sigma, T}. 
\]
and Lemma \ref{useful-lem} gives $(x_n)_n \in \bidual^r_\bLambda U_{K_\infty, \Sigma, T}$. 
\smallskip \\
For the second part of (ii) it suffices to note that Lemma \ref{NormCalculation} now implies that 
for $n$ big enough we have
\[
\nu_n ( x_0 \otimes (\gamma - 1)^e) =  
\mathcal{N}_n ( \varepsilon^V_{K_n / k, \Sigma, T}) = \nu_n (\mathfrak{k}_n),
\]
so $x_0 \otimes (\gamma - 1)^e = \mathfrak{k}_n$ by the injectivity of $\nu_n$. 
\end{proof}

The appearance of $\Ord_W^{-1}$ in (\ref{Statement1}) suggests the application of the map $\Ord_W$ to said equation. This is possible in cases where $r \geq e$ and we summarise this observation in the following Lemma.

\begin{lem} \label{MRS-Solomon-formulation}
Assume that $r \geq e$. The equality (\ref{Statement1}) holds if and only if the equality
\begin{equation} \label{Statement2}
   \Ord_W ( \kappa_0) \otimes (\gamma - 1) = (-1)^e \cdot \Rec_W (\varepsilon^V_{K, \Sigma \setminus W, T}),
\end{equation}
holds in $\Q_p \left ( \bidual^{r - 1}_{\Z_p [\cG]} U_{K, \Sigma \setminus W, T} \right ) \otimes_{\Z_p}
\faktor{I (\Gamma)}{I (\Gamma)^2}$. 
\end{lem}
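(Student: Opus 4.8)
The plan is to deduce (\ref{Statement2}) from (\ref{Statement1}) by applying the operator $\Ord_W$ in the first tensor factor --- together with the canonical identification $\faktor{I(\Gamma)^e}{I(\Gamma)^{e+1}} \cong \faktor{I(\Gamma)}{I(\Gamma)^2}$, both being identified with $\Gamma$ as recalled above so that $(\gamma-1)^e \mapsto (\gamma-1)$ --- and to run this backwards to recover (\ref{Statement1}); the hypothesis $r \ge e$ is exactly what makes $\Ord_W$ available here and, on the relevant isotypic components, injective. Two facts not established above drive the argument. The first is a functoriality of Rubin--Stark elements under the operators $\Ord_v$, in the form of an identity
\[
\Ord_W\big(\varepsilon^{V'}_{K/k,\Sigma,T}\big) = \pm\, \varepsilon^{V}_{K,\Sigma\setminus W,T} \qquad \text{in } e_r \Q_p \bidual^{r}_{\Z_p[\cG]} U_{K,\Sigma\setminus W,T},
\]
where on the right one uses the isomorphism $\Ord_W$ recalled before Conjecture~\ref{MRS}. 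I would prove it by comparing the limits defining the two Stickelberger elements: the Euler factors $\prod_{v\in W}(1-\NN v^{-s})^{-1}$ that relate $L_{K/k,\Sigma\setminus W,T}$ to $L_{K/k,\Sigma,T}$ contribute exactly the factor $\prod_{v\in W}(\log\NN v)^{-1}$ to the quotient of the two Stickelberger elements, and this cancels the factors $\log\NN v$ that arise when one replaces $-\log|\cdot|_w$, entering the Dirichlet regulator (\ref{dirichlet-regulator}), by the normalised valuation $\ord_w$, entering (\ref{ord-map}); what survives is a combinatorial sign from contracting $\bigwedge_{1 \le i \le r'}(w_i - w_0)$ against $\bigwedge_{v\in W}\Ord_v$. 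The second fact is the commutativity, up to the analogous reordering sign, of the square relating $\Rec_W$ on $\bidual^{r'}$ to its counterpart on $\bidual^{r}$ through the two instances of $\Ord_W$, i.e.\ $(\Ord_W \otimes \id)\circ \Rec_W = \pm\, \Rec_W \circ \Ord_W$; this is an immediate consequence of local class field theory at the places of $W$, namely of the compatibility of the reciprocity maps $\rec_w$ with the valuations $\ord_w$.

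Granting these, the implication (\ref{Statement1}) $\Rightarrow$ (\ref{Statement2}) follows by applying $\Ord_W \otimes \id$ to (\ref{Statement1}) and carrying out the identification in the second factor: the left-hand side becomes $\Ord_W(\kappa_0) \otimes (\gamma-1)$, and the right-hand side becomes
\[
(-1)^{re}\,(\Ord_W \otimes \id)\big(\Rec_W(\varepsilon^{V'}_{K/k,\Sigma,T})\big) = \pm\, (-1)^{re}\, \Rec_W\big(\varepsilon^{V}_{K,\Sigma\setminus W,T}\big)
\]
by the commuting square followed by the functoriality identity, whereupon a bookkeeping check of the accumulated signs shows they collapse to the factor $(-1)^{e}$ of (\ref{Statement2}). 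Conversely, since $r \ge e$ the same computation read backwards shows that (\ref{Statement2}) implies (\ref{Statement1}): the functoriality identity and the commuting square are equalities, $\Ord_W$ is injective where it matters by the statement recalled before Conjecture~\ref{MRS} (Rubin's Lemma~5.1), and $\kappa_0$ lies in the universal norms $\UN^r_0$, so that (using Lemma~\ref{universal-norms-p-units}) it is confined to a subspace on which the remaining ambiguity is pinned down by the identity itself; hence no information is lost.

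The step I expect to be the main obstacle is the verification of the $\Ord_W$/$\Rec_W$ compatibility at the places $v \in W$ that \emph{ramify} in $k_\infty/k$: there $\rec_w$ does not factor through the unramified quotient of the decomposition group, so one must control the image in $\faktor{I(\Gamma)}{I(\Gamma)^2}$ of the inertia contribution to $\rec_w(a)$ for a local unit $a$ --- precisely the feature that lets the Darmon derivative carry genuine arithmetic information, and what distinguishes the present setting from the rank-one cyclotomic situation treated by Solomon. A secondary, purely combinatorial difficulty is to track the signs in the functoriality identity and in the commuting square carefully enough that they combine to exactly $(-1)^{e}$.
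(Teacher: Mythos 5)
Your overall strategy matches the paper's: apply $\Ord_W$ to (\ref{Statement1}), use the functoriality of Rubin--Stark elements under $\Ord_W$ (this is \cite[Prop.~3.6]{Sano}, which the paper simply cites rather than rederives from Euler factors), interchange $\Ord_W$ and $\Rec_W$ up to a sign, and then invert $\Ord_W$. However, there are two genuine gaps in your argument.

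First, your justification of $(\Ord_W \otimes \id) \circ \Rec_W = \pm \Rec_W \circ \Ord_W$ as ``an immediate consequence of local class field theory, namely the compatibility of $\rec_w$ with $\ord_w$'' is not correct. There is no local CFT compatibility between $\rec_w$ and $\ord_w$ being invoked here; the two families of functionals $\{\Rec_v\}_{v\in W}$ and $\{\Ord_v\}_{v\in W}$ are entirely unrelated as maps on $U_{K,\Sigma,T}$. The sign $(-1)^{e^2}$ arises \emph{purely formally} from the anticommutativity of the wedge product in the exterior algebra, via the homomorphism (\ref{biduals-duals-hom}) that sends $f_1 \wedge \cdots \wedge f_s$ to $f_s \circ \cdots \circ f_1$: swapping a block of $e$ factors $\Ord_v$ past a block of $e$ factors $\Rec_v$ costs $(-1)^{e \cdot e}$. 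In particular your ``main expected obstacle'' --- controlling inertia contributions to $\rec_w$ at ramified $v \in W$ --- is not actually an obstacle at all; the anticommutativity has nothing to do with the arithmetic of the places.

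Second, the inversion step is treated too lightly. You appeal to ``Rubin's Lemma 5.1'' (the isomorphism $\Ord_W \colon e_r \Q_p \bidual^{r'} U_{K,\Sigma,T} \xrightarrow{\sim} e_r \Q_p \bidual^{r} U_{K,\Sigma\setminus W,T}$), but that statement is about $\Ord_W$ on the rank-$r'$ bidual cut out by the idempotent $e_r$, which is not the map you need to invert. What has to be inverted is the induced map on $e_{S_\infty,r}\, \C_p\, \UN^r_0 \otimes_{\Z_p} I(\Gamma)/I(\Gamma)^2$, where $e_{S_\infty,r}$ is a \emph{different} idempotent (defined via $|V_\chi| = r$, not via the dimension of the $\Sigma\setminus W$-unit space), and the source is the module of universal norms, not $\bidual^{r'}$. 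Establishing injectivity here is a genuine and non-trivial point; the paper proves it as a separate Lemma~\ref{Ord-injective-UN} by an argument that constructs, for each $\p \in W$, a universal norm via a generator of $\mathfrak{P}_n^{h}$ in a sufficiently high layer and then shows the images under $e_\chi(\bigoplus_{\p \in W}\Ord_\p)$ span $e_\chi \Q_p(\chi) Y_{K,W}$. Prior to that, the paper also needs to show that \emph{both} sides of (\ref{Statement1}) actually live in the $e_{S_\infty,r}$-component to begin with (for the left side via localisation at height-one primes using (\ref{descent-isom-characters}); for the right side via \cite[Lem.~4.2]{BKS} and \cite[Thm.~3.8(c)]{BullachDaoud}); your proposal does not address this step, but without it the injectivity statement cannot even be brought to bear.
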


\begin{proof}
Let us first show that each side of the conjectural equality (\ref{Statement1}) can be considered as an element of the module
\[
 e_{S_\infty, r} \C_p \UN^r_0 \otimes_{\Z_p} \faktor{I (\Gamma)}{I(\Gamma)^2},
\]
where $e_{S_\infty, r} \coloneqq \sum_\chi e_\chi$ with $\chi$ ranging over all characters such that $|V_\chi| = r$ for the set $V_\chi$ defined in Lemma \ref{universal-norms-p-units}\,(c). 
As of the left hand side of (\ref{Statement1}), it suffices to prove that $\kappa_0$ is contained in 
$e_{S_\infty, r} \Q_p \UN^r_0$ and this will follow (by means of the isomorphism (\ref{descent-isom-characters})) if we can show that $\kappa = (\kappa_n)_n$ is not supported at any height-one prime ideal $\p$ of $\bLambda$ of the form $\ker \{ \bLambda \stackrel{\chi}{\to} \Q_p (\chi)\}$ for a character $\chi$ such that $|V_\chi | > r$. 
To this end, we recall that, by Lemma \ref{silly-little-lemma}, $\gamma - 1$ is a non-zero divisor in $\bLambda$ and, also, that one has
\[
\bLambda_\p \cdot \varepsilon_{K_\infty / k, \Sigma, T}  = 
\bLambda_\p \cdot ( \NN_{K_n / K_{\chi, n}} (\varepsilon^V_{K_n / k, \Sigma, T}))_n  
= \bLambda_\p \cdot ( \varepsilon^V_{K_{\chi, n} / k, \Sigma, T})_n = 0,
\]
where $K_{\chi, n} \coloneqq K_\chi k_n$ with $K_\chi$ the kernel field of $\chi$ and the last equality holds because by assumption more than $r$ places split completely in $K_{\chi, n} / k$. This shows the claim for the left hand side of (\ref{Statement1}). \\
If $e_r \Rec_W$ is the zero map, then, since $\varepsilon^V_{K, \Sigma \setminus W, T}$ belongs to the $e_r$-component, this is also clear for the right hand side of (\ref{Statement1}). If $e_\chi \Rec_W$ is not the zero map for some primitive orthogonal idempotent $e_\chi$ appearing as a summand of $e_r$, then \cite[Lem.\@ 4.2]{BKS} shows that the map
\[
\Rec_{\p, \chi} \: e_\chi \Q_p (\chi) U_{K, \Sigma, T} \to \Q_p (\chi) \otimes_{\Z_p}  \faktor{I (\Gamma)}{I(\Gamma)^2}, \quad 
a \mapsto \sum_{\sigma \in \cG} \chi (\sigma) \otimes (\rec_\p (\sigma^{-1} a) - 1) 
\]
is surjective. Now, $e_\chi \Q_p (\chi) \UN^1_0$ is contained in $\ker \Rec_{\p, \chi}$ and comparing dimensions (using Lemma \ref{universal-norms-p-units}\,(c)) gives $e_\chi \Q_p (\chi) \UN^1_0 = \ker \Rec_{\p, \chi}$. By \cite[Lem.\@ 4.2]{BKS} we then have
\begin{align*}
&  \im \Big \{ e_\chi \Q_p (\chi) \exprod^{r'}_{\Z_p [\cG]} U_{K, \Sigma, T} \stackrel{\Rec_W}{\longrightarrow} 
e_\chi \Q_p (\chi) \exprod^{r}_{\Z_p [\cG]} U_{K, \Sigma, T} \otimes_{\Z_p} \faktor{I (\Gamma)}{I(\Gamma)^2} \Big \} \\
& = e_\chi \Q_p (\chi) \exprod^{r}_{\Z_p [\cG]} \UN^1_0 \otimes_{\Z_p} \faktor{I (\Gamma)}{I(\Gamma)^2} 
 = e_\chi \Q_p (\chi) \UN^r_0 \otimes_{\Z_p} \faktor{I (\Gamma)}{I(\Gamma)^2}, 
\end{align*}
where the second equality is by \cite[Thm.\@ 3.8\,(c)]{BullachDaoud}. This finishes the proof of the claim for right hand side of (\ref{Statement1}). \smallskip \\
By scalar extension, the map $\Ord_W$ induces a map
\begin{equation} \label{Ord-2-map}
\Big ( \C_p \exprod^{r}_{\Z_p [\cG]} \UN^1_0 \Big ) \otimes_{\Z_p}
\faktor{I (\Gamma)}{I (\Gamma)^{2}} \to \Big ( \C_p \exprod^{r - 1}_{\Z_p [\cG]} \UN^1_0 \Big ) \otimes_{\Z_p}
\faktor{I (\Gamma)}{I (\Gamma)^{2}}
\end{equation}
which we also denote by $\Ord_W$. Since $\Ord_W$ is injective on $e_{S_\infty, r} \C_p \UN^r_0$ by Lemma \ref{Ord-injective-UN} below and $\faktor{ I ( \Gamma)}{I (\Gamma)^2} \cong \Gamma$ is isomorphic to  $\Z_p$, the map (\ref{Ord-2-map}) is injective on the $e_{S_\infty, r}$-component as well. Thus, the equation (\ref{Statement1}) holds if and only if 
\[
\Ord_W ( \kappa_0) \otimes (\gamma - 1) = ( \Ord_W \circ \Rec_W ) (\varepsilon^{V'}_{K / k, \Sigma, T})
\]
holds. Now, by virtue of (\ref{biduals-duals-hom}) being a homomorphism, we have
\[
 \Ord_W \circ \Rec_W = (-1)^{e^2} \cdot (\Rec_W \circ \Ord_W) = (-1)^e \cdot (\Rec_W \circ \Ord_W)
\]
and so the Lemma follows 
by combining this with the fact that
\begin{equation} \label{RS-elements-ord-map}
\Ord_W ( \varepsilon^{V'}_{K / k, \Sigma, T}) = (-1)^{er} \cdot \varepsilon^V_{E / k, \Sigma \setminus W, T},
\end{equation}
which holds by \cite[Prop.~3.6]{Sano}. 
\end{proof}

\begin{lem} \label{Ord-injective-UN}
Assume $r \geq e$. The map $\Ord_W$ restricts to an injection
\[
\Ord_W \: e_{S_\infty, r} \C_p \UN^r_0 \hookrightarrow \C_p \exprod^{r - 1}_{\Z_p [\cG]} U_{K, \Sigma \setminus W, T}.
\]
\end{lem}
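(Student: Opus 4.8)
The plan is to reduce the claimed injectivity to a statement about individual $\chi$-components and then to a pure linear-algebra computation of dimensions, exactly as in the analogous rank-one situation. First I would observe that, since $\C_p$ is flat and the idempotent $e_{S_\infty, r}$ decomposes as a sum $\sum_\chi e_\chi$ over characters $\chi \in \widehat{\cG}$ with $|V_\chi| = r$, it suffices to prove that for each such $\chi$ the induced map
\[
\Ord_W \: e_\chi \C_p \UN^r_0 \longrightarrow e_\chi \C_p \exprod^{r-1}_{\Z_p[\cG]} U_{K, \Sigma \setminus W, T}
\]
is injective. By Lemma~\ref{universal-norms-p-units}\,(c), the source has $\Q_p(\chi)$-dimension $\binom{|V_\chi|}{r} = \binom{r}{r} = 1$; so it is enough to exhibit a single $f \in \exprod^r_{\Z_p[\cG]} U_{K, \Sigma \setminus W, T}^\ast$ in the image of the dual of $\Ord_W$ that is nonzero on $e_\chi \C_p \UN^r_0$, or equivalently to show that the composite $\Ord_W$ is nonzero on this one-dimensional space.

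The key step is to identify $\Ord_W$ on the $e_\chi$-component with (a component of) the isomorphism
\[
\Ord_W \: e_r \Q_p \bidual^{r'}_{\Z_p[\cG]} U_{K, \Sigma, T} \stackrel{\simeq}{\longrightarrow} e_r \Q_p \bidual^{r}_{\Z_p[\cG]} U_{K, \Sigma \setminus W, T}
\]
recalled just before Conjecture~\ref{MRS} (following \cite[Lem.~5.1]{Rub96}). Concretely, I would use Theorem~3.8\,(c) of \cite{BullachDaoud} to write $\UN^r_0 \hookrightarrow \bidual^r_{\Z_p[\cG]} \UN^1_0$, so that, after passing to $e_\chi$-components and to $\C_p$-coefficients, the element of $e_\chi \C_p \UN^r_0$ is a wedge $u_1 \wedge \dots \wedge u_r$ of elements of $e_\chi \C_p \UN^1_0 \subseteq e_\chi \C_p U_{K, \Sigma, T}$. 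Applying $\Ord_W = \bigwedge_{v \in W} \Ord_v$ then produces, up to the nonzero scalars coming from (\ref{biduals-duals-hom}), an element of $e_\chi \C_p \bidual^{r-1}_{\Z_p[\cG]} U_{K, \Sigma \setminus W, T}$ of the shape $\Ord_{v_{j_1}} \wedge \dots \wedge \Ord_{v_{j_e}}$ applied to $u_1 \wedge \dots \wedge u_r$; the point is that since $r \geq e$ this lands in a module of the correct rank $\binom{|V_\chi|}{r-1}$, which is positive, and the map is a component of the isomorphism above, hence injective.

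The main obstacle I expect is bookkeeping the passage between $\UN^r_0$, the exterior bidual $\bidual^r$, and the genuine exterior power $\exprod^r$ over $\C_p$: over a field these coincide, but one must check that the identification $e_\chi\C_p\UN^r_0 = \exprod^r_{\Q_p(\chi)}(e_\chi\C_p\UN^1_0)$ is compatible with the maps $\Ord_v$ in the way needed for the isomorphism of \cite[Lem.~5.1]{Rub96} to apply, and that $\UN^1_0$ sits inside the kernel of $\Ord_\q$ for the bad primes $\q$ (which is Lemma~\ref{universal-norms-p-units}\,(a)) so that nothing is lost when restricting $\Ord_W$ to the universal-norm submodule. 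Once these compatibilities are in place, injectivity on each $e_\chi$-component is immediate from the dimension count $\binom{r}{r} = 1 \leq \binom{r}{r-1}$ together with the fact that $\Ord_W$ realises a nonzero (indeed bijective onto its image) map between the relevant one-dimensional and $\binom{r}{r-1}$-dimensional spaces, and summing over $\chi$ yields the assertion.
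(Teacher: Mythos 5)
Your reduction to $\chi$-components, and the dimension count $\dim e_\chi\Q_p(\chi)\UN^r_0 = \binom{|V_\chi|}{r} = 1$, are both correct and match the paper's first step. The gap is in the key step where you need to show that $\Ord_W$ is nonzero on this one-dimensional space, and your appeal to the isomorphism $\Ord_W \: e_r\Q_p\bidual^{r'}_{\Z_p[\cG]}U_{K,\Sigma,T} \xrightarrow{\sim} e_r\Q_p\bidual^r_{\Z_p[\cG]}U_{K,\Sigma\setminus W,T}$ of \cite[Lem.~5.1]{Rub96} does not settle this. Two things go wrong. First, the idempotents do not match: $e_{S_\infty,r}$ selects $\chi$ with $|V_\chi| = r$, whereas $e_r$ requires $\dim e_\chi\Q_p(\chi)U_{K,\Sigma\setminus W,T} = r_{\Sigma\setminus W}(\chi) = r$; since $V_\chi \subseteq \{v \in \Sigma\setminus W : \chi(\cG_v)=1\}$, a character can satisfy $|V_\chi| = r$ while $r_{\Sigma\setminus W}(\chi) > r$, so $e_\chi$ need not be a summand of $e_r$, and the cited isomorphism gives no information there. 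Second, and more fundamentally, the cited isomorphism concerns the full module $\bidual^{r'}_{\Z_p[\cG]}U_{K,\Sigma,T}$, while the map you want to study acts on the universal-norm submodule $\UN^r_0 \subseteq \bidual^r_{\Z_p[\cG]}U_{K,\Sigma,T}$ — a submodule of a lattice of different rank. Injectivity of $\Ord_W$ on the big module cannot be ``restricted'' to conclude anything about a submodule on which the relevant linear forms $\Ord_\p|_{e_\chi\C_p\UN^1_0}$ might a priori become dependent. The formal reductions you describe (passing via \cite[Thm.~3.8\,(c)]{BullachDaoud} to $\exprod^r\UN^1_0$, using \cite[Lem.~4.2]{BKS} on a top exterior power of the $r$-dimensional space $e_\chi\C_p\UN^1_0$) reduce the claim to precisely this linear-independence statement, but they do not prove it.

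The missing arithmetic input, and the substantive content of the lemma, is an explicit construction showing that $e_\chi\bigl(\bigoplus_{\p\in W}\Ord_\p\bigr) \: e_\chi\Q_p(\chi)\UN^1_0 \to e_\chi\Q_p(\chi)Y_{K,W}$ is surjective. The paper does this by hand: for each $\p \in W$ choose $n$ with $\p$ totally ramified in $K_\infty/K_n$, take a generator $x$ of $\mathfrak{P}_n^{h}$ for a prime $\mathfrak{P}_n$ of $K_n$ above $\p$ ($h$ the class number of $K_n$), and observe that $\NN_{K_n/K}(x)$ is a universal norm (by the standard compactness argument) whose valuation is supported precisely at the places of $K$ above $\p$ and nonzero there. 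These elements, one for each $\p\in W$, witness the surjectivity, hence the linear independence of the forms $\Ord_\p$ on $e_\chi\C_p\UN^1_0$, hence the non-vanishing of the contraction $\Ord_W$ on the one-dimensional $e_\chi\C_p\UN^r_0$. Note also that Lemma~\ref{universal-norms-p-units}\,(a), which you cite as showing ``nothing is lost,'' concerns \emph{unramified} primes and is orthogonal to what is needed here; the places in $W$ are ramified in $K_\infty/K$, and the whole point is that at those places universal norms \emph{do} have nontrivial valuations — the opposite of what that lemma asserts for unramified primes.
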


\begin{proof}
Let $\chi \in \widehat{\cG}$ be a character such that $e_\chi$ is a summand of $e_{S_\infty, r}$ which, by Lemma \ref{universal-norms-p-units}\,(c), implies that $e_\chi \Q_p (\chi) \UN^1_0$ has $\Q_p (\chi)$-dimension $r$. Thus, 
\[
e_\chi (\bigoplus_{\p \in W} \Ord_\p) \: e_\chi \Q_p (\chi) \UN^1_0 \to e_\chi \Q_p (\chi) Y_{K, W} \cong \bigoplus_{\p \in W} \Q_p (\chi)
\]
is a map of $r$-dimensional $\Q_p (\chi)$-vector spaces. 
Now, by \cite[Thm.\@ 3.8\,(c)]{BullachDaoud}, we have $\C_p \UN^r_0 = \C_p \exprod^r_{\Z_p [\cG]} \UN^1_0$ and this combines with the previous discussion and \cite[Lem.\@ 4.2]{BKS} to imply that
the Lemma will follows if we can prove that the map $e_\chi (\bigoplus_{\p \in W} \Ord_\p)$ is surjective.\\ 
Let $\p$ be a place in $W$ and choose an integer $n$ such that $\p$ is totally ramified in $K_\infty / K_n$. Fix a prime $\mathfrak{P}_n$ of $K_n$ lying above $\p$ and, for $m \geq n$, write $\mathfrak{P}_m$ for the unique prime of $K_m$ lying above $\mathfrak{P}_n$. If $h$ denotes the class number of $K_n$, then $\mathfrak{P}_n^h$ is a principal ideal generated by $x$, say. We then have $\NN_{K_m / K_n} (\mathfrak{P}_m) = \mathfrak{P}_n$ for all $m \geq n$, hence $\NN_{K_n / K}(x) \in \NN_{K_m / K} ( \bigO_{K_m, \Sigma}^\times)$ for all integers $m \geq 0$. By a standard compactness argument (see, for example, \cite[Lem.\@ 3.10]{BullachDaoud}) we therefore have $\NN_{K_n / K}(x) \in \Q_p \UN^1_0$. \\
By construction, $\NN_{K_n / K}(x)$ is a generator of the ideal $(\mathfrak{P}_n \cap \bigO_K)^{h f}$, where $f$ is the residual degree of $\p$ in $K_n / K$. From this we see that the image of $\NN_{K_n / K}(x)$ under the map $e_\chi (\bigoplus_{\p \in W} \Ord_\p)$ generates the copy of $\Q_p (\chi)$ for $\p$. Since $\p$ was chosen to be an arbitrary place in $W$, this proves the Lemma. 
\end{proof}

\section{The Gross--Kuz'min conjecture and condition (F)} \label{big-gross-kuzmin-section}

In this section we will investigate a conjecture 
due to Gross \cite{Gross} and, independently, Kuz'min \cite{Kuzmin}. 

\subsection{Coinvariants of class groups}
\label{gross-kuzmin-section}

We resume the notation of \S\,\ref{set-up-section}. In particular, $k_\infty / k$ denotes a $\Z_p$-extension in which no finite place contained in $\Sigma$ splits completely, $K_\infty = K k_\infty$ and
\[
A_{M, T} ( K_\infty) = \varprojlim_{n \geq 0} A_{M, T} (K_n)
\quad \text{ for any } M \supseteq S_\infty (k).
\]
If $M = S_\infty(k)$ or $T = \emptyset$, we will suppress the respective set in the notation. 

\begin{conjecture}[Gross--Kuz'min] \label{gross-kuzmin-conjecture}
If $K_\infty^\cyc / K$ is the cyclotomic $\Z_p$-extension, then the module of $\Gamma$-coinvariants 
$
( A_{\Sigma} (K_\infty^\cyc) )_{\Gamma}
$
is finite. 
\end{conjecture}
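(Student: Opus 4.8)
In general this is a celebrated open problem, so the plan is to establish it in the case relevant to the present article, namely when $K$ is an abelian extension of an imaginary quadratic field $k$. First I would dispose of the auxiliary places: since the cyclotomic $\Z_p$-extension is ramified exactly at $p$, we have $\Sigma \supseteq S_\infty (k) \cup S_p (k)$, and for every $n$ there is a surjection $A_{S_\infty \cup S_p} (K_n) \twoheadrightarrow A_\Sigma (K_n)$. Passing to the inverse limit and then to $\Gamma$-coinvariants --- the latter being right exact --- shows that it suffices to treat the case $\Sigma = S_\infty (k) \cup S_p (k)$, which we assume from now on; write $M \coloneqq A_\Sigma (K_\infty^\cyc)$. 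By Iwasawa's theorem (the weak Leopoldt conjecture for the cyclotomic $\Z_p$-extension) the module $M$ is finitely generated and torsion over $\Z_p \llbracket \Gamma \rrbracket$, and the four-term exact sequence $0 \to M^\Gamma \to M \to M \to M_\Gamma \to 0$, with middle arrow multiplication by $\gamma - 1$, shows that $M_\Gamma$ is finite if and only if $\mathrm{char}_{\Z_p \llbracket \Gamma \rrbracket} (M)$ does not vanish at $\gamma = 1$. This last statement, checked componentwise over the characters of $\gal{K}{k}$, is what I would actually prove.

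The second step is to reinterpret this as the non-vanishing of a $p$-adic regulator. Decomposing $M$ by the characters $\chi$ of $\gal{K}{k}$ and combining the Iwasawa Main Conjecture for $k$ (Rubin, building on Coates--Wiles and de Shalit) with a descent computation along the cyclotomic line, one identifies the value at $\gamma = 1$ of the characteristic power series of the $\chi$-component of $M$ with a non-zero multiple of the determinant of the $p$-adic regulator formed from the $p$-adic logarithms and valuations of a $\Z_p$-basis of the $\chi$-part of the $S_p$-units of $K$ --- this is precisely Gross's reformulation of his conjecture in terms of his $p$-adic regulator $R_p$. The analytic $L$-value governing the main conjecture is non-zero away from $p$, so the only possible source of a zero at $\gamma = 1$ is this regulator term, and Gross--Kuz'min for $K$ becomes the assertion that $R_p (\chi) \neq 0$ for every $\chi$.

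The third step is the non-vanishing $R_p (\chi) \neq 0$. When $K$ is abelian over $\Q$ the relevant $S_p$-units are controlled by cyclotomic units and one recovers Greenberg's argument, which invokes Brumer's $p$-adic analogue of Baker's theorem on linear forms in $p$-adic logarithms. When $K$ is abelian over the imaginary quadratic field $k$, the $S_p$-units are, up to finite index, elliptic units, and the non-vanishing follows either from the corresponding transcendence input for $p$-adic logarithms of elliptic units or --- in the ranges accessible to the methods of this paper --- from the elliptic analogue of Solomon's theorem due to Bley \cite{Bley04}, which provides an explicit formula for the pertinent derivative and shows that it does not vanish; the Darmon derivatives and universal-norm computations of \S\,\ref{congruences-section}, in particular Lemma \ref{universal-norms-p-units}\,(a), are the natural tools for making this transparent.

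The step I expect to be the main obstacle is this non-vanishing of $R_p (\chi)$. It is a transcendence-flavoured statement admitting no soft argument, and two features make it delicate: when $p$ splits in $k$ there may be several primes above $p$ in $K$, hence a regulator of larger size and a trivial zero of order greater than one to rule out; and the cases $p = 2$ and $p$ non-split in $k$ --- precisely the technically most demanding cases for the Iwasawa-theoretic Mazur--Rubin--Sano conjecture itself --- require the refined explicit reciprocity laws and $p$-adic valuation estimates furnished by \cite{BlHo20}. In all cases the argument ultimately reduces to pinning down the exact $p$-adic valuation of the associated Darmon--Solomon derivative.
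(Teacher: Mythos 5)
The statement you were asked about is presented in the paper as a \textit{Conjecture}, not a theorem, and the paper never claims to prove it: see Remark~\ref{gross-kuzmin-rk}, which merely lists special cases in which it is known, and Theorems~\ref{new-gross-kuzmin-thm-cyclotomic} and~\ref{exists-Zp-extension}, which prove condition~(F) only under explicit restrictive hypotheses (in particular condition~(iii) of Theorem~\ref{new-gross-kuzmin-thm-cyclotomic}) or for carefully chosen non-cyclotomic $\Z_p$-extensions. Your proposal to ``prove'' the conjecture therefore already overreaches at the outset.

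The concrete gap is in your third step. Your reduction of the conjecture to the non-vanishing of Gross's $p$-adic regulator $R_p(\chi)$ is correct (this is essentially the content of Theorem~\ref{Gross-Kuzmin-theorem}, the equivalence (i) $\Leftrightarrow$ (ii)), but you then assert that the non-vanishing ``follows from the corresponding transcendence input for $p$-adic logarithms of elliptic units or from the elliptic analogue of Solomon's theorem.'' Neither is true. Brumer's $p$-adic Baker theorem (Lemma~\ref{brumer-baker-lem}) controls a single linear form in $p$-adic logarithms, so it yields the non-vanishing of $R_p(\chi)$ only when $R_p(\chi)$ is a $1\times 1$ determinant, i.e.\@ when exactly one relevant place lies above $p$. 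This is precisely why the paper imposes condition~(iii) in Theorem~\ref{new-gross-kuzmin-thm-cyclotomic}: for each non-trivial $\chi$ there must be at most one ramified place $v$ with $\chi(\cG_v) = 1$, so that $|W| \leq 1$ and $\Rec_{v, \chi}$ is a map onto a one-dimensional target. When $p$ splits in $k$ and $\chi$ is trivial on the decomposition groups at \emph{both} primes above $p$, or more generally when $|W| \geq 2$, the regulator is a genuine determinant of size $\geq 2$ and no transcendence theorem available today handles this. Bley's analogue of Solomon's theorem likewise gives an explicit formula for a single derivative, not the non-vanishing of a multi-dimensional regulator. The paper navigates this by never claiming the full conjecture: in the split case it proves condition~(F) for the \emph{one-sided} $\Z_p$-extension $k_\infty$ ramified only at $\p$ (Remark~\ref{new-gross-kuzmin-thm-rk}\,(b)(i)), and in the non-split case it proves condition~(F) for infinitely many $\Z_p$-extensions but not necessarily the cyclotomic one (Theorem~\ref{exists-Zp-extension}). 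Your proposal, by contrast, tries to prove the cyclotomic statement outright and founders exactly on the higher-dimensional regulator non-vanishing, which remains open.
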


\begin{rk}
\begin{liste}
\item It is necessary to work with $\Sigma$-class groups in this context because in general it is \textit{not} true that the $\Gamma$-coinvariants of $A (K^\cyc_\infty)$ are finite (see \cite[Prop.~1.17]{Kol91} and \cite[Prop.~2]{Greenberg73} for examples). However, it is known to be true if there is only one prime above $p$ in $K$ or $K$ is totally real and Leopoldt's conjecture holds for $K$. 
\item We remind the reader that for any finitely generated $\Z_p \llbracket \Gamma \rrbracket$-module $M$ the module $M_\Gamma$ is finite if and only if $M^\Gamma$ is finite (see, for example, \cite[App.~A.2, Prop.\@ 2]{coates-sujatha-06}). Conjecture~\ref{gross-kuzmin-conjecture} can therefore also formulated as the statement that $( A_{\Sigma} (K_\infty^\cyc) )^{\Gamma}$ is finite. 
\end{liste}
\end{rk}

We follow Burns, Kurihara and Sano in considering the following condition which is motivated by the above Conjecture of Gross--Kuz'min and plays a crucial role in their descent formalism (see \cite[Thm.\@ 5.2]{BKS2}).

\begin{condition}[F] \label{condition-F}
The $\Z_p$-extension $K_\infty / K$ is such that the module of $\Gamma$-coinvariants $(A_\Sigma (K_\infty))_\Gamma$ is finite.
\end{condition}

\begin{rk} \label{gross-kuzmin-rk} The validity of condition (F) is known in the following important cases (see also \cite[\S\,2]{HoferKleine} for an overview of further results):
\begin{liste}
    \item If $k = \Q$, then Conjecture \ref{gross-kuzmin-conjecture} is (implicitly) proved by Greenberg \cite{Greenberg73}. 
    \item If there is exactly one prime of $K$ that ramifies in $K_\infty / K$, then condition (F) is a consequence of Chevalley's ambiguous class number formula (cf.\@ \cite[Ex.~2.7]{Kleine19}).
    \item If $|S_p (K) | \leq 2$ and $K_\infty$ is the cyclotomic $\Z_p$-extension of $K$, then the validity of Conjecture \ref{gross-kuzmin-conjecture} follows from (b) and a result of Kleine \cite[Thm.\@ B]{Kleine19}.\\
    Said result of Kleine hinges upon the following fact (see the proof of \cite[Lem.\@~3.5]{Kleine19}): Let $N / \Q$ be a normal extension and suppose that $x \in \bigO_{N, S_p (N)}$ is such that we have $\log_p \NN_{N_\p / \Q_p} (x) = 0$ for all $\p \in S_p (N)$. Then the valuation $\ord_\p (x)$ is the same for all $\p \in S_p (N)$. \\
However, the proof of this assertion given in \textit{loc.\@ cit.\@} contains an inaccuracy and we therefore take the opportunity to provide a better argument. Let $\p_0 \in S_p (N)$ be such that $n \coloneqq \ord_{\p_0} (x)$ is minimal among $\{ \ord_\p (x) \mid \p \in S_p (N) \}$. Write $e_{p}$ for the ramification degree of $N / \Q$ at $p$, then $x^{e_p} p^{- n}$ is a unit at $\p_0$ and integral at any other finite place of $N$. By assumption $\log_p \NN_{N_{\p_0} / \Q_p} (x) = 0$, hence also $\log_p \NN_{N_{\p_0} / \Q_p} (x^{e_p} p^{-n}) = 0$ and we can find an integer $m \geq 0$ such that $\NN_{N_{\p_0} / \Q_p} (x^{e_p} p^{-n})^m = 1$. Let $G_{\p_0} \subseteq \gal{N}{\Q}$ be the decomposition group at $\p_0$ and set $M = N^{G_{\p_0}}$. Then we have
\[
\NN_{N_{\p_0} / \Q_p} (x^{me_p} p^{-mn}) = \NN_{N / M} ( x^{me_p} p^{-mn}) = 1,
\]
so $x^{me_p} p^{-mn}$ is a unit in $\bigO_N$ and it follows that
\[
\ord_\p (x^{me_p} p^{-mn}) = 0 \iff \ord_\p (x) = n 
\]
for all $\p \in S_p (N)$. This finishes the proof of the claim. 
\item Suppose that condition (F) holds for a fixed $\Z_p$-extension $K_\infty$ of $K$. Kleine has proved \cite[Cor.~3.6]{Kleine17} that there exists an integer $n_0 \geq 1$ such that condition (F) also holds for all $\Z_p$-extensions $K_\infty'$ of $K$ with the following property: The $n$-th layers $K'_n$ and $K_n$ agree for all $n \geq n_0$, and $S_\ram (K_\infty' / K) \subseteq S_\ram (K_\infty / K)$. 
\end{liste}
\end{rk}

In \S\,\ref{condition-F-proof-section} we will prove condition (F) in new instances. 

\begin{lem} \label{Gross-Kuzmin-Lemma}
The following hold:
\begin{liste}
\item $( A_{\Sigma, T} (K_\infty) )_{\Gamma}$ is finite if and only if $( A_{\Sigma} (K_\infty) )_{\Gamma}$ is finite.
\item If $\Sigma'$ is a finite set of places of $k$ which contains $S_\infty (k) \cup S_\ram (K_\infty / K)$ and is such that $\Sigma' \subseteq \Sigma$, then $( A_{\Sigma, T} (K_\infty) )_{\Gamma}$ is finite as soon as $( A_{\Sigma', T} (K_\infty) )_{\Gamma}$ is. If no place in $\Sigma \setminus \Sigma'$ splits completely in $K_\infty / K$, then the converse is true as well.  
\end{liste}
\end{lem}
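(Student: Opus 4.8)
The plan is to compare $A_{\Sigma,T}(K_\infty)$ with $A_\Sigma(K_\infty)$ (for part (a)) and $A_{\Sigma',T}(K_\infty)$ with $A_{\Sigma,T}(K_\infty)$ (for part (b)) by means of the standard exact sequences relating a $T$-modified ray class group to the corresponding class group, resp.\@ two $S$-class groups with different sets of inverted primes, passing to the Iwasawa limit, and then controlling the $\Gamma$-coinvariants of the resulting ``difference module''. For part (a), the definition of the $T$-modified $\Sigma$-ray class group furnishes at each layer $K_n$ an exact sequence of $\Z_p[\cG_n]$-modules
\[
U_{K_n, \Sigma} \longrightarrow \bigoplus_{w \in T_{K_n}} (\bigO_{K_n}/w)^\times \otimes_\Z \Z_p \longrightarrow A_{\Sigma, T}(K_n) \longrightarrow A_\Sigma(K_n) \longrightarrow 0 .
\]
All terms are compact, so every $\varprojlim^1$-term vanishes and $\varprojlim_n$ produces an exact sequence
\[
0 \longrightarrow C_\infty \longrightarrow A_{\Sigma, T}(K_\infty) \longrightarrow A_\Sigma(K_\infty) \longrightarrow 0,
\]
with $C_\infty$ a quotient of $D_\infty \coloneqq \varprojlim_n \bigoplus_{w \in T_{K_n}} (\bigO_{K_n}/w)^\times \otimes_\Z \Z_p$. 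Since $A_\Sigma(K_\infty)$ is a quotient of $A_{\Sigma,T}(K_\infty)$, the implication ``$(A_{\Sigma,T}(K_\infty))_\Gamma$ finite $\Rightarrow$ $(A_\Sigma(K_\infty))_\Gamma$ finite'' is immediate; conversely, by right-exactness of $(-)_\Gamma$ the module $(A_{\Sigma,T}(K_\infty))_\Gamma$ is an extension of $(A_\Sigma(K_\infty))_\Gamma$ by a quotient of $(C_\infty)_\Gamma$. So part (a) reduces to showing that $(D_\infty)_\Gamma$ is finite.

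To establish this, I would first note that, as $T$ is disjoint from $\Sigma \supseteq S_\ram(K_\infty / k)$, every $v \in T$ is unramified in $K_\infty/K$, and, not splitting completely in $k_\infty/k$, has only finitely many places above it in $K_\infty$, none of which splits completely in $K_\infty/K$. Decomposing $D_\infty$ along these places and identifying, for a place $\tilde w$ of $K_\infty$ over $v$ with restriction $\mathfrak w_0$ to $K$, the corresponding summand with the Tate module $\varprojlim_n \mu_{p^n}(\bigO_{K_n}/\mathfrak w_n)$, one sees that $D_\infty$ is a finitely generated $\Z_p$-module on which the (non-trivial, hence open) decomposition subgroup $\Gamma_{\tilde w} \subseteq \Gamma$ acts through its canonical identification with $\gal{\kappa(\tilde w)}{\kappa(\mathfrak w_0)}$, a topological generator of the latter acting as multiplication by $\NN \mathfrak w_0$. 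Since the norms $\NN v$ ($v \in T$) are rational integers $> 1$, a topological generator $\gamma$ of $\Gamma$ then acts on $D_\infty \otimes_{\Z_p} \Q_p$ without the eigenvalue $1$, so $(D_\infty)_\Gamma$ — and a fortiori $(C_\infty)_\Gamma$ — is finite, as needed.

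For part (b), the natural surjections $A_{\Sigma', T}(K_n) \twoheadrightarrow A_{\Sigma, T}(K_n)$ (inverting the primes above $Z \coloneqq \Sigma \setminus \Sigma'$) pass in the limit to a surjection $A_{\Sigma',T}(K_\infty) \twoheadrightarrow A_{\Sigma,T}(K_\infty)$, whence $(A_{\Sigma,T}(K_\infty))_\Gamma$ is a quotient of $(A_{\Sigma',T}(K_\infty))_\Gamma$; this gives the first implication, which needs no hypothesis. For the converse, let $C_n \subseteq A_{\Sigma',T}(K_n)$ be the kernel of this surjection, i.e.\@ the subgroup generated by the classes of the (finitely many) primes of $K_n$ above $Z$; taking $\varprojlim_n$ yields $0 \to C \to A_{\Sigma',T}(K_\infty) \to A_{\Sigma,T}(K_\infty) \to 0$ with $C = \varprojlim_n C_n$, and it suffices to prove $C = 0$. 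Each $C_n$ is a quotient of $\bigoplus_{\mathfrak w \in Z_{K_n}} \Z_p$, and compatibility of the ideal-class maps with the norm transition maps forces the transition on the summand attached to a prime $\mathfrak w_n$ to be multiplication by the residue degree $f(\mathfrak w_n/\mathfrak w_{n-1})$. But $\Sigma' \supseteq S_\ram(K_\infty / K)$ shows each $v \in Z$ is unramified in $K_\infty/K$, and by hypothesis $v$ does not split completely in $K_\infty/K$, so these residue degrees grow by a factor of $p$ at every sufficiently high layer; hence the relevant inverse limits vanish, $C = 0$, and $A_{\Sigma',T}(K_\infty) \cong A_{\Sigma,T}(K_\infty)$, which gives both implications of (b).

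The main obstacle is the finiteness of $(D_\infty)_\Gamma$ in part (a): this requires pinning down the difference module $D_\infty$ precisely and invoking the standard, but slightly fiddly, fact that Frobenius acts on the relevant Tate modules of $p$-power roots of unity by a number $\neq 1$ — and this is exactly where the hypothesis that the places of $T$ do not split completely in $k_\infty/k$ (hence in $K_\infty/K$) is used. In part (b) the analogous point, namely that the obstruction module $C$ vanishes outright, is by contrast comparatively soft, following once one observes that $\Sigma' \supseteq S_\ram(K_\infty/K)$ together with the non-splitting hypothesis forces the residue degrees of the primes above $\Sigma \setminus \Sigma'$ to be unbounded.
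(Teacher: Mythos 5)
Your proof is correct. Part (b) is essentially identical to the paper's argument (the paper phrases it as: places in $\Sigma\setminus\Sigma'$ are eventually inert, so the norm maps act as multiplication by $p^{m-n}$ on their classes, forcing $A_{\Sigma',T}(K_\infty) = A_{\Sigma,T}(K_\infty)$), while your treatment is just slightly more explicit about the modules $C_n$ and $C$.

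For part (a) you take a genuinely different route for the key finiteness of $(D_\infty)_\Gamma$. The paper starts from the explicit two-term free resolution
\[
0 \to \bigoplus_{v \in T} \Z_p[\cG_n] \xrightarrow{(1 - \NN v^{-1}\Frob_v)_v} \bigoplus_{v\in T} \Z_p[\cG_n] \to \mathbb{F}_{T_{K_n}}^\times \to 0
\]
arising as a representative of the $T$-modification complex $\bigoplus_{v\in T}\mathrm{R}\Gamma_f(K_v,\mathrm{Ind}(\Z_p(1)))$; passing to the limit gives $0 \to \bigoplus_v\bLambda \to \bigoplus_v\bLambda \to \varprojlim\mathbb{F}^\times_{T_{K_n}} \to 0$, and taking $\Gamma$-coinvariants immediately identifies $(\varprojlim\mathbb{F}^\times_{T_{K_n}})_\Gamma$ with $\mathbb{F}^\times_{T_K}$. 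You instead decompose $D_\infty$ into a direct sum of modules $\mathrm{Ind}_{\Gamma_{\tilde w}}^\Gamma T_{\tilde w}$ of Tate modules of residue fields and conclude via the fact that the Frobenius generator of $\Gamma_{\tilde w}$ acts by $\NN\mathfrak{w}_0 > 1$, so $(T_{\tilde w})_{\Gamma_{\tilde w}} \cong \Z_p/(\NN\mathfrak{w}_0 - 1)$ is finite. Both routes are valid; the paper's resolution-based approach is cleaner in that it computes the $\Gamma$-coinvariants exactly (not merely proving finiteness) and does not require unwinding the Galois-module structure of the Tate modules by hand, whereas your argument is more elementary and self-contained, not relying on the $T$-modified cohomology formalism. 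A small remark: since $T$ is only required to avoid $\Sigma$, places of $T$ above $p$ are not excluded a priori, but in that case $\kappa(w)^\times$ has trivial $p$-part, so the corresponding summand of $D_\infty$ vanishes and your eigenvalue argument only ever needs to deal with $v\nmid p$, where $\NN\mathfrak{w}_0 \in \Z_p^\times$.
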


\begin{proof}
The exact sequence 
\begin{cdiagram}
    \mathbb{F}_{T_{K_n}}^\times : =   \bigoplus_{w \in T_{K_n}} \left ( 
       \faktor{\bigO_{K_n}}{w} \right )^\times \arrow{r} & 
       A_{\Sigma, T} (K_n) \arrow{r} & A_{\Sigma} (K_n) \arrow{r} & 0
\end{cdiagram}
implies that it is sufficient to show that the module $ \varprojlim_n \mathbb{F}_{T_{K_n}}^\times$ has finite $\Gamma_K$-coinvariants in order to prove (a). Now taking the limit of the exact sequences (which are obtained as representatives of the complexes $\bigoplus_{v \in T} \mathrm{R} \Gamma_f ( K_v, \mathrm{Ind}_{G_{K_n}}^{G_k} (\Z_p (1)) )$, see \cite[(19)]{BurnsFlach01})
\begin{equation} \label{T-modiciation-sequence-1}
\begin{tikzcd}
       0 \arrow{r} & \bigoplus_{v \in T} \Z_p [\cG_{K_n}] \arrow{rrr}{(1 - \text{N} v^{-1} \cdot \Frob_v)_v} & & & \bigoplus_{v \in T} \Z_p [\cG_{K_n}] \arrow{r} & 
        \mathbb{F}_{T_{K_n}}^\times \arrow{r} & 0
\end{tikzcd}
\end{equation}
gives an exact sequence
\begin{equation} \label{T-modiciation-sequence-2}
\begin{tikzcd}
       0 \arrow{r} & \bigoplus_{v \in T} \bLambda \arrow{r} & \bigoplus_{v \in T} \bLambda \arrow{r} & \varprojlim_n \mathbb{F}_{T_{K_n}}^\times \arrow{r} & 0.
\end{tikzcd}
\end{equation}
By taking $\Gamma$-coinvariants of (\ref{T-modiciation-sequence-2}) we obtain the exact sequence
\begin{cdiagram}
 \bigoplus_{v \in T} \Z_p [\cG] \arrow{rrr}{1 - \text{N} v^{-1} \cdot \Frob_v} & & & \bigoplus_{v \in T} \Z_p [\cG] \arrow{r} & 
       \Big ( \varprojlim_n \mathbb{F}_{T_{K_n}}^\times \Big)_\Gamma \arrow{r} & 0.
\end{cdiagram}
Comparing with (\ref{T-modiciation-sequence-1}), we deduce that $\big ( \varprojlim_n \mathbb{F}_{T_{K_n}}^\times \big)_{\Gamma} = \mathbb{F}_{T_{K}}^\times$. In particular, said module is finite.\smallskip\\
The first part of (b) is clear since $A_{\Sigma, T} (K_\infty)$ is a quotient of $A_{\Sigma', T} (K_\infty)$. For the second part we note that the assumption implies that any place $v \in \Sigma \setminus \Sigma'$ is inert in $K_m / K_n$ for big enough integers $n, m \geq 0$. It follows that the norm map $A_{\Sigma', T} (K_m) \to A_{\Sigma', T} (K_n)$ induces multiplication by $p^{m - n}$ on the class $[v]$. Thus, we must have $A_{\Sigma', T} (K_\infty) = A_{\Sigma, T} (K_\infty)$ and this proves the claim. 
\end{proof}

In the sequel, for any finite set $M$ of places of $k$, we set
\[
r_M (\chi) = \begin{cases}
| \{ v \in M \mid \chi ( \cG_v) = 1 \} | & \text{ if } \chi \neq 1, \\
|M| - 1 & \text{ if } \chi = 1.
\end{cases}
\]
Here $\cG_v \subseteq \cG$ denotes the decomposition group at $v$.\medskip\\ 
We are now in a position to state the main result of this subsection. 

\begin{thm} \label{Gross-Kuzmin-theorem}
For any character $\chi \in \widehat{\cG}$ such that $r_\Sigma (\chi) = r'$ the following assertions are equivalent.
\begin{enumerate}[label=(\roman*)]
\item The module $e_\chi \Q_p (\chi) A_{\Sigma, T} (K_\infty)^\Gamma$ vanishes. 
\item The map
\begin{align*}
\bigoplus_{v \in W} \Rec_{v, \chi} \: & e_\chi \Q_p (\chi) U_{K, \Sigma, T} \to e_\chi \Q_p (\chi)  Y_{K, W} \otimes_{\Z_p} \faktor{I ( \Gamma)}{I (\Gamma)^2 }, \\
& \quad a \mapsto e_\chi \sum_{v \in W} \sum_{\sigma \in \cG} \chi (\sigma) w \otimes \big ( \rec_w (\sigma^{-1} a) - 1   \big)
\end{align*}
is surjective.
\item The map
\[
\Rec_W \: e_\chi \Q_p (\chi) \exprod^{r'}_{\Z_p [\cG]} U_{K, \Sigma, T} \to e_\chi \Q_p (\chi) \exprod^r_{\Z_p [\cG]} U_{K, \Sigma, T}
\otimes_{\Z_p} \faktor{I ( \Gamma)^e}{I (\Gamma)^{e + 1} }
\]
defined in (\ref{Rec-map})
is non-zero (equivalently, injective).
\end{enumerate}
\end{thm}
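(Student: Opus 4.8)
The plan is to fix a character $\chi$ with $r_\Sigma(\chi) = r'$ and to pass to the localisation $R := \bLambda_\p$ of $\bLambda$ at the height-one prime $\p = \ker\big( \bLambda \xrightarrow{\chi} \Q_p(\chi) \big)$. As recalled in the proof of Lemma~\ref{universal-norms-p-units}(c), $R$ is a discrete valuation ring; I would moreover observe that $\gamma - 1$ is a uniformiser of $R$, because $\ker(\chi)$ is a minimal prime of the reduced ring $\Z_p[\cG]$, whence $R/(\gamma - 1) = \Z_p[\cG]_{\ker(\chi)} = \Q_p(\chi)$ is the residue field of $R$. The hypothesis $r_\Sigma(\chi) = r'$ forces $\{ v \in \Sigma \mid \chi(\cG_v) = 1 \} = V'$; since archimedean places split completely in $k_\infty/k$, this means the set $V_\chi$ of Lemma~\ref{universal-norms-p-units}(c) equals $V$, and combining this with the Dirichlet unit theorem and Lemma~\ref{universal-norms-p-units}(c) records the dimensions $\dim_{\Q_p(\chi)} e_\chi\Q_p(\chi) U_{K, \Sigma, T} = r'$, $\rank_R (U_{K_\infty, \Sigma, T})_\p = r$, and $\dim_{\Q_p(\chi)} e_\chi\Q_p(\chi) Y_{K, W} = e$. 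All three assertions concern the fixed $\chi$, so it is enough to argue after this localisation.

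For the equivalence (ii)$\,\Leftrightarrow\,$(iii) I would argue purely multilinearly. Since $\Rec_W$ is the composite $\Rec_{v_e} \circ \dots \circ \Rec_{v_1}$ of the maps obtained from (\ref{Rec-map}) via (\ref{biduals-duals-hom}), choosing a $\Q_p(\chi)$-basis $u_1, \dots, u_{r'}$ of $e_\chi\Q_p(\chi) U_{K, \Sigma, T}$ one computes the value $\Rec_W(u_1 \wedge \dots \wedge u_{r'})$, which generates the image of the one-dimensional space $e_\chi\Q_p(\chi)\exprod^{r'}_{\Z_p[\cG]} U_{K, \Sigma, T}$, in terms of the $e \times e$ minors of the matrix $\big( \Rec_{v_a}(u_i) \big)_{1 \le a \le e,\, 1 \le i \le r'}$. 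Thus $\Rec_W$ is non-zero exactly when this matrix has rank $e$, which — the target $e_\chi\Q_p(\chi) Y_{K, W} \otimes I(\Gamma)/I(\Gamma)^2$ being $e$-dimensional — is the surjectivity asserted in (ii); the parenthetical equivalence "non-zero $\Leftrightarrow$ injective" in (iii) is automatic because the source is one-dimensional. That the $\Rec_{v, \chi}$ really are the components of $\bigoplus_{v \in W}\Rec_{v, \chi}$ is \cite[Lem.~4.2]{BKS}.

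The substance lies in (i)$\,\Leftrightarrow\,$(ii), which I would obtain by Iwasawa descent on the complex $D^\bullet_{K_\infty, \Sigma, T}$. Localising (\ref{yoneda-Iwasawa}) and (\ref{yoneda-Iwasawa-total}) at $\p$ and applying $-\otimes^{\mathbb{L}}_R R/(\gamma - 1)$ to the latter (using that $\gamma - 1$ is a non-zero-divisor by Lemma~\ref{silly-little-lemma}, so that the only non-trivial Tor is $\mathrm{Tor}_1$), the universal-coefficient sequences produce a short exact sequence $0 \to (U_{K_\infty, \Sigma, T})_{\p,\Gamma} \to e_\chi\Q_p(\chi) U_{K, \Sigma, T} \xrightarrow{\beta} H^1(D^\bullet_{K_\infty, \Sigma, T})_\p^{\,\Gamma} \to 0$ together with an identification $H^1(D^\bullet_{K_\infty, \Sigma, T})_{\p,\Gamma} \cong e_\chi\Q_p(\chi) H^1(D^\bullet_{K, \Sigma, T})$, from which the dimension count above yields $\dim_{\Q_p(\chi)} H^1(D^\bullet_{K_\infty, \Sigma, T})_\p^{\,\Gamma} = e$. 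Next I would compute $X_{K_\infty, \Sigma, \p}$ from the decomposition of $Y_{K_\infty, \Sigma}$ over the places of $\Sigma$: a place in $V$ contributes a free $R$-summand; a place in $\Sigma \setminus V'$ localises to $0$ (for $\chi \neq 1$, its decomposition augmentation ideal is not contained in $\p$); and a place $v \in W$ contributes $R/(\gamma - 1)$, because $1 + \gamma + \dots + \gamma^{p^{m_v} - 1}$ is a unit in $R$, where $p^{m_v}$ is the residue degree of $v$ in $K_\infty/K$. Using also $\big( \varprojlim_n \Z_p \big)_\p = 0$ for $\chi \neq 1$, this gives $X_{K_\infty, \Sigma, \p} \cong R^{\oplus r} \oplus \big( R/(\gamma - 1) \big)^{\oplus e}$, whose $\Gamma$-invariants are $e$-dimensional and, via the relevant norm elements, identify with $e_\chi\Q_p(\chi) Y_{K, W} \otimes I(\Gamma)/I(\Gamma)^2$. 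Taking $\Gamma$-invariants of the localisation of (\ref{yoneda-Iwasawa}) then produces a left-exact sequence $0 \to A_{\Sigma, T}(K_\infty)_\p^{\,\Gamma} \to H^1(D^\bullet_{K_\infty, \Sigma, T})_\p^{\,\Gamma} \xrightarrow{\pi^\Gamma} X_{K_\infty, \Sigma, \p}^{\,\Gamma}$ in which the two non-trivial terms are both $e$-dimensional; since $A_{\Sigma, T}(K_\infty)_\p$ is a torsion $R$-module, it vanishes if and only if $\pi^\Gamma$ is injective, equivalently — equal dimensions — an isomorphism, equivalently the composite $\pi^\Gamma \circ \beta$ is surjective.

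What remains, and what I expect to be the main obstacle, is the identification of the composite $\pi^\Gamma \circ \beta \colon e_\chi\Q_p(\chi) U_{K, \Sigma, T} \to X_{K_\infty, \Sigma, \p}^{\,\Gamma} = e_\chi\Q_p(\chi) Y_{K, W} \otimes I(\Gamma)/I(\Gamma)^2$ with $\bigoplus_{v \in W}\Rec_{v, \chi}$. Here $\beta$ is the Bockstein homomorphism attached to the $\Z_p$-extension $K_\infty/K$, while $\pi$ is the map of Proposition~\ref{propeties-iwasawa-complex}(c), which by part~\ref{surjection-pi-maps-places} sends the distinguished basis elements to the divisors $w_{K_n, i} - w_{K_n, 0}$; unwinding these descriptions together with the local class field theory input behind (\ref{Rec-map}), i.e.\@ \cite[Prop.~2.7]{Sano}, should show that $\pi^\Gamma \circ \beta$ agrees up to sign with $\bigoplus_{v \in W}\Rec_{v, \chi}$. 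This is essentially the same identification of a Bockstein with a reciprocity map that underlies Lemma~\ref{MRS-Solomon-formulation} and the formulation of Conjecture~\ref{MRS}, so the bookkeeping — while delicate, especially in the case $\chi = 1$, where $r_\Sigma(1) = r'$ forces $\Sigma = V' \cup \{ v_0 \}$ and $\big( \varprojlim_n \Z_p \big)_\p \neq 0$, so that the contribution of the extra place $v_0$ to $Y_{K_\infty, \Sigma, \p}$ and to the degree map must be checked to cancel — should go through. Once this identification is in place, the chain of equivalences above closes and the theorem follows.
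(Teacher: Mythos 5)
Your proposal follows essentially the same route as the paper: both proofs perform a Bockstein-type descent on the complex $D^\bullet_{K_\infty, \Sigma, T}$, use the dimension count supplied by Lemma~\ref{universal-norms-p-units}\,(c) to reduce (i)\,$\Leftrightarrow$\,(ii) to whether $\pi_{K, \infty}$ is surjective on $\Gamma$-invariants, and then identify the resulting map with the reciprocity maps to close the loop, while (ii)\,$\Leftrightarrow$\,(iii) is multilinear algebra. The differences are cosmetic: you work throughout in the localisation $R = \bLambda_\p$ (a DVR with uniformiser $\gamma - 1$) and deduce the relevant exact sequences from universal coefficients, whereas the paper exhibits the same data via the snake-lemma sequence (\ref{norms-sequence}) and the commutative diagram (\ref{some-diagram}) comparing $H^1(D^\bullet_{K_\infty, \Sigma, T})^\Gamma$ with $H^1(D^\bullet_{K, \Sigma, T})$; and for (ii)\,$\Leftrightarrow$\,(iii) you argue directly via $e \times e$ minors, where the paper instead cites \cite[Lem.~4.2]{BKS} combined with the bound $\dim_{\Q_p(\chi)} e_\chi\Q_p(\chi)\ker\beta_\infty \geq r$. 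Both routes are valid.

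The one step you flag as a potential obstacle --- identifying, for $v \in W$, the composite $w^\ast \circ \pi_K \circ \beta_\infty$ with $\Rec_v$ and checking it vanishes for $v \in V$ --- is exactly the content of Lemma~\ref{bockstein-lem-1} in the paper, whose proof simply passes to the inverse limit the finite-level computations of \cite[Lem.~5.20 and Lem.~5.21]{BKS}. You therefore do not need to re-derive it from \cite[Prop.~2.7]{Sano} (which only serves to show that the $\Rec_v$ descend to the exterior biduals); invoking Lemma~\ref{bockstein-lem-1}, or the cited BKS lemmas directly, removes the hedge and completes your argument, including the $\chi = 1$ case you flag, since those references cover it uniformly.
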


The proof of this result will be given in \S\,\ref{bockstein-section}.

\begin{rk} \label{gross-kuzmin-rk-1}
If $K_\infty / K$ is the cyclotomic $\Z_p$-extension, then the equivalence (i) $\Leftrightarrow$ (ii) in Theorem~\ref{Gross-Kuzmin-theorem} is already known due to \cite[Thm.~1.14]{Kol91}. In general, the implication (i) $\Rightarrow$ (iii) is proved in \cite[\S\,5B]{BKS2}.\\ 
If $K$ is a CM extension of a totally real field $k$ and $\chi$ is totally odd, then (iii) is equivalent to the non-vanishing of the \textit{Gross regulator}. In this setting, Gross has proved in \cite[Prop.~1.16]{Gross} that condition (iii) holds if there is at most one prime $\p$ of $k$ above $p$ such that $\chi (\p) = 1$. 
\end{rk}

We end this subsection by recording the following technical observation that will prove useful in applications of Theorem \ref{Gross-Kuzmin-theorem}.

\begin{lem} \label{soerens-lemma}
Let $\chi \in \widehat{\cG}$ be a character and write $K_\chi$ for the subfield of $K$ cut out by the character $\chi$. The following assertions are equivalent:
\begin{enumerate}[label=(\roman*)]
\item $e_\chi \Q_p (\chi) A_{\Sigma, T} (K_\infty)_\Gamma = 0$,
\item $e_\chi \Q_p (\chi) A_{\Sigma, T} ( K_{\chi, \infty})_{\Gamma_\chi} = 0$. 
\end{enumerate}
Here $\Gamma_\chi = \gal{K_{\chi, \infty}}{K_\chi}$ denotes the Galois group of the $\Z_p$-extension $K_{\chi, \infty}$ of $K_\chi$. 
\end{lem}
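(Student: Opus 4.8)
The statement to prove is Lemma~\ref{soerens-lemma}, which asserts the equivalence of the vanishing of $e_\chi \Q_p(\chi) A_{\Sigma, T}(K_\infty)_\Gamma$ and of $e_\chi \Q_p(\chi) A_{\Sigma, T}(K_{\chi, \infty})_{\Gamma_\chi}$. The plan is to relate the two Iwasawa modules $A_{\Sigma, T}(K_\infty)$ and $A_{\Sigma, T}(K_{\chi, \infty})$ via the norm/corestriction map associated to the subextension $K_\infty / K_{\chi, \infty}$ and then check that this map becomes an isomorphism after applying the exact functor $e_\chi \Q_p(\chi) \otimes_{\Z_p[\cG]} (-)$. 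First I would observe that $\cG = \gal{K}{k}$ acts on $A_{\Sigma, T}(K_\infty)$ and that $e_\chi \Q_p(\chi) \otimes_{\Z_p[\cG]} (-)$ only sees the $\chi$-isotypic part; since $\chi$ factors through $\gal{K_\chi}{k}$, this part is controlled by the behaviour over $K_\chi$. Concretely, for each finite layer $n$ the natural corestriction map $A_{\Sigma, T}(K_n) \to A_{\Sigma, T}(K_{\chi, n})$ and the extension-of-ideals map in the other direction compose (in both orders) to multiplication by $[K_n : K_{\chi, n}]$, which is a unit in $\Q_p(\chi)$ once we restrict to the $\chi$-component where the complementary part of the group acts trivially. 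Passing to the inverse limit over $n$ and tensoring with $e_\chi \Q_p(\chi)$ therefore yields an isomorphism
\[
e_\chi \Q_p(\chi) \otimes_{\Z_p[\cG]} A_{\Sigma, T}(K_\infty) \;\cong\; e_\chi \Q_p(\chi) \otimes_{\Z_p[\gal{K_\chi}{k}]} A_{\Sigma, T}(K_{\chi, \infty}).
\]

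Next I would reconcile the two notions of coinvariants. On the left, $\Gamma = \gal{K_\infty}{K}$ acts, and on the right $\Gamma_\chi = \gal{K_{\chi, \infty}}{K_\chi}$ acts; under the identification $K_\infty = K \cdot k_\infty$ and $K_{\chi, \infty} = K_\chi \cdot k_\infty$ both $\Gamma$ and $\Gamma_\chi$ are canonically identified with $\gal{k_\infty}{k_\infty \cap K}$, respectively $\gal{k_\infty}{k_\infty \cap K_\chi}$, hence with open subgroups of $\gal{k_\infty}{k}$ that agree up to finite index. Taking $\Gamma$-coinvariants commutes with the exact functor $e_\chi \Q_p(\chi) \otimes (-)$ (as $\Q_p(\chi)$ is flat over $\Z_p$ and $e_\chi$ is an idempotent in the relevant group algebra), and since a finite-index subgroup inclusion $\Gamma' \subseteq \Gamma$ induces, after inverting $p$ — which we have done — isomorphic coinvariant modules for any $\Z_p\llbracket \Gamma\rrbracket$-module on which the finite quotient acts (via the trace/averaging argument), the two coinvariant spaces are identified. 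Therefore the isomorphism of the previous display descends to an isomorphism of $\Gamma$- (resp. $\Gamma_\chi$-) coinvariants, and in particular one vanishes if and only if the other does.

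The main obstacle I anticipate is the careful bookkeeping of \emph{which} group acts and \emph{how} the decomposition/ramification data for $\Sigma$ and $T$ transfer from $K_\infty$ to $K_{\chi, \infty}$: one must ensure that the hypothesis ``no finite place of $\Sigma$ splits completely in $k_\infty/k$'' is inherited (it is, since it is a statement purely about $k_\infty/k$), and that the $T$-modification behaves compatibly under corestriction (the relevant Fitting-ideal or exact-sequence comparison, analogous to Lemma~\ref{Gross-Kuzmin-Lemma}\,(a), goes through because the $T$-part $\mathbb{F}^\times_{T}$ contributes only finite modules). A secondary subtlety is that the corestriction map on class groups is not an isomorphism integrally — only after tensoring with $\Q_p(\chi)$ on the $\chi$-component — so I would be careful to phrase everything rationally from the outset and invoke the idempotent $e_\chi$ to kill the kernel and cokernel, which are supported on the complementary idempotents. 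Once these points are in place the argument is essentially formal; alternatively, one could deduce the lemma directly from Theorem~\ref{Gross-Kuzmin-theorem} by noting that conditions (ii) and (iii) there are visibly intrinsic to the $\chi$-component and hence unchanged upon replacing $K$ by $K_\chi$, but I would prefer the direct comparison as it avoids re-checking the hypothesis $r_\Sigma(\chi) = r'$.
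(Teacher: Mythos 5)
Your proposal matches the paper's strategy in its essential points: relate the two modules via the norm/corestriction map $\NN_{K_\infty / K_{\chi,\infty}}$ and show it becomes an isomorphism after the $\chi$-projection because the relevant group element acts as a scalar that becomes invertible. The paper's version is more streamlined in one respect: rather than separately establishing a module isomorphism and then comparing $\Gamma$-coinvariants with $\Gamma_\chi$-coinvariants via a trace argument, it localises once and for all at the height-one prime $\p = \ker\{\bLambda \xrightarrow{\chi} \Z_p[\im\chi]\}$, which contains both $I_\Gamma$ and the non-$\chi$ part of $\Z_p[\cG]$, so the identification $e_\chi \Q_p(\chi) \otimes_{\Z_p[\cG]} M_\Gamma \cong \bLambda_\p/\p\bLambda_\p \otimes_\bLambda M$ takes care of the coinvariants automatically and avoids your finite-index averaging step. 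Two small points to tighten in your write-up. First, the layer indexing: if $K \cap K_{\chi,\infty} = K_{\chi,m}$, the compatible norm maps run $A_{\Sigma,T}(K_n) \to A_{\Sigma,T}(K_{\chi,n+m})$, not to level $n$, so that the degree $[K_n : K_{\chi,n+m}] = |H|$ with $H = \gal{K}{K_{\chi,m}}$ is eventually constant and the maps really do form a morphism of inverse systems. Second, and more substantively, the composition extension-then-corestriction $\iota \circ N$ is integrally the norm element $\NN_H$, not the scalar $|H|$; only $N \circ \iota = |H|$ holds integrally. Your parenthetical caveat that the equality holds ``once we restrict to the $\chi$-component where the complementary part of the group acts trivially'' is the correct fix — indeed $H \subseteq \ker\chi$, so $\chi(\NN_H) = |H|$ — but since this is precisely the place where the idempotent $e_\chi$ (equivalently, localisation at $\p$, where $\NN_H \in \bLambda_\p^\times$) is doing real work, it deserves to be stated as the main step rather than as an afterthought.
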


\begin{proof}
Let $m$ be such that $K \cap K_{\chi, \infty} = K_{\chi, m}$ and write $H$ for $\gal{K}{K_{\chi, m}}$, which we can identify with $\gal{K_\infty}{K_{\chi, \infty}}$ and therefore view as a subgroup of $\gal{K_{\chi, \infty}}{k}$. Observe that the norm maps $\NN_{K_n / K_{\chi, n + m}} \: A_{\Sigma, T} (K_n) \to A_{\Sigma, T} ( K_{\chi, n + m})$ induce a map 
\[
\NN_{K_\infty / K_{\chi, \infty}} \: A_{\Sigma, T} (K_\infty) \to A_{\Sigma, T} (K_{\chi, \infty})
\]
which factors as 
\begin{cdiagram}
A_{\Sigma, T} (K_\infty) \arrow{r}{\cdot \NN_H} \arrow{d}[left]{\NN_{K_\infty / K_{\chi, \infty}}} & A_{\Sigma, T} (K_\infty)^H \\
A_{\Sigma, T} (K_{\chi, \infty}) \arrow{ur}[right, yshift=-0.2cm]{i} & 
\end{cdiagram}
where $i$ is the natural map induced by the inclusions $K_{\chi, n} \subseteq K_n$. 
Define a height-one prime ideal of $\bLambda$ as $\p = \ker \{ \bLambda \stackrel{\chi}{\longrightarrow} \Z_p [\im \chi] \}$.\\
We have $\chi (\NN_H) = |H| \neq 0$, hence $\NN_H \in \bLambda_\p^\times$ and so multiplication by $\NN_H$ (which is the same as $i \circ \NN_{K_\infty / K_{\chi, \infty}}$) becomes an isomorphism. Moreover, the composite $\NN_{K_\infty / K_{\chi, \infty}} \circ i$ coincides with multiplication by $|H|$ and is therefore also bijective after localisation at $\p$. It follows that $\NN_{K_\infty / K_{\chi, \infty}}$ induces an isomorphism 
\[
\big ( A_{\Sigma, T} (K_\infty) \big)_\p \stackrel{\simeq}{\longrightarrow} \big ( A_{\Sigma, T} (K_{\chi, \infty}) \big)_\p.
\]
Now, we have an isomorphism of $\bLambda$-modules $\faktor{\bLambda_\p}{\p \bLambda_\p} \cong e_\chi \Q_p (\chi)$ and thus obtain 
 \begin{align*}
e_\chi \Q_p(\chi) \otimes_{\Z_p [\cG]} A_{\Sigma, T} (K_\infty)_\Gamma & \cong  \faktor{\bLambda_\p}{\p \bLambda_\p} \otimes_\bLambda A_{\Sigma, T} (K_\infty) \\
& \cong \faktor{\bLambda_\p}{\p \bLambda_\p} \otimes_\bLambda A_{\Sigma, T} (K_{\chi, \infty}) \\
& \cong \faktor{\bLambda_\p}{\p \bLambda_\p} \otimes_{\Z_p [\cG_\chi]} \big ( \Z_p [\cG_\chi] \otimes_\bLambda A_{\Sigma, T} (K_{\chi, \infty}) \big) \\
& \cong e_\chi \Q_p(\chi) \otimes_{\Z_p [\cG_\chi]} A_{\Sigma, T} (K_{\chi, \infty})_{\Gamma_\chi},
 \end{align*}
 thereby proving the Lemma. 
 \end{proof}

\subsection{Computation of Bockstein homomorphisms} \label{bockstein-section}

To prove Theorem \ref{Gross-Kuzmin-theorem} we will perform a computation of \textit{Bockstein maps} as in \cite[\S\,5B]{BKS2} (see also \cite[\S\,10]{Burns07}). 
The Bockstein homomorphism at level $n$ is defined as the map
\[
\beta_n \: U_{K, \Sigma, T} \to H^1 (D^\bullet_{K_n, \Sigma, T}) \otimes_{\Z_p} I ({\Gamma_n}) 
\to H^1 (D^\bullet_{K, \Sigma, T}) \otimes_{\Z_p} \faktor{ I ( \Gamma_n)}{I ({\Gamma_n})^2}, 
\]
where the first arrow is the connecting homomorphism arising from the exact triangle
\begin{cdiagram}
D^\bullet_{K_n, \Sigma, T} \otimes_{\Z_p [\cG_n]} I_{\Gamma_n} \arrow{r} & 
D^\bullet_{K_n, \Sigma, T} \arrow{r} & D^\bullet_{K_n, \Sigma, T} \otimes_{\Z_p [\cG_n]} \Z_p [\cG]
\arrow{r} & \phantom{X}.
\end{cdiagram}
By taking the limit over $n$, we obtain a map 
\begin{equation} \label{bockstein-Iwasawa-1}
\beta_\infty \: U_{K, \Sigma, T} \to H^1 (D^\bullet_{K, \Sigma, T}) \otimes_{\Z_p} \varprojlim_n \faktor{ I ( \Gamma_n)}{I ({\Gamma_n})^2} \cong H^1 (D^\bullet_{K, \Sigma, T})  \otimes_{\Z_p} \faktor{ I (\Gamma)}{I (\Gamma)^2 } 
\end{equation}
that is identified with the map
\[
U_{K, \Sigma, T} \stackrel{\delta'}{\to} H^1 (D^\bullet_{K_\infty, \Sigma, T} ) \otimes_{\Z_p} I (\Gamma) \to 
 H^1 (D^\bullet_{K, \Sigma, T})  \otimes_{\Z_p} \faktor{ I (\Gamma)}{I (\Gamma)^2 } 
,\]
where the map $\delta'$ is the connecting homomorphism induced by the triangle
\begin{cdiagram}
D^\bullet_{K_\infty, \Sigma, T} \otimes_{\bLambda} I_{\Gamma} \arrow{r} & 
D^\bullet_{K_\infty, \Sigma, T} \arrow{r} & D^\bullet_{K_\infty, \Sigma, T} \otimes_{\bLambda} \Z_p [\cG]
\arrow{r} & \phantom{X}.
\end{cdiagram}
To make the definition of $\beta_\infty$ more explicit we now fix a topological generator $\gamma \in \Gamma$ and note that this choice
gives rise to an identification of the above triangle with the triangle induced by multiplication by $\gamma - 1$. In particular, it allows to view $\delta'$ as the boundary homomorphism $\delta$ arising from an application of the snake lemma to the following commutative diagram:
\begin{cdiagram}[row sep=small]
       & & & 0 \arrow{d} & \\
       & & & H^0 (D^\bullet_{K_n, \Sigma, T}) \arrow{d} & \\
       0 \arrow{r} & \Pi_\infty \arrow{d}{\phi} \arrow{r}{\cdot (\gamma - 1)} &
       \Pi_\infty \arrow{d}{\phi} \arrow{r} & \Pi_0 \arrow{r} \arrow{d}{\phi_0} & 0 \\
         0 \arrow{r} & \Pi_\infty \arrow{d} \arrow{r}{\cdot (\gamma - 1)} &
       \Pi_\infty \arrow{r} & \Pi_0 \arrow{r}  & 0 \\
       & H^1 (D^\bullet_{K_\infty, \Sigma, T}) \arrow{d} & & & \\
       & 0 & & &
\end{cdiagram}
Using that $(U_{K_\infty, \Sigma, T})_\Gamma \cong \UN^1_0$, the snake lemma also shows that this boundary map $\delta$ fits into the exact sequence
\begin{equation} \label{norms-sequence}
\begin{tikzcd}
     0 \arrow{r} & \UN^1_0 \arrow{r} & U_{K, \Sigma, T} \arrow{r}{\delta} & H^1 ( D^\bullet_{K_\infty, \Sigma, T})^\Gamma \arrow{r} & 0.
     \end{tikzcd}
\end{equation}
Our fixed choice of topological generator $\gamma \in \Gamma$ also induces an isomorphism $\faktor{I (\Gamma)}{I (\Gamma)^2} \cong \Z_p$, hence $\beta_\infty$ can be identified with the composite map
\begin{equation} \label{composite-bockstein}
U_{K, \Sigma, T} \stackrel{\delta}{\longrightarrow} H^1 (D^\bullet_{K_\infty, \Sigma, T})^\Gamma \subseteq H^1 (D^\bullet_{K_\infty, \Sigma, T}) \to H^1 (D^\bullet_{K_\infty, \Sigma, T})_\Gamma \cong H^1 (D^\bullet_{K, \Sigma, T}).
\end{equation}

\begin{lem} \label{bockstein-lem-1}
Let $v \in \Sigma$ and recall that we have previously fixed a place $w$ of $K$ lying above $v$. 
The composite map 
\begin{align*}
\beta_w \: U_{K, \Sigma, T}  & \stackrel{\beta_\infty}{\longrightarrow} H^1 (D^\bullet_{K, \Sigma, T})  \otimes_{\Z_p} \faktor{ I (\Gamma)}{I (\Gamma)^2 }
\stackrel{\pi_K}{\to} X_{K, \Sigma } \otimes_{\Z_p} \faktor{ I (\Gamma)}{I (\Gamma)^2 } \\
&
  \stackrel{w^\ast}{\longrightarrow} \Z_p [\cG] \otimes_{\Z_p} \faktor{ I (\Gamma)}{I (\Gamma)^2 } 
\end{align*}
is zero if $v  \in V$, and coincides with $\Rec_v$ if $v \in W$. Here we have used the map $\pi_K$ appearing in (\ref{yoneda-finite}) and the notation $w^\ast$ for the $\Z_p [\cG]$-linear dual of $w$ considered as an element of $Y_{K, \Sigma}$.
\end{lem}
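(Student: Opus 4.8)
The plan is to follow the method of \cite[\S\,5B]{BKS2} (compare \cite[\S\,10]{Burns07}): realise the Bockstein map $\beta_\infty$ as a cup product, localise it at $w$, and then invoke local class field theory. I would begin from the explicit description of $\beta_\infty$ recorded just above the statement. Thus, for $a\in U_{K,\Sigma,T}=H^0(D^\bullet_{K,\Sigma,T})=\ker\phi_0$, lift $a$ along $\Pi_\infty\twoheadrightarrow\Pi_0$ to $\hat a\in\Pi_\infty$, write $\phi(\hat a)=(\gamma-1)\hat b$ (legitimate since $\phi_0(a)=0$ and $\gamma-1$ is a non-zero divisor by Lemma~\ref{silly-little-lemma}), and read $\beta_\infty(a)$ off as the image of the class of $\hat b$ under $H^1(D^\bullet_{K_\infty,\Sigma,T})_\Gamma\cong H^1(D^\bullet_{K,\Sigma,T})$, tensored with $\gamma-1$. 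The exact triangle underlying this construction is $D^\bullet_{K_\infty,\Sigma,T}\otimes_\bLambda^{\mathbb{L}}(-)$ applied to the tautological extension $0\to I_\Gamma/I_\Gamma^2\to\bLambda/I_\Gamma^2\to\Z_p[\cG]\to0$, the class of which — transported along $I(\Gamma)/I(\Gamma)^2\cong\Gamma$ and (\ref{Isomorphism}) — is the canonical surjection $\psi_\gamma$ onto $\Gamma$; together with the interpretation of $H^0(D^\bullet_{K,\Sigma,T})$ as Kummer classes and the construction of these complexes in \cite{BKS}, this identifies $\beta_\infty$ with the cup-product map $a\mapsto a\cup\psi_\gamma$, exactly as in \textit{loc.\ cit.} The $T$-modification plays no role here, as it only enlarges the kernel of $\pi_K$ in (\ref{yoneda-finite}).

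The next step is to localise at $w$. By the construction of the complexes $D^\bullet_{E,M,Z}$ (see \cite[Prop.~3.1]{bdss} and Proposition~\ref{propeties-iwasawa-complex}), the composite $w^\ast\circ\pi_K\colon H^1(D^\bullet_{K,\Sigma,T})\to\Z_p[\cG]$ is the $\cG$-equivariant map induced by localisation at $w$ followed by the local invariant isomorphism $\inv_w\colon H^2(K_w,\Z_p(1))\stackrel{\sim}{\to}\Z_p$; and for $v\in W$ we have $W\subseteq V'$, so $v$ splits completely in $K/k$, hence $\cG_v=\{1\}$ and no norm over a decomposition group intervenes in $w^\ast$. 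Since cup products are compatible with localisation, it follows that the $\sigma$-component of $w^\ast\pi_K\beta_\infty(a)$ equals $\inv_w\big(\loc_w(\sigma a)\cup\loc_w(\psi_\gamma)\big)$, up to a fixed sign and a possible substitution $\sigma\leftrightarrow\sigma^{-1}$ produced by the identifications of the first step.

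It then remains to compute locally. If $v\in V$, the decomposition group $\Gamma_w$ of $w$ in $\Gamma\cong\Z_p$ is trivial — since $v$ splits completely in $K_\infty/K$, and also since $\Gamma$ is torsion-free while $v$ is archimedean — so $\loc_w(\psi_\gamma)=0$ and thus $\beta_w=0=\Rec_v$. If $v\in W$ (necessarily a finite place), recall that for $b\in\widehat{K_w^\times}=H^1(K_w,\Z_p(1))$ and $\psi\in H^1(K_w,\Z_p)=\Hom_{\mathrm{cont}}(G_{K_w},\Z_p)$ local class field theory gives $\inv_w(b\cup\psi)=\psi(\rec_w(b))$. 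Applying this with $\psi=\loc_w(\psi_\gamma)$, which factors through $\Gamma_w\hookrightarrow\Gamma$ and sends $\gamma\mapsto1$, identifies $\inv_w(\loc_w(\sigma a)\cup\loc_w(\psi_\gamma))$ with the image of $\rec_w(\sigma a)$ under $\Gamma\cong I(\Gamma)/I(\Gamma)^2$, namely $\rec_w(\sigma a)-1$. Summing over $\sigma\in\cG$ and rewriting through (\ref{Isomorphism}) yields exactly $\Rec_v(a)$, and a direct bookkeeping check shows that the sign and $\sigma\leftrightarrow\sigma^{-1}$ discrepancies from the earlier steps cancel, so that $\beta_w=\Rec_v$.

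The step I expect to be the main obstacle is the localisation compatibility used in the second paragraph: one must verify — within the formalism of the Weil-\'etale complexes $D^\bullet_{E,M,Z}$, and compatibly with the tower $K_\infty/K$ defining $D^\bullet_{K_\infty,\Sigma,T}$ — that the surjections $\pi$ and $\pi_K$ of (\ref{yoneda-Iwasawa}) and (\ref{yoneda-finite}) are induced by localisation to the local complexes $\mathrm{R}\Gamma(K_w,\Z_p(1))$, $w\in\Sigma_K$, and that the triangle defining $\beta_\infty$ restricts compatibly to the corresponding local triangles. In effect the real content is to set up a localisation triangle linking $D^\bullet_{K,\Sigma,T}$ (resp.\ $D^\bullet_{K_\infty,\Sigma,T}$) to the local terms $\mathrm{R}\Gamma(K_w,\Z_p(1))$; granting this, the local class field theory input and the equivariant bookkeeping are routine, if delicate.
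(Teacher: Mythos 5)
The paper's own proof is a one-line citation: $\beta_\infty$ is constructed as the limit of the finite-level Bockstein maps $\beta_n$, and the result is read off from the finite-level computations in \cite[Lem.~5.20 and Lem.~5.21]{BKS}, which compute precisely the composites $w^\ast\circ\pi_K\circ\beta_n$ for $v\in V$ and $v\in W$. Your argument does not take the limit of a cited finite-level formula but instead reconstructs the underlying computation directly at the Iwasawa level: realise $\beta_\infty$ as cup product with the canonical class $\psi_\gamma\in H^1(\Gamma,\Z_p)$, note that $w^\ast\circ\pi_K$ is localisation at $w$ followed by the invariant map, and apply the formula $\inv_w(b\cup\psi)=\psi(\rec_w(b))$ together with triviality of $\Gamma_w$ at archimedean $v$. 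This is precisely the argument that \cite[Lem.~5.20--5.21]{BKS} carry out, so the two proofs are the same in substance; you reconstruct what the paper outsources.

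Your sketch is sound, with the caveat that the two points you yourself flag — the sign/$\sigma\leftrightarrow\sigma^{-1}$ bookkeeping, and the compatibility of $\pi_K$ (resp.\ $\pi$) with the localisation triangles underlying the Weil-\'etale complexes $D^\bullet_{E,M,Z}$ — are not formal trivialities but are exactly the content of the cited lemmas of \cite{BKS}. In a write-up you would either carry these out as in \textit{loc.\,cit.} or simply cite them directly, which is what the paper does. One small addition: when arguing that $\beta_w=0$ for $v\in V$, it is cleaner to recall that the paper's running hypothesis (beginning of \S\,\ref{set-up-section}) is that all archimedean places split completely in $k_\infty/k$, so indeed $\Gamma_w=1$; this settles the $p=2$ case too, where triviality of $\Gamma_w$ is not automatic from torsion-freeness of $\Gamma$ alone.
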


\begin{proof}
This follows immediately from the corresponding results on $\beta_n$, see \cite[Lem.~5.20 and Lem.~5.21]{BKS}. 
\end{proof}

\textit{Proof (of Theorem \ref{Gross-Kuzmin-theorem}):}
The assumption $r_\Sigma (\chi) = r'$ implies that, if $\chi$ is non-trivial, we have $\chi (v_0) \neq 1$. We therefore have an isomorphism
 \[
 e_\chi \Q_p Y_{K, V'} \to e_\chi \Q_p X_{K, \Sigma}, \quad e_\chi \cdot \sum_{v \in V'} a_v w \mapsto e_\chi \cdot \sum_{v \in V'} a_v (w  - w_0). 
 \]
 Write $\alpha$ for the natural map $H^1 (D^\bullet_{K_\infty, \Sigma, T}) \to H^1 (D^\bullet_{K_\infty, \Sigma, T})_\Gamma \cong H^1 (D^\bullet_{K, \Sigma, T})$. 
By (\ref{composite-bockstein}) and Lemma \ref{bockstein-lem-1}, 
the map in (ii) then coincides with the composite
\begin{align*}
e_\chi \beta_\infty \:   e_\chi \Q_p (\chi) U_{K, \Sigma, T} & \stackrel{\delta}{\longrightarrow} 
e_\chi \Q_p (\chi) H^1 (D^\bullet_{K_\infty, \Sigma, T})^\Gamma \\
& \stackrel{\alpha}{\longrightarrow} e_\chi \Q_p (\chi) H^1 (D^\bullet_{K, \Sigma, T}) \\
& \stackrel{\pi_K}{\cong} e_\chi \Q_p (\chi) Y_{K, V'}
,
\end{align*}
and actually has image inside $e_\chi \Q_p (\chi) Y_{K, W}$. The first map $\delta$ is already surjective in any case by the exact sequence (\ref{norms-sequence}), so the above composite map surjects onto $e_\chi \Q_p (\chi) Y_{K, W}$ if and only if $\alpha$ does.\\
Now, we have a commutative diagram
\begin{equation} \label{some-diagram}
\begin{tikzcd}
e_\chi \Q_p (\chi) H^1 (D^\bullet_{K_\infty, \Sigma, T} )^\Gamma \arrow{r}{\alpha} \arrow{d}{\pi_{K, \infty}} & 
e_\chi \Q_p (\chi) H^1 (D^\bullet_{K, \Sigma, T}) \arrow{d}[right]{\pi_K}[left]{\simeq} \\
e_\chi \Q_p (\chi) Y_{K_\infty, V'}^\Gamma \arrow{r}{f} & 
e_\chi \Q_p (\chi) Y_{K, V'} \\
\end{tikzcd}
\end{equation}
where the map $f$ is induced by \begin{equation} \label{map-on-Y}
Y_{K_\infty, V'}^\Gamma \to Y_{K, V'}, \quad \sum_{i = 1}^{r'}  (a_{n, i} \cdot w_{K_n, i})_n \mapsto \sum_{i = 1}^{r'} a_{0, i} \cdot w_{K, i}
\end{equation}
via extension of scalars.
Observe that $Y_{K_\infty, V}$ is a $\Z_p \llbracket \Gamma \rrbracket$-projective direct summand of $Y_{K_\infty, V'}$. Given this, we have $Y_{K_\infty, V'}^\Gamma = Y_{K_\infty, W}^\Gamma$. We now claim that the map in (\ref{map-on-Y}) embeds $Y_{K_\infty, W}^\Gamma$ with finite index into $Y_{K, W}$.\\
To do this, we write $l_i$ for the index 
of the decomposition group of $w_i$ inside $\Gamma$.
For $1\leq i \leq r'$, each $w_i$ splits completely in $K / k$ and hence $K_{l_i}$ coincides with the decomposition field of $w_i$ inside the extension $K_\infty / k$. 
It follows that
\begin{align*}
Y_{K_\infty, W}^\Gamma & \cong \big ( \bigoplus_{i = r + 1}^{r'} \Z_p [\cG_{l_i}] \big)^\Gamma = \bigoplus_{i = r + 1}^{r'}  \Z_p [\cG_{l_i}]^\Gamma
=  \bigoplus_{i = r + 1}^{r'} \Z_p [\cG_{l_i}]^{\Gamma_{l_i}}
\end{align*}
As a consequence, the map (\ref{map-on-Y}) is given by
\[
Y_{K_\infty, W}^\Gamma \cong \bigoplus_{i = r + 1}^{r'} \Z_p [\cG_{l_i}]^{\Gamma_{l_i}} 
\longrightarrow 
\bigoplus_{i = r + 1}^{r'} \Z_p [\cG_{l_i}]_{\Gamma_{l_i}}
\cong 
\bigoplus_{i = r + 1}^{r'} \Z_p [\cG] \cong 
Y_{K, W}, 
\]
where the middle arrow is the natural projection map. Since, for each $i$, the cokernel of the map $\Z_p [\cG_{l_i}]^{\Gamma_{l_i}} \to \Z_p [\cG_{l_i}]_{\Gamma_{l_i}}$ is annihilated by $p^{l_i} = |\Gamma_{l_i}|$, this shows the claim. It follows that the map $f$ in (\ref{some-diagram}) maps $e_\chi \Q_p (\chi) Y_{K_\infty, W}^\Gamma$ isomorphically onto $e_\chi \Q_p (\chi) Y_{K, W}^\Gamma$ and, because the image of $\pi_K \circ \alpha$ is contained in $e_\chi \Q_p (\chi) Y_{K, W}$, we are therefore reduced to the question of when the map labelled $\pi_{K, \infty}$ in the diagram (\ref{some-diagram}) is surjective. From the exact sequence (\ref{yoneda-Iwasawa}) we obtain the exact sequence 
\begin{equation} \label{invariants-H2-sequence}
    \begin{tikzcd}
       0 \arrow{r} & (A_{\Sigma, T} (K_\infty))^\Gamma \arrow{r} & H^1 (D^\bullet_{K_\infty, \Sigma, T})^\Gamma \arrow{r}{\pi_{K, \infty}} & (X_{K_\infty, \Sigma})^\Gamma, 
       \end{tikzcd}
\end{equation}
 hence $\pi_{K, \infty}$ is surjective (in fact, an isomorphism) after extending scalars to $e_\chi \Q_p (\chi)$ if and only if $e_\chi \Q_p (\chi) (A_{\Sigma, T} (K_\infty))^\Gamma = 0$. This establishes (i) $ \Leftrightarrow$ (ii). \medskip \\
 As note above, we can identify the map $e_\chi \beta_\infty$ with the map $\bigoplus_{v \in W} \Rec_{v, \chi}$ that appears in statement (ii) of Theorem \ref{Gross-Kuzmin-theorem}. 
By construction $\ker \beta_\infty$ contains $\UN^1_0$. Thus, $e_\chi \Q_p (\chi) \ker \beta_\infty$ has dimension at least $r$ by Lemma \ref{universal-norms-p-units}\,(c). As a consequence, the exterior power $e_\chi \Q_p (\chi) \exprod^r_{\Z_p [\cG]} \ker \beta_\infty$ is non-zero and so the equivalence (ii) $\Leftrightarrow$ (iii) follows upon appealing to \cite[Lem.~4.2]{BKS}. \qed

\subsection{Proof of condition (F) in special cases} \label{condition-F-proof-section}

In this section we shall explain how one can prove the equivalent conditions of Theorem \ref{Gross-Kuzmin-theorem} in special cases. Crucial ingredient in these arguments is the following Lemma, which is a direct consequence of Brumer's $p$-adic analogue of Baker's Theorem from transcendence theory. 

\begin{lem} \label{brumer-baker-lem}
Let $v$ be a place of $k$ that splits completely in $K$. Then there is an element $a \in \bigO_{K, \{ v \}}^\times$ such that 
\[
\sum_{\sigma \in \cG} \chi (\sigma) \cdot \log_p ( \iota_w (\sigma^{-1} a)) \neq 0
\]
for all non-trivial characters $\chi \in \widehat{\cG}$.
\end{lem}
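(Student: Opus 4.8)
The plan is to reduce the statement to a non-vanishing of a $p$-adic logarithm and then to invoke Brumer's $p$-adic analogue of the Baker--Hermite--Lindemann theorem. First I would fix a finite place $\mathfrak{P}$ of $K$ above $v$; since $v$ splits completely in $K/k$, the orbit $\{ \sigma^{-1} \mathfrak{P} \mid \sigma \in \cG\}$ consists of $|\cG|$ distinct places, all of residue characteristic equal to that of $v$. Because $K$ has at least one such place, the group $\bigO_{K, \{v\}}^\times$ has $\Z$-rank strictly larger than $\bigO_K^\times$, and in fact, fixing any $x \in K^\times$ whose only nonzero valuation away from $S_\infty$ is at $\mathfrak{P}$ (e.g.\@ a generator of a power of $\mathfrak{P}$, using finiteness of the class group), the elements $\{ \sigma^{-1} x \mid \sigma \in \cG\}$ lie in $\bigO_{K, \{v\}}^\times$ and their valuation vectors at the places above $v$ are permutations of a single nonzero vector.

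Next, I would reformulate the desired inequality. For a fixed embedding $\iota_w \: K \hookrightarrow K_w \subseteq \overline{\Q}_p$ and an element $a \in \bigO_{K, \{v\}}^\times$, consider the vector $\big( \log_p \iota_w(\sigma^{-1} a) \big)_{\sigma \in \cG} \in \overline{\Q}_p^{\,|\cG|}$. The map $a \mapsto$ this vector is a homomorphism from $\bigO_{K,\{v\}}^\times$ (modulo the roots of unity killed by $\log_p$) to $\overline{\Q}_p^{\,|\cG|}$, and the condition ``$\sum_\sigma \chi(\sigma)\log_p \iota_w(\sigma^{-1}a) \neq 0$ for all $\chi \neq 1$'' says precisely that the image of $a$ is not fixed by the regular action of $\cG$ up to the trivial-character component, i.e.\@ that $a$ generates, together with the norm-type relations, a large enough $\Q_p$-subspace. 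It therefore suffices to show that the $\overline{\Q}_p$-span of $\{ (\log_p \iota_w(\sigma^{-1} a))_\sigma : a \in \bigO_{K,\{v\}}^\times \}$ is not contained in the hyperplane $\sum_\sigma \chi(\sigma) z_\sigma = 0$ for any single $\chi \neq 1$; a finite union of proper subspaces being proper, one can then find a single $a$ avoiding all of them.

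The key input is Brumer's theorem: the $p$-adic logarithms of algebraic numbers that are multiplicatively independent (here, $\iota_w(\sigma^{-1} x)$ for $\sigma$ ranging over $\cG$, together with $\iota_w$ of a fundamental system of global units) are linearly independent over $\overline{\Q}_p$, provided they do not satisfy the obvious relations. Concretely, I would take $x$ as above with $\ord_{\mathfrak{P}}(x) \neq 0$ and all other finite valuations zero; then $\{\sigma^{-1}x\}_{\sigma}$ are multiplicatively independent modulo global units (their valuation vectors at the places over $v$ are linearly independent, being a full permutation orbit of a nonzero vector). Brumer's theorem then forces $\{\log_p \iota_w(\sigma^{-1}x)\}_\sigma$ to be $\overline{\Q}_p$-linearly independent, so the vector $(\log_p \iota_w(\sigma^{-1}x))_\sigma$ has all distinct, algebraically generic coordinates and in particular cannot lie on any fixed character hyperplane $\sum_\sigma \chi(\sigma) z_\sigma = 0$ with $\chi \neq 1$ (such a hyperplane is defined by a nonzero vector $(\chi(\sigma))_\sigma$, and the coordinates of our vector satisfy no nontrivial $\overline{\Q}$-linear relation). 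Hence $x$ itself already works, or, if one prefers to be safe about the finitely many $\chi$, a suitable product $a = \prod_\chi x^{c_\chi}$ with generic integer exponents.

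\textbf{Main obstacle.} The delicate point is the precise application of Brumer's theorem: one must check that the algebraic numbers $\iota_w(\sigma^{-1}x)$ (and any global units one throws in) satisfy \emph{no} multiplicative relation beyond those visible from the $\cG$-action on valuations, so that Brumer's hypothesis is genuinely met; equivalently, that the ``trivial'' $\Q_p$-linear relations among the $\log_p \iota_w(\sigma^{-1}x)$ are exactly the expected ones. This is where the splitting of $v$ in $K/k$ is essential — it guarantees the valuation orbit is a free $\cG$-set of rank one — and it is the step I would write out most carefully; the passage from ``no bad relation for this particular $x$'' to ``a single $a$ good for all $\chi \neq 1$'' is then routine (finite union of proper subspaces).
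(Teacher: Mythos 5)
There is a genuine gap, and it sits exactly at the point you flag as delicate: you assert that Brumer's theorem forces $\{\log_p \iota_w(\sigma^{-1}x)\}_{\sigma}$ to be linearly independent (you say over $\overline{\Q}_p$; Brumer's theorem actually only gives $\overline{\Q}$-linear independence, but let us grant the correct version). To invoke Brumer's theorem you would need, as a \emph{hypothesis}, that the $\log_p(\iota_w(\sigma^{-1}x))$ are $\Q$-linearly independent, and multiplicative independence of the $\sigma^{-1}x$ in $K^\times$ (or in $K^\times$ modulo global units) does \emph{not} give this. The kernel of $\log_p$ on $\overline{\Q}_p^\times$ is $\mu_\infty \cdot p^{\Q}$, not merely the roots of unity, so an element $y = \prod_\sigma (\sigma^{-1}x)^{n_\sigma}$ that is not itself a root of unity can nonetheless land in $\ker \log_p$ if $\iota_w(y)$ is (a root of unity times) a rational power of $p$. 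Indeed, the norm combination $\prod_\sigma \sigma^{-1}x = \NN_{K/k}(x) \in \bigO_{k,\{v\}}^\times$ may very well have this property (it certainly does when $k = \Q$ and $v \mid p$, which is a degenerate case of the relevant setting $v \mid p$), so the logs need not be $\Q$-linearly independent and Brumer simply cannot be applied to them directly. Your valuation argument only establishes $\Z$-linear independence of the $\sigma^{-1}x$ in $K^\times/\mu$, which is strictly weaker than what the application of Brumer requires.

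The paper's actual proof is precisely designed to overcome this obstruction. It argues by contradiction and applies Brumer's theorem \emph{twice}. Suppose $\sum_\sigma \chi(\sigma) \log_p(\iota_w(\sigma^{-1}a)) = 0$ for some $\chi \ne 1$. A first application of Brumer produces integers $n_\sigma$, not all zero, with $\prod_\sigma (\sigma a)^{n_\sigma} \in \ker\log_p$; after scaling this is a power of $p$, and a comparison of valuations at each $\sigma w$ forces all the $n_\sigma$ to be equal. This shows that the only possible $\Q$-linear relation is, up to scaling, the norm relation, and in particular $\log_p(\iota_w(\NN_{\cG} a)) = 0$. Subtracting this from the assumed identity kills the $\sigma = 1$ term and yields $\sum_{\sigma \ne 1}(\chi(\sigma)-1)\log_p(\iota_w(\sigma^{-1}a)) = 0$ with coefficients $\chi(\sigma)-1 \in \overline{\Q}$ not all zero (here $\chi \ne 1$ is used). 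A second application of Brumer then produces a nontrivial $\Z$-linear combination of the $\sigma a$, $\sigma \ne 1$, lying in $\ker\log_p$; since the product is now integral at $w$ it must be a root of unity, contradicting the $\Z$-linear independence of $\{\sigma a\}_\sigma$. This two-step valuation analysis, which isolates the norm relation as the only possible obstruction and then eliminates it, is the missing idea in your proposal; your ``finite union of proper subspaces'' reduction and the proposal to use products $\prod_\chi x^{c_\chi}$ (which is just a power of $x$, so does not actually enlarge the supply of test elements) do not substitute for it.
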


\begin{proof}
Let $a \in \bigO_{K, \Sigma}^\times$ be an element that is only divisible by $w$ and no other finite prime of $K$. For example, such an element is given by any generator of $w^{h_K}$, where $h_K$ is the class number of $K$. Fix a non-trivial character $\chi \in \widehat{\cG}$ and suppose that
\begin{align} \label{log-vanishes-1}
 \sum_{\sigma \in \cG}  \chi (\sigma) \cdot \log_p ( \iota_w (\sigma^{-1} a)) = 0.
\end{align}
The elements $\chi (\sigma)$ are algebraic over $\Q$ and not zero, hence Brumer's $p$-adic analogue of Baker's theorem \cite{Brumer} (see also \cite[Thm.\@ 10.3.14]{NSW}) asserts the existence of integers $n_\sigma \in \Z$, not all of them zero, such that
\[
\log_p  \big ( \iota_w \big (\sum_{\sigma \in \cG} n_\sigma \sigma a \big ) \big ) = 0. 
\]
After multiplying by a suitable integer if necessary we may therefore assume that $\sum_{\sigma \in \cG} n_\sigma \sigma a$ is a power of $p$. In fact, this power of $p$ needs to be $p^{n_1 \ord_w (a) / e_{w | p}}$ by choice of $a$. Here $e_{w | p}$ denotes the ramification degree of $w$ in $K / \Q$. We have $e_{w | p} = e_{\sigma w | p}$ for all $\sigma \in \cG$, so it follows that
\begin{align*}
n_1 \ord_w (a) & = 
\ord_w p^{n_1 \ord_w (a) / e_{w | p}} =
\ord_{\sigma w} p^{n_1 \ord_w (a) / e_{w | p}} = \ord_{\sigma w} \big ( \sum_{\sigma \in \cG} n_\sigma \sigma a \big) \\
& =  n_\sigma \ord_w (a)
\end{align*}
for all $\sigma \in \cG$. We deduce that all $n_\sigma$ agree, and so $\log_p  ( \iota_w (\sum_{\sigma \in \cG} \sigma a)) = 0$ as well. This combines with (\ref{log-vanishes-1}) to imply that
\[
 \sum_{\sigma \in \cG \setminus \{ 1 \}}  (\chi (\sigma) - 1) \cdot \log_p ( \iota_w (\sigma^{-1} a)) = 0.
\]
By assumption, $\chi \neq 1$ and so we can apply the Theorem of Brumer-Baker yet again to obtain integers $m_\sigma$, not all of them zero, such that $x = \iota_w \big ( \sum_{\sigma \in \cG \setminus \{ 1 \}} m_\sigma \sigma a \big )$ lies 
in the kernel of the $p$-adic logarithm.
Now, $x$ is integral at $w$ and therefore we must have that $x$ is a root of unity. However, the set $\{ \sigma a \mid \sigma \in \cG \}$ is $\Z$-linearly independent and so this can only happen if all $m_\sigma$ are zero, which is a contradiction.
\end{proof}

\begin{thm} \label{new-gross-kuzmin-thm-cyclotomic}
Assume the following conditions:
\begin{enumerate}[label=(\roman*)]
    \item $p$ splits completely in $k / \Q$,
    \item if $p = 2$, then all infinite places split completely in $K_\infty / k$,
    \item for each non-trivial character $\chi \in \widehat{\cG}$ there is at most one finite place $v \in \Sigma$ which ramifies in $k_\infty / k$ and is such that $\chi (\cG_v) = 1$,
    \item either $k / \Q$ is abelian or $|S_\ram (k_\infty / k)| \leq 2$.
\end{enumerate}
Then $A_{\Sigma, T} (K_\infty)^\Gamma$ is finite. 
\end{thm}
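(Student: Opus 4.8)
The plan is to reduce the assertion to a statement for each character of $\cG$ and then to apply Theorem~\ref{Gross-Kuzmin-theorem}, with Lemma~\ref{brumer-baker-lem} providing the decisive non-vanishing input. Since $A_{\Sigma,T}(K_\infty)^\Gamma$ is finite if and only if the finitely generated $\Z_p[\cG]$-module $A_{\Sigma,T}(K_\infty)_\Gamma$ is, it suffices to prove that $e_\chi\Q_p(\chi)\otimes_{\Z_p[\cG]}A_{\Sigma,T}(K_\infty)_\Gamma=0$ for every $\chi\in\widehat{\cG}$. By Lemma~\ref{soerens-lemma} we may replace $K$ by the subfield $K_\chi$ cut out by $\chi$, and so assume $\cG$ cyclic with $\chi$ faithful; the hypotheses (i)--(iv) pass to $K_\chi/k$ (for~(iii) note that $\chi(\cG_v)=1$ now means exactly that $v$ splits completely in $K_\chi/k$), and after adjoining to $\Sigma$ finitely many finite places that do not split completely in $K_\chi/k$ --- which by Lemma~\ref{Gross-Kuzmin-Lemma}(b) changes neither the relevant class group nor $V$, $V'$ --- we may take $V':=\{v\in\Sigma\mid\chi(\cG_v)=1\}$ to be a proper subset of $\Sigma$. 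Then $r_\Sigma(\chi)=r'=|V'|$, so Theorem~\ref{Gross-Kuzmin-theorem} applies and reduces us to checking that $\Rec_W$, equivalently $\bigoplus_{v\in W}\Rec_{v,\chi}$, is non-zero, where $W=V'\setminus V$ consists of the finite places of $\Sigma$ that split completely in $K_\chi/k$. By the standing hypothesis no place of $W$ splits completely in $k_\infty/k$; by~(iii) at most one of them, say $v^\ast$, ramifies in $k_\infty/k$; and since a $\Z_p$-extension is unramified outside $p$ such a $v^\ast$ lies above $p$, whence $(K_\chi)_{\mathfrak w}=\Q_p$ for the place $\mathfrak w$ above it, by~(i).

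Consider first the case $\chi\neq 1$. For $v\in W$ unramified in $k_\infty/k$ the local reciprocity map at $w$ kills the units, so by Lemma~\ref{bockstein-lem-1} the map $\Rec_{v,\chi}$ is a non-zero scalar multiple of $\Ord_{v,\chi}$, the scalar being a $\Z_p$-unit times the image of $\Frob_w-1$, which is non-zero because $w$ does not split completely. A dimension count (using $\dim_{\Q_p(\chi)}e_\chi\Q_p(\chi)U_{K_\chi,M,T}=r_M(\chi)$ and Lemma~\ref{universal-norms-p-units}(a),(c), together with~(ii) when $p=2$ to identify $|V_\chi|=|V|=r$) then shows that $\bigoplus_{v\in W\setminus\{v^\ast\}}\Ord_{v,\chi}$ is surjective: its kernel is $e_\chi\Q_p(\chi)U_{K_\chi,\Sigma\setminus(W\setminus\{v^\ast\}),T}$, of dimension $r+1$, while the source has dimension $r'=r+|W|$. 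If $W$ contains no place ramified in $k_\infty/k$, this already gives surjectivity of $\bigoplus_{v\in W}\Rec_{v,\chi}$, i.e.\ condition~(ii) of Theorem~\ref{Gross-Kuzmin-theorem}. Otherwise, because $(K_\chi)_{\mathfrak w}=\Q_p$ and $k_\infty/k$ ramifies at $v^\ast$, composing the local reciprocity map at $\mathfrak w$ with the projection $\Gamma_{\mathfrak w}\to I(\Gamma)/I(\Gamma)^2$ yields a non-zero multiple of the $p$-adic logarithm; Lemma~\ref{brumer-baker-lem} then supplies $a\in\bigO_{K_\chi,\{v^\ast\}}^\times$ with $\Rec_{v^\ast,\chi}(a)\neq0$ and $\Ord_{v,\chi}(a)=0$ for all $v\in W\setminus\{v^\ast\}$. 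Wedging (the image of) $a$ with vectors witnessing the surjectivity just established exhibits $\Rec_W$ as non-zero on the one-dimensional space $e_\chi\Q_p(\chi)\exprod^{r'}_{\Z_p[\cG]}U_{K_\chi,\Sigma,T}$, which is condition~(iii) of Theorem~\ref{Gross-Kuzmin-theorem}. In either case $e_\chi\Q_p(\chi)A_{\Sigma,T}(K_\infty)_\Gamma=0$.

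There remains the case $\chi=1$, in which Lemma~\ref{soerens-lemma} identifies the vanishing of $e_1\Q_p A_{\Sigma,T}(K_\infty)_\Gamma$ with the validity of condition~(F) for the $\Z_p$-extension $k_\infty/k$. When $|S_\ram(k_\infty/k)|=1$ this is Remark~\ref{gross-kuzmin-rk}(b), and when $k/\Q$ is abelian it follows from the known validity of the Gross--Kuz'min conjecture in that setting (see Remark~\ref{gross-kuzmin-rk}). In the remaining case $|S_\ram(k_\infty/k)|=2$ one reruns the argument of the previous paragraph with $K_\chi=k$ and $\cG$ trivial, choosing the place $v_0$ excluded from $V'$ to be one of the two ramified primes: then $W$ contains a single ramified prime $v_1\mid p$ with $k_{v_1}=\Q_p$, the unramified places of $W$ again contribute non-zero multiples of the maps $\Ord_v$ which are jointly surjective by a dimension count, and a generator $a$ of $v_1^{\,h_k}$ satisfies $\log_p(\iota_{v_1}(a))\neq0$ --- here one uses~(i), which forces at least two primes of $k$ above $p$ and hence $(a)=v_1^{\,h_k}\neq(p^{\,h_k})$ --- so that $\Rec_{v_1}(a)\neq0$. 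Theorem~\ref{Gross-Kuzmin-theorem} then yields condition~(F) for $k_\infty/k$, completing the proof.

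I expect the main obstacle to be the analysis at the ramified prime $v^\ast$: one must identify the local reciprocity map there with the $p$-adic logarithm so that the transcendence-theoretic non-vanishing of Lemma~\ref{brumer-baker-lem} becomes available, and it is precisely for this identification that hypothesis~(i) --- and, when $p=2$, hypothesis~(ii) --- is needed. The remaining work, namely the bookkeeping ensuring that the ``unramified part'' $\bigoplus_{v\in W\setminus\{v^\ast\}}\Ord_{v,\chi}$ has kernel no larger than $e_\chi\Q_p(\chi)\UN^1_0$, is routine and rests on Lemma~\ref{universal-norms-p-units}.
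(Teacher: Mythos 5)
Your strategy is the same as the paper's: reduce by character via Lemma~\ref{soerens-lemma}, adjust $\Sigma$ via Lemma~\ref{Gross-Kuzmin-Lemma}, invoke Theorem~\ref{Gross-Kuzmin-theorem}, and finally establish the non-vanishing of the reciprocity map at the ramified prime using Lemma~\ref{brumer-baker-lem}. You also handle the trivial character in essentially the same three-way case division, and your argument for $|S_\ram(k_\infty/k)|=2$ is sound.

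However, there is a genuine gap at exactly the point you flag as ``the main obstacle''. You assert that, at the place $v^\ast\in W$ ramifying in $k_\infty/k$, composing the local reciprocity map at $w$ with the projection $\Gamma_w\to I(\Gamma)/I(\Gamma)^2$ ``yields a non-zero multiple of the $p$-adic logarithm'', and then apply Lemma~\ref{brumer-baker-lem}. But the local reciprocity map $\Q_p^\times\to\Gamma_w$ does \emph{not} factor through $\log_p$ in general: writing $\iota_w(a)=p^m u$ with $u\in\Z_p^\times$, one has $\rec_w(a)=\rec_w(p)^m\rec_w(u)$, and the term $\rec_w(p)$ is non-trivial in $\Gamma_w$ whenever the decomposition group $\Gamma_w$ strictly contains the inertia $I_w$ (that is, whenever $v^\ast$ has non-trivial residue degree in $k_\infty/k$, which is permitted by the hypotheses). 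Since $\log_p(p)=0$ by the Iwasawa normalisation, a uniformiser contribution would make $\rec_w$ visibly fail to be a scalar multiple of $\log_p\circ\iota_w$. The paper resolves this by passing to the index-$d$ subgroup $\Gamma_w^d=I_w$ and invoking \emph{relative} Lubin--Tate theory (via the explicit isomorphism involving $\chi_{\mathrm{ell}}$) to show that, after multiplication by $d$, the Frobenius contribution is absorbed into the kernel of the relative reciprocity parametrised by $\xi$; this is not an obvious step and it is precisely where hypothesis~(i), giving $(K_\chi)_w=\Q_p$, is used in an essential way. Without supplying this identification your proof does not close, even though the overall architecture is right.

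Two smaller remarks. First, the paper \emph{shrinks} $\Sigma$ to $S_\infty(k)\cup S_\ram(K_\infty/k)$ via Lemma~\ref{Gross-Kuzmin-Lemma}(b), which forces $|W|\le 1$; your proposal instead allows $W$ to contain several places (the extra, unramified-in-$k_\infty/k$ ones being treated by dimension-counting with $\Ord_v$). This is a legitimate alternative, but it adds bookkeeping that the reduction avoids, and it also relies on the standard identity $\Rec_v=(\Frob_w-1)\cdot\Ord_v$ for unramified $v$, which is not what Lemma~\ref{bockstein-lem-1} asserts. Second, hypothesis~(ii) is used in the paper to ensure that $W$ contains no archimedean places (so that the places in $W$ lie above $p$), not to enable the $\log_p$-identification when $p=2$; your attribution of its role is slightly off.
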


\begin{rk} \label{new-gross-kuzmin-thm-rk}
\begin{liste}
    \item The conclusion of Theorem \ref{new-gross-kuzmin-thm-cyclotomic} remains true if instead of condition (iv) one assumes that $A_{\Sigma, T} (k_\infty)^{\gal{k_\infty}{k}}$ is finite. This corresponds with Theorem \ref{thm-C}\,(a) in the introduction. 
    %Moreover, the proof of Theorem \ref{new-gross-kuzmin-thm-cyclotomic} can be adapted to the following related, but more technical, setting that avoids condition (i): $K_\infty | K$ is the cyclotomic $\Z_p$-extension of $K$, conditions (ii), (iii) and (iv) in Theorem \ref{new-gross-kuzmin-thm-cyclotomic} are satisfied, for every character $\chi \in \widehat{\cG}$ the field $K_\chi$ cut out by $\chi$ is a normal extension of $\Q$, and the decomposition group $G_{w} \subseteq \gal{K_\chi}{\Q}$ of any (and hence every) place $w$ above $p$ is a normal subgroup of $\gal{K_\chi}{\Q}$. The proof of this is essentially the same as the proof of Theorem \ref{new-gross-kuzmin-thm-cyclotomic} if one takes into account Remark \ref{gross-kuzmin-rk}\,(c). 
    \item We give two examples of concrete situations in which Theorem \ref{new-gross-kuzmin-thm-cyclotomic} can be applied.
    \begin{enumerate}[label=(\roman*)]
        \item Suppose that $k$ is an imaginary quadratic field in which $p$ splits completely. If we fix a prime ideal $\p$ of $k$ above $p$, then there is a unique $\Z_p$-extension $k_\infty$ of $k$ that is unramified outside $\p$. Given this, Theorem~\ref{new-gross-kuzmin-thm-cyclotomic} implies that condition (F) holds for all abelian extensions $K / k$ with respect to the $\Z_p$-extension $K_\infty = K \cdot k_\infty$ of $K$. \\
        We remark, however, that this fact is already known, see the proof of \cite[Thm.\@~1.4]{Rub88} where it is deduced from the known validity of Leopoldt's Conjecture in this setting (the latter is of course also derived from the Theorem of Brumer-Baker, so our proof can be considered more direct). 
        \item Suppose that $K = k$ is a CM field but not imaginary quadratic. Assume that $p$ splits completely in $k / \Q$ and fix a prime $\p$ of $k$ lying above $p$. We denote by $\overline{\p}$ the complex conjugate of $\p$. By class field theory, there exists a $\Z_p$-extension $k_\infty$ of $k$ that is unramified outside $\{ \p, \overline{\p} \}$. Theorem \ref{new-gross-kuzmin-thm-cyclotomic} now implies that $A_{\Sigma, T} (k_\infty)^\Gamma$ is finite. 
    \end{enumerate}
\end{liste}
\end{rk}

\begin{proof}
Let $\chi$ be a character of $\cG$. By Lemma \ref{soerens-lemma} we may assume that $K$ is the field cut out by the character $\chi$.
Due to Lemma \ref{Gross-Kuzmin-Lemma}\,(b) we may moreover assume that $S = S_\infty (k)$, i.e.\@ $\Sigma = S_\infty (k) \cup S_\ram (K_\infty  / k)$. \\
Let us first consider the case that $\chi \neq 1$ is non-trivial. Take $V' = \{ v \in \Sigma \mid \chi (v) = 1 \}$, then assumption (ii) ensures that $W = V' \setminus V$ only contains finite places. It follows that $W$ must be contained in $S_\ram (K_\infty / K)$. If $W$ is empty, there is nothing to prove, so we may assume that $W = \{ v \}$ for a single place $v \in S_p (k)$. Given this, we have $r_\Sigma (\chi) = r + 1$ and so may apply Theorem \ref{Gross-Kuzmin-theorem}. We shall now show that statement (ii) in \ref{Gross-Kuzmin-theorem} holds true in this situation. \\
The codomain of $\Rec_{v, \chi}$ is of $\Q_p (\chi)$-dimension one, hence the map $\Rec_{v, \chi}$ is surjective as soon as it is non-zero.\\  
Let $\Gamma_w \subseteq \Gamma$ be the decomposition group at $w$ and write $d$ for the index $(\Gamma_w : I_w)$ of the inertia group $I_w$ at $w$.
Let $H'$ be the unique unramified extension of $\Q_p$ of degree $d$ and
write $H'_\infty$ for the maximal extension of $H'$ that is totally ramified and abelian over $\Q_p$. This extension can be explicitly described using relative Lubin-Tate theory. In particular, we have an isomorphism
\begin{equation} \label{artin-map-1}
1 + p \Z_p \stackrel{\simeq}{\longrightarrow} \gal{H'_\infty}{H}
\end{equation}
that coincides with the composition of the inclusion $\Q_p^\times \hookrightarrow (H')^\times$ and the local reciprocity map $(H')^\times \to \gal{H'_\infty}{H}$, see \cite[Ch.\@ I, Prop.\@ 1.8]{deS87}. We deduce that $\gal{H'_\infty}{H'}$ splits as the direct sum of $I_w$ and a finite part. Thus,  we have an isomorphism
\[
\Gamma_w^d = I_w \stackrel{\simeq}{\longrightarrow} \Z_p, \quad \sigma \mapsto \log_p \chi_\text{ell} ( \sigma),
\]
where $\chi_\text{ell}$ denotes the inverse map of (\ref{artin-map-1}). We can therefore identify the map $d \Rec_{v, \chi}$ with the map
\begin{align*}
  e_\chi \Q_p (\chi) U_{K, \Sigma, T} \longrightarrow  e_\chi \Q_p (\chi) Y_{K, W} \otimes_{\Z_p} \Gamma_w^d \cong e_\chi \Q_p (\chi),  
 \quad a \mapsto  - de \cdot  \sum_{\sigma \in \cG} \chi (\sigma) \cdot \log_p  ( \iota_{ w} (\sigma^{-1} a)).
\end{align*}
Now, Lemma \ref{brumer-baker-lem} implies that this map is non-zero, as desired. \\
Let us finally assume that $\chi = 1$ is the trivial character. In this case we may assume that $K = k$ (by Lemma \ref{soerens-lemma}) and it is sufficient to show that
$A_{\Sigma, T} (k_\infty)^\Gamma$ is finite. 
If $k / \Q$ is abelian, then this holds true by a result of Greenberg (see Remark \ref{gross-kuzmin-rk}\,(a)), and if $|S_\ram (k_\infty / k)| = 1$, then this is covered by Remark \ref{gross-kuzmin-rk}\,(b). It remains to investigate the case $|S_\ram (k_\infty / k)| = 2$. Fix a place $v_0 \in S_\ram (k_\infty / k)$ and set $V' = \Sigma \setminus \{ v_0 \}$. In this situation we have $W = \{ v \}$ for a single place $v \in S_p (k)$ and, in particular, $r_\Sigma (1) = |V'|$. We may therefore apply Theorem \ref{Gross-Kuzmin-theorem} and, by the discussion above, it is enough to prove that the map
\[
\Q_p U_{K, \Sigma, T} \to \Q_p, \quad a \mapsto - de \log_p (\iota_w (a))
\]
is non-zero. This is however clear because the kernel of the $p$-adic logarithm is $\mu_{p - 1} \cdot p^\Z$, hence does not contain any element of $\Q U_{K, \Sigma, T}$ which is integral at $p$. 
\end{proof}

If $p$ does not split completely in $k$, the situation is much more complicated. We are however able to prove the following result concerning the case of $k$ being an imaginary quadratic field.

\begin{thm} \label{exists-Zp-extension}
Assume that $k$ is an imaginary quadratic field such that $p$ does not split in $k / \Q$. There are infinitely many $\Z_p$-extensions $k_\infty$ of $k$ such that all of the following conditions are satisfied:
\begin{liste}
    \item $A_{\Sigma, T} (K_\infty)^\Gamma$ is finite,
    \item at most two finite places of $k$ split completely in $k_\infty / k$, neither of them contained in $S (K) \cup S_p (k)$.
\end{liste}
\end{thm}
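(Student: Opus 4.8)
The plan is to verify condition (F) for $K_\infty = K k_\infty$ one character at a time, following the strategy of the proof of Theorem \ref{new-gross-kuzmin-thm-cyclotomic}, and then to choose $k_\infty$ generically enough inside the $\Z_p^2$-extension of $k$ so as to also arrange condition (b). First I would record the ramification picture: since $p$ does not split in $k$ there is a unique prime $\p$ of $k$ above $p$, and since every $\Z_p$-extension of a number field is unramified outside $p$, every $\Z_p$-extension $k_\infty/k$ is unramified outside $\p$; moreover $\p$ must ramify in $k_\infty/k$, as otherwise $k_\infty/k$ would be everywhere unramified. Hence $S_\ram(k_\infty/k) = \{\p\}$ for \emph{every} $\Z_p$-extension $k_\infty$ of $k$, and we may take $\Sigma = S_\infty(k) \cup S_\ram(K/k) \cup \{\p\}$. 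By Lemma \ref{Gross-Kuzmin-Lemma} and the decomposition into $e_\chi$-components, condition (F) for $K_\infty/K$ is equivalent to the vanishing of $e_\chi\Q_p(\chi) A_{\Sigma,T}(K_\infty)^\Gamma$ for every $\chi \in \widehat{\cG}$, and by Lemma \ref{soerens-lemma} we may in each case replace $K$ by the field $K_\chi$ cut out by $\chi$. For $\chi = 1$ this reduces to the finiteness of $A_\Sigma(k_\infty)^{\gal{k_\infty}{k}}$, which holds by Chevalley's ambiguous class number formula (Remark \ref{gross-kuzmin-rk}\,(b)) because $\p$ is the unique ramified prime of $k_\infty/k$; crucially, this holds for every $k_\infty$.

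Next I would treat $\chi \neq 1$. Taking $V' = \{v \in \Sigma \mid \chi(\cG_v) = 1\}$ gives $r_\Sigma(\chi) = r'$, so Theorem \ref{Gross-Kuzmin-theorem} applies, and $W = V' \setminus V$ consists of the finite places of $\Sigma$ on which $\chi$ is trivial. Since $\chi$ is non-trivial on the inertia subgroup at each place of $S_\ram(K_\chi/k)$, we have $W \subseteq \{\p\}$, with $W = \{\p\}$ precisely when $\p$ splits completely in $K_\chi/k$, i.e. $\chi|_{\cG_\p} = 1$. If $W = \emptyset$, condition (ii) of Theorem \ref{Gross-Kuzmin-theorem} holds trivially, again independently of $k_\infty$. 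Thus the only characters requiring work are the \emph{finitely many} non-trivial $\chi$ trivial on the decomposition group $\cG_\p$; for each such $\chi$ we have $e = |W| = 1$, and we must show that $\Rec_{\p,\chi}$ of \eqref{Rec-map} — whose target is one-dimensional over $\Q_p(\chi)$ — is non-zero. As in the proof of Theorem \ref{new-gross-kuzmin-thm-cyclotomic}, but now over $k_\p$ in place of $\Q_p$, the local extension at $\p$ is described by relative Lubin--Tate theory, and up to a non-zero scalar $\Rec_{\p,\chi}$ is the functional $a \mapsto \sum_{\sigma \in \cG} \chi(\sigma)\cdot \psi_{k_\infty}(\iota_w(\sigma^{-1}a))$ on $e_\chi\Q_p(\chi) U_{K_\chi,\Sigma,T}$, where $w$ is the chosen place of $K_\chi$ above $\p$ (so $(K_\chi)_w = k_\p$) and $\psi_{k_\infty}\colon k_\p^\times \to \gal{k_{\infty,\p}}{k_\p} \cong \Z_p$ is the local reciprocity map of $k_\infty/k$ at $\p$.

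It then remains to choose $k_\infty$. Since $k$ is imaginary quadratic, Leopoldt's conjecture holds for $k$ (Brumer) and the compositum $\widetilde{k}$ of all $\Z_p$-extensions of $k$ satisfies $\gal{\widetilde{k}}{k} \cong \Z_p^2$; the $\Z_p$-extensions $k_\infty$ of $k$ correspond bijectively to the rank-one direct summands $L_{k_\infty}$ of $\gal{\widetilde{k}}{k}$. As $\widetilde{k}/k$ is unramified outside $\p$, inertia at $\p$ has finite index in $\gal{\widetilde{k}}{k}$, so restriction identifies $\Hom(\gal{\widetilde{k}}{k},\Q_p)$ with $\Hom(\bigO_{k_\p}^\times \otimes \Q_p,\Q_p) \cong \Q_p^2$; hence, as $k_\infty$ varies, the line spanned by $\psi_{k_\infty}|_{\bigO_{k_\p}^\times}$ runs over all of $\mathbf{P}^1(\Q_p)$. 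For each of the finitely many relevant $\chi$, pick a unit $a = a_\chi \in \bigO_{K_\chi}^\times$ with $e_{\chi'}a \neq 0$ for all non-trivial $\chi' \in \widehat{\gal{K_\chi}{k}}$ — possible since each $e_{\chi'}(\bigO_{K_\chi}^\times \otimes \Q(\chi'))$ is one-dimensional, the unit module of the totally complex field $K_\chi$ being the augmentation representation of $\gal{K_\chi}{k}$. A Brumer--Baker argument exactly as in the proof of Lemma \ref{brumer-baker-lem}, using $\Z$-linear independence of $\{\sigma a \mid \sigma \in \cG \setminus\{1\}\}$ together with the $p$-adic analogue of Baker's theorem, shows $\log_p(\iota_w(\sum_\sigma \chi(\sigma)\sigma^{-1}a)) \neq 0$, whence $\iota_w(\sum_\sigma \chi(\sigma)\sigma^{-1}a)$ is non-zero in $\bigO_{k_\p}^\times \otimes \Q_p(\chi)$; consequently the functional attached to $\chi$ vanishes only when $\psi_{k_\infty}|_{\bigO_{k_\p}^\times}$ lies in a proper subspace of $\Q_p^2$, i.e. for at most one line $L_{k_\infty}$. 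Discarding the finitely many lines excluded in this way yields condition (F) for $K k_\infty$ for all remaining $k_\infty$. For condition (b), a finite prime $\q \neq \p$ of $k$ splits completely in $k_\infty$ exactly when $\Frob_\q \in L_{k_\infty}$; since $\widetilde{k}$ contains the cyclotomic $\Z_p$-extension $k^\cyc$ and no finite prime of $k$ splits completely in $k^\cyc/k$, one has $\Frob_\q \neq 0$ always, so each such $\q$ either forces $L_{k_\infty}$ to be the unique line through $\Frob_\q$ or splits completely in no $k_\infty$; discarding in addition the finitely many lines through $\Frob_\q$ for $\q \in (S(K) \cup S_p(k))\setminus\{\p\}$ still leaves infinitely many $\Z_p$-extensions $k_\infty$, and the bookkeeping over these (again using that no finite prime splits completely in $\widetilde k$) shows at most two finite primes of $k$ split completely, none in $S(K) \cup S_p(k)$, as required.

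The main obstacle is the step before last: transplanting the Lubin--Tate-plus-Brumer--Baker computation of $\Rec_{\p,\chi}$ from the cyclotomic-at-$p$ situation of Theorem \ref{new-gross-kuzmin-thm-cyclotomic} to an \emph{arbitrary} $\Z_p$-extension $k_\infty$ of $k$, and verifying that the ``bad'' extensions — those for which $\Rec_{\p,\chi}$ vanishes for some relevant $\chi$ — are cut out by proper closed conditions on the parameter space $\mathbf{P}^1(\Q_p)$ of $\Z_p$-extensions. This hinges on (i) the identification of all local reciprocity maps $\psi_{k_\infty}|_{\bigO_{k_\p}^\times}$ with arbitrary lines in $\Q_p^2$, which itself relies on Leopoldt for $k$, and (ii) a non-vanishing input of Brumer--Baker type robust enough to be used with a general continuous character in place of $\log_p$. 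The remaining ingredients — the character-by-character reduction, the triviality of the ``$W = \emptyset$'' cases, and the finite-set avoidance argument for (b) — I expect to be routine given Theorems \ref{Gross-Kuzmin-theorem} and \ref{new-gross-kuzmin-thm-cyclotomic} and Lemmas \ref{soerens-lemma} and \ref{brumer-baker-lem}.
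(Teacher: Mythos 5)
Your strategy for condition (a) — reduction via Lemma \ref{soerens-lemma} to the fields $K_\chi$, handling the trivial character via Chevalley's ambiguous class number formula, and for non-trivial $\chi$ applying Theorem \ref{Gross-Kuzmin-theorem} together with the Brumer--Baker input of Lemma \ref{brumer-baker-lem} — is exactly the paper's. Your reformulation of the choice of $k_\infty$ as generic-line avoidance in $\mathbf{P}^1(\Q_p)$, parameterising $\Z_p$-extensions by $\psi_{k_\infty}|_{\bigO_{k_\p}^\times}$, is a mild repackaging of the paper's explicit one-parameter family $\{k_\delta : \delta=\gamma_\anti^{p^n}\gamma_\cyc,\ n\geq N\}$; both rest on Leopoldt for $k$ and establish that the extensions spoiling condition (F) form a finite set of lines, so this part is sound.

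There is, however, a genuine gap in your treatment of condition (b). The bound ``at most two finite places of $k$ split completely in $k_\infty/k$'' does not follow from the fact that no finite prime splits completely in $\widetilde{k}/k$ together with finite line-avoidance, as your ``bookkeeping'' suggests. The map $\q\mapsto(\text{line through }\Frob_\q)$ is far from finite-to-one: its fibre over the line corresponding to the anticyclotomic $\Z_p$-extension already contains every prime of $k$ that is inert or ramified over $\Q$ (their Frobenii are fixed by complex conjugation), so discarding finitely many lines cannot by itself control the number of completely split primes in the remaining extensions. What the paper actually uses here is a theorem of Emsalem, which asserts that in \emph{any} $\Z_p$-extension of $k$ other than the anticyclotomic one at most two finite primes split completely; this is a non-trivial arithmetic input and must be cited or reproved. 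Once it is in hand, the rest of your argument for (b) — discarding the finitely many lines through $\Frob_\q$ for $\q\in S(K)\cup S_p(k)$, or equivalently the paper's device of taking $k_\delta\cap k^\cyc$ large — goes through.
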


\begin{proof}
By Lemma \ref{soerens-lemma} the property (a) is satisfied if, for every character $\chi \in \widehat{\cG}$, the module $e_\chi \Q_p (\chi) A_{\Sigma, T} ( K_{\chi, \infty})$ vanishes, where $K_{\chi, \infty} = K_\chi \cdot k_\infty$ and $K_\chi$ denotes the subfield of $K$ cut out by the character $\chi$. 
By Remark \ref{gross-kuzmin-rk}\,(b) this holds for $\chi = 1$ because $k$ contains only prime above $p$, so it suffices to consider non-trivial characters. By Lemma \ref{Gross-Kuzmin-Lemma} it is enough to check if $e_\chi \Q_p (\chi) A_{\Sigma_\chi, T} ( K_{\chi, \infty})$ vanishes, where $\Sigma_\chi = S_\ram (K_{\chi, \infty} / k) \cup S_\infty (k)$. In this situation we may apply Theorem \ref{Gross-Kuzmin-theorem} which asserts that the aforementioned vanishing is equivalent to the surjectivity of the map $\bigoplus_{v \in W_\chi} \Rec_{v, \chi}$ defined in (ii) of Theorem \ref{Gross-Kuzmin-theorem} as
\[
 e_\chi \Q_p (\chi) \bigO_{K_\chi, \Sigma_\chi, T}^\times \to e_\chi \Q_p (\chi) Y_{K_\chi, W_\chi} \otimes_{\Z_p} \Gamma_\chi,
\quad a \mapsto \sum_{w \in W_\chi}\sum_{\sigma \in \cG_\chi} \chi (\sigma) w \otimes (\rec_w ( \sigma^{-1} a ) - 1),
\]
where $\cG_\chi = \gal{K_\chi}{k}$, $\Gamma_\chi = \gal{K_{\chi, \infty}}{K_\chi}$, and $W_\chi = \{v \in \Sigma_\chi \setminus S_\infty (k) \mid \chi (v) = 1 \}$.
Observe that we must have $W_\chi \subseteq S_\ram (k_\infty / k) = \{ v\}$ for the unique place $v$ of $k$ above $p$. If $W_\chi = \emptyset$, there is nothing to show. We may therefore assume that $W_\chi = \{ v \}$, and we let $\widehat{\cG}_W$ be the subset of $\widehat{\cG}$ comprising all non-trivial characters $\chi$ such that $W_\chi \neq \emptyset$. \\
Note that we have a commutative diagram
\begin{cdiagram}
 e_\chi \Q_p (\chi) \bigO_{K, \Sigma_\chi, T}^\times \arrow{r}{e_\chi \Rec_v} \arrow{d}[left]{\NN_{K / K_\chi}} & e_\chi \Q_p (\chi) \otimes_{\Z_p} \Gamma \arrow{d}{\simeq} \\
 e_\chi \Q_p (\chi) \bigO_{K_\chi, \Sigma_\chi, T} \arrow{r}{\Rec_{v, \chi}} &
e_\chi \Q_p (\chi) \otimes_{\Z_p} \Gamma_\chi,
\end{cdiagram}
where the isomorphism on the right is induced by the inclusion $\Gamma = \gal{K_\infty}{K} \subseteq \Gamma_\chi$ (which has finite index). It is therefore sufficient to check if the map $e_\chi \Rec_v$ is surjective (or, equivalently, non-zero) for all $\chi \in \widehat{\cG}_W$. \\
The basic strategy of the remainder of this proof is now to show that this holds if one avoids, if necessary, certain 'bad' $\Z_p$-extensions. As a first step towards this, we will now first give a more explicit description of the map $e_\chi \Rec_v$. \medskip \\
Let $F_\infty$ be the compositum of all $\Z_p$-extensions of $k$, which is a $\Z_p^2$-extension as a consequence of the known validity of Leopoldt's Conjecture for this setting. In fact, we know that $\gal{F_\infty}{k} = \Z_p \gamma_\cyc \oplus \Z_p \gamma_\anti$, where $\gamma_\cyc, \gamma_\anti \in \gal{F_\infty}{k}$ are such that the fixed fields $F_\infty^{\langle \gamma_\cyc \rangle}$ and $F_\infty^{\langle \gamma_{\anti}\rangle}$ are the cyclotomic and anti-cyclotomic $\Z_p$-extensions of $k$, respectively. 
Write $\gal{F_\infty}{k}_v = \gal{F_\infty K}{K}_w$ for a choice of decomposition group at $v$ inside $F_\infty / k$ and $w$ inside $\gal{F_\infty K}{K}$, respectively. If $I_v$ denotes the inertia subgroup of $\gal{F_\infty}{k}_v$, then explicit local class field theory \cite[Ch.\@ I, Prop.\@ 1.8]{deS87} gives that the inverse of the local reciprocity map identifies $I_v$ with a quotient of $1 + \p_v$, where $\p_v$ is the maximal ideal of the valuation ring $\bigO_{k_v} \subseteq k_v$  of $k_v$. Since $1 + \p_v$ and $I_v$ are both of $\Z_p$-rank two, $I_v$ must agree with the torsion-free part of $1 + \p_v$. 
We can therefore find an integer $s \geq 1$ such that
\begin{equation} \label{some-map}
\gal{F_\infty}{k}_v^{p^sd} \subseteq I_v^{p^s} \stackrel{\simeq}{\longrightarrow} (1 + \p_v)^{p^s} \subseteq 1 + \p_v^s \stackrel{\simeq}{\longrightarrow} \p_v^s,
\end{equation}
where $d = (\gal{F_\infty}{k} : I_v)$, the first arrow is the inverse of the local reciprocity map $\mathrm{Art}_v \: k_v^\times  \to \gal{F_\infty}{k}_v$, and the second arrow is the $p$-adic logarithm. 
The cokernel of (\ref{some-map}) is finite, hence, for all characters $\chi \in \widehat{\cG}_W$, it induces an isomorphism
\[
\omega_\chi \: e_\chi \Q_p (\chi) \otimes_{\Z_p} \gal{F_\infty \cdot K}{K} = e_\chi \Q_p (\chi) \otimes_{\Z_p} \gal{F_\infty}{k}  \stackrel{\simeq}{\longrightarrow} e_\chi \Q_p (\chi) \otimes_{\Z_p} \p_v.
\]
Given this, we can identify the map
\[
e_\chi \Q_p (\chi) \cdot U_{K, W, T} \to e_\chi \Q_p (\chi) \otimes_{\Z_p} \gal{F_\infty K}{K}, \quad
a \mapsto p^sd \sum_{\sigma \in \cG} \chi (\sigma) \otimes  \mathrm{Art}_v ( \iota_w (\sigma^{-1} a)),
\]
where $\iota_w \: K^\times \hookrightarrow K_w^\times$ denotes the canonical embedding,
with the map 
\begin{align*}
 \widetilde{\rho_\chi} \: e_\chi \Q_p (\chi) \otimes_\Z \bigO_{K, W, T}^\times \to e_\chi \Q_p (\chi) \otimes_{\Z_p} \p_v, \quad
a \mapsto - p^s d \sum_{\sigma \in \cG} \chi (\sigma)  \otimes \log_p ( \iota_w (\sigma^{-1} a)) .
\end{align*}
Let $\gamma, \delta \in \gal{F_\infty}{k}$ be a $\Z_p$-basis
and write $k_\delta = F_\infty^{\langle \delta \rangle}$ for the $\Z_p$-extension of $k$ that is cut out by $\delta$. Observe that all $\Z_p$-extensions of $k$ are of this form. We also set $K_\delta = K \cdot k_\delta$.\\
Recall that the map $\rec_w$ is the composite of $\iota_w \: K^\times \hookrightarrow K_w^\times$ and the local reciprocity map $K_w^\times \to \gal{K_\delta}{K}$, and note that the latter map can be described as the composition of $\mathrm{Art}_v$ and the restriction map on decomposition groups $\gal{F_\infty K}{K}_w \to \gal{K_\delta}{K}_w$. More explicitly, if
$\gamma^x \delta^y$ is an element of $\gal{F_\infty K}{K}_w$, then its restriction to $K_\delta$ coincides with $\gamma^x$. \\
Observe that it is sufficient to check the non-vanishing of the map $e_\chi \Rec_v$ after multiplication by $p^s d$.
The above discussion implies that $p^s d\cdot e_\chi \Rec_v$ can be identified with the map $\rho_\chi = \pi_{\chi, \gamma} \circ \widetilde{\rho_\chi}$, where $\pi_{\chi, \gamma}$ denotes the projection map
\[
\pi_{\chi, \gamma} \: e_\chi \Q_p (\chi) \otimes_{\Z_p} \p_v \to e_\chi \Q_p (\chi), \quad
x \omega_\chi (\gamma^{p^s d}) + y \omega_\chi ( \delta^{p^s d}) \mapsto x.
\]
By Lemma \ref{brumer-baker-lem} the map $\widetilde{\rho_\chi}$
is non-zero. 
It follows that the map $\rho_\chi$ can only be zero if the image of $\widetilde{\rho_\chi}$ 
is contained in, and hence coincides with,
the kernel of the projection map 
$\pi_{\chi, \gamma}$, i.e.\@ the
submodule of $e_\chi \Q_p (\chi) \otimes_{\Z_p} \p_v$ generated by $\omega_\chi  (\delta)$.
Thus, the map $\rho_\chi$ is non-zero for all $\chi \neq 1$ if $\delta$ is not a $\Z_p^\times$-multiple of an element in the set $\{ \delta_\chi \mid \chi \in \widehat{\cG}_W \}$, where $\delta_\chi$ denotes a topological generator of 
\[
\omega_\chi^{-1} \Big ( \widetilde{\rho_\chi} ( e_\chi \Q_p (\chi) U_{K, W, T}) \cap \omega_\chi (\gal{F_\infty}{k} ) \Big). 
\]
We now claim that we can choose an integer $N \geq 1$ such that all $\Z_p$-extensions in the set
\[
\Omega (N) = \{ k_\delta \mid \delta = \gamma_\anti^{p^n} \cdot \gamma_\cyc \text{ for some } n \geq N \}
\]
satisfy all of the conditions (a) -- (c). Indeed, if $N$ is big enough such that for $n \geq N$ none of the elements $\gamma_\anti^{p^n} \cdot \gamma_\cyc$ is a $\Z_p^\times$-multiple of an element in $\{ \delta_\chi \mid \chi \in \widehat{\cG}_W \}$, then each $\Z_p$-extension in $\Omega (N)$ will have property (a). Note that $k_{\delta} \cap k^\cyc = k^\cyc_n$ if $\delta = \gamma_\anti^{p^n} \cdot \gamma_\cyc$. Since no finite place splits completely in $k^\cyc / k$, we may therefore choose $N$ such that the second part of (b) is satisfied for each element of $\Omega (N)$. The first part of (b), in turn, follows from a result of Emsalem \cite{emsalem}
which, as a particular case, asserts that in any $\Z_p$-extension of $k$ that is not the anticyclotomic extension at most two finite primes can split completely.
\end{proof}

\section{Abelian extensions of an imaginary quadratic field} \label{iq-section}

In this section we specialise to the case where the base field $k$ is imaginary quadratic.

\subsection{The conjecture of Mazur--Rubin and Sano for elliptic units} \label{set-up-section-iq}

Fix an imaginary quadratic field $k$ and a prime number $p$. We will often distinguish between two cases:
\begin{enumerate}[label=$\bullet$]
    \item \textit{(split case)} The rational prime $p$ splits in $k$. In this case we
    fix a choice of prime ideal $\p \subseteq \bigO_{k}$ above $p$, i.e.\@ we then have $p \bigO_{k}= \frp \overline{\frp}$ with $\frp \neq \overline{\frp}$. 
    \item \textit{(non-split case)} The prime $p$ is either inert in $k$, i.e.\@ $p\bigO_{k}=\frp$ is prime, or ramified, i.e.\@ $p \bigO_{k}=\frp^{2}$.
\end{enumerate}
Fix a finite abelian extension $K / k$ and define $k_\infty$ to be 
\begin{enumerate}[label=$\bullet$]
\item the unique $\Z_p$-extension of $k$ unramified outside $\p$, in the split case,
\item any $\Z_p$-extension of $k$ in which only finitely many finite places split completely, none of them ramified in $K / k$. 
\end{enumerate}
As in \S\,\ref{set-up-section} we then set $K_\infty = K \cdot k_\infty$, write $K_n$ for the $n$-th layer of the $\Z_p$-extension $K_\infty / K$, and use the notations $\cG, \cG_{n}, \Gamma_{n}, \Gamma^n$ and $\bLambda$ etc. We also note that, in the split case, no finite place splits completely in $k_\infty / k$, see \cite[Ch.~II, Prop.~1.9]{deS87}. \smallskip \\
Fix a prime ideal $\a \subsetneq \bigO_k$ that does not split completely in $k_\infty / k$ and is coprime to $6 \p \m$, where $\m = \m_K$ denotes the conductor of $K$. The set $T = \{ \a \}$ then has the property that $U_{E, \Sigma, T}$ is $\Z_p$-torsion free for every subfield $E$ of $K_\infty / k$. \smallskip \\
In the notation of \S\,\ref{set-up-section} we take $V = S_\infty (k)$ and $S$ a finite set which contains $S_\infty (k) \cup S_\ram (K / k)$.
Recall that in \S\,\ref{set-up-section} we have also fixed a proper subset $V' \subsetneq \Sigma$ consisting of places that split completely in $K / k$, and have set $e$ to be the size of $W = V' \setminus V$.  
\smallskip\\
We are now able to state the main result of this subsection.

\begin{thm} \label{MRS-iq-thm}
Let $\kappa_0$ be the Darmon derivative of $\varepsilon_{K_\infty / k, \Sigma, T}$ with respect to some topological generator $\gamma$ of $\Gamma$. Then we have
\[
 \kappa_0 \otimes (\gamma - 1)^e =  (\Rec_W \circ \Ord_W^{-1}) ( \varepsilon^V_{K / k, \Sigma \setminus W, T}).
\]
In particular, Conjecture \ref{MRS} holds for the data $(k_{\infty} / k, K, S, T, V')$ fixed above. 
\end{thm}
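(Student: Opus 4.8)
The plan is to deduce the theorem from the explicit reciprocity law for elliptic units, after the formal reductions of \S\,\ref{congruences-section}. To begin, the hypothesis of Conjecture~\ref{MRS} is met: the $p$-part of the Rubin--Stark Conjecture holds for each $K_n/k$ and the data $(V,\Sigma,T)$ by Example~\ref{Rubin--Stark-examples}\,(c) (after enlarging $\ff$ if necessary), so that $\varepsilon_{K_\infty/k,\Sigma,T}=(\varepsilon^V_{K_n/k,\Sigma,T})_n$ is a well-defined element of $\bidual^r_\bLambda U_{K_\infty,\Sigma,T}$. I then verify Conjecture~\ref{OrderOfVanishingConjecture}: in the split case $k_\infty/k$ is unramified outside $\p$, and in the non-split case $k_\infty/k$ lies in the $\Z_p^2$-extension of the imaginary quadratic field $k$, which is unramified outside $p$; since there is a single prime $\p$ of $k$ above $p$, we get $S_\ram(k_\infty/k)\subseteq\{\p\}$ in either case, so $W=V'\setminus V$ contains at most one place ramifying in $k_\infty/k$. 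Proposition~\ref{order-of-vanishing-prop}\,(b) then gives $\varepsilon_{K_\infty/k,\Sigma,T}\in I_\Gamma^e\cdot\bidual^r_\bLambda U_{K_\infty,\Sigma,T}$, so the Darmon derivative $\kappa_0$ is defined, and by Proposition~\ref{MRS-equivalent} it suffices to prove the identity (\ref{Statement1}). As $\Ord_W(\varepsilon^{V'}_{K/k,\Sigma,T})=(-1)^{re}\varepsilon^V_{K/k,\Sigma\setminus W,T}$ by (\ref{RS-elements-ord-map}), this identity coincides with the displayed formula of the theorem, and establishing it yields Conjecture~\ref{MRS} for the data in question.

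Next I would reduce to the case $|W|=1$. Since $r=|S_\infty(k)|=1$, the general case of (\ref{Statement1}) follows from its specialisations in which $W$ consists of a single place; this is a standard functoriality argument relying on the compatibility (\ref{RS-elements-ord-map}) of Rubin--Stark elements with the valuation operators $\Ord_v$, the corresponding behaviour of Darmon derivatives, and the multiplicativity of $\Ord_W$ and $\Rec_W$ expressed through (\ref{biduals-duals-hom}); cf.\@ the analogous step in the proof of \cite[Thm.~4.9]{BKS2}. In the resulting case $r=e=1$ the hypothesis of Lemma~\ref{MRS-Solomon-formulation} is satisfied, and that lemma shows (\ref{Statement1}) is equivalent to the Solomon-type identity
\begin{equation*}
\Ord_v(\kappa_0)\otimes(\gamma-1)\;=\;-\,\Rec_v\big(\varepsilon^V_{K,\Sigma\setminus\{v\},T}\big)
\end{equation*}
in $\Q_p[\cG]\otimes_{\Z_p}I(\Gamma)/I(\Gamma)^2$, where $\varepsilon^V_{K,\Sigma\setminus\{v\},T}$ is the elliptic unit of Example~\ref{Rubin--Stark-examples}\,(c) for the set $\Sigma\setminus\{v\}$. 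If $v\nmid p$, then $v$ is unramified in $K_\infty/K$ (that extension being unramified outside $p$) and, belonging to $S$, is not completely split in $k_\infty/k$, hence not in $K_\infty/K$; therefore $\Rec_v(\varepsilon^V_{K,\Sigma\setminus\{v\},T})=0$, since the latter is a local unit at $v$ and $K_\infty/K$ is unramified there, while $\kappa_0\in\UN^1_0\subseteq\ker\Ord_v$ by Lemma~\ref{universal-norms-p-units}\,(a). Both sides thus vanish, and the identity holds in this case.

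The remaining case, $v=\p\mid p$ — which occurs only in the split case, with $\p$ split completely in $K/k$ — is the heart of the proof. Here one computes both sides through the explicit reciprocity law for elliptic units. On the one hand, the norm-coherent system $\varepsilon_{K_\infty/k,\Sigma,T}$, built from Robert's function $\psi$, gives rise to a Coleman power series at $\p$ whose logarithmic derivative governs both the $\p$-adic valuation of the Darmon derivative $\kappa_0$ (the elliptic analogue of \cite[Prop.~2.2]{Solomon1992}) and, through the reciprocity law, the local symbols $\rec_w$ entering $\Rec_\p$; on the other hand $\Rec_\p(\varepsilon^V_{K,\Sigma\setminus\{\p\},T})$ is made explicit by Kronecker's second limit formula (cf.\@ \cite[Lem.~2.2]{Fla09} and Example~\ref{Rubin--Stark-examples}\,(c)). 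The desired identity thereby becomes an equality between two quantities attached to the same Coleman power series. For $k=\Q$ this is Solomon's theorem \cite{Solomon1992}, and for $p>2$ split in $k$ with $p$ coprime to the class number of $k$ it is the elliptic analogue due to Bley \cite{Bley04}; the outstanding cases — in particular $p=2$, $p$ non-split in $k$, and the removal of the class-number hypothesis — are supplied by the refined reciprocity computations of \cite{BlHo20}. Carrying out this last comparison without restrictions is the principal obstacle, and it is precisely where the new results of \cite{BlHo20} are indispensable.
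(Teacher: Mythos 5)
Your high-level architecture matches the paper's: verify the Rubin--Stark hypothesis via elliptic units, establish Conjecture~\ref{OrderOfVanishingConjecture} via Proposition~\ref{order-of-vanishing-prop}\,(b), invoke Proposition~\ref{MRS-equivalent} and Lemma~\ref{MRS-Solomon-formulation} to reduce to a Solomon-type identity, dispose of places not above $p$, and reduce the place above $p$ to Coleman power series and the explicit reciprocity law. The paper does essentially all of this (it reduces to $W\subseteq S_\ram(K_\infty/K)=\{\p\}$ via \cite[Prop.~4.4\,(iv)]{BKS2}, which immediately gives $|W|\le 1$). However, there are two substantive problems with your account.

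First, your claim that the case $v=\p\mid p$ ``occurs only in the split case'' is false, and it is exactly the opposite of the point. Whether $\p\in W$ depends only on $\p$ splitting completely in the \emph{finite} extension $K/k$; this is entirely independent of whether the rational prime $p$ splits, is inert, or ramifies in $k/\Q$. In the non-split case there is a unique $\p$ above $p$ and $S_\ram(k_\infty/k)=\{\p\}$, so $W=\{\p\}$ occurs just as readily as in the split case. Indeed, handling the non-split case (together with $p=2$) is precisely what the introduction singles out as the new contribution of Theorem~\ref{thm-A}, and Lemma~\ref{constant-term-lem} is proved in the paper specifically so that the Coleman power series/explicit reciprocity argument applies in the non-split situation too. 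Your concluding sentence about ``$p$ non-split in $k$'' being one of the cases covered by \cite{BlHo20} is internally inconsistent with this earlier claim. So the assertion that $v\mid p$ does not arise in the non-split case is a genuine gap: it would silently discard the main new case.

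Second, you omit a necessary preparatory reduction. The local computation with Coleman power series and Proposition~\ref{prop_seiriki_thm} (which is the precise input from \cite[Cor.~3.17]{BlHo20}, not just ``refined reciprocity computations'') is carried out at a level $K_n$ where $\p$ has full decomposition group in $K_\infty/K_n$. The paper therefore first passes from $K$ to $K_n$ with $n=(\Gamma:\Gamma_\p)$, using an explicit manipulation of Darmon derivatives with the norm operator $\NN_{\Gamma_n}$ (cf.\@ equations~(\ref{rechnung-1}) and~(\ref{rechnung-2})) to show that the identity for $K_n$ implies the one for $K$. Without this step your appeal to the explicit reciprocity law for the norm-coherent system does not directly yield the formula at level $K$. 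Your sketch of the final comparison (Coleman power series controlling both $\ord_\p(\kappa_0)$ and $\Rec_\p$) is otherwise the right idea, and your identification of \cite{BlHo20} as the key external input is correct, but the precise mechanism — Lemma~\ref{constant-term-lem} identifying $\Col_u(0)$ with $\iota_w(\psi_{\ff\m,\a})$, combined with Proposition~\ref{prop_seiriki_thm} evaluating the valuation of the Hilbert~90 element $\beta_{\sigma,\rho}$ — needs to be spelt out, and it needs to be carried out without the restriction to the split case.
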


\begin{rk}
In the split case
our methods only allow to prove Conjecture~\ref{MRS} for the unique $\Z_p$-extension $k_\infty / k$ which is unramified outside $\p$. However, this is sufficient to establish the relevant case of the equivariant Tamagawa Number Conjecture in this setting (see Theorem~\ref{thm_iq_main_thm}) and this, in turn, implies Conjecture~\ref{MRS} for any choice of $\Z_p$-extension in which no finite place contained in $\Sigma$ splits completely via \cite[Lem.\@ 5.17]{BKS2} (cf.\@ also \cite[Thm.\@ 5.16]{BKS}).
\end{rk}

The proof of Theorem \ref{MRS-iq-thm} occupies the remainder of this subsection and we shall appeal to relative Lubin-Tate theory \cite[Ch.~I]{deS87} during its course. In order to do this, we first need to establish a little more notation. \medskip \\
Let $H$ be a finite extension of $\Q_p$ and denote the cardinality of its residue field $\faktor{\bigO_H}{\p_H}$ by $q$. We fix an integer $d > 0$ and let $H'$ be the unramified extension of $H$ of degree $d$. We write $\varphi \in \gal{H'}{H}$ for the arithmetic Frobenius automorphism. \\
Fix an element $\xi \in H^\times$ such that $\ord_H ( \xi) = d$.  For each power series $f$ satisfying Frobenius-like properties (for details see \cite[Ch. I]{deS87}) there exists a unique one-dimensional commutative formal group law $F_f \in \bigO_{H'} \llbracket X, Y \rrbracket$  satisfying $F_{f}^{\varphi} \circ f = f \circ F_{f}$ called a relative Lubin-Tate group (relative to the extension $H'/H$).
We let $W^n_f$ be the group of \textit{division points of level $n$} of $F_f$ and set $\widetilde{W^n_f} = W^n_f \setminus W^{n - 1}_f$ for every $n \in \N$. Then $H'_n = H' (W^{n + 1}_f)$ is a totally ramified extension of $H'$ of degree $q^n (q - 1)$ and $H'_\infty = \bigcup_{n \in \N} H'_n$ is the maximal totally ramified extension of $H'$ that is abelian over $H$. \medskip \\
Fix $\omega_i \in \widetilde W^i_{\varphi^{- i} (f)}$ such that $(\varphi^{- i} f) ( \omega_i) = \omega_{i - 1}$ and let $u \in \varprojlim_n (H'_n)^\times$ be a norm-coherent sequence. 
There is a unique integer $\nu(u)$ such that $u_{n}\bigO_{H_{n}'} = \mathfrak{p}^{\nu(u)}_{H_{n}'}$ for all $n \geq 0$. By \cite[Ch. I, Thm.~2.2]{deS87}
there is a unique power series $\Col_u \in t^{\nu(u)} \bigO_{H'} \llbracket t \rrbracket ^{\times}$ such that
\[
( \varphi^{- (i + 1)} \Col_u) (\omega_{i + 1}) = u_i 
\]
for all $i \geq 0$. This power series $\Col_u$ is called the \textit{Coleman power series} associated to $u$. \medskip \\
Let $\rho \: \gal{H'_\infty}{H} \to \faktor{\Q}{\Z}$ be a character of finite order. Write $H_\rho = (H'_\infty)^{\ker \rho}$ for the field cut out by $\rho$ and choose $m$ minimal with the property that $H_\rho \subseteq H'_m$. \\
If $u \in \varprojlim_n \bigO_{H'_n}^\times$ is a norm-coherent sequence, then class field theory implies that $\NN_{H'_0 / H} (u_0) = 1$. Hilbert's Theorem 90 therefore ensures the existence of an element $\beta_{\sigma, \rho} \in H^\times_\rho$ satisfying $(\sigma - 1) \cdot \beta_{\sigma, \rho} = \NN_{H'_m / H_\rho} ( u_m)$, where $\sigma$ denotes a generator of $\gal{H_\rho}{H}$. \medskip \\
The following is proved in \cite[Cor.~3.17]{BlHo20}. 

\begin{prop} \label{prop_seiriki_thm}
Using the notation introduced above, assume that $\rho (\sigma) = \frac{1}{[H_\rho : H]} + \Z$.
Then we have 
\[
\frac{\ord_{H_\rho} (\beta_{\sigma, \rho})}{e_{H_\rho / H}} = - \rho ( \rec_H ( \NN_{H' / H} (\Col_u (0)))) 
\quad \text{ in } \faktor{\Q}{\Z},
\]
where we write $e_{H_\rho / H}$ for the ramification degree of the extension $H_\rho / H$ and $\rec_H$ denotes the local reciprocity map $H^\times \to \gal{H'_\infty}{H}$. 
\end{prop}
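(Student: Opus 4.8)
The plan is to deduce Proposition~\ref{prop_seiriki_thm} by combining local class field theory with the explicit reciprocity laws of relative Lubin--Tate theory \cite[Ch.~I]{deS87}. First, one checks that the left-hand side is a well-defined element of $\tfrac{1}{e_{H_\rho/H}}\Z/\Z$: the relation $(\sigma-1)\beta_{\sigma,\rho}=\NN_{H'_m/H_\rho}(u_m)$ determines $\beta_{\sigma,\rho}$ only up to multiplication by an element of $(H_\rho^\times)^{\gal{H_\rho}{H}}=H^\times$, and $\ord_{H_\rho}(H^\times)=e_{H_\rho/H}\Z$. Since $u_m$ is a unit, so is $v:=\NN_{H'_m/H_\rho}(u_m)$, and $\NN_{H_\rho/H}(v)=\NN_{H'_0/H}(u_0)=1$ by the class field theory fact recalled before the statement; as $\gal{H_\rho}{H}$ is cyclic (generated by $\sigma$), the element $v$ defines a class $[v]\in\widehat{H}^1(\gal{H_\rho}{H},\bigO_{H_\rho}^\times)$.

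The first main step is to identify the left-hand side with $[v]$. The short exact sequence of $\gal{H_\rho}{H}$-modules $0\to\bigO_{H_\rho}^\times\to H_\rho^\times\xrightarrow{\ \ord_{H_\rho}\ }\Z\to 0$ together with Hilbert~90 ($H^1(\gal{H_\rho}{H},H_\rho^\times)=0$) yields a canonical isomorphism $\widehat{H}^1(\gal{H_\rho}{H},\bigO_{H_\rho}^\times)\xrightarrow{\,\sim\,}\Z/e_{H_\rho/H}\Z$ sending the class of a unit $(\sigma-1)\beta'$ to $\ord_{H_\rho}(\beta')\bmod e_{H_\rho/H}$. Composing with $\Z/e_{H_\rho/H}\Z\hookrightarrow\tfrac{1}{e_{H_\rho/H}}\Z/\Z\subseteq\Q/\Z$, $\ 1\mapsto 1/e_{H_\rho/H}$, the left-hand side of the proposition is thereby identified with the image of $[v]$; the normalisation $\rho(\sigma)=1/[H_\rho:H]$ fixes the generator of $\gal{H_\rho}{H}$ and hence the sign.

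The second main step is the computation of $[v]$ from the Coleman power series $\Col_u$, which is an instance of a Wiles-type explicit reciprocity law for the relative Lubin--Tate group $F_f$: the Hilbert~90 class of the norm $\NN_{H'_m/H_\rho}(u_m)$ of a norm-coherent sequence is to be computed from $\Col_u$. Because $v$ is a unit, only the depth-zero (``trivial-zero'') part of the reciprocity law intervenes --- no term involving the logarithmic derivative of $\Col_u$ appears --- so the relevant input reduces to de Shalit's description of the action of $\rec_H$ on the division points $\omega_i$ \cite[Ch.~I, Prop.~1.8]{deS87}, combined with the functional equation of Coleman's norm operator \cite[Ch.~I, Thm.~2.2]{deS87} to push $u_m$ down the tower and isolate the constant term $\Col_u(0)\in\bigO_{H'}^\times$. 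The outcome is that $[v]$, under the isomorphism of the first step, equals $-\rho\big(\rec_H(\NN_{H'/H}(\Col_u(0)))\big)\in\tfrac{1}{e_{H_\rho/H}}\Z/\Z$, which by the first step is precisely the claimed identity.

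The main obstacle will be this second step. In the relative Lubin--Tate setting one must carry Frobenius twists --- both of the formal groups $F_{\varphi^{-i}f}$ and of the power series --- through the iterated norm computations so that it is exactly $\NN_{H'/H}(\Col_u(0))$ (with the correct sign), rather than some Frobenius-conjugate or inverse, that appears; and one must check that the contribution of the higher coefficients of $\Col_u$ to $[v]$ genuinely cancels, which is where the norm-coherence of $u$ is essential. A more routine point is the independence of the answer from the auxiliary choices --- the system $(\omega_i)$, the Hilbert~90 element $\beta_{\sigma,\rho}$, and the generator $\sigma$ (normalised already by $\rho(\sigma)=1/[H_\rho:H]$) --- which the formulation anticipates.
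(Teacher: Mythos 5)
The paper does not actually give a proof of this proposition: its ``proof'' is the single sentence preceding the statement, which refers the result to \cite[Cor.~3.17]{BlHo20}. There is therefore no internal argument against which to compare yours.

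As for the merits of your proposal: your Step~1 is correct and complete. Starting from the $\gal{H_\rho}{H}$-sequence $0 \to \bigO_{H_\rho}^\times \to H_\rho^\times \to \Z \to 0$, Hilbert~90 and the norm index formulae indeed identify $\widehat{H}^1(\gal{H_\rho}{H}, \bigO_{H_\rho}^\times)$ with $\Z/e_{H_\rho/H}\Z$ via $[(\sigma-1)\beta'] \mapsto \ord_{H_\rho}(\beta') \bmod e_{H_\rho/H}$, and the cyclicity of $\gal{H_\rho}{H}$ justifies interpreting $v = \NN_{H'_m/H_\rho}(u_m)$ as a Tate cohomology class. This part is a clean and self-contained reduction.

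Step~2, however, is where the entire content of the proposition lies, and you have not carried it out. You assert that $[v]$ can be computed from the constant term $\Col_u(0)$ by ``pushing $u_m$ down the tower'', that the Frobenius twists of the relative Lubin--Tate groups $F_{\varphi^{-i}f}$ propagate so as to produce exactly $\NN_{H'/H}(\Col_u(0))$ with the correct sign, and that the higher coefficients of $\Col_u$ cancel --- each of these is exactly what needs proof, and you explicitly flag them as ``the main obstacle''. Moreover, framing the result as a degenerate case of a Wiles-type explicit reciprocity law is somewhat misleading: there is no Hilbert pairing in sight, and the actual argument is a more direct computation with de Shalit's explicit description of the Artin map \cite[Ch.~I, Prop.~1.8]{deS87} and the defining property of the Coleman power series \cite[Ch.~I, Thm.~2.2]{deS87}, with careful bookkeeping of the indexing and Frobenius twists across the tower $H'_n$. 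Your outline names the right ingredients but stops precisely where the work begins, so as it stands the argument has a genuine gap --- one which the paper sidesteps by invoking the already-established \cite[Cor.~3.17]{BlHo20}.
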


\textit{Proof of Theorem \ref{MRS-iq-thm}:}
First we observe that by \cite[Prop.~4.4\,(iv)]{BKS2} we may reduce to the case 
$W \subseteq S_\ram (K_{\infty} / K)  = \{ \p \}$. 
Since Conjecture \ref{MRS} is trivial if $W = \emptyset$, we may assume that $W = \{ \p \}$.
In particular, $\p \nmid \m$ and hence, because $V'$ is a proper subset of $\Sigma$, there must be a finite place $\q \in \Sigma$ that is different from $\p$. Observe that $\bigO_k^\times \to (\faktor{\bigO_k}{\q^l} )^\times$ is injective if we choose $l$ big enough. We may therefore take the ideal $\mathfrak{f}$ appearing in Example \ref{Rubin--Stark-examples}\,(c) to be an appropriate power of $\q$. Given this, we have that, for any $n \in \N_0$,
\[
 \varepsilon^V_{K_n / k, \Sigma, T} = \NN_{k ( \mathfrak{f} \m \p^{n + 1})/K_{n}} ( \psi_{\mathfrak{f} \m \p^{n + 1}, \mathfrak{a}})
\]
is the elliptic unit defined in \ref{Rubin--Stark-examples}\,(c). 
\medskip \\
Let $n = (\Gamma : \Gamma_{\p}) $ be the index of the decomposition group at $\p$ inside $\Gamma$, i.e.\@ $n$ is maximal such that $\p$ does not split at all in $K_{\infty} / K_{n}$. We shall now first demonstrate that it suffices to prove Conjecture \ref{MRS} for the field $K_{n}$. \\
By Theorem~\ref{order-of-vanishing-prop}\,(a) the Darmon derivatives $\kappa_0$ and $\kappa_n'$ of $\varepsilon_{K_\infty / k, \Sigma, T}$ with respect to topological generators $\gamma \in \Gamma$ and $\gamma^{p^n} \in \Gamma^n$, respectively, exist. By definition these are the bottom values of norm-coherent sequences $(\kappa_m)_m$ and $(\kappa'_{n + m})_m$ which satisfy
\[
(\gamma - 1) \cdot \kappa_m = \varepsilon^V_{K_m, \Sigma, T}
\quad \text{ and } \quad
(\gamma^{p^n} - 1) \cdot \kappa'_{n + m} = \varepsilon^V_{K_{n + m}, \Sigma, T}
\]
for all integers $m \geq 0$. 
It follows that we have
\[
(\gamma - 1) \cdot \NN_{\Gamma_{n}} \cdot \kappa_{n +m}'  = (\gamma^{p^n} - 1) \cdot \kappa'_{n +m} =  \varepsilon^V_{K_{n + m}, \Sigma, T},
\]
hence, by uniqueness, we must have $(\kappa_{n + m})_m = (\NN_{\Gamma_{n}}  \kappa_{n + m}' )_m $ and it follows that $\kappa_0 = \NN_{\Gamma_{n}}^2 \cdot \kappa'_n = p^n  \NN_{\Gamma_{n}} \kappa'_n$. This implies that
\begin{equation} \label{rechnung-1}
    \kappa_0 \otimes (\gamma - 1) =  p^n  \NN_{\Gamma_{n}} \kappa'_n \otimes (\gamma - 1)
     = (\NN_{\Gamma_{n}}\kappa'_n) \otimes (\gamma^{p^n} - 1)
\end{equation}
inside $U_{K, \Sigma, T} \otimes_{\Z_p} \faktor{I (\Gamma)}{I(\Gamma)^2}$. 
As $\faktor{I (\Gamma_{n})}{I(\Gamma_{n})^2}$ is $\Z_p$-torsion free, the inclusion $U_{K, \Sigma, T} \hookrightarrow U_{K_{n}, \Sigma, T}$ induces an injection
\begin{equation} \label{injection-1}
U_{K, \Sigma, T} \otimes_{\Z_p} \faktor{I (\Gamma_{n})}{I(\Gamma_{n})^2} 
\hookrightarrow 
U_{K_{n}} \otimes_{\Z_p} \faktor{I (\Gamma_{n})}{I(\Gamma_{n})^2}
\end{equation}
that allows us to view (\ref{rechnung-1}) as an equality inside the right hand side of (\ref{injection-1}). 
Assuming the validity of the conjecture for $K_{n}$ (in the form (\ref{Statement1})), we may therefore continue the calculation in (\ref{rechnung-1}) as follows:      
\begin{align} \nonumber 
     \NN_{\Gamma_{n}} (\kappa'_n \otimes (\gamma^{p^n} - 1))
    & = \NN_{\Gamma_{n}} (\Rec_W \circ \Ord_W^{-1}) ( \varepsilon^V_{K_n, \Sigma \setminus W, T})  \\ \nonumber 
    & = (\Rec_W \circ \Ord_W^{-1}) ( \NN_{\Gamma_{n}} \varepsilon^V_{K_n, \Sigma \setminus W, T}) \\ \label{rechnung-2}
    & = (\Rec_W \circ \Ord_W^{-1}) ( \varepsilon^V_{K, \Sigma \setminus W, T} ).
\end{align}
Observe that we have a commutative diagram
\begin{equation*} 
\begin{tikzcd}
       U_{K_{n}, \Sigma, T} \arrow{rr}{\Rec_W \circ \Ord_W^{-1}} & & 
       \Q_p \cdot  U_{K_{n}, \Sigma, T}
       \otimes_{\Z_p} \faktor{I ( \Gamma_{n})}{I (\Gamma_{n})^2}   \\
       U_{K, \Sigma, T} \arrow[hookrightarrow]{u} \arrow{rr}{\Rec_W \circ \Ord_W^{-1}} & &  
       \Q_p \cdot  U_{K, \Sigma, T}
       \otimes_{\Z_p} \faktor{I ( \Gamma_{n})}{I (\Gamma_{n})^2}
       \arrow[hookrightarrow]{u}, 
\end{tikzcd}
\end{equation*}
where the right hand vertical arrow is induced by (\ref{injection-1}).  We caution the reader that the two horizontal arrows, although both labelled $\Rec_W \circ \Ord_W^{-1}$, do not coincide but that inducing the bottom arrow from $\cG$ to $\cG_{n}$ gives the top arrow.\\
Given this commutative diagram, the equations (\ref{rechnung-1}) and (\ref{rechnung-2}) taken together finish the proof of the claim. We therefore may, and will, assume without loss of generality that $\p$ has full decomposition group in $K_{\infty} / K$. \medskip \\
Let $w$ be a place of $K$ above $\p$ and choose an embedding $\iota_w \: \overline{\Q} \hookrightarrow \overline{\Q_p}$ that restricts to $w$ on $K$. In the following, we will denote the completion of a finite abelian extension field $F$ of $k$ at the place induced by $\iota_w$ by $\widetilde{F}$. 
Put $H = \widetilde{K}$ and $H' = \widetilde{k(\ff \m)}$. 
Using that $H'_n = \widetilde{k ( \ff \m \p^{n + 1})}$, 
we can then define a norm-coherent sequence $u = (u_n)_n \in \varprojlim_n \bigO_{H'_n}$ by setting
\[
u_n = \iota_w ( \psi_{\ff \m \p^{n + 1}, \a} ) \quad \text{ for all } n > 0.
\]

\begin{lem} \label{constant-term-lem}
We have
\[ 
\Col_u (0) = \iota_w ( \psi_{\ff \m, \a} ).
\]
\end{lem}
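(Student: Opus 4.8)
The plan is to pass to the completions at the place $w$ of $K$ above $\p$, recognise the resulting tower of local fields as a relative Lubin--Tate tower, verify that $u$ is norm-coherent along it, and then compute the constant term of its Coleman power series. For the local set-up, recall that $\p$ splits completely in $K/k$ (since $W=\{\p\}\subseteq V'$ and $V'$ consists of places split completely in $K/k$), so $H=\widetilde K=K_w$ is just $k_\p$; moreover $\p\nmid\ff\m$ (we have already arranged $\p\nmid\m$, and $\ff$ was chosen to be a power of $\q\neq\p$), so $H'=\widetilde{k(\ff\m)}$ is the unramified extension of $H$ of degree $d=[H':H]$, while each $H'_n=\widetilde{k(\ff\m\p^{n+1})}$ is totally ramified over $H'$ and the tower $(H'_n)_n$ realises the relative Lubin--Tate extensions of \cite[Ch.~I]{deS87}. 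The first thing to check is that $u=(u_n)_{n\geq 0}$, with $u_n=\iota_w(\psi_{\ff\m\p^{n+1},\a})$, lies in $\varprojlim_n\bigO_{H'_n}^\times$ with respect to the norm maps. This follows from the norm-compatibility of Robert's function in the modulus variable (\cite{Rob92}; see also \cite[Ch.~II]{deS87}), namely $\NN_{k(\ff\m\p^{n+2})/k(\ff\m\p^{n+1})}(\psi_{\ff\m\p^{n+2},\a})=\psi_{\ff\m\p^{n+1},\a}$ --- there is no Euler factor here because $\p$ already divides the modulus $\ff\m\p^{n+1}$ --- together with the observation that each $k(\ff\m\p^{n+2})/k(\ff\m\p^{n+1})$ is totally ramified at the places above $\p$, so that the global norm localises at $w$ to $\NN_{H'_{n+1}/H'_n}$. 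Since $\psi_{\ff\m\p^{n+1},\a}$ is a unit away from the primes dividing $\a$ and $\p\nmid\a$, each $u_n$ is a unit at $w$; hence $\nu(u)=0$ and $\Col_u\in\bigO_{H'}\llbracket t\rrbracket^\times$.

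It then remains to compute $\Col_u(0)$. By \cite[Ch.~I, Thm.~2.2]{deS87}, $\Col_u$ is uniquely determined by the interpolation law $(\varphi^{-(n+1)}\Col_u)(\omega_{n+1})=u_n$ for $n\geq 0$. Reading this law one step further down the tower --- where $W^0_f=\{0\}$, so $\omega_0=0$, and $H'_{-1}=H'$ --- identifies $\Col_u(0)$ with the canonical `level $-1$' term of the norm-coherent sequence, i.e.\ with $\NN_{H'_0/H'}(u_0)$ up to the arithmetic-Frobenius twist that is built into the relative Lubin--Tate Coleman relation. One evaluates this using the modulus-distribution relation for $\psi$ at the step $k(\ff\m\p)/k(\ff\m)$ --- which now genuinely involves a Frobenius contribution at $\p$, since $\p\nmid\ff\m$ --- and the identity $\iota_w\circ\Frob_\p=\varphi\circ\iota_w$ on $k(\ff\m)$ (valid since $\Frob_\p$ stabilises the place below $w$ and restricts there to the arithmetic Frobenius of $H'/H$); the Frobenius twist in the Coleman relation then exactly absorbs the Frobenius appearing in the distribution relation, leaving $\Col_u(0)=\iota_w(\psi_{\ff\m,\a})$. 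Alternatively, and perhaps more transparently, one may expand Robert's function $\psi(\,\cdot\,;\ff\m,\a)$ along the relative Lubin--Tate parameter at $w$, check directly that the resulting power series satisfies the interpolation law above (so that it equals $\Col_u$ by uniqueness), and read off that its constant term --- the value at the origin of the formal group, which corresponds to the base point defining $\psi_{\ff\m,\a}$ --- is $\iota_w(\psi_{\ff\m,\a})$.

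The main obstacle is entirely one of bookkeeping of normalisations: one must match Robert's normalisation of $\psi$ and its behaviour under change of modulus, the choice of uniformiser, Frobenius and division points in the relative Lubin--Tate tower, and de Shalit's $\varphi$-twisted interpolation law for Coleman power series, and verify that the Frobenius contributions on the two sides cancel precisely. All the ingredients needed for this are available in \cite[Ch.~I--II]{deS87} and \cite{BlHo20}, and the computation is of a piece with the one underlying \cite[Cor.~3.17]{BlHo20}.
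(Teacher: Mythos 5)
Your second ``alternative'' paragraph is essentially the route the paper takes: the proof reduces to identifying the Coleman power series of the norm-coherent sequence $u$ with the expansion of Robert's elliptic function along the relative Lubin--Tate parameter, and then reading off the constant term. The paper does this by citing \cite[Ch.~II, \S 4.9, Prop.]{deS87} (and \cite[Prop.~4.5]{OuVi16} for a detailed version) together with the monogeneity relation of $\psi$, so the basic idea you describe is the right one. However, there is a genuine gap in your write-up.

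The entire content of the paper's proof of this lemma is the transfer from the split case to the non-split case, and your proposal does not address this at all. De Shalit's Proposition (and the OuVi16 version) is stated and proved only when $p$ splits in $k$, because at a key step (part~(ii) of the cited proposition) the splitness is used: it guarantees that the formal group $\widehat{E}$ is of $\Z_p$-height one, hence isomorphic to $\widehat{\mathbb{G}}_m$, and this is what allows de Shalit to construct the generator $(\omega_n)$ of the Tate module via explicit torsion points. In the non-split case the formal group has $\Z_p$-height two, and the argument has to be ``inverted'': one first produces torsion points $u_n$ for which the explicit formula in \cite[Ch.~II, \S 4.4, (12)]{deS87} defines a generator of the Tate module, and only then runs the rest of de Shalit's computation. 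Your proof reads as if both cases were treated uniformly, with the non-split case reduced to ``bookkeeping of normalisations''; this underestimates the point, and without that additional argument your proof is incomplete in precisely the case the paper is most interested in.

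A secondary concern is your first approach, the attempt to ``extrapolate'' the interpolation law $(\varphi^{-(n+1)}\Col_u)(\omega_{n+1}) = u_n$ to level $-1$. The Coleman norm relation of \cite[Ch.~I, Prop.~2.7]{deS87} reads $\mathcal{N}\Col_u = \Col_u^{\varphi}\circ f$, and evaluating at $t=0$ yields $\prod_{\omega\in W^1_f}\Col_u(\omega) = \varphi(\Col_u(0))$, i.e.\ it constrains $\Col_u(0)$ only via a product over division points, one factor of which is $\Col_u(0)$ itself. This does not immediately give the clean identification $\Col_u(0) = \NN_{H'_0/H'}(u_0)$ ``up to Frobenius twist'' that you assert, and you would still need to compute the remaining product $\prod_{\omega\neq 0}\Col_u(\omega)$ explicitly. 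So while your alternative approach is sound in outline (modulo the split/non-split issue above), the first approach as written would need substantially more care before it could be trusted.
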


\begin{proof}
In the split case this is \cite[Chp.~II, Sec~4.9, Prop.]{deS87} combined with the evaluation of the power series at zero and an application of the monogeneity relation of Robert's $\psi$-function. A more detailed proof of the split case is given in \cite[Prop.~4.5]{OuVi16} following the same strategy as \cite{deS87}. \medskip \\
We claim that essentially the same proof works in the non-split case. First observe that in the proof of part (i) of the cited Proposition in \cite{deS87} the fact that the prime is split in $k$ is not used. 
In part (ii) the condition that $p$ is split is used to obtain a certain generator of the Tate module $(\omega_{n})$ of the underlying formal group $\widehat{E}$ (because in this case the formal group $\widehat{E}$ is isomorphic to $\widehat{\mathbb{G}}_{m}$ and hence of height one). It is then shown that there exist torsion points $u_{n}$ which can be used to give an explicit description of the elements $\omega_{n}$ at each level \cite[Chp.~II, Sec.~4.4, (12)]{deS87}. In the non-split case one can now invert the strategy: Indeed, it is easy to see that there exist torsion points $u_{n}$ such that the explicit description given in (12) is a generator of the Tate module of $\widehat{E}$. Using this as the definition of $(\omega_{n})$, the remaining steps in the proof are exactly as in the split case. 
\end{proof}

Recall that we have fixed a topological generator $\gamma$ of $\Gamma$ above. 
We define the isomorphisms
\begin{align*}
  s_{\gamma}\:&  \Gamma  \longrightarrow \Z_{p}, &   & s_{\gamma, n} \: \Gamma_{n} \longrightarrow \nZ{p^{n}} \\ 
  & \gamma^{a} \longmapsto a  &   &  \phantom{s_{\gamma, n} \: } \gamma^{a} \longmapsto a \bmod p^{n} \Z 
\end{align*}
and the character
\begin{align}
\label{def_character}
\rho_{\gamma , n} \: \gal{H'_{\infty}}{H} \xrightarrow{\pi_{n}} \gal{\widetilde{K_{n}}}{H} \xrightarrow{s_{\gamma ,n }} \nZ{p^{n}}
\cong \faktor{\tfrac{1}{p^n} \Z}{\Z}
,
\end{align}
where $\pi_{n}$ is the natural projection map induced by restriction. By definition, $\rho_{\gamma, n}$ is a character of finite order with kernel $\gal{H'_\infty}{\widetilde{K_{n}}}$, hence Proposition \ref{prop_seiriki_thm} combines with Lemma~\ref{constant-term-lem} to reveal that 
\begin{align} \label{seiriki-equn-1}
    \frac{\ord_{\widetilde{K_{n}}}(\beta_{\gamma_{n}, \rho_{\gamma, n}})}{e_{\widetilde{K_{n}} / H}} 
    \equiv - \frac{
    s_{\gamma, n} (\pi_{n}(\rec_{H}(\NN_{H'/ H} (\psi_{\ff \m, \a})))}
    {p^n}
    \bmod \Z
\end{align}
for all $n \geq 0$.  
By definition the Darmon derivative $\kappa_0$ of $\varepsilon_{K_\infty / k, \Sigma, T}$ with respect to $\gamma$ is the bottom value of a norm-coherent sequence $\kappa = (\kappa_n) \in \varprojlim_n U_{K_{n}, \Sigma}$ that satisfies
\[
(\gamma - 1) \iota_w ( \kappa_n) = \iota_w (\varepsilon^V_{K_n / k, \Sigma, T}) = \NN_{H'_m / H_{\rho_{\gamma, n}}} ( u_m),
\]
thus we may take $\beta_{\gamma_n, \rho_{\gamma, n}} \equiv \iota_w (\kappa_n) \mod \widetilde{K}^\times$. We now obtain from (\ref{seiriki-equn-1}) that
\begin{align*}
\ord_{H} (  \iota_w (\kappa_0)) & =
\ord_H (
 \NN_{\widetilde{K_{n}} / H} (\iota_w (\kappa_n))) =
\tfrac{p^n}{e_{\widetilde{K_{n}} / H}} \cdot 
\ord_{\widetilde{K_n}} ( \iota_w (\kappa_n)) \\
& \equiv 
 - s_{\gamma, n} (\pi_{n}(\rec_{H}(\NN_{H'/ H} (\psi_{\ff \m, \a}))) \mod p^{n} \Z.
\end{align*}
Taking the limit over $n$ then gives 
\begin{equation} \label{eqn_ord_rec_equal_final}
\ord_w (\kappa_0) = - s_\gamma (\rec_w ( \varepsilon^V_{K, \Sigma \setminus W, T} ))
\end{equation}
as an equality in $\Z_p$. By repeating the argument we also obtain equation (\ref{eqn_ord_rec_equal_final}) for the places $\sigma w$, where $\sigma \in \cG$. Collating these equations, we find that
\begin{align*}
    \Ord_W (\kappa_0) \otimes (\gamma - 1) & = \sum_{\sigma \in \cG_\chi} \ord_{\sigma w} (\kappa_0)  \sigma \otimes (\gamma - 1) \\
    & = - \sum_{\sigma \in \cG_\chi} s_\gamma (\rec_{\sigma w} (\varepsilon^V_{K, \Sigma \setminus W, T} )) \sigma \otimes (\gamma - 1) \\
    & = - \Rec_W ( \varepsilon^V_{K, \Sigma \setminus W, T} ).
\end{align*}
By Lemma \ref{MRS-Solomon-formulation} this concludes the proof of Theorem \ref{MRS-iq-thm}. 
\qed

\subsection{The equivariant Iwasawa Main Conjecture}

In this section we prove a suitable variant of the equivariant Iwasawa Main Conjecture for abelian extensions of an imaginary quadratic field.
In this setting, numerous results on the Iwasawa Main Conjecture have already appeared in the literature, both in classical and equivariant  formulations  (cf.\@ \cite{Rub88}, \cite{Rubin91}, \cite{Rub94}, \cite{Ble06}, \cite{Fla09}, \cite{JoKi11}, \cite{Vig13}). 
However, we require a result that is both slightly more general and of a different shape than is available in the literature thus far. \medskip \\ 
 Suppose to be given an abelian extension $L_\infty / k$ such that $\gal{L_\infty}{k} \cong \Gamma \times \Delta$, where $\Delta$ is a finite abelian group and $\Gamma \cong \Z_p^d$ for an integer $d > 0$.
Note that $d \in \{1, 2 \}$ as a consequence of the known validity of Leopoldt's Conjecture for the imaginary quadratic field $k$. 
 \\
We also fix a finite set $\Sigma$ of places of $k$ that contains $S_\infty (k) \cup S_p ( k)$ and a finite set $T$ of places of $k$ that is disjoint from $\Sigma$. Assume that no finite place contained in $\Sigma$ splits completely in the $\Z_p^d$-extension $L_\infty^\Delta / k$. \medskip \\
As before we write $\bLambda = \Z_p[\Delta] \llbracket \Gamma \rrbracket$ for the relevant equivariant Iwasawa algebra and denote its total field of fractions by $\cQ (\bLambda)$. One can then define a perfect complex $D^\bullet_{L_\infty, \Sigma, T}$ as in (\ref{limit-complex})
and define a map
\begin{align}
\Det_{\bLambda} (D^\bullet_{L_\infty, \Sigma, T}) & \hookrightarrow \cQ ( \bLambda) \otimes_\bLambda \Det_{\bLambda} (D^\bullet_{L_\infty, \Sigma, T}) 
\nonumber \\
& \cong \Det_{\cQ (\bLambda)} ( \cQ ( \bLambda) \otimes^\mathbb{L}_\bLambda D^\bullet_{L_\infty, \Sigma, T}) 
\nonumber \\
& \cong \big (\cQ (\bLambda) \otimes_\bLambda U_{L_\infty, \Sigma, T} \big)
\otimes_{\cQ ( \bLambda)} \big( \cQ ( \bLambda) \otimes_{\bLambda} Y_{L_\infty, S_\infty (k)} \big)^\ast \nonumber \\
& \cong \cQ (\bLambda) \otimes_\bLambda U_{L_\infty, \Sigma, T},
\label{projection-map-infinite}
\end{align}
where the first isomorphism follows from a well-known property of the determinant functor, the second isomorphism is the natural `passage-to-cohomology' map, and the last isomorphism is due to the isomorphism $Y_{L_\infty, S_\infty (k)} \cong \bLambda$ obtained from our fixed choice of extension of the unique infinite place of $k$ to $L_\infty$.\\ 
The map (\ref{projection-map-infinite}) then restricts to a map
\[
\Theta_{L_\infty / k, \Sigma, T}^1 \:  
    \Det_{\bLambda} (D^\bullet_{L_\infty, \Sigma, T}) 
    \hookrightarrow U_{L_\infty, \Sigma, T}^{\ast \ast} \cong U_{L_\infty, \Sigma, T},
    \]
see \cite[Lem.\@ 3.12]{BullachDaoud} for more details.    
\smallskip\\
We now recall the (higher-rank) equivariant Iwasawa Main Conjecture in this setting as proposed in \cite[Conj.\@ 3.1 and Rk.\@ 3.3]{BKS2}.

\begin{conj} \label{eIMC}
There exists a $\bLambda$-basis $\mathcal{L}_{L_\infty / k, \Sigma, T}$ of $\Det_\bLambda (D^\bullet_{L_\infty, \Sigma, T})$ such that
\[
\Theta^1_{L_\infty / k, \Sigma, T} ( \mathcal{L}_{L_\infty / k, \Sigma, T}) = \varepsilon_{L_\infty / k, \Sigma, T}.
\]
\end{conj}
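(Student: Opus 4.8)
The plan is to deduce Conjecture~\ref{eIMC} from the known two-variable Iwasawa Main Conjecture for imaginary quadratic fields together with the theory of elliptic units, and then to repackage the outcome in the determinant formulation demanded here. The first reduction is to the case in which $\gal{L_\infty}{k}$ is pro-$p$: writing $\Delta = \Delta_p \times \Delta'$ with $|\Delta'|$ prime to $p$ and decomposing $\Z_p[\Delta']$ into characters, one reduces to each $\psi$-component for $\psi\in\widehat{\Delta'}$, since $\Det$ is compatible with this base change and the Euler system $\varepsilon_{L_\infty/k,\Sigma,T}$ — the family of elliptic units of Example~\ref{Rubin--Stark-examples}\,(c) — decomposes accordingly. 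A further standard descent — replacing $L_\infty$ by its compositum $\widetilde L_\infty$ with the maximal abelian pro-$p$ tower of $k$ (which by Leopoldt is a $\Z_p^2$-extension in the split case and a $\Z_p$-extension otherwise) and using the functoriality of $\Det$ and the norm-compatibility of the elliptic-unit Euler system under the ensuing change of group — reduces the assertion to that maximal tower, where the classical main conjecture is available.

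Over that tower I would translate the statement into an equality of ideals. By Proposition~\ref{propeties-iwasawa-complex} the complex $D^\bullet_{L_\infty,\Sigma,T}$ is represented by a two-term complex of finitely generated free modules, so $\Det_\bLambda(D^\bullet_{L_\infty,\Sigma,T})$ is free of rank one; unwinding the isomorphisms defining (\ref{projection-map-infinite}) together with the exact sequences (\ref{yoneda-Iwasawa-total}) and (\ref{Iwasawa-cohomology}), the existence of a basis $\mathcal{L}_{L_\infty/k,\Sigma,T}$ with $\Theta^1_{L_\infty/k,\Sigma,T}(\mathcal{L}_{L_\infty/k,\Sigma,T}) = \varepsilon_{L_\infty/k,\Sigma,T}$ is — modulo the pseudo-null subtleties discussed below — equivalent to the equality of $\bLambda$-ideals
\[
\im(\varepsilon_{L_\infty/k,\Sigma,T}) = \Fitt^0_\bLambda(H^2_{T,\Iw}(\bigO_{L,\Sigma},\Z_p(1))), \qquad L\coloneqq L_\infty^\Gamma,
\]
the left-hand side being the cyclic ideal generated by the elliptic-unit Euler system (compare Remark~\ref{order-of-vanishing-evidence-rk} and \cite[Lem.\@ 3.12]{BullachDaoud}). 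The divisibility ``$\subseteq$'' here is precisely the bound on the relevant Iwasawa module produced by Rubin's Euler-system machinery for elliptic units: in the split case it is the two-variable main conjecture of \cite{Rubin91}, \cite{Rub94} (equivariantly refined in \cite{Ble06}, \cite{Fla09}), and in the inert or ramified case, and for $p=2$, it is the corresponding divisibility established in \cite{JoKi11}, \cite{Vig13}, \cite{OuVi16}, resting on the explicit Coleman-map and reciprocity computations of \cite{deS87}, \cite{BlHo20} — the same local input that underlies Theorem~\ref{MRS-iq-thm}.

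For the reverse divisibility I would argue, as usual, via the analytic class number formula: after base change to $\cQ(\bLambda)$ both sides generate the same fractional ideal, because the leading term of the associated $p$-adic $L$-function — equivalently, the elliptic-unit regulator — is computed by Kronecker's second limit formula (cf.\ \cite{Fla09}); the two ideals therefore differ at most by a pseudo-null (in the one-variable case, finite) factor, which is trivial once the classical Iwasawa $\mu$-invariant vanishes, and automatically so in the semisimple case $p\nmid[K:k]$, exactly as in the Burns--Greither descent. It then remains to promote this ideal-theoretic equality to the required $\Det$-level identity, which amounts to verifying that the module $H^1(D^\bullet_{L_\infty,\Sigma,T})$ (equivalently, the associated class-group Iwasawa module) has no nonzero finite $\bLambda$-submodule, so that its determinant is pinned down by the characteristic ideal of its torsion part. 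I expect this last step, together with the bookkeeping required to assemble the various literature results into a single equivariant statement over the general $\Z_p^d$-extension $L_\infty^\Delta/k$ — tracking the Euler factors at the places of $\Sigma\setminus S_p(k)$, the $T$-modification, and the case $p=2$ in the non-split setting — to be the main technical obstacle; it is precisely here that the restriction to $\mu=0$ (or $p\nmid[K:k]$) in the non-split case of Theorem~\ref{thm-B}\,(b) enters.
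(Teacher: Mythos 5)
Your proposal recovers the general shape of the argument — reduce to the compositum with the full maximal $\Z_p$-power extension, feed in the elliptic-unit Euler system, and close the gap with a structure/$\mu$-invariant argument under condition ($\ast$) — but the route through the middle differs from the paper's in one important respect: you aim to establish \emph{both} divisibilities of an ideal-theoretic main conjecture (Euler-system bound in one direction, analytic class number formula / Kronecker's limit formula in the other), and then separately ``promote'' the resulting equality of characteristic or Fitting ideals to the $\Det$-level statement, citing an absence of nonzero finite submodules. The paper avoids this two-step loop entirely. Its proof of Theorem \ref{appendix-main-result} invokes a structural lemma (Lem.\@ 6.2(b) of \cite{scarcity}) asserting that, once condition ($\ast$) guarantees $A_\Sigma(L_\infty)$ has projective dimension at most one after localisation at every height-one prime of $\bLambda$, the \emph{single} containment
\[
\im (\varepsilon_{L_\infty / k, \Sigma, T})^{\ast \ast} \subseteq \Fitt^0_\bLambda ( A_{\Sigma, T} (L_\infty))^{\ast \ast} \cdot \Fitt^0_\bLambda ( X_{L_\infty, \Sigma \setminus S_\infty (k)})^{\ast \ast}
\]
already forces the $\Det$-basis to exist; the class-number-formula input is absorbed into the formalism rather than invoked as a separate analytic step, and no auxiliary pseudo-null or finite-submodule bookkeeping is needed. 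The Euler-system divisibility itself is then taken verbatim from the proof of Thm.\@ 6.5(b) of \cite{scarcity}, which already handles the non-split, ramified and $p = 2$ cases uniformly, rather than being assembled from the patchwork of \cite{Rubin91}, \cite{Rub94}, \cite{JoKi11}, \cite{Vig13}, \cite{OuVi16} as you suggest --- a patchwork the paper explicitly cautions against when it says the existing literature gives results ``slightly more [restrictive] and of a different shape than is available.'' Your preliminary prime-to-$p$ character decomposition is also not used by the paper and is strictly unnecessary: the reduction is just the commutative diagram (\ref{descent-diagram-1}) relating $\Det_{\bLambda'}(D^\bullet_{L_\infty,\Sigma,T})$ to $\Det_{\bLambda}(D^\bullet_{K_\infty,\Sigma,T})$ via $\varpi_{L_\infty/K_\infty}$ and $\NN_{L_\infty/K_\infty}$.

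Two specific points to tighten if you pursue your route: (i) the statement ``after base change to $\cQ(\bLambda)$ both sides generate the same fractional ideal'' is vacuous as written (every nonzero ideal becomes the unit ideal there); what you actually need is a comparison of $\lambda$-invariants or, in the two-variable case, of characteristic ideals in codimension one, which is where Kronecker's second limit formula enters; (ii) the final ``promotion'' step in fact requires more than absence of finite submodules — it needs the localised projective dimension bound from ($\ast$), which is precisely what [scarcity, Lem.\@ 6.2(b)] packages. Your approach is defensible in outline, but the paper's single-divisibility shortcut is substantially cleaner and is what actually makes the non-split $p = 2$ case tractable.
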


Fix a prime ideal $\p$ of $k$ above $p$ as in \S\,\ref{set-up-section-iq}. The main result of this is subsection is as follows.  

\begin{thm} \label{appendix-main-result}
Let $K / k$ be an abelian extension and put $L_\infty = K l_\infty$, where $l_\infty$ is the maximal $\Z_p$-power extension of $k$ unramified outside $\p$. Assume the following condition:
\begin{itemize}
    \item[($\ast$)] $\gal{L_\infty}{k}$ is $p$-torsion free or the Iwasawa $\mu$-invariant of $A_{\Sigma} (L_\infty)$ (as a $\Z_p \llbracket \Gamma \rrbracket$-module) vanishes.
\end{itemize}
Then Conjecture \ref{eIMC} holds for $(L_\infty / k, \Sigma, T)$. In particular, Conjecture \ref{eIMC} holds for $(K_\infty / k, \Sigma, T)$ with $K_\infty = K k_\infty$ if one takes $k_\infty / k$ to be any of the $\Z_p$-extensions described at the beginning of \S\,\ref{set-up-section-iq}. 
\end{thm}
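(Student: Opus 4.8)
The plan is to deduce Conjecture \ref{eIMC} for $(L_\infty / k, \Sigma, T)$ from the classical main conjecture of Rubin \cite{Rub88}, \cite{Rubin91} for elliptic units over $k$, upgraded to the equivariant setting and to the full $\Z_p$-power extension $L_\infty$. First I would reduce to the case where $\gal{L_\infty}{k}$ itself is $\cong \Gamma \times \Delta$ with $\Gamma \cong \Z_p^d$ and $\Delta$ finite, which is automatic here; the subtle point is whether $d = 1$ or $d = 2$, and by hypothesis $l_\infty$ is the maximal $\Z_p$-power extension unramified outside $\p$, so in the split case $d = 1$ and in the non-split case $d = 2$ (the anticyclotomic direction is unramified outside $\p$). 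In either case, the strategy is a descent from the $\Z_p^d$-extension $L_\infty^\Delta / k$, where the relevant statement is the content of Rubin's two-variable main conjecture (respectively the one-variable statement), to the equivariant level over $\bLambda = \Z_p[\Delta]\llbracket\Gamma\rrbracket$.

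The key steps, in order, would be: (1) Identify $\Det_\bLambda(D^\bullet_{L_\infty,\Sigma,T})$ in terms of Iwasawa cohomology, using the exact sequences (\ref{yoneda-Iwasawa-total}), (\ref{Iwasawa-cohomology}), so that producing the basis $\mathcal{L}_{L_\infty/k,\Sigma,T}$ amounts to a Fitting-ideal / characteristic-ideal computation for $H^2_{T,\Iw}(\bigO_{K,\Sigma},\Z_p(1))$, equivalently for $\varprojlim_n A_{\Sigma,T}(K_n)$ together with the module $X_{K_\infty,\Sigma\setminus V}$. (2) Invoke the validity of the main conjecture for elliptic units over $k$ (Rubin, and in the cases $p = 2$ or $p$ non-split the refinements used in \cite{BlHo20}, \cite{Vig13}, \cite{JoKi11}) to get the characteristic-ideal identity over $\Z_p\llbracket\gal{L_\infty^\Delta}{k}\rrbracket$, and then (3) perform equivariant descent over $\Delta$: this is where condition $(\ast)$ enters. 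If $\gal{L_\infty}{k}$ is $p$-torsion free, then $\bLambda$ is a regular ring (a power series ring over $\Z_p$) and the passage from the statement about characteristic ideals to the statement about generators of determinant modules is formal, because reflexive $\bLambda$-modules of rank one are free; if instead the $\mu$-invariant of $A_\Sigma(L_\infty)$ vanishes, one runs the standard argument comparing Fitting ideals of the relevant torsion modules with their characteristic ideals, using that $\mu = 0$ forces these to agree up to units (cf.\@ the Burns--Greither machinery and \cite[\S\,3]{BKS2}). (4) Finally, the "in particular" clause follows because for the $\Z_p$-extensions $k_\infty$ described in \S\,\ref{set-up-section-iq} one has $K_\infty = K k_\infty \subseteq L_\infty$, and Conjecture \ref{eIMC} for $L_\infty$ descends to $K_\infty$ by applying $-\otimes^{\mathbb L}_\bLambda \bLambda_{K_\infty}$ along the surjection of Iwasawa algebras, the Rubin--Stark element being norm-compatible by construction; one must only check that no finite place of $\Sigma$ splits completely in $k_\infty$, which holds in the split case by \cite[Ch.~II, Prop.~1.9]{deS87} and by the construction of $k_\infty$ in the non-split case.

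The main obstacle I expect is step (3), the equivariant descent over the finite group $\Delta$ together with the passage from "divisibility/characteristic-ideal" statements to the exact identification of a generator of the determinant module demanded by Conjecture \ref{eIMC}. The classical main conjecture for elliptic units is a statement about characteristic ideals (or Fitting ideals) of Iwasawa modules; promoting it to an equality of $\bLambda$-lattices inside $\cQ(\bLambda)\otimes_\bLambda U_{L_\infty,\Sigma,T}$ requires controlling the difference between a module and its reflexive hull, and over a non-regular ring $\Z_p[\Delta]\llbracket\Gamma\rrbracket$ this difference is governed precisely by the finite submodules whose triviality is what the $\mu = 0$ hypothesis (or $p$-torsion-freeness of the Galois group, which makes $\bLambda$ regular) is there to guarantee. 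A secondary technical point is assembling the non-split, $p = 2$, and ramified-$p$ cases of the underlying main conjecture into a single uniform input; here one leans on \cite{BlHo20} and the references collected before the statement, but the bookkeeping of Euler factors at $p$ and at places in $T$ in these cases needs care.
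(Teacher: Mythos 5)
Your proposal describes a genuinely different route from the paper's, and the difference matters. You propose the classical two-stage strategy: first establish the (one- or two-variable) main conjecture for elliptic units over $\Z_p\llbracket\gal{L_\infty^\Delta}{k}\rrbracket$, then perform an equivariant descent over $\Delta$ to land on $\bLambda = \Z_p[\Delta]\llbracket\Gamma\rrbracket$, with condition $(\ast)$ controlling the mismatch between Fitting ideals, characteristic ideals, and reflexive hulls. You correctly flag step (3) — the passage from a non-equivariant characteristic-ideal statement to an exact basis identification of the determinant lattice over a possibly non-regular equivariant Iwasawa algebra — as the main obstacle, and indeed this is notoriously the hard part of such arguments.

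The paper avoids that obstacle entirely: it never proves a non-equivariant statement and then descends. Instead it works directly over $\bLambda$ using the theory of equivariant Euler systems. Condition $(\ast)$ is exploited through a different mechanism: it is used to guarantee that $A_\Sigma(L_\infty)$ has projective dimension at most one after localisation at every height-one prime of $\bLambda$. Given that, the paper applies \cite[Lem.~6.2(b)]{scarcity}, a criterion which reduces the full Conjecture~\ref{eIMC} to a \emph{single one-sided} inclusion of reflexive hulls,
\[
\im (\varepsilon_{L_\infty / k, \Sigma, T})^{\ast \ast} \subseteq \Fitt^0_\bLambda \big( A_{\Sigma, T} (L_\infty)\big)^{\ast \ast} \cdot \Fitt^0_\bLambda \big( X_{L_\infty, \Sigma \setminus S_\infty (k)}\big)^{\ast \ast},
\]
which is then verified by an Euler system argument carried out in the proof of \cite[Thm.~6.5(b)]{scarcity}. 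Thus a one-sided Euler system divisibility plus a local projective-dimension bound delivers the full two-sided equality of determinant lattices, with no $\Delta$-descent appearing anywhere, and with $p=2$, non-split $p$, and ramified $p$ handled uniformly. This is both shorter and structurally more robust than the route you sketch. Your handling of the concluding reduction from $L_\infty$ to $K_\infty$ via $-\otimes^{\mathbb L}_\bLambda$ along the surjection of Iwasawa algebras and norm-compatibility of Rubin--Stark elements does match the paper's use of the commutative diagram (\ref{descent-diagram-1}).
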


\begin{proof}
Let us first prove that it is indeed enough to prove Conjecture \ref{eIMC} for $L_\infty / k$. If $p$ is split in $k / \Q$, then $l_\infty$ and $k_\infty$ agree and so the claim is clear in this case. In the non-split case, $l_\infty$ is the maximal $\Z_p$-power extension of $k$ and hence $K_\infty$ is contained in $L_\infty$. We then have a commutative diagram  
\begin{equation} \label{descent-diagram-1}
\begin{tikzcd}
\Det_{\bLambda'} ( D^\bullet_{L_\infty, \Sigma, T} ) \arrow{rr}{\Theta^1_{L_\infty / k, \Sigma, T}} \arrow[twoheadrightarrow]{d}[left]{\varpi_{L_\infty / K_\infty}} & &
U_{L_\infty, \Sigma, T} \arrow{d}{\NN_{L_\infty / K_\infty}} \\
\Det_{\bLambda} ( D^\bullet_{K_\infty, \Sigma, T} )
\arrow{rr}{\Theta^1_{K_\infty / k, \Sigma, T}} & &
U_{K_\infty, \Sigma, T},
\end{tikzcd}
\end{equation}
where the left hand vertical map $\varpi_{L_\infty / K_\infty}$ is induced by the isomorphism (cf.\@ Proposition \ref{propeties-iwasawa-complex}\,(c)\,(ii))
\[
D^\bullet_{L_\infty, \Sigma, T} \otimes^\mathbb{L}_{\Z_p \llbracket \gal{L_\infty}{k} \rrbracket} \Z_p \llbracket \gal{K_\infty}{k} \rrbracket
\cong D^\bullet_{K_\infty, \Sigma, T}.
\]
The claim now follows directly from the above commutative diagram (\ref{descent-diagram-1}). \smallskip \\
To prove Conjecture \ref{eIMC} for $L_\infty / k$, we first note that the explicit condition ($\ast$) ensures that $A_\Sigma (L_\infty)$ has projective dimension at most one after localising at any height-one prime $\p$ of $\bLambda$. By \cite[Lem.\@ 6.2\,(b)]{scarcity} it is therefore enough to show that one has an inclusion
\[
\im (\varepsilon_{L_\infty / k, \Sigma, T})^{\ast \ast} \subseteq \Fitt^0_\bLambda ( A_{\Sigma, T} (L_\infty))^{\ast \ast} \cdot \Fitt^0_\bLambda ( X_{L_\infty, \Sigma \setminus S_\infty (k)})^{\ast \ast}
\]
and this can be done using the theory of Euler systems (see the proof of \cite[Thm.\@ 6.5\,(b)]{scarcity}, where the above inclusion, which agrees with (31) of \textit{loc.\@ cit.}, is verified). 
\end{proof}

To end this subsection we clarify the nature of condition ($\ast$). 

\begin{prop} \label{mu}
Let $K / k$ be an abelian extension and put $K_\infty = K \cdot l_\infty$, where $l_\infty$ is the maximal $\Z_p$-power extension of $k$ unramified outside $\p$. The $\mu$-invariant of $A_\Sigma (L_\infty)$ (as a $\Z_p \llbracket \Gamma \rrbracket$-module) vanishes in each of the following cases:
\begin{liste}
\item The prime number $p$ splits in $k / \Q$,
\item the degree $[K : k]$ is a power of $p$,
\item there is a $\Z_p$-extension $F_\infty$ of $K$ contained in $L_\infty$ in which no prime above $p$ splits completely and which is such that the $\mu$-invariant of $A (F_\infty)$ (as a $\Z_p \llbracket \gal{F_\infty}{K} \rrbracket$-module) vanishes,
\item $A_{S_p} (K)$ vanishes and $|S_p (K) | = 1$.
\end{liste}
\end{prop}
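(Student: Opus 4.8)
The plan is to prove all four assertions by one device. In each case one produces a $\Z_p$-extension $F_\infty$ of $K$ with $K \subseteq F_\infty \subseteq L_\infty$ (where $L_\infty = K\cdot l_\infty$) such that no prime of $K$ above $p$ splits completely in $F_\infty/K$ and such that $\mu(A(F_\infty)) = 0$ — equivalently, as $A(F_\infty)$ is $\Z_p\llbracket\gal{F_\infty}{K}\rrbracket$-torsion, such that $A_\Sigma(F_\infty)$ is finitely generated over $\Z_p$. Granting this: $\gal{L_\infty}{K}$ is torsion-free of rank $d\in\{1,2\}$ ($d=1$ if $p$ splits, $d=2$ otherwise), so $\gal{L_\infty}{F_\infty}$ is free over $\Z_p$ of rank $d-1$; using that no finite place of $\Sigma$ splits completely in $l_\infty/k$, standard control theory for the $\Z_p$-power extension $L_\infty/F_\infty$ (cf.\ \cite{Kleine17}, \cite{Kleine19}) gives a map $(A_\Sigma(L_\infty))_{\gal{L_\infty}{F_\infty}} \to A_\Sigma(F_\infty)$ with finite kernel and cokernel, so its source is finitely generated over $\Z_p$. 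Topological Nakayama in the $\gal{L_\infty}{F_\infty}$-direction then forces $A_\Sigma(L_\infty)$ to be finitely generated over the subalgebra $\Z_p\llbracket\gal{L_\infty}{F_\infty}\rrbracket\subseteq\Z_p\llbracket\gal{L_\infty}{K}\rrbracket$; reducing modulo $p$, $A_\Sigma(L_\infty)/p$ becomes a module over a polynomial subring in $d-1$ variables, hence torsion over $\Z_p\llbracket\gal{L_\infty}{K}\rrbracket/(p)$, which by the usual characterisation is exactly the vanishing of the $\mu$-invariant (unchanged on passing to the commensurable subgroup $\Gamma$).

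\textbf{Exhibiting $F_\infty$ in the four cases.} Case (c) is the device applied to the hypothesised $F_\infty$. In case (a), $p$ splits, $l_\infty$ is a $\Z_p$-extension, $d=1$, and one takes $F_\infty = L_\infty$: via $A(L_\infty)\twoheadrightarrow A_\Sigma(L_\infty)$ it is enough to cite the vanishing of the $\mu$-invariant of the Iwasawa module of the $\p$-ramified $\Z_p$-extension, valid for every finite abelian $K/k$ by the theorem of Gillard (refined by Schneps at $p=2$). In case (d) one takes $F_\infty = K\cdot k^{\cyc}$; the hypotheses $|S_p(K)|=1$ and $A_{S_p}(K)=0$ give $A_{S_p}(K\cdot k^{\cyc})=0$ by the standard triviality criterion for the cyclotomic Iwasawa module (Greenberg), so $A(K\cdot k^{\cyc})$ is finite — in particular $\mu(A(K\cdot k^{\cyc}))=0$, with no prime above $p$ splitting completely. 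In case (b) we may assume $p$ non-split, so $l_\infty=\widetilde k$ is the maximal $\Z_p^2$-extension of $k$ and $[K:k]$ being a $p$-power makes $\gal{K\widetilde k}{\widetilde k}$ a finite $p$-group; again take $F_\infty = K\cdot k^{\cyc}$. By Ferrero--Washington $\mu(A(k^{\cyc}))=0$, so $A(k^{\cyc})$ is finitely generated over $\Z_p$; since $K k^{\cyc}/k^{\cyc}$ is a finite $p$-extension with Galois group $\Delta_1$, whose group ring $\FF_p[\Delta_1]$ is local with nilpotent augmentation ideal, the control theorem for this finite extension (well-behaved because $A(k^{\cyc})$ is finitely generated over $\Z_p$) shows $(A(K k^{\cyc}))_{\Delta_1}$ is finitely generated over $\Z_p$, and a short Nakayama argument over $\FF_p[\Delta_1]$ then yields that $A(K k^{\cyc})/p$ is finite, i.e.\ $\mu(A(K k^{\cyc}))=0$; as before the prime above $p$ does not split completely in $K k^{\cyc}/K$.

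\textbf{Expected main obstacle.} The heart of the matter is the control-theoretic transfer: one must formulate, for each of the $\Z_p$-extensions $L_\infty/F_\infty$ and the finite $p$-extension $K k^{\cyc}/k^{\cyc}$, a control theorem with finite kernels and cokernels, and verify carefully the non-splitting conditions on the places of $\Sigma$ and on the primes above $p$ that keep these error terms finite; in the two-variable ($d=2$) non-split situation this bookkeeping, and its interaction with the comparisons between $A_\Sigma$, $A_{\Sigma,T}$ and $A_{S_p}$ (compare Lemma~\ref{Gross-Kuzmin-Lemma}), is the delicate step. The deep external inputs — Gillard--Schneps in (a) and Ferrero--Washington in (b) — enter only at the very end, so the risk lies not in those citations but in making the descent machinery propagate $\mu=0$ cleanly, with the customary extra care required when $p=2$.
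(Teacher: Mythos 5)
Your proof follows the same skeleton as the paper's: the deep $\mu=0$ input enters through case (a) (Gillard and its refinements) and case (b) (Ferrero--Washington), and everything else is a transfer of $\mu$-vanishing from a one-variable intermediate $F_\infty$ up to the possibly two-variable $L_\infty$. Where the paper cites this transfer from \cite[Prop.~4.1 and Cor.~4.8]{Cuo80}, you re-derive it via control theory and topological Nakayama; in spirit this is Cuoco's own argument, but the finiteness (or at least bounded $\Z_p$-corank) of the kernel and cokernel of $(A_\Sigma(L_\infty))_{\gal{L_\infty}{F_\infty}} \to A_\Sigma(F_\infty)$ is precisely the technical core of that reference and needs to be established rather than asserted --- you flag this as the crux, but leave it as such. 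For case (b) the paper first applies \cite[Thm.~2]{Iwa73_b} to reduce to the $\mu$-invariant of $A(K^P k^\cyc_\infty)$, where $K^P$ is the fixed field of the $p$-Sylow subgroup (here $K^P = k$); your inline argument over the local ring $\FF_p[\Delta_1]$ is essentially the standard proof of Iwasawa's theorem, so the two routes are equivalent. For (d) your route (take $F_\infty = K k^\cyc_\infty$, prove $\mu(A(F_\infty))=0$, then invoke (c)) is more roundabout than the paper's, which applies Nakayama directly to the $\Z_p\llbracket\Gamma\rrbracket$-module $A_{S_p}(L_\infty)$ using the isomorphism $(A_{S_p}(L_\infty))_\Gamma \cong A_{S_p}(K) = 0$; both work. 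Two points to tighten: your citation ``Gillard refined by Schneps at $p=2$'' for case (a) should be checked to also cover $p=3$ --- the paper uses \cite{OuVi16} for $p\in\{2,3\}$; and the paper's opening reduction $\mu(A_\Sigma(L_\infty)) = \mu(A(L_\infty))$ (valid because no finite place splits completely in $L_\infty$, via \cite[Ch.~II, \S1.9, Prop.]{deS87}) is only implicit in your write-up and should be made explicit, since the two modules do not coincide in general.
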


\begin{rk}
Iwasawa has conjectured that statement (c) in Proposition \ref{mu} is always satisfied if one takes $F_\infty$ to be the cyclotomic $\Z_p$-extension of $K$.
\end{rk}

\begin{proof}
It is well-known that the Iwasawa $\mu$-invariants of $A_\Sigma (L_\infty)$ and $A (L_\infty)$ agree because no finite prime splits completely in $L_\infty$ (see \cite[Ch.\@ II, \S\,1.9, Prop.\@]{deS87}), hence it suffices to discuss the vanishing of the latter. \\
In the situation of (a) the required vanishing follows from the main results of \cite{Gil85} (for $p > 3$) and \cite{OuVi16} (for $p \in \{2, 3\}$).\\
Let $k_\infty^\mathrm{cyc}$ and $K^\cyc_\infty$ be the cyclotomic $\Z_p$-extensions of $k$ and $K$, respectively. From \cite[Thm.~2]{Iwa73_b}
we know that the vanishing of the $\mu$-invariant of $A ( K^\mathrm{cyc}_\infty)$
is implied by the vanishing of the $\mu$-invariant of $A ( K^P \cdot k^\mathrm{cyc}_\infty)$, where $K^P$ denotes the fixed field of the $p$-Sylow subgroup $P$ of $\cG = \gal{K}{k}$. This combines with the Theorem of Ferrero-Washington \cite{ferrero-washington} to imply the claim for (b) once we have verified that it is valid if (c) holds. \\
To do this, we may assume that $p$ is not split in $k / \Q$ because we have already dealt with split primes in (a). In this case, $K_\infty / K$ is a $\Z_p^2$-extension in which all primes above $p$ are finitely decomposed. Given this, statement (c) implies the claim by \cite[Prop.~4.1 and Cor.~4.8]{Cuo80}.\\
Finally, as is well-known, (d) follows from Nakayama's Lemma using that $(A_{S_p} (K_\infty))_\Gamma \cong A_{S_p} (K)$ if $|S_p (K) | = 1$. 
\end{proof}

\subsection{Proof of Theorem~\ref{thm-B}}

We are finally in a position to prove Theorem \ref{thm-B} from the introduction. 

\begin{thm} \label{thm_iq_main_thm}
Let $p$ be a prime number, $k$ an imaginary quadratic field, and $K / k$ a finite abelian Galois extension with Galois group $\cG$.
\begin{liste}
\item If $p$ splits in $k$, then $\mathrm{eTNC} ( h^0 (\Spec (K)), \Z_p [\cG])$ holds. 
\item If $p$ does not split in $k$, then $\mathrm{eTNC} ( h^0 (\Spec (K)), \Z_p [\cG])$ holds if the following condition is satisfied:
\begin{itemize}
    \item [($\ast$)] Let $l_\infty$ be the maximal $\Z_p$-power extension of $k$ and put $L_\infty = K \cdot l_\infty$. The Iwasawa $\mu$-invariant of $A_\Sigma (L_\infty)$ (as a $\Z_p \llbracket \gal{L_\infty}{K} \rrbracket$-module) vanishes or $\gal{L_\infty}{k}$ is $p$-torsion free.
\end{itemize}
\end{liste}
\end{thm}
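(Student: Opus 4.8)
The plan is to deduce Theorem~\ref{thm_iq_main_thm} by feeding the results established above into the descent formalism of Burns, Kurihara and Sano. Recall that \cite[Thm.~5.2]{BKS2} shows $\mathrm{eTNC}(h^0(\Spec K), \Z_p[\cG])$ to hold once one knows, for a suitably chosen $\Z_p$-extension $k_\infty/k$ and a compatible choice of data $(S, T, V, V')$ as in \S\ref{set-up-section}, all three of the equivariant Iwasawa Main Conjecture~\ref{eIMC}, the Iwasawa-theoretic Mazur--Rubin--Sano Conjecture~\ref{MRS}, and condition~(F) from~\ref{condition-F}; the $p$-part of the Rubin--Stark Conjecture~\ref{Rubin--Stark-conj} is also needed, but for elliptic units it is known by Example~\ref{Rubin--Stark-examples}\,(c) after enlarging the conductor $\ff$ if necessary, which we always allow. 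Throughout I take $V = S_\infty(k)$, $S = S_\infty(k) \cup S_\ram(K/k) \cup S_p(k)$, and $T = \{\a\}$ for an auxiliary prime $\a$ coprime to $6\p\m$ as in \S\ref{set-up-section-iq}, so that $U_{E,\Sigma,T}$ is $\Z_p$-torsion free throughout the tower; the set $V'$ ranges over the proper subsets of $\Sigma$ consisting of places that split completely in $K/k$, and in the instances relevant to the descent argument $W = V'\setminus V$ is either empty or a single place above $p$.

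For part~(a) I would take $k_\infty$ to be the unique $\Z_p$-extension of $k$ unramified outside $\p$; by \cite[Ch.~II, Prop.~1.9]{deS87} no finite place of $k$ splits completely in $k_\infty/k$, so the hypotheses of \S\ref{set-up-section} are met. Conjecture~\ref{MRS} for elliptic units and this $\Z_p$-extension is precisely Theorem~\ref{MRS-iq-thm}; Conjecture~\ref{eIMC} is Theorem~\ref{appendix-main-result}, whose hypothesis~($\ast$) is automatic here because the relevant $\mu$-invariant vanishes by Proposition~\ref{mu}\,(a); and condition~(F) for $K_\infty = K k_\infty$ holds by Theorem~\ref{new-gross-kuzmin-thm-cyclotomic}, all of whose hypotheses are satisfied in this situation (note that $k/\Q$ is abelian and $S_\ram(k_\infty/k) = \{\p\}$; see also Remark~\ref{new-gross-kuzmin-thm-rk}\,(b)). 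Applying \cite[Thm.~5.2]{BKS2} then gives~(a).

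For part~(b) the new subtlety is that Theorem~\ref{MRS-iq-thm} proves Conjecture~\ref{MRS} only for the $\Z_p$-extensions singled out at the start of \S\ref{set-up-section-iq} --- those in which only finitely many finite places split completely, none of them ramified in $K/k$ --- whereas condition~(F) is currently accessible only through the a priori different family produced by Theorem~\ref{exists-Zp-extension}. The decisive point is that these families overlap: by Theorem~\ref{exists-Zp-extension} there are infinitely many $\Z_p$-extensions $k_\infty$ of $k$ for which condition~(F) holds for $K_\infty = K k_\infty$ and for which the (at most two) finite places of $k$ that split completely in $k_\infty/k$ avoid $S_\ram(K/k) \cup S_p(k)$, hence avoid $S$ and $\Sigma$. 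Fixing such a $k_\infty$, the set-up of \S\ref{set-up-section} applies, so Theorem~\ref{MRS-iq-thm} yields Conjecture~\ref{MRS} and condition~(F) holds by construction. For Conjecture~\ref{eIMC} one observes that in the non-split case $k_\infty$ lies inside the maximal $\Z_p$-power extension $l_\infty$ of $k$, so that Theorem~\ref{appendix-main-result} establishes Conjecture~\ref{eIMC} for $(K_\infty/k,\Sigma,T)$ under hypothesis~($\ast$), which is exactly condition~($\ast$) of Theorem~\ref{thm_iq_main_thm}\,(b). A final application of \cite[Thm.~5.2]{BKS2} yields~(b).

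The part I expect to require the most care is not any single input but the bookkeeping that makes the three inputs apply to one and the same $k_\infty$: one must check that all compatibility conditions of \S\ref{set-up-section} and of \cite[Thm.~5.2]{BKS2} --- on $S$, $\Sigma$, $T$, $V$, $V'$, on the auxiliary prime $\a$, and on the split and ramified places of the chosen $\Z_p$-extension (the $p = 2$ conditions being harmless here, since $k$ is imaginary quadratic) --- can be met simultaneously, which is precisely what the freedom built into Theorems~\ref{exists-Zp-extension} and~\ref{MRS-iq-thm} is designed to afford. A secondary point is to confirm that the descent machinery of \cite{BKS2} invokes Conjecture~\ref{MRS} only in its rank-jump-by-one instances (i.e.\ with $W$ at most one $p$-adic place), which is exactly the range covered by Theorem~\ref{MRS-iq-thm}.
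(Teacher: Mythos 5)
Your proposal is correct and follows essentially the same route as the paper: both deduce the result from \cite[Thm.~5.2]{BKS2} by supplying the equivariant Iwasawa Main Conjecture (Theorem~\ref{appendix-main-result}), the Iwasawa-theoretic Mazur--Rubin--Sano Conjecture (Theorem~\ref{MRS-iq-thm}), and condition~(F) (Remark~\ref{new-gross-kuzmin-thm-rk}\,(b) in the split case, Theorem~\ref{exists-Zp-extension} in the non-split case), for the same choice of $\Z_p$-extension $k_\infty/k$ in each of the two cases. The only small inaccuracy is your claim that the descent formalism only invokes the MRS conjecture with $W$ at most one $p$-adic place; in fact the paper applies it with $V'_\chi = S_\spc(K_\chi/k) \cap \Sigma$ (and $V'_1 = \Sigma\setminus\{\p\}$ for the trivial character, handled directly via \cite[Prop.~4.4\,(iv)]{BKS2}), so $W_\chi$ may be larger a priori, but since Theorem~\ref{MRS-iq-thm} is stated for arbitrary $V' \subsetneq \Sigma$ and reduces internally to $W \subseteq \{\p\}$, the conclusion is unaffected.
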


\begin{proof}
This will follow from the equivariant Iwasawa Main Conjecture proved in Theorem \ref{appendix-main-result} and the descent argument of Burns, Kurihara and Sano in \cite[Thm.~5.2]{BKS2}.\smallskip \\
To do this, we first need to introduce some notation. In the split case, we take $k_\infty$ to be the unique $\Z_p$-extension of $k$ that is unramified outside $\p$. In the non-split case, we take $k_\infty$ to be one of the $\Z_p$-extensions of $k$ provided by Theorem~\ref{exists-Zp-extension}.\\
For any character $\chi \in \widehat{\cG}$ we moreover introduce the following notation:
\begin{enumerate}[label=$\bullet$]
    \item $K_\chi = K^{\ker \chi}$ the field cut out by the character $\chi$, and $\cG_\chi = \gal{K_\chi}{k}$ is Galois group,
        \item $K_{\chi, \infty} = K_\chi \cdot k_\infty$ the composite of $K_\chi$ with 
        $k_\infty$, and $\Gamma_\chi = \gal{K_{\chi, \infty}}{K_\chi}$.
    \end{enumerate}
In addition, we define
\[
V'_\chi = \begin{cases}
S_\spc (K_\chi / k) \cap \Sigma & \text{ if } \chi \neq 1, \\
\Sigma \setminus \{ \p \} & \text{ if } \chi = 1,
\end{cases}
\]
and set $W_\chi = V'_\chi \setminus V$, where $V = S_\infty (k)$. By enlarging $S$ if necessary we may assume that $S_p (k) \subseteq \Sigma$ and that $V'_\chi$ is a proper subset of $\Sigma$ for all $\chi \in \widehat{\cG}$.
\smallskip \\
Let us now address each condition required to apply the general result \cite[Thm.~5.2]{BKS2} separately:
\begin{enumerate}[label=$\bullet$]
\item The equivariant Iwasawa Main Conjecture holds for $(K_\infty / k, \Sigma, T)$ by Theorem \ref{appendix-main-result}. 
\item The Iwasawa-theoretic Mazur--Rubin Sano Conjecture (in the formulation \cite[Conj.\@ 4.2]{BKS2}) holds for the data 
$(K_{\chi, \infty} / k, K_\chi, S, T, V'_\chi)$: If $\chi$ is non-trivial, then this is proved in Theorem \ref{MRS-iq-thm}.
For the trivial character the set $V'_\chi = \Sigma \setminus \{ \p \}$ consists only of places unramified in $k_\infty / k$, hence in this case the conjecture holds as a consequence of \cite[Prop.~4.4\,(iv)]{BKS2}.
\item Condition (F) (as stated in \ref{condition-F}) for $K_\infty / K$ is valid: In the split case this is Remark \ref{new-gross-kuzmin-thm-rk}\,(b), in the non-split case this is Theorem \ref{exists-Zp-extension}).
\end{enumerate}
 This concludes the proof of Theorem \ref{thm_iq_main_thm}.
\end{proof}

From Theorem \ref{thm_iq_main_thm} and Proposition \ref{mu}\,(b) we immediately obtain the following result towards the integral eTNC.

\begin{cor}
\label{cor_finsterau}
If all prime factors of $[K:k]$ are split in $k$ or $[K : k]$ is a prime power, then  $\mathrm{eTNC} ( h^0 (\Spec (K)), \Z[\cG])$ holds.
\end{cor}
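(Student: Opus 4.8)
The plan is to deduce Corollary~\ref{cor_finsterau} from Theorem~\ref{thm_iq_main_thm} together with Proposition~\ref{mu}\,(b), using the standard reduction of the integral eTNC to its $p$-parts over all rational primes $p$. Recall that $\mathrm{eTNC}(h^0(\Spec K), \Z[\cG])$ is valid if and only if $\mathrm{eTNC}(h^0(\Spec K), \Z_p[\cG])$ holds for every prime number $p$; this is a well-known consequence of the fact that $\Z[\cG] \hookrightarrow \prod_p \Z_p[\cG]$ induces a decomposition of the relevant relative $K$-group (see, e.g., \cite{BurnsFlach01}). So it suffices to verify the $p$-part of the eTNC for each prime $p$ individually, and this is precisely the content of Theorem~\ref{thm_iq_main_thm} once we check that its hypotheses are met in the two cases under consideration.

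First suppose that every prime factor of $n \coloneqq [K:k]$ is split in $k$. Fix any rational prime $p$. If $p \nmid n$, then $\cG$ has order prime to $p$, so $\Z_p[\cG]$ is a maximal order and $\mathrm{eTNC}(h^0(\Spec K), \Z_p[\cG])$ follows from the classical (non-equivariant) Tamagawa number conjecture for the characters of $\cG$, which is known in this setting by the work of Rubin and others; alternatively one may simply invoke Theorem~\ref{thm_iq_main_thm} directly, noting that when $p$ is split Theorem~\ref{thm_iq_main_thm}\,(a) applies unconditionally, and when $p$ is non-split but $p \nmid n$ the $p$-Sylow subgroup of $\cG$ is trivial so $\gal{L_\infty}{k}$ is $p$-torsion free, whence condition ($\ast$) of Theorem~\ref{thm_iq_main_thm}\,(b) is automatically satisfied. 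If instead $p \mid n$, then by hypothesis $p$ is split in $k$, so Theorem~\ref{thm_iq_main_thm}\,(a) gives the $p$-part unconditionally. In all cases the $p$-part holds, and assembling over all $p$ yields $\mathrm{eTNC}(h^0(\Spec K), \Z[\cG])$.

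Next suppose that $n = \ell^m$ is a prime power. Again fix a rational prime $p$. If $p \neq \ell$, then $p \nmid n$ and the argument of the previous paragraph applies verbatim (the $p$-Sylow subgroup of $\cG$ is trivial, so $\Z_p[\cG]$ is maximal and/or condition ($\ast$) holds trivially). If $p = \ell$, then $p$ is either split or non-split in $k$. In the split case Theorem~\ref{thm_iq_main_thm}\,(a) applies directly. In the non-split case we invoke Theorem~\ref{thm_iq_main_thm}\,(b), and it remains to verify condition ($\ast$), i.e.\ the vanishing of the Iwasawa $\mu$-invariant of $A_\Sigma(L_\infty)$ (equivalently of $A(L_\infty)$). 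But this is exactly Proposition~\ref{mu}\,(b): since $[K:k] = p^m$ is a power of $p$, the $\mu$-invariant vanishes. Hence the $p$-part holds for all $p$, and again we conclude $\mathrm{eTNC}(h^0(\Spec K), \Z[\cG])$.

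The only genuinely substantive input is the $\mu$-vanishing supplied by Proposition~\ref{mu}\,(b), which in turn rests on the theorems of Gillard, Oukhaba--Viguié, Ferrero--Washington and Iwasawa cited there; everything else is the routine passage between the integral and $p$-adic formulations of the eTNC and a case analysis on whether $p$ divides $[K:k]$. I do not anticipate any real obstacle: the corollary is essentially a bookkeeping consequence of Theorem~\ref{thm_iq_main_thm}, with the prime-power hypothesis engineered precisely so that Proposition~\ref{mu}\,(b) removes the conditional clause ($\ast$) whenever $p$ is the relevant ramified/inert prime, and the ``all factors split'' hypothesis engineered so that part (a) covers every prime dividing the degree.
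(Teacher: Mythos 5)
Your proof is correct and takes essentially the same route as the paper: reduce the integral eTNC to its $p$-parts, apply Theorem~\ref{thm_iq_main_thm}\,(a) when $p$ splits, and when $p$ is non-split verify condition~($\ast$) either via the $p$-torsion-freeness of $\gal{L_\infty}{k}$ (when $p\nmid[K:k]$) or via Proposition~\ref{mu}\,(b) (when $[K:k]$ is a $p$-power). The paper states this in one sentence; your write-up fills in exactly the intended case analysis.
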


\begin{remark} \label{rk_finsterau}
\begin{liste}
    \item If $p \nmid h_k [K : k]$, where $h_k$ denotes the class number of $k$, then the validity of $\mathrm{eTNC} ( h^0 (\Spec (K)), \Z_p [\cG])$ also follows from unpublished work of Bley \cite[Part~II, Thm.~1.1]{Ble98} on the Strong Stark Conjecture. It should be straightforward to strengthen said result to cover all primes $p \nmid [K : k]$ by taking into account the improvements of \cite[\S\,3]{Rubin91} provided in \cite{Rub94}. We remark that even this expected strengthening is covered by Theorem \ref{thm_iq_main_thm}.  
    \item As illustrated by
Corollary \ref{cor_finsterau},
the validity of the $p$-part of the eTNC for split primes $p \mid 2 h_k$ allows for a significant improvement towards the integral eTNC. Previously,
one had to restrict to cases where $k$ is one of only nine imaginary quadratic fields of class number one and all prime factors of $[K : k]$ are split in $k$ to obtain unconditional results towards the validity of the
eTNC for the pair $(h^0 (\Spec (K)), \Z[\cG])$. 
\end{liste}

\end{remark}

\renewcommand{\emph}[1]{\textit{#1}}

\addcontentsline{toc}{section}{References}

\printbibliography
\enlargethispage{2cm}

\small
\enlargethispage{2cm}
\textsc{King's College London,
Department of Mathematics,
London WC2R 2LS,
UK} \\
\textit{Email address:} \href{mailto:dominik.bullach@kcl.ac.uk}{dominik.bullach@kcl.ac.uk}\\

\textsc{Universit\"at der Bundeswehr M\"unchen,
Kompetenzzentrum Krisenfr\"uherkennung \\
Werner-Heisenberg-Weg 39, 85577 Neubiberg,
Germany}\\
\textit{Email address:} \href{mailto:martin.hofer@unibw.de}{martin.hofer@unibw.de}

\end{document}